\providecommand\@dotsep{5}
\renewcommand{\epsilon}{\varepsilon}
\renewcommand{\phi}{\varphi}
\newcommand{\mres}{ \! \mathbin{\vrule height 1.4ex depth 0pt width
    0.13ex\vrule height 0.13ex depth 0pt width 0.8ex} \!}
\newcommand{\tempdnotation}{\delta }
\newcommand{\tempalpha}{\alpha}
\newcommand{\dom}{\mathop{\rm dom}}
\newcommand{\intr}{\mathop{\rm int}}
\newcommand{\ri}{\mathop{\rm r.i.}}
\newcommand{\R}{{\mathbf R}}
\newcommand{\Rn}{{\R^n}}
\newcommand{\p}{{\partial}}
\newtheorem{theorem}{Theorem}[section]
\newtheorem{proposition}[theorem]{Proposition}
\newtheorem{corollary}[theorem]{Corollary}
\newtheorem{lemma}[theorem]{Lemma}
\newtheorem{definition}[theorem]{Definition}
\newtheorem{remark}[theorem]{Remark}
\newtheorem{example}[theorem]{Example}
\renewcommand\subsection{\@startsection{subsection}{2}%
  \z@{.5\linespacing\@plus.7\linespacing}{.1\linespacing}%
  {\normalfont\scshape}}
\title[The monopolist's free boundary problem in the plane]{ The monopolist's
  free boundary problem in the plane}
\author{Robert J. McCann
  \and
  Cale Rankin
  \and
Kelvin Shuangjian Zhang}
\address{Departments of Mathematics and Economics, University of Toronto, Toronto Ontario M5S 2E4 Canada {\tt mccann@math.utoronto.ca}}
\address{School of Science, UNSW Canberra ACT 2600 Australia 
  {\tt c.rankin@unsw.edu.au}}
\address{School of Mathematical Sciences and Center for Applied Mathematics,
Fudan University, 
220 Handan Road, Yangpu District, Shanghai 200433, P.R. China 
{\tt ksjzhang@fudan.edu.cn}}
\thanks{
MSC 2020: 
35R35 
49N10 
91B41 
Secondary:
35Q91 
49Q22 
90B50 
91A65 
91B43 
\\ 
$^*$Robert McCann's work was supported in part by the Canada Research Chairs program CRC-2020-00289, Natural Sciences and Engineering Research Council of Canada Discovery Grants RGPIN--2020--04162 and 2026-04906, 
and a grant from the Simons Foundation (923125, McCann).
Cale Rankin's was supported in part by postdoctoral fellowships from the Fields Institute for Research in Mathematical Sciences and the University of Toronto. Kelvin Shuangjian Zhang's was supported by a Start-Up Grant from Fudan University. The authors are grateful to Guillaume Carlier, Shibing Chen, Philippe Chon\'e, Krzysztof Ciosmak, Alessio Figalli, Jean-Charles Rochet, Ting-Kam Leonard Wong and Yi Ru-Ya Zhang for fruitful exchanges.
    \copyright \today}
\begin{document}
\begin{abstract}
We study the Monopolist's problem with a focus on the free boundary separating
bunched from unbunched consumers, especially in the plane, and give a full
description of its solution for the family of square domains $\{(a,a+1)^2\}_{a
\ge 0}$.  The Monopolist's problem is fundamental in economics, yet widely
considered analytically intractable when both consumers and products have more
than one degree of heterogeneity.  Mathematically, the problem is to minimize a
smooth, uniformly convex Lagrangian over the space of nonnegative convex
functions.  What results is a free boundary problem between the regions of
strict and nonstrict convexity. Our work is divided into three parts: a study
of the structure of the free boundary problem on convex domains in $\R^n$
showing that the product allocation map remains Lipschitz up to  portions of the
fixed boundary and that each bunch extends to this boundary; a proof in $\R^2$
that the interior free boundary can only fail to be smooth in one of four
specific ways (cusp, high frequency oscillations, stray bunch, nontransversal
bunch); and, finally, the first complete solution 
to Rochet and Chon\'e's example
on the family of squares $\Omega = (a,a+1)^2$, where we discover
bifurcations first to targeted and then to blunt bunching as the distance $a \ge
0$ to the origin is increased.   To do this, we extend the localization for measures in convex-order to accommodate potential discontinuities in the product allocation map at the fixed boundary. We also employ
techniques from the study of the Monge--Amp\`ere equation and the obstacle problem.
\end{abstract}
\maketitle
\setcounter{tocdepth}{1}
\tableofcontents

\section{Introduction} The Monopolist's problem is a principal-agent model for
making decisions facing asymmetric information; it has fundamental importance
in microeconomic theory. A simple form from \cite{RochetChone98} capturing
multiple dimensions of heterogeneity that we rederive below is to
\begin{equation}
\label{eq:monopolist} \begin{dcases} &\text{minimize}\ L[u]:= \int_{\Omega}
\left(\frac{1}{2}|Du-x|^2 + u \right) \, dx,\\ &\text{on}\  \mathcal{U} := \{u : \overline\Omega \rightarrow
 [0,\infty)\ \text{convex and lower semicontinuous}
\}.
  \end{dcases}
\end{equation}
We always take $\Omega \subset \mathbf{R}^n$ to be open and convex with compact closure
$\overline \Omega$. Our goal in this paper is to elucidate 
 the properties of
solutions to this minimization problem, with an eventual focus on the
two-dimensional setting. Because the minimization takes place over the set of
convex functions the problem has a free boundary structure. The free boundary
separates the region where the function is convex, but not strictly convex,
from the region where the function is strictly convex. In this paper we refine
our understanding of the free boundary structure in all dimensions by showing
regions where $u$ is not strictly convex always extend to the fixed boundary $\partial
\Omega$. We show the regularity known for $u$ often extends from the interior to the fixed boundary.
In two-dimensions, we show in a neighbourhood of a 
{newly identified} class of
free boundary points (that we call tame), the minimizer solves 
{a classical
obstacle problem, but with a low regularity endogenous obstacle}. From this we obtain that the tame free boundary is locally
piecewise Lipschitz except at accumulation points of its local maxima; 
it has Hausdorff dimension strictly less than two and is the graph of a
continuous function.
We also establish a bootstrapping procedure: if the free boundary is suitably Lipschitz, it is $C^\infty$;  see \cite{ChenFigalliZhang26+,McCannOBrienRankin26+} for subsequent developments.
As an application of our techniques we completely describe the
solution on the square domains $\Omega =(a,a+1)^2 \subset \mathbf{R}^2$ with $a \ge
0$. Despite significant numerical
\cite{BoermaTsyvinskiZimin22+,CarlierLachand-RobertMaury01,CarlierDupuis17,CarlierDupiusRochetThanassoulis24,EkelandMoreno-Bromberg10,MerigotOudet14,Mirebeau16,Oberman13},
and analytic attempts
\cite{RochetChone98,KolesnikovSandomirskiyTsyvinskiZimin22+,McCannZhang23+} the
description of the solution on the square has previously remained incomplete,
and has come to be regarded as analytically intractable
\cite{BoermaTsyvinskiZimin22+,CarlierDupiusRochetThanassoulis24,Kolesnikov23+}.
At least on the plane, we rebut this view by confirming for $a \ge \frac72 -\sqrt{2}$ the
solution recently hypothesized by McCann and Zhang \cite{McCannZhang23+}.  We
show how their solution can also be modified to accommodate smaller values of
$a$ and other convex, planar domains. We show that the nature of the bunching
undergoes unanticipated changes --- from absent to targeted to blunt --- as $a \ge 0$
is increased.  
{Here {\em blunt} bunching refers to rays orthogonal to
the diagonal line of symmetry, and {\em targeted} bunching refers to all other rays
\cite{BoermaTsyvinskiZimin22+}.}
We rigorously prove the support $\overline{Du({\Omega})}$ of the
unknown distribution of products consumed has a lower boundary which is concave
nondecreasing --- as the above-mentioned numerics and stingray description suggest --- and that all
products selected by more than one type of consumer lie on this boundary or its
reflection through the diagonal.

The problem \eqref{eq:monopolist} arises from the question of how a monopolist
should price goods for optimal profit in the face of information
asymmetry. Here is a simple derivation of \eqref{eq:monopolist}. We assume a
closed set of products $\Omega^* \subset \mathbf{R}^n$ where each coordinate represents some
attribute of the product, and an open set of consumers $\Omega \subset \mathbf{R}^n$ where
each coordinate represents some attribute of the consumer. Consumers are
distributed according to a Borel probability measure $\mu \in \mathcal{P}(\Omega)$.  The
monopolist's goal is to determine a price $v(y)$ at which to sell product
$y$ in a way which maximizes her profit. If consumer $x \in \Omega$ attains benefit $b(x,y)$ from  product $y \in \Omega^*$ then the consumer
will choose the product $y$ which maximizes his utility
\begin{equation}
  \label{eq:b-transform} u(x) := \sup_{y \in \Omega^*} b(x,y) - v(y).
\end{equation}
Provided it exists, we denote the $y$ which realizes this supremum by $Yu(x)$.
Assuming the monopolist pays cost $c(y)$ for product $y$, then the monopolist's
goal is to maximize {her} profit, the integral of price {she sells}  for minus cost {she pays},
\[ \int_{\Omega} [v(Yu(x)) - c(Yu(x))] \, d \mu(x). \]
The problem has been considered in this generality in e.g.~\cite{Basov05}
\cite{Carlier01} \cite{FigalliKimMcCann11} \cite{mccann2023c}, and for even more
general (non quasilinear) utility functions in \cite{NoldekeSamuelson18}, \cite{McCannZhang19}.
For this paper, to highlight the mathematical properties of most interest, we
adopt several standard simplifying assumptions proposed by Rochet and Chon\'e
\cite{RochetChone98}: that products lie in the nonnegative orthant
$\Omega^*=[0,\infty)^n$ and the monopolist's direct cost to produce them is quadratic
$c(y) = |y|^2/2$, furthermore, that consumers are uniformly distributed on their domain $\Omega \subset \R^n$ and their product preferences are bilinear $b(x,y) = x \cdot y$.  In this
case \eqref{eq:b-transform} implies $u$ is the Legendre transform of $v$ (and
thus a convex function), $Yu(x) \in \partial u(x)$, and when $\Omega \subset [0,\infty)^n$ the
Monopolist's goal becomes to maximize
\[ \int_{\Omega} \left( x \cdot Du(x) - u(x) - \frac{|Du(x)|^2}{2} \right) \, dx,\]
over nonnegative convex functions which, up to an irrelevant constant, is the
problem \eqref{eq:monopolist}. Since convex functions are differentiable almost
everywhere the integrand is well-defined. The nonnegativity constraint on $u$
represents the additional requirement that $v(0)=0$, meaning consumers need not
consume if the monopolist raises prices too high, or equivalently, are always
free to pay nothing by choosing the zero product as an outside option.

This problem was first considered by Mussa and Rosen in the one-dimensional
setting \cite{MussaRosen78}, (after related models of taxation
\cite{Mirrlees71}, matching \cite{Becker73}, and signaling \cite{Spence73}
were introduced and analyzed by Mirrlees, Becker and Spence). The
multidimensional problem was considered by Wilson \cite{Wilson93} and Armstrong
\cite{Armstrong96}, while our formulation above is essentially that of Rochet
and Chon\'e \cite{RochetChone98}.  Although this model is of significant
importance to economists it presents serious mathematical difficulties. Indeed,
were there only the nonnegativity constraint in \eqref{eq:monopolist} we would have a variant of the
obstacle problem \cite{Caffarelli98,Figalli18}; with the \textit{nonnegativity
and convexity constraint} we have a free boundary problem for three different
regions. In the region where the function is positive
but not strictly convex, the fundamental tool of two-sided perturbation by an
arbitrary test function is no longer applicable. As a result, until recent work
of the first and third authors \cite{McCannZhang23+} it has not even been
possible to write down the Euler--Lagrange equation in the region of nonstrict convexity. Despite
this, other aspects of the problem have been studied, notably by Rochet and
Chon\'e \cite{RochetChone98}, who derived a necessary and sufficient condition
for optimality in terms of convex-ordering between the positive and negative
parts of the variational derivative of the objective functional  conditioned on product selected, Basov
\cite{Basov05} who advanced a control theoretic approach to such problems,
Carlier \cite{Carlier01,Carlier02} who considered existence and first-order
conditions for the minimizer, and Carlier and Lachand-Robert
\cite{CarlierLachand-Robert01a,CarlierLachandRobert01,CarlierLachandRobert08}
who studied regularity and gave a description of the polar cone. 

In this paper we prove results of mathematical and economic interest. We invoke
tools from diverse areas of mathematics:  the theory of sweeping and convex orders of
measures, Monge--Amp\'ere equations, regularity theory for the obstacle problem, and the theory of optimal transport (which has deep
links to the Monopolist's problem). We also indicate a striking
connection to the classical obstacle problem: Locally the minimizer $u$
solves an obstacle problem where the obstacle is the minimal
convex extension of $u$ from its region of nonstrict convexity.  We now outline
our results.

\subsection{Multidimensional results}

If $u$ solves \eqref{eq:monopolist} and $\Omega$ is a convex open subset of
$\mathbf{R}^n$ it is 
known from the work of {Rochet--Chon\'e
\cite{RochetChone98}, 
Carlier--Lachand-Robert \cite{CarlierLachand--Robert01},
and Caffarelli and Lions \cite{CaffarelliLions06+} 
that $u \in
(C^{1,1}_{\text{loc}}\cap C^{0,1})(\Omega)$;
see \cite{mccann2023c}\cite{ChenFigalliZhang26+}.
Inspired by the present manuscript, 
Chen, Figalli and Zhang  have shown 
$u\in C^1(\overline \Omega)$ for $n=2$
\cite{ChenFigalliZhang26+}; see Remark \ref{R:ChenFigalliZhang}.
{
To allow for the possibility $u \not\in C^1(\overline\Omega)$
when $n>2$, let $u$ also denote the minimal convex extension of $u$ from $\Omega$ to all of $\R^n,$ and set
\begin{align*}
\p u &:= \{(x,y) \in \R^{2n} ; u(z) \ge u(x) + y \cdot (z-x) \quad \forall z \in \R^n\},
\\ \p u (x) &:= \{y \in \R^n ; (x,y) \in \p u\},
\\ \dom Du &:= \{ x \in \R^n ; \#(\p u(x))=1\}.
\end{align*}
Note $u$ is differentiable at $x$ if and only if $x \in \dom Du$. Since it is not clear that $\dom Du$ includes much of
$\p \Omega$, 
we extend $Du$ to ($\mathcal H^{n-1}$ almost all of) 
$\p \Omega$ as follows.  Let
$\dom D_{\p\Omega} u \subset \p \Omega$ denote the set of differentiability points of $\p \Omega$ where the restriction $u|_{\p \Omega}$ of $u$ to $\p \Omega$ is differentiable on $\p \Omega$, and set 
\begin{align}
\label{dom D^-u}
\dom D^-u &:= (\overline\Omega \cap \dom Du) \cup \dom D_{\p \Omega}u,
\\ D^-u(x) &:= 
\begin{cases}
    y(x) & {\rm if}\ x \in \dom D_{\p \Omega}u
\\ Du(x) & {\rm if}\ x \in \dom D^-u \setminus \dom D_{\p \Omega}u
\end{cases}
\label{D^-u}
\end{align}
where $\lambda(x)\ge 0$ and $y(x)$ are defined by
$\p u(x) = [y(x),y(x) + \lambda(x) \mathbf n]$ and
$\mathbf n$ is the outer normal to $\p \Omega$
at each $x \in \dom D_{\p \Omega} u$.
By Appendix~\ref{sec:rochet-chones-use} or \cite[Lemmas 1.6 and 2.2]{GangboMcCann00},
$D^-u$ is a well-defined Borel selection from $\p u$ and also coincides $\mathcal H^{n-1}$-a.e. on $\p \Omega$ with the inner boundary trace of the $BV$ function 
$Du:\Omega \longrightarrow \R^n$ from \cite[Theorem 3.77]{AmbrosioFuscoPallara00}.

As a non-tangential limit, $D^-u$ depends only on the local behaviour of $u$ on $\Omega$,
(whereas the outer boundary trace $D^+u(x):= D^-u(x) + \lambda(x)\mathbf n$ may depend on the behaviour of $u|_\Omega$ near the entire boundary facet of $\p \Omega$ containing $x$;
of course $D^\pm u=Du$ and $\dom D^-u=\overline \Omega$ when $u \in C^1(\overline \Omega)$).  
We define
an equivalence relation on $\dom D^-u$,  whose equivalence class 
for each $x_0 \in \dom D^-u$ is given by the convex set
\begin{align} 
\tilde{x_0} &:=  (D^- u)^{-1}(D^-u(x_0)) \subset \overline \Omega.
\label{equivalence classes}
\end{align}
In other words, $x_0 \sim x_1$ are equivalent if and only if $D^-u(x_0)=D^-u(x_1)$.}
We call an
equivalence class trivial if 
$\tilde{x} = \{x\}$. 
We call equivalence classes {\em leaves},
since they foliate the interior of $\Omega_i$.
They are also called {\em isochoice
sets} \cite{ChiapporiMcCannPass17} or {\em bunches} if nontrivial \cite{RochetChone98}. We
also call one-dimensional leaves {\em rays}. We set 
\begin{equation}
  \label{stratify} \Omega_{i} = \{x \in   \dom D^-u \subset \overline\Omega; 
  \tilde x \text{ is ($n-i$)-dimensional} \}.
\end{equation}
In particular --- outside of $\overline\Omega \setminus \dom D^-u$ (which is $\mathcal H^{n-1}$ negligible) --- $\Omega_n$ consists of all points at which $u$ is
strictly convex and $\Omega_0$ consists of all points 
lying in the closure of
an open subset of $\Omega$ on which $u$ is affine. These disjoint sets partition 
$\dom D^-u \subset \overline \Omega$,  
and our first result describes the qualitative behavior in each
set. 

\begin{theorem}[Partition into foliations by leaves that extend to the
boundary]\label{thm:structure-convexity} 
Let $u$ solve 
\eqref{eq:monopolist}
where $\Omega \subset \R^n$ is bounded, open and convex and set 
$Z:= \{x \in {\overline \Omega} ; u(x)=0\}$. 
Then $\Omega \subset \Omega_0 \cup \ldots \cup \Omega_n  =\dom D^-u $ and
\begin{enumerate}
  \item {either the closed convex set 
  $Z=\overline \Omega_0$ and 
  hence has positive volume,  or else
$\Omega_0$ is empty
and $Z \subset \p \Omega$ has positive area $\mathcal{H}^{n-1}(Z)>0$;}
  \item $\Omega_1,\dots,\Omega_{n-1}$ are a union of equivalence classes {$\tilde x$ 
  on which $u$ is
affine and the closure $\overline {\tilde x}$ 
of $\tilde x$ 
intersects the fixed} boundary $\partial \Omega$;
  \item $\Omega_n \cap \Omega$ is an open set on which $u \in C^\infty(\Omega_n)$ solves $\Delta u =n+1$.
\end{enumerate}
\end{theorem}

Note Theorem \ref{thm:structure-convexity}(1) refines Armstrong's result on the desirability of exclusion \cite[Proposition 1]{Armstrong96}, 
by relaxing strict convexity of $\Omega$ and showing that if $Z=u^{-1}(0)$ fails to have positive volume, then it lies on the boundary and has positive area;
c.f. \cite[Theorem 4.3]{FigalliKimMcCann11}.

\begin{remark}[An analytic interface]\label{R:analyticity}
 Any portion of $\p \Omega_0$ lying outside $\overline{\Omega_1 \cup \cdots \cup \Omega_{n-1} \cup \p\Omega}$ is
smooth --- in fact locally analytic --- by the theory of the obstacle problem
\cite{Caffarelli77,Caffarelli98}. Indeed, on any open ball in $\Omega$ exhausted by
$\Omega_0$ and $\Omega_n$, $u$ is a convex solution of $ \frac1{n+1}\Delta u =
1_{\{u>0\}}(x)$. Thus $\Omega_0$ is convex, and subsequently has Lipschitz boundary
which improves to locally analytic by the regularity for the obstacle problem
(see also \cite[Theorem 7.3]{Figalli18}).  Theorem
\ref{thm:description-on-square} 
below gives examples of square domains
in the plane for which this portion is nonempty.
\end{remark}

It is immediate from the definition that $\Omega_i$ for $1 \leq i \leq n-1$
are a union of nontrivial equivalence classes; the key conclusion is these
extend to the boundary (i.e. if $x \in \Omega_i$ for $1 \leq i \leq n-1$ then
$\overline{\tilde{x}} \cap \partial \Omega \neq \emptyset$).
 This offers some hope that the interior solution can be inferred from its boundary behaviour.
The PDE in point (3) has been considered in more
generality by Rochet and Chon\'e \cite{RochetChone98}.
Note the economic interpretation of (1): no bunches of positive measure are
sold {apart from} the null product; as we shall see during the proof of (2), any
product sold to more than one consumer lies on the boundary of the set of
products sold. Thus the entire interior of the set of products sold consists of
individually customized products.
Our proof of Theorem \ref{thm:structure-convexity} and some subsequent results requires a new proposition asserting that a.e. on the boundary of a convex domain $\Omega$,
the minimizer of \eqref{eq:monopolist} 
satisfies the boundary condition 
$({D^{ +}u}(x)-x) \cdot  \mathbf{n} \geq 0$, 
 where $\mathbf{n}$ is the outer unit normal to $\Omega$. 
Established in Proposition \ref{prop:neumann-sign}  (see Proposition \ref{P:neumann-sign-} for replacing $D^+u$ by $D^-u$), it can be interpreted to mean that the normal component of any boundary distortion in product selected can never be inward.
Moreover, in convex polyhedral domains and certain other situations,  
we are able to extend the interior regularity $u \in C^{1,1}_{\text{loc}}(\Omega)$ of Caffarelli and Lions \cite{CaffarelliLions06+} \cite{mccann2023c} to the smooth parts of the fixed boundary,
Theorem \ref{thm:boundary-regularity}.

\subsection{Planar results}

The remainder of our results are restricted to the planar case $\Omega \subset
\mathbf{R}^2$. 
In this case 
the work of Chen, Figalli and Zhang discussed in Remark \ref{R:ChenFigalliZhang} shows $\overline \Omega = \dom D^-u$
and $D^\pm u = Du$, so the equivalence classes take the form
\begin{align} 
\tilde{x_0} &= \{x \in \overline{\Omega} ; u(x) = p_{x_0}(x)\}
\label{support plane} 
\\ \nonumber \text{where}\quad 
p_{x_0}(x)&= u(x_0) + Du(x_0)\cdot(x-x_0), 
\end{align}
hence are compact and partition all of $\overline \Omega$.
Theorem \ref{thm:structure-convexity} provides a complete
description of the solution in $\Omega_0$ and $\Omega_2$: it remains to better understand
the behavior of the solution in $\Omega_1$ as well as the properties of the domains
$\Omega_1$ and $\Omega_2$, (noting $\Omega_0$ is, by Theorem \ref{thm:structure-convexity}, a
closed convex set).

By Theorem \ref{thm:structure-convexity}, the free boundary $\Gamma := \partial \Omega_1 \cap \partial \Omega_2
\cap \Omega$ between $\Omega_1$ and $\Omega_2$ consists only of points in rays which
extend to $\partial \Omega$.  In \S\ref{sec:neum-estim} we prove that the Neumann condition
\begin{equation}
  \label{Neumann} (Du(x_0)-x_0) \cdot \mathbf{n} = 0,
\end{equation}
where $\mathbf{n}$ is the outer unit normal to the fixed boundary $\p \Omega$, can be used to characterize the
presence of these rays.  Namely if \eqref{Neumann} is not satisfied at $x_0 \in \partial
\Omega$ then $\tilde{x_0} \ne \{x_0\}$ (and  
nearby rays foliate the interior of $\Omega_1$ locally).  Conversely, if $x \in \partial \Omega$ lies in a boundary
neighbourhood $B_\epsilon(x) \cap \partial \Omega$ on which the Neumann condition \eqref{Neumann} is
satisfied then $x$ is a point of strict convexity for $u$.  The remaining case ---
rays $\tilde x_0 \ne \{x_0\}$ which satisfy \eqref{Neumann} --- is subtle: we call
such rays {\em stray} and conjecture the union $ \mathcal S$ of stray rays has zero area in
general. Remark~\ref{rem:Neumann unimodality} shows convex polygonal domains $\Omega \subset [0,\infty)^2$ admit at most countably many stray rays; the squares $\Omega=(a,a+1)^2$ admit none.

Let $x_1$ be a point in the free boundary $\Gamma$ which, necessarily, lies on the ray $\tilde{x_1}$. Let $x_0 \in \tilde{x_1} \cap \partial \Omega$ be the {only fixed-}boundary endpoint of $\tilde{x_1}$ provided by Theorem \ref{thm:structure-convexity}. Note that if $\partial \Omega$ is $C^1$ in a neighbourhood of $x_0$ and $(Du(x_0) - x_0)\cdot \mathbf{n} > 0$, the same inequality holds for all $x \in B_\epsilon(x_0) \cap \partial \Omega$ and thus such $x$ are also the boundary endpoints of nontrivial rays. In this case 
we call $x_1$ a {\em tame} free boundary point (and $\tilde x_1$ a {\em tame} ray).  We denote the set of tame free boundary points by $\mathcal{T} \subset \Gamma = \Omega \cap \p \Omega_1 \cap \p \Omega_2$ {(and reiterate that
$\#(\tilde x_1 \cap \p \Omega) = 1$ 
for all $x_1 \in \mathcal T$).}

\begin{theorem}
[Regularity results for the 
free boundary
]
\label{thm:fb-regularity}
Let $u$ solve \eqref{eq:monopolist} where $\Omega \subset \R^2$ is a bounded, open, and
convex.
If
$x_1 \in \mathcal{T}$ with $\tilde x_1 \cap \p \Omega =\{x_0\}$  
and $\p \Omega$ is 
smooth at $x_0$ then: there exists 
$\epsilon > 0$ such that
$u \in C^{1,1}\big(\overline{\Omega} \cap B_\epsilon(x_0)\big)$  and
\\
(1) $\Gamma:= \p \Omega_1 \cap \p\Omega_2 \setminus \p \Omega$ has Hausdorff dimension less than $2$ in $B_\epsilon(x_1)$; \\
(2) the function $D(x):= \text{diam}(\tilde{x})$ is continuous in $B_\epsilon(x_0)\cap \p \Omega$; \\
(3) if $A \subset \p\Omega$ denotes the closure of all points where $D$ attains its local maxima, then
$\Gamma \cap \{x' \in \tilde x : x \in B_\epsilon(x_0) \cap \p\Omega \setminus A\}$ is Lipschitz;
\\
(4) if $D$ is Lipschitz on $B_\epsilon(x_0)\cap\p\Omega$, 
then a bootstrapping procedure
yields $\Gamma \cap B_\epsilon(x_1)$ is a $C^\infty$ curve and $u \in C^\infty(B_\epsilon(x_1) \cap \text{int } \Omega_1)$.
\end{theorem}

Establishing the Lipschitz regularity of $D$, which
permits the above-mentioned bootstrapping to a $C^\infty$ free boundary, remains an interesting open 
problem.\footnote{Subsequently however, 
Chen--Figalli--Zhang and McCann--O'Brien--Rankin 
built independently on the developments below 
to show locally that
the tame free boundary $\mathcal T$ 
is continuously differentiable 
\cite{ChenFigalliZhang26+}
(outside a discrete set,
\cite{McCannOBrienRankin26+} showed $C^{0,\alpha}$ smoothness
for all $0<\alpha<1$),
and moreover,
apart from a closed nowhere dense subset,  is $C^\infty$-smooth \cite{McCannOBrienRankin26+}.
}

\begin{remark}[Lipschitzianity, convexity, and smoothness]
\label{R:convex?}
Note the Lipschitz requirement on 
$D|_{B_\epsilon(x_0) \cap 
\p \Omega}$ from (4)
is not necessarily satisfied even when the corresponding portion of $\Gamma$ lies in a Lipschitz submanifold given by (3).  If $\{ x' \in \tilde x : x' \in B_\epsilon(x_0) \cap \p\Omega\}$ happens to be convex this distinction disappears for every smaller value of $\epsilon$; simulations \cite{Mirebeau16} suggest this occurs in the square examples from Theorem~\ref{thm:description-on-square}.
Thus if the region $\Omega_1^+$ depicted in 
Figure~\ref{fig:three-behaviours} is convex,  as Mirebeau's simulations lead us to conjecture, Theorem~\ref{thm:fb-regularity} guarantees the curved portion of its boundary is smooth (away from its endpoints).  See alternately
Remark \ref{R:Hoelder to smooth} below.
\end{remark}

Theorem \ref{thm:fb-regularity} is proved using new coordinates 
for the problem, new Euler--Lagrange equations, and a new observation: That in a neighbourhood of a tame free boundary point the difference between the minimizer $u$ and the minimal convex extension of $u|_{\Omega_1}$ solves the classical obstacle problem.  A priori, the obstacle is  $C^{1,1}$, i.e. has a merely $L^\infty$ Laplacian, and thus, without first improving the regularity of $u|_{\partial \Omega_1}$, the above results are the best one can obtain from the 
theory of the obstacle problem. 

\subsection{Bifurcations along the family of symmetrical squares}

For general convex domains it is difficult to study the structure of the
stray set $\mathcal{S} = \Gamma \setminus \mathcal{T}$ which may include points in the relative interior of
rays. {Except on polygonal domains, we have not ruled out the possibility} that the relative boundary of
\[ \partial\Omega_{\neq} := \{x \in \partial \Omega ; (Du-x) \cdot \mathbf{n} \neq 0\},\]
in $\p \Omega$ might have positive $\mathcal{H}^1$-measure and the corresponding free boundary
be nonsmooth; {(nor have  \cite{ChenFigalliZhang26+}\cite{McCannOBrienRankin26+})}. However, in specific cases, a more complete description is
possible. For example, on the squares $\Omega = (a,a+1)^2$,  
 we establish unimodality of $(Du-x)\cdot \mathbf n$ and use it to show $\partial\Omega_{\neq}$ is a
single connected component of $\partial \Omega$. In fact, 
we are able to provide the first explicit and complete description of the solution
on $\Omega=(a,a+1)^2$, including an unexpected trichotomy for $a=0$, $a$ sufficiently
small, and $a$ sufficiently large. To describe the solution we label the edges
of $\Omega$ by their compass direction and set
\begin{align*} 
&\Omega_{N} =[a,a+1]\times\{a+1\} &&\Omega_{E} = \{a+1\}\times [a,a+1] 
\\ &\Omega_{W} = \{a\} \times [a,a+1] &&\Omega_S = [a,a+1]\times\{a\}.
\end{align*}

The minimizer is described by the following bifurcation theorem (illustrated schematically in Figure \ref{fig:three-behaviours}).

\begin{figure}
     \centering
  \begin{subfigure}[b]{0.9\textwidth}
         \centering
         \begin{tikzpicture}[thick,scale=6]
		\draw[domain=0:1] plot (0, \x) ;
		\draw[domain=0:1] plot (\x, 1) ;
		\draw[domain=0:1] plot (1,\x) ;
		\draw[domain=0:1] plot (\x, 0) ;
		\node at (0.2, 0.2) {$\Omega_0$};
		\node at (0.6, 0.6) {$\Omega_{2}$};

\draw plot coordinates {(0., 0.4) (0.005, 0.4) (0.01, 0.399998) (0.015, 0.399993) (0.02,  0.399983) (0.025, 0.399967) (0.03, 0.399944) (0.035,  0.399911) (0.04, 0.399867) (0.045, 0.39981) (0.05,  0.399739) (0.055, 0.399653) (0.06, 0.399549) (0.065,  0.399427) (0.07, 0.399284) (0.075, 0.399119) (0.08,  0.39893) (0.085, 0.398716) (0.09, 0.398475) (0.095,  0.398206) (0.1, 0.397906) (0.105, 0.397574) (0.11,  0.397208) (0.115, 0.396806) (0.12, 0.396367) (0.125,  0.395889) (0.13, 0.39537) (0.135, 0.394807) (0.14,  0.3942) (0.145, 0.393545) (0.15, 0.392841) (0.155,  0.392086) (0.16, 0.391278) (0.165, 0.390413) (0.17,  0.389491) (0.175, 0.388508) (0.18, 0.387461) (0.185,  0.386349) (0.19, 0.385167) (0.195, 0.383914) (0.2,  0.382586) (0.205, 0.38118) (0.21, 0.379693) (0.215,  0.37812) (0.22, 0.376458) (0.225, 0.374703) (0.23,  0.372851) (0.235, 0.370897) (0.24, 0.368835) (0.245,  0.366661) (0.25, 0.364368) (0.255, 0.361951) (0.26,  0.359402) (0.265, 0.356715) (0.27, 0.353881) (0.275,  0.350891) (0.28, 0.347735) (0.285, 0.344403) (0.29,  0.340883) (0.295, 0.337161) (0.3, 0.333222) (0.305,  0.32905) (0.31, 0.324624) (0.315, 0.319922) (0.32,  0.31492) (0.325, 0.309586) (0.33, 0.303886) (0.335,  0.297778) (0.34, 0.291212) (0.345, 0.284124) (0.35,  0.276439) (0.355, 0.268057) (0.36, 0.258851) (0.365,  0.248648) (0.37, 0.237207) (0.375, 0.224174) (0.38,  0.20899) (0.385, 0.190684) (0.39, 0.167281) (0.395,  0.133329) (0.4, 0.)};
                
	\end{tikzpicture}
         \caption{$a=0$}
         \label{fig:a0}
     \end{subfigure}      
     \begin{subfigure}[b]{0.9\textwidth}
         \centering
	\begin{tikzpicture}[thick,scale=6]
		\draw[domain=0:1] plot (0, \x) ;
		\draw[domain=0:1] plot (\x, 1) ;
		\draw[domain=0:1] plot (1,\x) ;
		\draw[domain=0:1] plot (\x, 0) ;
                                \draw[domain=0:0.2] plot (\x, -0.36397*\x+0.5) ; 
                                \draw[domain=0:0.2] plot ( -0.36397*\x+0.5,\x) ; 

                                \draw (0.4274,0.2) arc [start angle=20, end angle=70, radius=3.8mm]; 

                                \draw[domain=0:0.2] plot(\x, -4.31985*\x*\x+0.6); 
                                \draw[domain=0:0.2] plot(-4.31985*\x*\x+0.6,\x); 
		\node at (0.2, 0.2) {$\Omega_0$};
		\node at (0.06, 0.52) {$\Omega_{1}^{-}$};
		\node at (0.53, 0.05) { $\Omega_{1}^{+}$};
		\node at (0.6, 0.6) {$\Omega_{2}$};
	\end{tikzpicture}
         \caption{$a>0$ sufficiently small}
         \label{fig:asmall}
     \end{subfigure}
     
 \begin{subfigure}[b]{0.9\textwidth}
         \centering
	\begin{tikzpicture}[thick,scale=6]
		\draw[domain=0:1] plot (0, \x) ;
		\draw[domain=0:1] plot (\x, 1) ;
		\draw[domain=0:1] plot (1,\x) ;
		\draw[domain=0:1] plot (\x, 0) ;
		\draw[domain=0:0.34] plot (\x,  -1.2+2*sqrt{1-\x*\x*4}) ;
		\draw[domain=0:0.34] plot (-1.2+2*sqrt{1-\x*\x*4}, \x);
		\draw[domain=0:0.3] plot (\x, 0.3-\x);
		\draw[domain = 0:0.68] plot (\x, 0.68-\x);
		
		\node at (0.09, 0.09) {$\Omega_0$};
		\node at (0.23, 0.28) { $\Omega_{1}^{0}$};
		\node at (0.13, 0.65) {$\Omega_{1}^{-}$};
		\node at (0.65, 0.13) { $\Omega_{1}^{+}$};
		\node at (0.6, 0.6) {$\Omega_{2}$};
	\end{tikzpicture}
         \caption{$a \ge \frac72 -\sqrt{2}$}
         \label{fig:alarge}
     \end{subfigure}          
     \caption{ Bifurcation to targeted then blunt bunching
 (Thm \ref{thm:description-on-square})     as distance $a\ge0$ 
 of $\Omega=(a,a+1)^2$ to zero is increased.}
        \label{fig:three-behaviours}
\end{figure}
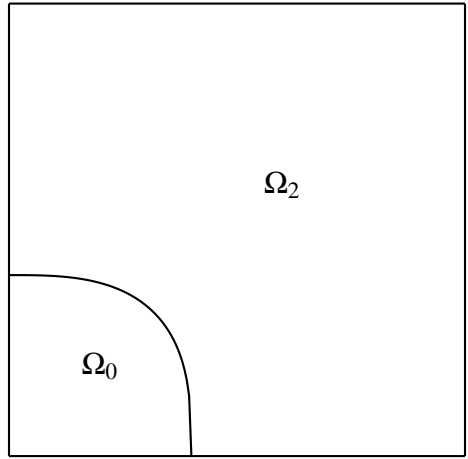
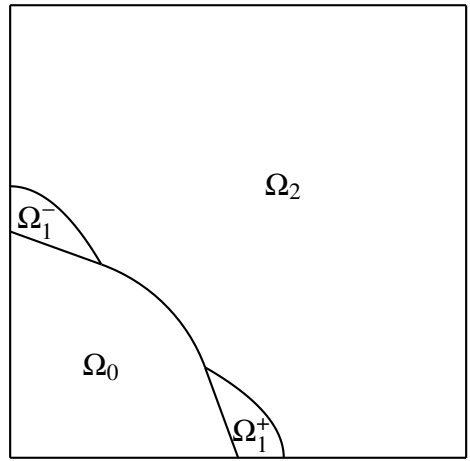
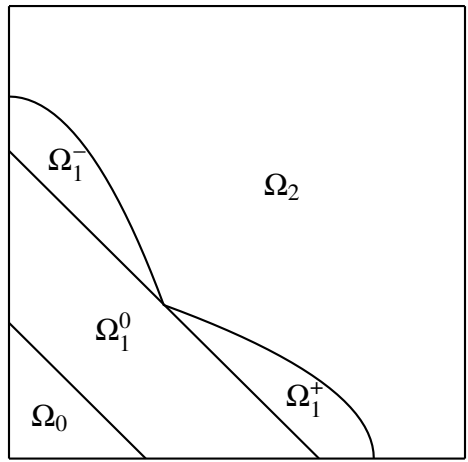

\begin{theorem}[Blunt bunching is a symptom of a seller's market]
\label{thm:description-on-square}
  Let $u$ solve \eqref{eq:monopolist} with $\Omega = (a,a+1)^2$ where $a \geq 0$. Then
  \begin{enumerate}
  \item $\Omega_0$ is a convex set which includes a neighbourhood of $(a,a)$ in $\overline \Omega$.
  \item   The portion of $\Omega_1$ consisting of rays having both endpoints on the
boundary $\p \Omega$ is connected and denoted by $\Omega_{1}^{0}$.  It is nonempty
when $a\ge \frac72 -\sqrt{2}\approx 2.1$.  All rays in $\Omega_1^0$ are orthogonal to the diagonal
and have one endpoint on $\Omega_{W}$ and the other on $\Omega_{S}$.  On the other hand there
is $\epsilon_0>0$ such that  $ \Omega_1^0$ is empty when $a < \epsilon_0$.
  \item For all $a > 0$ there are exactly two disjoint  connected components of $\Omega_{1} \setminus \Omega_1^0$.  In these regions, each ray has only one endpoint $x_0 \in \p \Omega$ on the boundary; it lies in $\Omega_S \cup \Omega_W$, violates the Neumann condition \eqref{Neumann},  and the solution $u$ is described by the Euler--Lagrange equations of McCann and Zhang \cite{McCannZhang23+}; c.f. \eqref{D:m,b}--\eqref{extras}. 
  For $a = 0$, $\Omega_1$ is empty.
  \item The set $\Omega_2$ of strict convexity of $u$ contains $\Omega_E\cup \Omega_N$ and the Neumann condition \eqref{Neumann} holds at each $x_0 \in \Omega_2 \cap \p \Omega$ apart from the 3 vertices.
  \end{enumerate}
\end{theorem}

The following corollary may be of purely mathematical interest from the point of view of calculus of variations and partial differential equations.
The  smoothness asserted follows from 
{Remark \ref{R:analyticity}.}

\begin{corollary}[Convexity of solution to, and contact set for, an obstacle problem]
For $\Omega = (-1,1)^2$,  the minimizer of $L(u)$ over non-negative functions $0 \le u \in W^{1,2}(\overline{\Omega})$ is convex.  
Its zero set is smooth, convex, has positive area, and is compactly contained in the centered square $\Omega$.
\end{corollary}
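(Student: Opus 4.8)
The plan is to use the dihedral symmetry of $\Omega=(-1,1)^2$ to reduce the statement to the $a=0$ case of Theorem~\ref{thm:description-on-square}: the minimizer in question will turn out to be the even reflection, across the two coordinate axes, of the monopolist's solution on the unit square $(0,1)^2$, and the crux is that by Theorems~\ref{thm:structure-convexity} and~\ref{thm:description-on-square} that solution has no bunching and satisfies the Neumann condition~\eqref{Neumann} along the edges of $(0,1)^2$, which is exactly what makes its reflection both $C^1$ and convex.

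First I would check that $L$ is invariant under $u\mapsto u\circ g$ for each $g$ in the symmetry group $D_4$ of the square $(-1,1)^2$ (a one‑line computation, since $g$ is orthogonal and $\operatorname{div}x$ is invariant) and that this group acts on the admissible class $\{0\le u\in W^{1,2}(\overline\Omega)\}$. As $L$ is strictly convex in $Du$ and linear in $u$, its minimizer $w$ over this class is unique, hence $D_4$‑invariant; in particular $w(-x_1,x_2)=w(x_1,x_2)=w(x_1,-x_2)$, so $\partial_1w=0$ on $\{x_1=0\}\cap\Omega$, $\partial_2w=0$ on $\{x_2=0\}\cap\Omega$, and $L_{(-1,1)^2}[w]=4\,L_{(0,1)^2}[w|_{(0,1)^2}]$. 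Because even reflection across the two axes turns any nonnegative $v\in W^{1,2}(\overline{(0,1)^2})$ into a nonnegative element of $W^{1,2}(\overline\Omega)$ of $L$‑energy $4\,L_{(0,1)^2}[v]$, minimality of $w$ forces $w|_{(0,1)^2}$ to minimize $L_{(0,1)^2}$ over \emph{all} nonnegative $W^{1,2}$ competitors; by uniqueness of that minimizer, it remains to identify it.

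The claim is that it equals the monopolist's solution $u^{0}$ of~\eqref{eq:monopolist} on $(0,1)^2$. By Theorem~\ref{thm:description-on-square}(3) at $a=0$ there is no bunching, so $\overline{(0,1)^2}=\Omega_0\cup\Omega_2$; by Theorem~\ref{thm:structure-convexity}, $\Omega_0=\{u^{0}=0\}$ is convex, $\Delta u^{0}=3$ on $\Omega_2=\{u^{0}>0\}$ and $\Delta u^{0}=0\le 3$ on the interior of the contact set, while $u^{0}$, being $C^{1,1}$ in the interior and $C^1$ up to the closure, contributes no singular term across the free boundary. Proposition~\ref{prop:neumann-sign} gives $(Du^{0}-x)\cdot\mathbf n\ge0$ a.e.\ on $\partial\Omega$, and Theorem~\ref{thm:description-on-square}(4) promotes this to equality on $\Omega_2\cap\partial\Omega$ off finitely many vertices; since $Du^{0}$ vanishes on $\Omega_0$ and is continuous up to the boundary, these two facts give in particular $\partial_1u^{0}\equiv0$ on the whole left edge and $\partial_2u^{0}\equiv0$ on the whole bottom edge. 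Integrating by parts, the relations above are precisely the Euler--Lagrange variational inequality for $\min\{L_{(0,1)^2}:0\le v\in W^{1,2}\}$: the interior term $\int(\phi-u^{0})(3-\Delta u^{0})$ splits into a vanishing piece on $\Omega_2$ and $3\int_{\Omega_0}\phi\ge0$, and the boundary term $\int_{\partial\Omega}(\phi-u^{0})(Du^{0}-x)\cdot\mathbf n$ splits into a nonnegative piece on $\Omega_0\cap\partial\Omega$ and a vanishing piece on $\Omega_2\cap\partial\Omega$. Since the functional is convex, $u^{0}$ is its minimizer; hence $w|_{(0,1)^2}=u^{0}$ and $w$ is the $D_4$‑reflection of $u^{0}$.

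It remains to read off the conclusions. The reflection is convex, because $u^{0}$ is convex with vanishing normal derivative on the two glued edges — so its even reflections are $C^1$ across each axis — and a $C^1$ function that is convex on each closed half‑space is convex (restrict to a line crossing the axis: its derivative is nondecreasing on each side and, by $C^1$‑matching, does not jump at the crossing). Thus $w$ is convex, and $\{w=0\}$ is the reflection of $\Omega_0$: it is convex (Theorem~\ref{thm:structure-convexity}(1)), has positive area and contains a neighbourhood of the origin (Theorem~\ref{thm:description-on-square}(1) at $a=0$), and is compactly contained in $(-1,1)^2$ because $\Omega_0\subset(0,1)^2$ is disjoint from the top and right edges $\Omega_N\cup\Omega_E\subset\Omega_2$ (Theorem~\ref{thm:description-on-square}(4)), whose $D_4$‑reflections exhaust $\partial(-1,1)^2$. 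Finally $w$ solves $\Delta w=3$ off an interior convex contact set of positive measure; every boundary point of a convex body has strictly positive lower Lebesgue density, so no free‑boundary point is singular, and the classical obstacle‑problem regularity of~\cite{Caffarelli77,Caffarelli98} with constant (hence smooth) right‑hand side makes $\partial\{w=0\}$ analytic, in particular smooth. I expect the main work to lie in the bookkeeping of the symmetry/gluing reduction, and especially in verifying that the obstacle problem's natural Neumann condition on the left and bottom edges of $(0,1)^2$ is exactly the vanishing‑slope condition that renders the reflection $C^1$ and convex — everything else being soft once Theorem~\ref{thm:description-on-square} is available.
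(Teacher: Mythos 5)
Your proof is correct and follows the paper's intended route: identify the obstacle-problem minimizer with the (bunching-free) monopolist solution at $a=0$ via the first-order conditions of the convex functional $L$, and then read off convexity of the zero set and invoke Caffarelli's regularity (positive Lebesgue density of the convex contact set rules out singular free-boundary points). The only real difference is the direction of the symmetry reduction: the paper restricts the $(-1,1)^2$ minimizer of \eqref{eq:monopolist} to $(0,1)^2$ (Step 3 of the proof of Theorem~\ref{thm:description-on-square}), whereas you reflect $u^0$ from $(0,1)^2$ outward and verify that the vanishing Neumann data on the glued edges makes the reflection $C^1$ and hence convex --- a detail the paper leaves implicit and which your write-up usefully records.
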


\begin{remark}[Concave nondecreasing profile of stingray's tail]
Numerical simulations of the square example show the region $Du(\Omega)$ of products
consumed to be shaped like a stingray, e.g. Figure 1 of \cite{CarlierDupiusRochetThanassoulis24}.  Theorem \ref{thm:description-on-square} combines with Lemma
\ref{lem:boundary-portions-convex} below to provide a rigorous proof that the
lower edge of stingray is concave non-decreasing --- as the simulations suggest ---
while Theorem \ref{thm:structure-convexity} shows that every product selected
by more than one type of consumer lies on this boundary or its mirror image
across the diagonal.
\end{remark}

\begin{remark}[Absence and ordering of blunt vs targeted bunching]
The potential absence of blunt bunching from the square example 
--- established on a nonempty interval $a \in (0,\epsilon_0)$ by the preceding theorem
--- has been overlooked in all previous investigations that we are aware of.  It can be understood as a symptom of a buyers' market,
in which a lack of enthusiasm on the part of qualified buyers incentivizes the monopolist to sell to fewer buyers but cater more to the tastes of those who do buy. The persistence of targeted bunching $\Omega_1^\pm \ne \emptyset$ for all $a>0$ reflects the need to transition continuously from vanishing Neumann condition \eqref{Neumann} 
--- satisfied on the customization region 
$\Omega_2 \cap (\Omega_S \cup \Omega_W)$ where $u$ is strictly convex --- to the uniformly positive Neumann condition $(Du -x)\cdot \mathbf n=a$ on the exclusion region $\Omega_0$ where $u$ vanishes,  in light of the 
{regularity $u \in C^1(\overline{\Omega})$ from Remark~\ref{R:ChenFigalliZhang}.}
Such bunching is neither needed nor present when $a=0$:
in this case $x_0 \cdot \mathbf{n}=a$ on $\Omega_S \cup \Omega_W$ shows the Neumann conditions in $\Omega_0$ and $\Omega_2$ coincide.
When the blunt bunching region $\Omega_1^0$ is present, our proof of Theorem \ref{thm:description-on-square} shows it separates $\Omega_0$ from $\Omega_1^\pm$, which in turn separate all but one point of $\Omega_1^0$ from $\Omega_2$.  In particular, blunt bunching implies $\Omega_0$ is a triangle, which is exceedingly rare in its absence.
\end{remark}

We now give  a characterization of the solution to \eqref{eq:monopolist}
on $\Omega=(a,a+1)^2$ for every value of $a \ge 0$.  Namely $u \in \mathcal{U}$ minimizes
\eqref{eq:monopolist} if and only if (A) bunching is absent
($\Omega_1 =\emptyset$, as for $a=0$), in which case $u$ solves $\frac13 \Delta u = 1_{\{u>0\}}$, i.e
the classical obstacle problem
\cite{PetrosyanShahgholianUraltseva12,Figalli18,RealRosOton22} and $(Du-x) \cdot \mathbf{n} = 0$ on $\partial \Omega$ (Figure \ref{fig:a0}),  or (B) bunching is present but blunt bunching is absent, ($\Omega_1^0 = \emptyset \ne \Omega_1$, as for $a \ll 1$) in which case we derive below necessary conditions, whose sufficiency can be confirmed as in \cite{McCannZhang23+} (Figure \ref{fig:asmall}), or (C) blunt bunching is present, ($\Omega_1^0 \ne \emptyset$, as for $a \ge \frac72 -\sqrt{2}$) (Figure \ref{fig:alarge}), in which case the sufficient conditions for a minimum established by two of us \cite{McCannZhang23+} are also shown to be necessary below.

If instead (B) blunt bunching is absent but bunching is present,
$\Omega_1^0 = \emptyset \ne \Omega_1$, 
Theorem \ref{thm:description-on-square} asserts that $\Omega_1 =
\Omega_1^+ \cap \Omega_1^-$ splits into two connected components
\begin{align}
\label{Omega_1^pm} \Omega_1^\pm &:= \{(x_1,x_2) \in \Omega_1 \setminus \Omega_1^0 : \pm (x_1-x_2) > 0 \},
\end{align} placed symmetrically below and above the diagonal.
The region $\Omega_1^-$ and its reflection $\Omega_1^+$ below the diagonal
are foliated by isochoice segments making continuously varying angles $\theta$ with
the horizontal.  The limit of these segments is a segment of length $R_0>0$
lying on the boundary of the convex set $Z= \Omega_0$, having endpoint $(a,h_0)$ and making angle $\theta_0
\in [-\pi/4,0)$ with the horizontal.

Fix any closed convex neighbourhood $\Omega_0$ of $(a,a)$ in $\overline\Omega$ which is reflection symmetric around the diagonal and contains such a segment in its boundary.
We describe the solution $u=u_1^-$ in $\Omega_1^-$ using an Euler-Lagrange equation \eqref{slope E-L} from
 \cite{McCannZhang23+}, rederived more simply in Section \ref{sec:rc-example} below. 
 Index each isochoice segment in $\Omega_1^-$ by its angle $\theta\in (-\frac{\pi}{4}, 
 0)$; (angles which are less than $-\pi/4$ or non-negative are ruled out in the proof of Theorem \ref{thm:description-on-square}).
Let $(a,{h(\theta)})$ denote its left-hand endpoint and parameterize the segment by 
distance ${  r} \in [0,R(\theta)]$ to this boundary point $(a,h(\theta))$. 
Along the hypothesized length $R(\theta)$ of this segment assume $u$ 
increases linearly with slope $m(\theta)$ and offset $b(\theta)$:
\begin{equation}\label{D:m,b}
u_1^-\Big((a,h(\theta)) + { r} (\cos \theta,  \sin \theta)\Big) = {m(\theta)}{  r} + {b(\theta)}.
\end{equation}
Given the initial (angle, height, length) triple $(\theta_0,h_0,R_0) \in [-\pi/4,0)\times (a,a+1) \times (0,\sqrt{2}/2)$ corresponding to the segment in $\Omega \cap \p\Omega_0$, and 
${R}: \left[\theta_0,0\right] \to \left[0, 1\right)$ 
piecewise Lipschitz  with $R(\theta_0)=R_0$, 
solve 
\begin{equation}\label{slope BC}
\textstyle  
m(\theta_0) = 0, \qquad m'(\theta_0) = 0\qquad  \mbox{\rm such that}
\end{equation}
\begin{equation}\label{slope E-L}
({m''(\theta) + m(\theta)} - {2R(\theta)})({m'(\theta)} \sin \theta - {m(\theta)} \cos \theta +a) = \frac32 {R^2(\theta) \cos\theta};
\end{equation}
then set
\begin{eqnarray}
\label{D:h}
{h(\theta)} &=& h_0+ \frac13 \int_{\theta_0}^{\theta} (m''(\vartheta) + m(\vartheta) - 2{  R(\vartheta)}) \frac{d\vartheta}{\cos \vartheta},
\\ 
{b(\theta)} &=& 
\int_{\theta_0}^{\theta} (m'(\vartheta) \cos \vartheta + m(\vartheta) \sin \vartheta) h'(\vartheta) d\vartheta.
\label{offset E-L}
\end{eqnarray}
	Given $(\theta_0,h_0,R_0)$ and $R(\cdot)$ {as above},  
	the triple $(m, b, h)$ satisfying 
	\eqref{slope E-L}--\eqref{offset E-L} exists and is unique {on the interval where $R(\cdot)>0$. Thus} 
	the shape of $\Omega_{1}^{-}$ 	
	and the value of $u_1^-$ on it will be uniquely determined by $\Omega_0$ and $R:\left[\theta_0,0\right] \to \left[0, 1\right)$.  We henceforth restrict our attention to choices of $\Omega_0$ and $R(\cdot)$ for which the resulting set $\Omega_1^-$ 
	lies above the diagonal.  
	In this case $\Omega_{1}^{+}$ and the value of $u = u_1^+$ on $\Omega_1^+$ are determined 
	by reflection symmetry $x_1 \leftrightarrow x_2$ across the diagonal.
	This defines $u=u_1$ on $\Omega_1$ and provides the boundary data on 
	$
	\p \Omega_1 \cap \p \Omega_2$ needed for the  mixed Dirichlet / Neumann boundary value problem for Poisson's equation,
 \begin{flalign}\label{mixedBVP}
	\begin{cases}
		\Delta u_2  = 3, & \text{ on } \Omega_2,\\
		(Du_2(x)-x)\cdot \mathbf{n} = 0, & \text{ on } \p \Omega_2 \cap \p \Omega,\\
		u_2 - u_1 = 0,  & \text{ on } \p \Omega_2 \cap \p \Omega_1, \\
        u_2 = 0 & \text{ on } \p \Omega_2 \cap \p \Omega_0,		\end{cases}
\end{flalign} 
 which determines $u=u_2$  on $\Omega_2:=\Omega \setminus (\Omega_0 \cup \Omega_1)$.
The duality discovered in \cite{McCannZhang23+}, implies that for at most one choice of  $\Omega_{0}$ and $R(\cdot)$ Lipschitz can convex $u$ (pieced together from  $u_1,u_2$ as above with $u_0:=0$ on $\Omega_0$) satisfy the supplemental Neumann conditions
 \begin{flalign}\label{extra}
        	 D(u_2-u_1) \cdot \hat{\mathbf n} =0, & \text{ on } \p \Omega_2 \cap \p \Omega_1
  \\ Du_2 \cdot \hat{\mathbf n} =0, & \text{ on } \p \Omega_2 \cap \p \Omega_0
  \label{extras}
  \end{flalign}
  required on the free boundaries (since $u\in C^1(\overline{\Omega})$ \cite{CarlierLachand--Robert01}\cite{RochetChone98}); here $\hat{\mathbf n}$ denotes the outer unit normal to $\Omega_2$ at $x \in \p \Omega_2$. In the course of proving Theorem~\ref{thm:description-on-square}
in Section \ref{sec:rc-example} below we complete this circle of ideas 
--- apart from the Lipschitz hypothesis 
{on $R(\cdot)$ which Lemma \ref{L:no boundary rays} 
renders inessential} --- 
by showing at least one such choice exists;  this choice uniquely solves \eqref{eq:monopolist} on the square in case (B).
In case (C), Theorem \ref{thm:description-on-square} 
 shows at least (and therefore exactly \cite{McCannZhang23+}) one choice exists satisfying
 the free boundary problem from  \cite{McCannZhang24+} in the analogous sense.
 
\subsection{Plan of the paper}

We conclude this introduction by outlining the structure of the paper. Section
\ref{sec:pert-ineq} contains preliminaries: the variational inequality
associated to \eqref{eq:monopolist}, some background on Alexandrov second
derivatives, and localization results of Rochet--Chon\'e.  In Section
\ref{sec:structure-convexity} we prove Theorem \ref{thm:structure-convexity}
using perturbation techniques previously used to study the Monge--Amp\`ere
equation. In Sections \ref{sec:boundary-c1-1} and \ref{sec:neum-estim} we prove
{crucial} results which facilitate our later {applications} in
Sections~\ref{sec:coord-argum-two} and~\ref{sec:lipschitz-fb}. First, in
Section \ref{sec:boundary-c1-1}, a boundary $C^{1,1}$ result on  polytopes (and for single-endpoint rays more generally) is established which is new for
this problem and extends the interior regularity result of 
Caffarelli and Lions \cite{CaffarelliLions06+} \cite{mccann2023c}. Next, section
\ref{sec:neum-estim} contains propositions quantifying how at points of nonstrict
convexity the Neumann boundary condition fails to be satisfied.
Section~\ref{sec:coord-argum-two} and Section~\ref{sec:lipschitz-fb} 
establish Theorem \ref{thm:fb-regularity} using techniques from the study of the
obstacle problem. Here we indicate a new connection to the classical obstacle
problem. Namely, that the Monopolist's problem gives rise to an obstacle
problem where the obstacle is the minimal convex extension of the function
defined on $\Omega_1$.  The proof of Theorem \ref{thm:description-on-square} is
completed in Section~\ref{sec:rc-example} using a case by case analysis based
on a careful choice of coordinates to deduce monotonicity of $(Du(x)-x)\cdot \mathbf{n}$ along the upper left boundary $\Omega_W \cup \Omega_N$ of the square. It confirms the economic intuition that the
degree to which product selection (hence bunching) is influenced by the market
presence of competing consumers decreases as we move away from the exclusion
region, i.e. from the lower left toward the upper right region of the square,
while on the other hand, increasing as we move the entire square of consumer
types away from the outside option by increasing $a \ge 0$.  We conclude with
an appendix 
{devoted to developing necessary background material, including the extension of Rochet and Chon\'e's localization results to $u \not\in C^1(\overline\Omega)$}.  Table
\ref{tbl:notation} contains a list of notation.

\begin{table}
  \begin{tabular}{ll}
    Notation & Meaning\\
    \hline
    $\Omega$ & A bounded open convex subset of $\mathbf{R}^n$.\\
    $\overline\Omega$ & The closure of $\Omega$.\\
    $\text{int } \overline \Omega$ & The interior of $\overline\Omega$.\\
    $\Omega^c$ & Set complement $\Omega^c:=\mathbf{R}^n \setminus \Omega$ of $\Omega$.\\
    $\mathbf n$ & Outer unit normal at a point where $\p \Omega$ is differentiable. 
    \\  $\text{spt}{f}$ & The support of $f$, i.e.\ $\text{spt}{f} = \text{closure}\{x ; f(x) \neq 0\}$.
    \\$ f|_\Omega$ & The restriction of $f$ to $\Omega$.      \\  $id$ & the identity map $id(x)=x$.
     \\  $\p u$ &   
$\p u := \{(x,y) \in \R^{2n}; u(z) \ge u(x) + (z-x)\cdot y \ \forall z \in \Rn\}$.
\\ $ \dom Du\subset \R^n$ & 
Set where minimal convex extension of $u$ is differentiable.
    \\ $ D^\pm u$ & Outer and inner extensions of $Du \in [BV(\Rn)]^n$ to $\p \Omega$,  \eqref{D^-u}.\\
    $ \dom D^- u$ & $\dom Du \cup \{\mbox{\rm differentiability points of 
$\p \Omega$ and $u|_{\p \Omega}$}
                        \}$, \eqref{dom D^-u}. \\
    $\tilde{x}$ & Bunch $\tilde {x} := \{z \in { \dom D^-u \subset \overline \Omega}; D^-u(z)=D^-u(x)\}$.\\
    $\ri(\tilde x)$ & The relative interior of the convex set $\tilde x$.\\
    $\Omega_i$ & Subset \eqref{stratify} of $\overline\Omega$ foliated by ($n-i$)-dimensional {bunches}.\\
    $\subset \subset$ & Compact containment.\\
    $f_{+}$ & The positive part of a function, $f_{+}(x) := \max\{f(x),0\}$.\\
    $\mu_{+}$ & The positive part of a measure $\mu$.
    \\$\mathcal{P}(\Omega)$ & The set of Borel probability measures on $\Omega$.\\
    $\mres$ & Measure restriction: $(\mu \mres A)(B) = \mu (A \cap B)$.\\
    $ \tilde \mu = F_\#\mu$ & Push-forward $\tilde \mu = \mu \circ F^{-1}$ of measure $\mu$ through $F$. \\
    $ \mu=\int \mu_y d\tilde \mu(y)$ & 
    Disintegration     of the measure 
    $\mu$ with respect to $F$ \\
    $\mathcal{H}^d$ & $d$-dimensional Hausdorff measure.\\
 $dx :=d\mathcal{H}^n(x)$ & $n$-dimensional Lebesgue measure, i.e. volume measure. \\
 $dS:=d\mathcal{H}^{n-1}$ & Surface area measure (or arclength in  special case $n=2$).
\\  $|A|$ &  Volume $|A|:=\mathcal H^n(A)$ of $A \subset \R^n$. 

  \end{tabular}
  \caption{Table of notation.}
  \label{tbl:notation}
\end{table}

\subsection{Note added in June 2026 revision}

\begin{remark}[On regularity up to the fixed boundary]\label{R:ChenFigalliZhang}
Although the minimizer of \eqref{eq:monopolist} has been claimed to satisfy $u \in C^1({\overline\Omega})$ 
for all convex bounded
$\Omega \subset \R^n$ \cite{RochetChone98} \cite{CarlierLachand--Robert01},
and it is true that $u \in C^{0,1}(\overline \Omega)$
(c.f.~Lemma \ref{L:Lipschitz}), 
these authors were unable to provide us with proof that the continuous differentiability of $u$  they establish in $\Omega$ extends to $\overline\Omega$ 
\cite{CarlierRochetChone}. 
In two dimensions $\Omega \subset \R^2$, 
 this gap was noticed and closed by Chen, Figalli, and Zhang \cite[Theorems 1.1--1.3]{ChenFigalliZhang26+} by building on the developments below; for $n=2$ they also improved on our Theorem
\ref{thm:boundary-regularity} by showing
$u \in C^{1,1}(\overline \Omega)$ when the exclusion region $Z:=\{u=0\}$
intersects $\Omega$, and giving counterexamples showing
the modulus of continuity of $Du$ can be arbitrarily bad
when $Z \subset \p \Omega$. 
In higher dimensions $n\ge 3$, 
the only boundary regularity results we are aware of are the $C^{1,1}$ conclusions provided by Theorem \ref{thm:boundary-regularity} below, whose subtlety is highlighted by the examples
in \cite{ChenFigalliZhang26+}.

Unaware of the aforementioned gap,  the first version of this manuscript on arXiv assumed $u \in C^1(\overline \Omega)$; the current version makes no such assumption. In particular, Appendix \ref{sec:rochet-chones-use} shows how to extend Rochet and Chon\'e's localization technique to $u \in (C^1 \cap C^{0,1})(\Omega)$.  We rely on this extension to prove 
Theorem~\ref{thm:structure-convexity}, and
Proposition \ref{P:neumann-sign-} which
complements \ref{prop:neumann-sign}.
On convex polygonal domains $\Omega \subset \R^2$, this also makes 
our arguments logically independent of the $u \in C^1(\overline \Omega)$ result of \cite{ChenFigalliZhang26+}: although continuity of $Du$ 
at the corners falls outside that established by 
Theorem~\ref{thm:boundary-regularity}(2), it is not essential to our reasoning.  
{A posteriori,
on the rectangular domains $\Omega\in \{(a,a+1)^2\}_{a \ge 0}$
of economic interest (because they represent products of intervals of different parameters),  
$u \in C^{1,1}(\overline \Omega)$
follows by combining 
Theorems \ref{thm:description-on-square} 
and~\ref{thm:boundary-regularity}(1).
}
{}
\end{remark}

\section{Variational inequalities and Alexandrov second derivatives}
\label{sec:pert-ineq}

\subsection{Variational inequalities}
\label{sec:vari-ineq}

Our basic tools for studying the unique minimizer of the functional
\begin{equation}
  \label{eq:1}
  L[u] = \int_{\Omega} \left(\frac{1}{2}|Du-x|^2 + u \right)\, dx,
\end{equation}
over
\[ \mathcal{U} = \{ u : \overline{\Omega} \rightarrow \mathbf{R} ; \text{ $u$ is nonnegative and convex}\},\]
are the variational inequalities stated in the following lemma.  Because the
gradient of a convex function {$u \in C^{0,1}(\Omega)$ has bounded variation, the
integration by parts against the gradients of Lipschitz test functions below is
justified, where $\Delta u$ is interpreted as a Radon measure vanishing outside $\Omega$,
and $D^-u$ is from \eqref{D^-u}. We have the equality $D^-u(x) \cdot \mathbf{n} =
\lim\limits_{t \downarrow 0}( u(x) - u(x-t\mathbf{n}))/t$ for $\mathcal{H}^{n-1}$ a.e. $x$ on $\partial \Omega$
and $\mathbf{n}$ the outer normal. Moreover, when $u \in C^{1,1}_{loc}(\Omega)$ --- as
{for the minimizer \cite{CaffarelliLions06+}\cite{mccann2023c} of
\eqref{eq:monopolist} --- we have $\Delta u$ absolutely continuous with respect to
Lebesgue}, and when $u \in C^1(\overline{\Omega})$ --- as for $n =2$ (Remark
\ref{R:ChenFigalliZhang}) --- we have $D^-u = Du$.

\begin{lemma}[Variational inequalities]
  \label{lem:variational-inequalities}
Let $u$ solve \eqref{eq:monopolist}. Let $\bar{u}\in \mathcal{U}$ be Lipschitz and $w=\bar u -u$. Then each of the following inequalities hold:
  \begin{align}
    \label{eq:perturb-0} 0 &\leq 
    L'_u(w) := \int_{\Omega}(n+1-\Delta u)  \, w dx + \int_{\partial\Omega}(D^- u-x) \cdot \mathbf{n} \, w dS,\\
    \label{eq:perturb-2} 0 &\leq L'_u(\bar u) =\int_{\Omega}(n+1-\Delta u) \bar{u} \, dx + \int_{\partial\Omega} (D^-u-x) \cdot \mathbf{n} \, \bar u dS,
  \end{align}
Moreover, if $Du \not \equiv D\bar{u}$ on a set of positive $\mathcal{H}^n$ measure, 
  then
\begin{align}
  \label{eq:perturb-1} 0 &< L'_{\bar u}(w) = \int_{\Omega}(n+1-\Delta \bar{u}) \, wdx + \int_{\partial\Omega}(D^-\bar{u}-x) \cdot \mathbf{n} \, wdS.
\end{align}
{Here $\Delta \bar u$ is 
a Radon measure 
on $\Omega$ and
$D^- u \cdot \mathbf n$ 
agrees with the one-sided derivative $\lim\limits_{t \downarrow 0}(u(x) - u(x-t\mathbf n))/t$ 
which exists by convexity of $u$.}
\end{lemma}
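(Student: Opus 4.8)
The plan is to exploit that $\mathcal{U}$ is a convex cone: if $u,\bar u\in\mathcal{U}$ then $u_t:=(1-t)u+t\bar u=u+tw\in\mathcal{U}$ for every $t\in[0,1]$, and $\lambda u\in\mathcal{U}$ for every $\lambda\ge 0$. Since $\bar u$ is Lipschitz and $u\in C^1(\overline\Omega)$ (Rochet--Chon\'e, Carlier--Lachand-Robert), the perturbation $w=\bar u-u$ is Lipschitz on $\overline\Omega$, hence every integrand below is bounded on the bounded set $\Omega$. Expanding the square exhibits the functional along the segment as the explicit quadratic
\[
L[u_t]=L[u]+t\int_{\Omega}\big((Du-x)\cdot Dw+w\big)\,dx+\frac{t^2}{2}\int_{\Omega}|Dw|^2\,dx,
\]
whose leading coefficient is nonnegative. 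Because $u$ minimizes $L$ over $\mathcal{U}$ and each $u_t\in\mathcal{U}$, the right-hand derivative at $t=0$ cannot be negative, i.e.\ $\int_{\Omega}\big((Du-x)\cdot Dw+w\big)\,dx\ge 0$.

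To identify the left side with $L'_u(w)$ I would integrate by parts. Using the interior regularity $u\in C^{1,1}_{\mathrm{loc}}(\Omega)$ of Caffarelli--Lions (so $\Delta u\in L^\infty_{\mathrm{loc}}(\Omega)$) together with $u\in C^1(\overline\Omega)$, the divergence identity $\int_{\Omega}(Du-x)\cdot Dw\,dx=-\int_{\Omega}(\Delta u-n)w\,dx+\int_{\partial\Omega}(Du-x)\cdot\mathbf{n}\,w\,dS$ follows by exhausting $\Omega$ with smooth subdomains and passing to the limit, the boundary term being controlled by the $C^1(\overline\Omega)$ bound; this is exactly \eqref{eq:perturb-0}. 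Inequality \eqref{eq:perturb-2} then follows from the cone structure and the manifest linearity of $w\mapsto L'_u(w)$: applying \eqref{eq:perturb-0} to the admissible competitor $\bar u=2u$ gives $L'_u(u)\ge 0$, while $\bar u=0$ gives $L'_u(-u)\ge 0$, i.e.\ $L'_u(u)\le 0$; hence the complementarity identity $L'_u(u)=0$, and so $L'_u(\bar u)=L'_u(w)+L'_u(u)=L'_u(w)\ge 0$.

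For the strict inequality \eqref{eq:perturb-1} I would traverse the segment in the opposite direction: $\bar u_t:=(1-t)\bar u+tu=\bar u-tw\in\mathcal{U}$ yields $L[\bar u_t]=L[\bar u]-tA'+\tfrac12 t^2 B'$ with $A'=\int_{\Omega}\big((D\bar u-x)\cdot Dw+w\big)\,dx$ and $B'=\int_{\Omega}|Dw|^2\,dx$. The hypothesis that $Du$ and $D\bar u$ differ on a set of positive $\mathcal{H}^n$-measure forces $B'>0$, so $t\mapsto L[\bar u_t]$ is a strictly convex quadratic; since its value at $t=1$ is the global minimum $L[u]$ and $L[u]\le L[\bar u_t]$ for all $t\in[0,1]$, strict convexity excludes $L[u]=L[\bar u]$, whence $L[u]<L[\bar u]$. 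Reading off the coefficient at $t=1$ gives $-A'+\tfrac12 B'<0$, i.e.\ $A'>\tfrac12 B'>0$. Integrating by parts against $D\bar u$ --- which is now only a monotone (hence $BV$) vector field, so that $\Delta\bar u$ is a Radon measure and $D\bar u\cdot\mathbf{n}$ must be read as the one-sided normal derivative supplied by convexity --- converts $A'$ into $L'_{\bar u}(w)$ and gives \eqref{eq:perturb-1}.

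The step I expect to be the real obstacle is precisely this last integration by parts at the level of a merely Lipschitz convex $\bar u$: establishing the Gauss--Green formula $\int_{\Omega}D\bar u\cdot Dw\,dx=-\int_{\overline\Omega}w\,d(\Delta\bar u)+\int_{\partial\Omega}(D\bar u\cdot\mathbf{n})\,w\,dS$ while accounting for any mass $\Delta\bar u$ places on $\partial\Omega$ and justifying the passage to the one-sided derivative there. I would resolve it by mollifying $\bar u$ from inside $\Omega$, applying the classical identity on subdomains $\Omega'\subset\subset\Omega$, and passing to the limit using weak-$\ast$ convergence $\Delta(\bar u\ast\rho_\varepsilon)\rightharpoonup\Delta\bar u$ in the interior and monotone convergence of the difference quotients (by convexity of $\bar u$) for the boundary contribution.
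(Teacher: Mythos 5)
Your proposal is correct and follows essentially the same route as the paper: one-sided differentiation of $t\mapsto L[u+tw]$ along the convex segment for \eqref{eq:perturb-0}, the reversed segment $\bar u - tw$ with strict convexity of the quadratic $h(t)=L[\bar u - tw]$ (so $h'(0)<0$) for \eqref{eq:perturb-1}, and the same integration-by-parts issues resolved the same way ($u\in C^1(\overline\Omega)\cap C^{1,1}_{\mathrm{loc}}$ for the first, $D\bar u$ of bounded variation with one-sided normal derivatives for the last). The only cosmetic difference is \eqref{eq:perturb-2}: you derive the complementarity identity $L'_u(u)=0$ from the cone structure (testing with $2u$ and $0$) and add it to \eqref{eq:perturb-0}, whereas the paper simply reruns the first argument with the admissible competitor $u+\bar u$; both are valid and equally short.
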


\begin{proof}
  We begin with \eqref{eq:perturb-0}. Let $u$ be the minimizer and observe $\mathcal{U}$ is convex. Thus for any $\bar{u} \in \mathcal{U}$, $w= \bar u- u$ and $t \in [0,1]$
 we have
\[ L[u] \leq L[u+t w],\]
so, in particular,
\begin{align}
\nonumber  0 &\leq \frac{d}{dt}\Big\vert_{t=0} L[u+tw] \\
  \label{eq:no-parts}  &= \int_{\Omega} (Du-x) \cdot Dw + w \, dx.
\end{align}
Note $u \in C^{0,1}(\overline{\Omega}) \cap C^{1,1}_{\text{loc}}(\Omega)$ with $\p^2_{ii} u\geq 0$ so we may apply the divergence theorem and obtain
\begin{align}\label{variational derivative}
{\frac{d}{dt}\Big\vert_{t=0} L(u+tw)
=} \int_{\Omega}(n+1-\Delta u)  \, wdx + \int_{\partial\Omega} (D^{-}u-x) \cdot \mathbf{n} \, wdS. 
\end{align}
where $\mathbf{n}$ is the outer unit normal to $\Omega$ which exists $\mathcal{H}^{n-1}$-a.e. for the convex domain $\Omega$.

  Inequality \eqref{eq:perturb-2} follows by performing the same argument with $u+\bar{u}$ in place of $\bar{u}$. For \eqref{eq:perturb-1} we perform similar calculations but use that $h(t):= L[\bar{u}  - tw]$ is strictly convex with a minimum at $1$ so $h'(0) < 0$. For the divergence theorem we take the one-sided directional derivatives and use that $Du$ and $D\bar u$  are of bounded variation 
  {\cite{AmbrosioFuscoPallara00} \cite[Ch. 5--6]{EvansGariepy92}}.
\end{proof}

\begin{remark}
  (1)  It is straightforward to see, again by arguing using a perturbation, that inequality \eqref{eq:perturb-2} holds not just for $\bar{u} \in \mathcal{U}$ but for any convex $\bar{u}$ with $\text{spt} \, \bar{u}_{ -}$ (the support of the negative part of $\bar{u}$) disjoint from the set $\{ u = 0\}$. The key observation is that for sufficiently small $t$, $u + t(\bar u - u) \in \mathcal{U}$.\\
(2) In any neighbourhood where $u$ is $C^2$ and uniformly convex, that is $u$ 
satisfies an estimate $D^2u \geq \lambda I >0$, one may perturb --- as is standard in the calculus of variations --- by smooth compactly supported functions and obtain $\Delta u =3$ in the interior and $(D^-u-x) \cdot \mathbf{n} = 0$ on the fixed boundary of $\Omega$.
Without a local uniform convexity estimate, even for smooth functions $\bar{u}$, it may be that there is no $t>0$ small enough to ensure $u+t\bar{u}$ is convex.
\end{remark}

Inequality \eqref{eq:perturb-1} is useful when one chooses $\bar{u}$ as paraboloid with prescribed Laplacian. We give an example now ---  the result we prove is required in subsequent sections. It is interesting to contrast the following result with the one-dimensional case, in which minimizers on domains $\Omega \subset [0,\infty) 
$ satisfy $u'(x) \leq x$.
Take the minimal convex extension of $u \in C^1(\Omega)$
to $\Rn$. 
In terms of the inner and outer boundary traces 
$D^\pm u \in [(L^\infty\cap BV)(\Rn)]^n$ from \eqref{dom D^-u}--\eqref{D^-u}:

\begin{proposition}[ Outer normal distortion is not inward]
\label{prop:neumann-sign}
  Let $u$ solve \eqref{eq:monopolist} where $\Omega \subset\mathbf{R}^n$ is bounded, open, and convex. Then 
  $\mathcal H^{n-1}(\p \Omega \setminus \dom D^-u)=0$
  and each $x_0 \in \p \Omega \cap \dom D^- u$ satisfies 
  \begin{equation}\label{neumann-sign+}
  (D^{ +}u(x_0) - x_0) \cdot \mathbf{n} \geq 0.
  \end{equation}
\end{proposition}

\begin{proof}
 By approximation  it suffices to prove the result for smooth \textit{strictly} convex domains. Indeed, \cite[Corollary 4.7]{FigalliKimMcCann11} and its proof imply if $\Omega^{(k)} \supset \Omega$
is a sequence of smooth strictly convex approximating domains and $u^{(k)}$ is the solution of \eqref{eq:monopolist} on $\Omega^{(k)}$ then $Du^{(k)}(x) \rightarrow Du(x)$ at every $x$ where $u$ and each $u^{(k)}$ is differentiable. Interior $C^{1,1}_{\text{loc}}$ regularity for $u^{(k)}$ at $x_0 \in \Omega^{(k)}$ and 
$x_0 \in \dom D^- u$ implies 
$Du^{(k)}(x_{0})$ accumulates onto $\p u(x_0)$ as desired.

Thus we take $\Omega$ to be smooth and strictly convex.  Up to a choice of
coordinates we assume $x_0 = 0$ and $\mathbf{n} = e_1$ (see Figure
\ref{fig:neumann-cond}). Recall $p_{x_0}(x):= u(x_0)+D^{ +}u(x_0)\cdot(x-x_0)$ is an
affine support at $x_0$.  For $t>0$ sufficiently small and $x=(x^1,\dots,x^n) \in
\mathbf{R}^n$ we consider the family of admissible perturbations (see Figure
\ref{fig:neumann-cond})
   \begin{align*}
     \hat{u}_t(x)&:= \frac{n+1}{2}\big([x^1 +t]_{+}\big)^2 + p_{x_0}(x),\\
     \bar{u}_t(x) &:= \text{max}\{u(x),\hat{u}_t(x)\},\\
     \Omega_{t} &:= \{x \in \overline{\Omega} ; \bar{u}_t(x) > u(x)\}.
   \end{align*}
Note $\Omega_t$ has positive measure since at $x_0=0$, $\hat{u}_t(0) =u(0)+
{(n+1)}t^2/2 > u(0)$. Moreover if $x^1 \leq - t$ then $\bar{u}_t(x) \leq u(x)$ and
thus $\Omega_{t} \subset \{x \in \overline{\Omega} ; x^1 \geq -t\}$. Thus, strict convexity of $\Omega$
implies $\overline{\Omega}_t \rightarrow \{0\}$ in Hausdorff distance \cite[\S 1.8]{Schneider92}
as $t\rightarrow 0$. Clearly $\Delta \bar{u}_t = n+1$ on $\Omega_{t}$.  To derive a contradiction assume
$(D^{ +}u(x_0)-x_0) \cdot \mathbf{n} <0$. Then for $t$ sufficiently small we also have
   \begin{equation}
     \label{eq:yields-contra}
     (D^{ +}\bar{u}_t(x) - x) \cdot \mathbf{n} < 0 \quad \text{on}\quad \Omega_{t} \cap \partial \Omega,
   \end{equation}
due to the upper semicontinuity of $\mathbf n \cdot D^+ u$ that follows from closedness of $\p u$, and also because
$|D\bar{u}_{t}(x_0)-Du(x_0)| = O(t)$. But \eqref{eq:perturb-1} with $\Delta \bar u = n+1$ implies
   \[ 0 < \int_{\Omega_t \cap \partial \Omega} (\bar{u}_t-u)(D^{ -}\bar{u}_t(x) - x) \cdot \mathbf{n} \, dS.\]
Since $(D^+\bar u - D^-\bar u)\cdot \mathbf n \ge 0$ this contradicts \eqref{eq:yields-contra} to conclude the proof.
         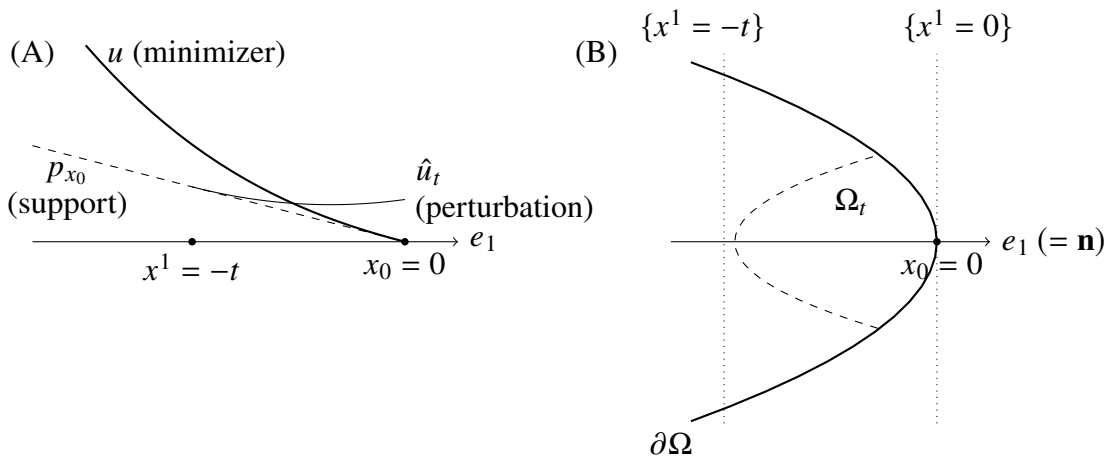
\begin{figure}
        \centering
        \centerline{\begin{tikzpicture}[scale=1.4]
            \node at (-4.5,2)  {(A)};
          \draw[thick,domain=-4:-1] plot(\x, -0.02*\x*\x*\x-0.2*\x);         
          \node [right] at (-3.9,2) {$u$ (minimizer)};
          \draw[dashed,domain=-4.5:-1] plot(\x, -0.26*\x-0.04);
          \node [left,text width=3cm, align=center] at (-3,0.7) {$p_{x_0}$ \\ (support)}; 
          \draw[domain=-3:-1] plot(\x, {-0.26*\x-0.04+0.1*(\x+3)*(\x+3)});
          \node [right,text width=3cm, align=left] at (-1,0.7) {$\hat{u}_t$ \\ (perturbation)};
          \draw[thin,->] (-4.5,0.22) -- (-0.5,0.22);
          \node [right] at (-0.5,0.22) {$e_1$};
           
           \node [below] at (-1, 0.22) {$x_0=0$};
           \node[circle, fill=black, inner sep=1pt] at (-1, 0.22) {};

           \node [below] at (-3, 0.22) {$x^1 = -t$};
           \node[circle, fill=black, inner sep=1pt] at (-3, 0.22) {};

           \node at (0.8,2) {(B)};
            \draw[thick,domain=-1.7:1.7] plot({-0.8*\x*\x+4}, \x+0.22);
            \node [above] at (1.5,-1.9) {$\partial \Omega$};
                      \draw[thin,->] (1.5,0.22) -- (4.5,0.22);
                      \node [right] at (4.5,0.22) {$e_1 \, (=\mathbf{n})$};

                      \node [below] at (4.05, 0.22) {$x_0=0$};
                      \node[circle, fill=black, inner sep=1pt] at (4, 0.22) {};

                      \draw [dotted] (4,-1.5) -- (4,2);
                      \node [above] at (4.2,2) {$\{x^1 = 0\}$};

           \draw [dotted] (2,-1.5) -- (2,2);
           \node [above] at (1.8,2) {$\{x^1 = -t\}$};

           \draw[dashed,domain=-0.82:0.82] plot({2*\x*\x+2.1}, \x+0.22);
           \node at (3.2,0.6) {$\Omega_t$};

        \end{tikzpicture}}
        \caption{Illustrates the constructions in the proof of Proposition~\ref{prop:neumann-sign}. Subfigure (A) shows a cross-section of the minimizer $u$, its support $p_{x_0}$, and the perturbation $\hat{u}_t$. Subfigure (B) illustrates that because $\Omega_{t} \subset \{x^1 \geq -t\}$ and $\Omega$ is strictly convex with outer normal $\mathbf{n} = e_1$ at $0$ we have $\overline{\Omega_t} \rightarrow \{0\}$ in the Hausdorff distance.}
        \label{fig:neumann-cond}
      \end{figure}
   \end{proof}

 An essential tool is that the variational inequality \eqref{eq:perturb-2} holds not only on $\overline \Omega$ but restricted to  enough of the leaves $\tilde{x}$ 
 to cover $\mathcal H^{n-1}$ almost all of $\overline \Omega$.
 This novel and powerful technique was pioneered in this context by 
 Rochet and Chon\'e~\cite{RochetChone98}, based on the sweeping theory of measures in convex order.
 In this section we recall and exploit a statement of their localization result; we relax their requirement that $u \in C^1( \overline\Omega)$ in Appendix~\ref{sec:rochet-chones-use}.

 We introduce the following notation for the variational derivative
 $\sigma:=\delta L/\delta u$ of our objective $L(u)$: 
 \begin{equation}
   \label{eq:variational-derivative}
   d \sigma (x) = (n+1-\Delta u)d\mathcal{H}^2\mres \Omega + (D^{ -}u-x) \cdot \mathbf{n} d\mathcal{H}^1 \mres \partial\Omega,
 \end{equation}
 which turns out to be a measure with finite total variation.

Recall we extended $Du$ to the boundary as $D^-u$ and defined the equivalence
relation $x_1 \sim x_2$ if and only if $D^-u(x_1) = D^-u(x_2)$.  Here $\dom D^-u =
\cup_{i=0}^n \Omega_{i}$ covers $\mathcal H^{n-1}$-almost all of $\overline\Omega$.  Letting
$y=D^-u(x)$ index the equivalence class of~$x$, we can disintegrate $\sigma$ by
conditioning on the value of $D^-u$ as follows.  For a unique $z \in \{u=0\}$,
Theorem \ref{T:strictly convex disintegration} asserts the measure $\bar \sigma = \sigma -
|\Omega|\delta_z$ has vanishing zeroth and first moments.  Defining its conditional
measures $\bar \sigma_{y} = \bar \sigma_{+,y}- \bar \sigma_{-,y}$ by disintegrating separately
the positive and negative parts of $\bar \sigma$ with respect to the equivalence
relation $\sim$, for $\tilde \sigma_+ := (D^-u)_\#\sigma_+$ almost every $y$ we show
\[
  0 \leq \int_{(D^-u)^{-1}(y)} w(x') d\bar \sigma_{y} (x') 
\]
for all convex functions $w:\R^n\longrightarrow \R$,  
much as Rochet and Chon\'e \cite{RochetChone98}
did for $u \in C^1(\overline \Omega)$.

{We now exploit this technique to show \eqref{neumann-sign+} can be improved by replacing
$D^+u$ by $D^-u$ --- at least on the set $\Omega_0$ of top dimensional leaves \eqref{stratify}.  It is interesting to contrast the radically different methods used to prove the previous proposition and the next one.
\begin{proposition}
    [Top dimensional inner normal distortion is not inward]
\label{P:neumann-sign-}
  Let $u$ solve \eqref{eq:monopolist} where $\Omega \subset\mathbf{R}^n$ is bounded, open, and convex. 
  Then $\mathcal H^{n-1}$-a.e. 
  $x \in {\Omega_0} \cap \p\Omega$ satisfies
  \begin{equation}\label{neumann-sign-}
  (D^{ -}u(x) - x) \cdot \mathbf{n} \geq 0.
  \end{equation} 
\end{proposition}

\begin{proof}
Recall $\Omega_0$ is a countable disjoint union $\cup_{i=1}^\infty \tilde z_i$ of convex leaves, 
each with positive volume $|\tilde z_i|>0$ and $u$ being affine
with slope $y_i=D^-u(z_i)$
on $\tilde z_i$.  Thus $(D^-u)_\#\sigma_+$ assigns 
mass at least $(n+1)|z_i|>0$ to $y_i$.
The probability measures conditioned on $D^-u=y_i$ are therefore given by $\bar \sigma_{\pm,y_i}=c_i^\pm\bar \sigma_\pm \mres \tilde z_i$,  and share the same proportionality constant $c_i^\pm=:c_i$ in light of Theorem \ref{T:strictly convex disintegration} and Lemma \ref{L:identifying disintegration},
which moreover imply that $\bar \sigma \mres \tilde z_i$ has nonnegative integral against any convex Lipschitz function,
(recalling $\bar \sigma = \sigma - |\Omega|\delta_z$
for some $z \in \dom D^-u$ with $u(z)=0$).

As in the proof of Theorem \ref{T:strictly convex disintegration},
for each $i$, Strassen's theorem \cite{Strassen65} asserts the existence of Borel random variables $X^\pm$ distributed like $\bar \sigma_{y_i,\pm}$ respectively, such that $X^-=E(X^+|X^-)$ a.s.; in other words, such that $(X^-,X^+)$ is a Martingale.  Let $D(\Omega) \subset \p\Omega$ denote the subset of points where $\p \Omega$ is differentiable, or equivalently has a unique supporting hyperplane.
The Gauss map $\mathbf n: D(\Omega) \longrightarrow \p B_1(0) \subset \R^n$ is Borel,  and its preimages ${\mathbf n}^{-1}(\omega)$ represent disjoint convex (but not necessarily compact) `facets' whose union is $D(\Omega)$ hence covers $\mathcal H^{n-1}$ almost all of $\p \Omega$.
Let $\bar {\mathbf n}$ denote the extension of $\mathbf n$
obtained by setting $\bar {\mathbf n}(x) =0$ if $x \in \Omega$.
Set $F^\pm= \bar {\mathbf n} \circ X^\pm$ and
let $\mu^\pm_{\omega}$ be the law of $X^\pm$ conditioned on 
$F^\pm=\omega$,  i.e. 
$\mu^\pm :=\bar \sigma_{\pm,y_i} = \int \mu^\pm_\omega d\tilde \mu^\pm(\omega)$
where $\tilde \mu^\pm = F^\pm_\#\bar \sigma_{y_i,\pm}$ in the notation
of Definition \ref{D:disintegrate}.  
Since $X^-\not\in \Omega$ lies on the boundary of the convex set $\Omega$ and $X^+ \in \overline \Omega$, the Martingale property forces 
$F^+= F^-$ (a.s.) unless $X^- \in \Omega$,
so $\tilde \mu^+ \ge \tilde \mu^-$ except perhaps at $\omega=0$.
Both $\mu^\pm_\omega$ 
therefore vanish outside the facet $\bar {\mathbf n}^{-1}(\omega)$
for $\tilde \mu^-$-a.e. $\omega \ne 0$
by Theorem~\ref{T:abstract disintegration}.

Recall $d\mu^+ = (n+1-\Delta u)_+d\mu^n + [(D^-u -id)\cdot \mathbf n]_+  d\mu^{n-1}$,  where $\mu^n := c_i \mathcal H^{n}\mres (\tilde x_i \cap \Omega)$
and $\mu^{n-1}:= c_i \mathcal H^{n-1}\mres (\tilde x_i \cap \p \Omega)$. 
As in Lemma \ref{L:identifying disintegration},
for $\tilde \mu^-$-a.e. $\omega$ in the Gauss sphere $\p B_1(0)\subset \R^n$ 
there is a normalization constant $c^+_\omega>0$ such that $c^+_\omega d\mu^+_\omega = [(D^-u -id)\cdot \omega]_+ d\mu^{n-1}_\omega$ where 
$\mu^{n-1} = \int \mu_\omega^{n-1} d\tilde \mu^{n-1}(\omega)$
and $\tilde \mu^{n-1}:={\mathbf n}_\#\mu^{n-1}$ are also provided by Theorem \ref{T:abstract disintegration}.

Now $[(D^-u -id)\cdot \mathbf n]_+ d\mu^{n-1}_\omega$ cannot vanish identically when $c^+_\omega>0$.  
Since $D^-u$ and $id\cdot \omega$ are both constant on 
the facet $\bar {\mathbf n}^{-1}(\omega) \cap \dom D^-u$
normal to $\omega$, it follows that 
$[(D^-u-id)\cdot \mathbf n]_-=0$ throughout the same facet 
for $\tilde \mu^-$-a.e. $\omega \ne 0$.
The representation 
$c^-_\omega d\mu^-_\omega = [(D^-u -id)\cdot \omega]_- d\mu^{n-1}_\omega + \hat c1_{\bar {\mathbf n}^{-1}(\omega)}(z)|\Omega|d\delta_z$ 
of $\mu^-\mres \p \Omega = \int_{\p B_1(0)} \mu^-_\omega d\tilde \mu^-(\omega)$ using constants $\hat c>0$ and $c^+_\omega>0$
for $\tilde \mu^-$-a.e. $\omega \ne 0$ now shows 
$d\mu^- =(n+1-\Delta u)_-d\mu^n + [(D^-u -id)\cdot \mathbf n]_-  d\mu^{n-1} + |\Omega|d\delta_z$ 
does not charge $\p \Omega$ except perhaps
at the single point $z$ because of the vanishing of the terms in square brackets (at $\mu^{n-1}$-almost every Lebesgue point of $\mu^- \mres \p \Omega$ with respect to $\mu^{n-1}$).
The proposition is therefore established.
\end{proof}
}

\subsection{Legendre transforms and Alexandrov second derivatives}
\label{sec:legendre-transf-alek}

Recall if $u : \Omega \rightarrow \mathbf{R}$ is a convex function, then its Legendre transform is defined by
\begin{equation}
\label{Legendre}
v(y) = \sup_{x \in \Omega} x \cdot y - u(x).
\end{equation}
A function is called Alexandrov second differentiable at $x_0$, with Alexandrov Hessian $D^2u(x_0)$ (an $n \times n$ matrix), provided as $x \rightarrow x_0$ that
\[ u(x) = u(x_0)+Du(x_0)\cdot(x-x_0) + \frac{1}{2}(x-x_0)^TD^2u(x_0)(x-x_0) + o(|x-x_0|^2).\]
Alexandrov proved convex functions are twice differentiable in this sense $\mathcal{H}^n$ almost everywhere.

It's well known that if a differentiable convex function $u$ is Alexandrov differentiable at $x_0$ and its Legendre transform is Alexandrov second differentiable at $y_0 := Du(x_0)$ then
\[D^2v(y_0) = \big[D^2u(x_0)\big]^{-1}. \]
We have an analogous result even when Alexandrov second differentiability is not assumed. 

\begin{lemma}[Legendre transform of Hessian bounds]
\label{lem:aleksandrov-derivatives}
  Assume $u:\Omega \rightarrow \mathbf{R}$ is a convex function with Legendre transform $v$, that $x_0 \in \Omega$ and $M$ is an invertible symmetric positive definite matrix. Assume $y_0 \in \partial u(x_0)$. Then
   \[ u(x_0+\delta x) \geq u(x_0) + y_0\cdot \delta x +  \delta x^TM\delta x/2 +o(|\delta x|^2) 
    \quad {\rm as}\ \delta x \rightarrow 0,\] if and only if
    \[v(y_0 + \delta y) \leq v(y_0) + x_0\cdot \delta y + \delta y^TM^{-1}\delta y/2+o(|\delta y|^2)
    \quad {\rm as}\ \delta y \rightarrow 0.\]
  \end{lemma}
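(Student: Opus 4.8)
The plan is to prove the forward implication and note that the converse follows by symmetry, since the Legendre transform is an involution on lower semicontinuous convex functions and $(M^{-1})^{-1} = M$, with $x_0 \in \partial v(y_0)$ holding automatically whenever $y_0 \in \partial u(x_0)$. So assume the quadratic lower bound on $u$ near $x_0$ holds with the positive definite matrix $M$, and fix a small $\delta y$. The natural approach is to directly estimate $v(y_0 + \delta y) = \sup_{x \in \Omega}\bigl(x\cdot(y_0+\delta y) - u(x)\bigr)$ from above by substituting the lower bound for $u$. For $x = x_0 + \delta x$ near $x_0$ this gives
\[
x\cdot(y_0+\delta y) - u(x) \le x_0 \cdot (y_0 + \delta y) + \delta x \cdot \delta y - \tfrac12 \delta x^T M \delta x + o(|\delta x|^2),
\]
using $y_0 \in \partial u(x_0)$ to cancel the linear-in-$\delta x$ terms coming from $y_0$. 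The quadratic form $\delta x \mapsto \delta x\cdot \delta y - \tfrac12 \delta x^T M \delta x$ is maximized at $\delta x = M^{-1}\delta y$ with maximal value $\tfrac12 \delta y^T M^{-1}\delta y$, which produces exactly the desired upper bound $v(y_0) + x_0\cdot\delta y + \tfrac12 \delta y^T M^{-1}\delta y + o(|\delta y|^2)$, provided the supremum defining $v(y_0+\delta y)$ is attained (or nearly attained) at an $x$ within $O(|\delta y|)$ of $x_0$.

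The main obstacle is precisely this localization: the supremum in the Legendre transform ranges over all of $\Omega$, and the pointwise quadratic lower bound on $u$ is only valid in a neighborhood of $x_0$. One must rule out that far-away points $x$ contribute more to $v(y_0 + \delta y)$ than the local competitor. I would handle this by a standard argument: since $y_0 \in \partial u(x_0)$, the function $x \mapsto x\cdot y_0 - u(x)$ attains its maximum over $\Omega$ at $x_0$, so for $x$ bounded away from $x_0$ there is a uniform gap, $x \cdot y_0 - u(x) \le v(y_0) - c$ for some $c = c(\rho) > 0$ whenever $|x - x_0| \ge \rho$. (This uses lower semicontinuity and properness of $u$; if $u$ is only defined on $\Omega$ one works with its closure, noting $v$ is unchanged.) Then $x\cdot(y_0+\delta y) - u(x) \le v(y_0) - c + |\delta y| \cdot \mathrm{diam}(\Omega)$, which is strictly less than the value $\approx v(y_0) + O(|\delta y|^2)$ of the near-$x_0$ competitor once $|\delta y|$ is small enough. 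Hence the supremum is achieved within $B_\rho(x_0)$, and then within $B_\rho(x_0)$ one can further shrink using the local quadratic bound: the near-optimal $x$ satisfies $|x - x_0| = O(|\delta y|)$ because $u(x) - u(x_0) - y_0\cdot(x-x_0) \ge \tfrac12\lambda|x-x_0|^2 + o(|x-x_0|^2)$ with $\lambda > 0$ the least eigenvalue of $M$, forcing $\tfrac12\lambda |x-x_0|^2 \lesssim |\delta y||x-x_0|$.

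Once the supremum is known to be controlled by $x$ with $|x-x_0| = O(|\delta y|)$, the displayed estimate above applies with $o(|\delta x|^2) = o(|\delta y|^2)$, and choosing $\delta x = M^{-1}\delta y$ (which is indeed $O(|\delta y|)$) as a competitor gives the matching lower bound $v(y_0+\delta y) \ge v(y_0) + x_0\cdot\delta y + \tfrac12\delta y^T M^{-1}\delta y + o(|\delta y|^2)$ directly from the definition of the supremum, without even needing the quadratic bound on $u$ for the lower direction — only that $x_0\in\Omega$ is interior so $x_0 + M^{-1}\delta y \in \Omega$ for small $\delta y$, together with $u(x_0 + M^{-1}\delta y) \le u(x_0) + y_0\cdot M^{-1}\delta y + \tfrac12(M^{-1}\delta y)^T M (M^{-1}\delta y) + o(|\delta y|^2)$, which is the trivial upper quadratic estimate that always holds at a point of second-order... wait, this upper bound on $u$ is \emph{not} automatic. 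I would instead obtain the lower bound on $v$ from the upper bound on $v$ applied to the involution, i.e. derive only ``$u$-bound $\Rightarrow$ $v$-bound'' as the clean one-directional statement and invoke it twice: once as stated, and once with $(u,v,M,x_0,y_0)$ replaced by $(v,u,M^{-1},y_0,x_0)$, using $v^* = u$ and that $x_0 \in \partial v(y_0) \iff y_0 \in \partial u(x_0)$. This closes the equivalence without ever needing a two-sided pointwise control on either function.
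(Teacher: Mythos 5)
Your forward direction (lower bound on $u$ $\Rightarrow$ upper bound on $v$) is correct and is essentially the paper's argument: both proofs bound $v(y_0+\delta y)=\sup_x\bigl(x\cdot(y_0+\delta y)-u(x)\bigr)$ by substituting the quadratic lower bound for $u$ and maximizing the resulting quadratic, the only real work being to localize the supremum near $x_0$. The paper localizes by restricting to $y\in\p u(\mathcal N_\epsilon)$ and replacing the $o(|\delta x|^2)$ by a clean $(1-\epsilon)$ factor; your uniform-gap-plus-quadratic-detachment argument showing the near-maximizer satisfies $|\delta x|=O(|\delta y|)$ accomplishes the same thing and is, if anything, more explicit. (Minor slip: the right side of your first display should carry a $-u(x_0)$, i.e.\ read $v(y_0)+x_0\cdot\delta y+\dots$, as your following sentence correctly assumes.)

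The converse is where your proposal breaks. You propose to prove only the one-directional statement ``lower bound on $f$ by $N$ $\Rightarrow$ upper bound on $f^*$ by $N^{-1}$'' and invoke it a second time with $(u,v,M)$ replaced by $(v,u,M^{-1})$. But that second invocation yields ``\emph{lower} bound on $v$ by $M^{-1}$ $\Rightarrow$ \emph{upper} bound on $u$ by $M$,'' whereas the ``if'' direction of the lemma is ``\emph{upper} bound on $v$ by $M^{-1}$ $\Rightarrow$ \emph{lower} bound on $u$ by $M$.'' These are logically independent implications (the lemma's two sides are a lower bound and an upper bound, so the statement is not symmetric under swapping $u\leftrightarrow v$), and no combination of your single one-directional lemma with involutivity of the transform produces the needed converse. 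Fortunately the converse is the easy direction and needs no localization at all: since $x_0\in\Omega$ is interior, $u=v^*$ near $x_0$ and Fenchel's equality gives $u(x_0)=x_0\cdot y_0-v(y_0)$; testing $u(x_0+\delta x)\ge (x_0+\delta x)\cdot y - v(y)$ with the single competitor $y=y_0+M\delta x$ and inserting the assumed upper bound on $v$ yields $u(x_0+\delta x)\ge u(x_0)+y_0\cdot\delta x+\tfrac12\delta x^TM\delta x+o(|\delta x|^2)$ directly. You should replace the duality reduction by this computation (or its contrapositive).
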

  \begin{proof}
We prove the ``only if'' statement; the ``if'' statement is proved similarly. Up to a choice of coordinates and subtracting an affine support we may assume $x_0=0=y_0$ and $u(x_0) = 0$. Whereby we're assuming
\begin{equation}
u(x) \geq x^TMx/2 +o(|x|^2).  
\label{eq:ox2}
\end{equation}
It is straightforward to see that \eqref{eq:ox2} holds if and only if for
every $\epsilon > 0$ there is a neighbourhood $\mathcal{N}_\epsilon$ of $0$ on which 
\begin{equation}
u(x) \geq (1-\epsilon)x^TMx/2.
\label{eq:ox2-equiv}
\end{equation}
Now, for $y \in \partial u(\mathcal{N}_\epsilon)$ we have
\begin{align*}
  v(y) &= \sup_{x \in \Omega} x \cdot y - u(x)\\
       &= \sup_{x \in \mathcal{N}_\epsilon} x \cdot y - u(x)\\
  &\leq \sup_{x \in \mathcal{N}_\epsilon} x \cdot y - (1-\epsilon)x^TMx/2.
\end{align*}
Provided $y$ lies in the possibly smaller neighbourhood $Y_{\epsilon}:=[(1-\epsilon)M\mathcal{N}_\epsilon] \cap \partial u(\mathcal{N}_\epsilon)$ the supremum is obtained at $x = [(1-\epsilon)M]^{-1}y$ and
\[ v(y) \leq y^T[(1-\epsilon)M]^{-1}y/2.\]
The aforementioned equivalence between \eqref{eq:ox2-equiv} and \eqref{eq:ox2} gives the desired result. 
\end{proof} 

\section{Partition into foliations by leaves 
that 
intersect  fixed boundary}
\label{sec:structure-convexity}
In this section we present the proof of Theorem \ref{thm:structure-convexity}. 
 We use localization (Appendix \ref{sec:rochet-chones-use}) to obtain part (1) 
 {(and later the vanishing Neumann condition complementing (3) throughout $\Omega_n \cap \p \Omega$). Apart}
 from that the only technique we use is energy comparison, and the variational inequalities \eqref{eq:perturb-0} -- \eqref{eq:perturb-2} coupled with careful choice of the comparison functions, many of which are inspired by those used to study the Monge--Amp\`ere equation \cite{Figalli17, LiuWang15}. We denote a subsection to each point of Theorem \ref{thm:structure-convexity}. 

\subsection{Point 1: $\Omega_0 = \{u = 0\}$ if nonempty.}

The fact that $Z:=u^{-1}(0) \subset \p\Omega$ and has positive area when $\Omega_0$ is empty is proved in Corollary \ref{lem:u=0_set_dimension}. Therefore we shall only prove $\Omega_0 \subset \{u= 0\}$; equality follows easily if $\Omega_0$ is nonempty, which is known to be true on strictly convex domains \cite{Armstrong96}. 

 \begin{proof}[Proof of Theorem \ref{thm:structure-convexity} (1)]
Fix $x_0 \in \Omega_0$,  meaning $|\tilde x_0|>0$. We recall from Theorem~\ref{T:strictly convex disintegration} that there is $z \in \{u = 0\}$ such that $\sigma - |\Omega|\delta_z$ has nonnegative integral against all real convex functions. The first step is to establish the result under the assumption  $z \not\in \tilde{x_0}$.  In this case we may apply Theorem~\ref{T:strictly convex disintegration} equation \eqref{consistency and local dominance+} with $w(x)=\pm p_{x_0}(x)$ and the characterization of the variational derivative provided by Lemma~\ref{L:identifying disintegration} to obtain
  \begin{equation}
    \label{eq:rc-ineq-2}
    0 = \int_{\tilde{x_0}}(n+1-\Delta u) u \, dx + \int_{\overline{ \tilde{x_0}} \cap \partial \Omega} u (D^{ -}u-x) \cdot \mathbf{n} \, dS,
  \end{equation}
  {noting $w=u$ on $\tilde x_0$.}
  Using $\Delta u = 0$ on $\tilde{x_0}$, $u \geq 0$,   disjointness of $\overline {\tilde x_0} \setminus \tilde x_0$ from $\dom D^-u$ and $(D^{ -}u-x) \cdot \mathbf{n} \geq 0$ holding $\mathcal H^{n-1}$ a.e. on $\tilde x_0 \cap \partial \Omega$ (from Proposition~\ref{P:neumann-sign-}) 
  then yields $u = 0$ on $\tilde{x_0}$ as required.

The second case to consider is $z \in \tilde{x_0}$ and thus $u = 0$ at $z \in \tilde{x_0}$. Of course, if $z \in \Omega$ then $Du(x_0)=Du(z)=0$ and the proof is complete. If not, by disintegrating $\Omega \setminus \overline{\tilde{x_0}}$ into equivalence classes, none of which contain $z$ and on which Theorem \ref{T:strictly convex disintegration} applies with both $w = \pm p_{x_0}$, Lemma \ref{L:identifying disintegration} yields
  \begin{equation}
    \label{eq:ineq-away-x0}
    0 = \int_{\Omega\setminus \tilde{x_0}} (n+1-\Delta u) p_{x_0} \, dx + \int_{\partial \Omega \setminus \overline { \tilde{x_0}}}p_{x_0}(D^-u-x) \cdot n \, dS.
  \end{equation}
  Now apply Lemma \ref{lem:variational-inequalities} inequality \eqref{eq:perturb-2} with $\overline{u} = u-p_{x_0}$, which is admissible and equals $0$ on $\tilde{x_0},$ to obtain
  \[ 0 \leq \int_{\Omega \setminus \tilde{x_0}} (n+1-\Delta u)(u-p_{x_0}) \, dx +\int_{\partial \Omega \setminus \overline{\tilde{x_0}}}(u-p_{x_0}) (D^-u-x) \cdot \mathbf{n} \, dS. \]
  Addition of \eqref{eq:ineq-away-x0} gives
  \[ 0 \leq \int_{\Omega \setminus \tilde{x_0}} (n+1-\Delta u)u \, dx +\int_{\partial \Omega \setminus \overline{ \tilde{x_0}}}u (D^-u-x) \cdot \mathbf{n} \, dS. \]
Combined with
  \[ 0 =  \frac{d}{dt}\Big\vert_{t = 1}L[tu] = \int_{\Omega } (n+1-\Delta u)u \, dx +\int_{\partial \Omega} u (D^-u-x) \cdot \mathbf{n} \, dS, \]
  yields
\[         0 \geq \int_{\tilde{x_0}}(n+1-\Delta u) u \, dx + \int_{\overline{ \tilde{x_0}} \cap \partial \Omega} u (D^{ -}u-x) \cdot \mathbf{n} \, dS\]
and we conclude using the same reasoning following \eqref{eq:rc-ineq-2}.
\end{proof}

\subsection{Point 3 of Theorem \ref{thm:structure-convexity}}

Now we present point 3: that $\Delta u = n+1$ in $\Omega_{n}$ and that $\Omega_n \cap \Omega$ is open.  One could obtain $\Delta u = n+1$ a.e. in $\Omega_n$ directly from Rochet--Chon\'e's localization (Theorem \ref{T:strictly convex disintegration} 
and its sequelae). 
However for point 3 of Theorem \ref{thm:structure-convexity} we require in addition, an inequality for $\Delta u(x_0)$ at all points where $Du(x_0) \in \text{int}Du(\Omega)$. Thus we prove the following lemma  using perturbations
instead. 

\begin{lemma}[Sub- and super-Poisson 
for interior vs.\ customized consumption]
\label{lem:leq/geq}
  Assume $u:\Omega \rightarrow \mathbf{R}$ solves \eqref{eq:monopolist} and $x_0 \in \Omega$ is a point of Alexandrov second differentiability satisfying $Du(x_0) \in \intr(Du(\Omega))$. Then
  \begin{enumerate}
\item\label{I:interior} There holds $\Delta u(x_0) \geq n+1$.
\item\label{I:customize} If, in addition $u$ is strictly convex at $x_0$, then $\Delta u(x_0) \leq n+1$.
  \end{enumerate}
\end{lemma}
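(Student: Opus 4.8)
The plan is to prove both inequalities by contradiction, testing the variational inequalities of Lemma~\ref{lem:variational-inequalities} --- principally the strict inequality \eqref{eq:perturb-1} --- against competitors $\bar u\in\mathcal U$ obtained by modifying $u$ only near $x_0$, built from the affine support $p_{x_0}(x)=u(x_0)+Du(x_0)\cdot(x-x_0)$ plus a quadratic correction of prescribed Laplacian. For \eqref{I:customize}, assuming $\Delta u(x_0)=\operatorname{tr}D^2u(x_0)>n+1$, I would take $\bar u:=\max\{u,\hat u_\epsilon\}$ with $\hat u_\epsilon(x):=p_{x_0}(x)+\tfrac12(x-x_0)^TB(x-x_0)+\epsilon$, where the symmetric positive semidefinite matrix $B$ is chosen with $\operatorname{tr}B>n+1$ still, but small enough relative to the Alexandrov Hessian $D^2u(x_0)$ that the Alexandrov expansion forces $\hat u_\epsilon<u$ on a punctured neighbourhood of $x_0$. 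Such $\bar u$ is convex and nonnegative, hence admissible; the contact set $\Omega_\epsilon:=\{\hat u_\epsilon>u\}$ then shrinks to $\{x_0\}$ as $\epsilon\downarrow0$, and on it $\bar u=\hat u_\epsilon$ with $\Delta\bar u=\operatorname{tr}B>n+1$. Provided $\Omega_\epsilon\subset\subset\Omega$, the boundary term in \eqref{eq:perturb-1} drops out, leaving $0<\int_{\Omega_\epsilon}(n+1-\operatorname{tr}B)(\bar u-u)\,dx<0$, the desired contradiction.

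For \eqref{I:interior}, assuming $\Delta u(x_0)<n+1$, I would instead depress $u$ near $x_0$. The hypothesis $Du(x_0)\in\operatorname{int}Du(\Omega)$ is exactly what provides room to do this: its Legendre transform $v$ is finite and locally Lipschitz near $y_0:=Du(x_0)$ with $\partial v(y_0)\subset\Omega$, and $v$ is locally uniformly convex there thanks to the interior bound $u\in C^{1,1}_{\text{loc}}(\Omega)$ of Caffarelli--Lions \cite{CaffarelliLions06+} via Lemma~\ref{lem:aleksandrov-derivatives}; so one may replace $v$, on a small ball about $y_0$, by its greatest convex minorant agreeing with $v$ outside that ball, a modification that strictly lowers $v(y_0)$ while keeping the gradient inside $\overline\Omega$. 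By Legendre duality this produces an admissible competitor $\bar u\ge u$ differing from $u$ only on a compactly contained set $\Omega'\subset\subset\Omega$ on which $\Delta\bar u\ge n+1$ (indeed $=+\infty$ where $v$ has been flattened to an affine piece). Feeding $\bar u$ into \eqref{eq:perturb-1} --- or, more carefully, tracking a one-parameter family of such dents and using the Alexandrov estimate $\int_{B_\rho(x_0)}\Delta u = \Delta u(x_0)|B_\rho|+o(|B_\rho|)$ as $\rho\downarrow0$ to control the remaining interior integral --- then contradicts $\Delta u(x_0)<n+1$.

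The step I expect to be the main obstacle is the contact-set localization in both parts when $D^2u(x_0)$ is degenerate. As flagged in the Remark following Lemma~\ref{lem:variational-inequalities}, without a local uniform convexity estimate for $u$ itself a quadratic competitor built directly in $x$-space need not stay below $u$ away from $x_0$ and may reach the fixed boundary, so the clean conclusion above is not immediate; the remedy I would pursue --- and the reason the interior-gradient hypothesis is essential --- is to carry the perturbation on the Legendre transform $v$, which \emph{is} uniformly convex near $Du(x_0)$ precisely because $u\in C^{1,1}_{\text{loc}}(\Omega)$, with $\operatorname{int}Du(\Omega)\ni Du(x_0)$ guaranteeing the perturbed gradient stays safely inside $\Omega$; alternatively one can approximate $\Omega$ by smooth strictly convex domains as in the proof of Proposition~\ref{prop:neumann-sign}. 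Verifying that the resulting $\bar u$ is Lipschitz and that the modified region is compactly contained in $\Omega$, and then reading off the sharp Laplacian inequality, is where the care is needed.
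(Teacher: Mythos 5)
Your treatment of part \eqref{I:customize} is essentially the paper's argument: perturb upward by $\max\{u,\hat u\}$ with $\hat u = p_{x_0}+\tfrac12(x-x_0)^TB(x-x_0)+h$ and $B=(1-\epsilon)D^2u(x_0)$, use strict convexity at $x_0$ to force $\hat u<u$ on a punctured neighbourhood when the Alexandrov Hessian degenerates, let the contact set shrink to $\{x_0\}$, and contradict \eqref{eq:perturb-1}.

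Part \eqref{I:interior} has a genuine gap. First, the construction is not what you think it is: the greatest convex \emph{minorant} of the convex function $v$ subject to agreeing with $v$ outside a ball is $v$ itself, so taken literally your perturbation is vacuous; the non-vacuous reading (replace $v$ on $B_\rho(y_0)$ by the largest convex function matching $v$ outside, i.e.\ a chordal \emph{majorant}) \emph{raises} $v$ and hence \emph{lowers} $u$ --- the opposite of your stated conclusions that $v(y_0)$ decreases and that $\bar u\ge u$. Second, and more fundamentally, flattening $v$ to an affine piece on a set of positive measure destroys exactly the Laplacian bound you need. The Legendre dual of an affine piece is a corner: $\Delta\bar u$ becomes a singular atom carried by a single point of $x$-space, while $\bar u$ is ruled, with $\Delta\bar u=0$ a.e., on the rest of the modified region. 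So if $\bar u\ge u$ you cannot have $\Delta\bar u\ge n+1$ a.e.\ on $\{\bar u>u\}$ (your parenthetical ``$=+\infty$'' is the symptom, not the cure: the mass concentrates on a Lebesgue-null set and the density collapses elsewhere), and if $\bar u\le u$ you cannot have $\Delta\bar u\le n+1$ as a measure either, because of the atom; in neither case does the sign argument against \eqref{eq:perturb-1} close. Note also that the modified set is not a small neighbourhood of $x_0$: it contains the entire leaf $\tilde x_0$, on which you control neither $\Delta u$ nor the distance to $\p\Omega$, since part \eqref{I:interior} does not assume strict convexity.

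The repair is the paper's: to exploit the \emph{upper} bound $\Delta u(x_0)<n+1$ one must push $u$ \emph{down}, hence push $v$ \emph{up}, and one must cap $v$ by a uniformly convex paraboloid rather than an affine function so that the dual Hessian stays bounded above. Normalizing $x_0=y_0=0$, set $\tilde v(y)=\tfrac12 y^T[D^2u(0)+\epsilon I]^{-1}y$, which Lemma~\ref{lem:aleksandrov-derivatives} places strictly below $v$ on a punctured neighbourhood of $0$ because the Alexandrov expansion gives $u<\tfrac12x^T(D^2u(0)+\epsilon I)x$ near $0$; let $v_h$ equal $\tilde v + h$ on the component of $\{v<\tilde v+h\}$ containing $0$ (a set shrinking to $\{0\}$, which is how the boundary term is removed) and $v$ elsewhere, and let $u_h=v_h^*$. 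Then $u_h\le u$ with strict inequality at $x_0$, and --- the step an affine flattening cannot deliver --- the lower Hessian bound $D^2v_h\ge[D^2u(0)+\epsilon I]^{-1}$ where $v_h>v$ converts, via Lemma~\ref{lem:aleksandrov-derivatives}, into the pointwise bound $\Delta u_h\le\Delta u(x_0)+n\epsilon<n+1$ on $\{u_h<u\}$, so that $(n+1-\Delta u_h)(u_h-u)\le0$ there, contradicting \eqref{eq:perturb-1}.
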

\begin{proof}
  (1) We take $x_0 \in \Omega$ with $Du(x_0) \in \intr Du(\Omega)$ assumed to be a point of Alexandrov second differentiability. For convenience translate and subtract the affine support at $x_0$ so that $x_0,u(x_0)$ and $Du(x_0)$ all vanish. 

  For a contradiction assume $\Delta u(x_0) < n+1$ and take $\epsilon > 0$ satisfying  $\Delta u(x_0)+n \epsilon < n+1$. There is a neighbourhood of $x_0=0$ on which $u(x) < x^T(D^2u(x_0)+\epsilon I)x/2$. 

 Let $v$ denote the Legendre transform \eqref{Legendre} of $u$ and set  $\tilde{v} = y^T[D^2u(0)+\epsilon I]^{-1}y/2$. 
 Lemma \ref{lem:aleksandrov-derivatives} implies  $\tilde{v} < v$ in a punctured neighbourhood of the origin. Thus as $h \rightarrow 0$ the connected component of $\{x ; v < \tilde{v} + h\}$ containing the origin, which we denote $\Omega^*_h$, converges to $\{0\}$ in the Hausdorff distance.  Set
  \begin{equation}
    \label{eq:perturb}
    v_h(y) =
    \begin{cases}
      \tilde{v}(y)+h & y \in \Omega^*_h\\
      v(y) & y \not\in \Omega^*_h.
    \end{cases}
  \end{equation}
  Let $u_h$ be the Legendre transform of $v_h$. Note $u_h \leq u$ and this
inequality is strict at $x_0$. Moreover because $D^2v_h \geq (D^2u(0)+\epsilon I)^{-1}$
at each $y \in \Omega^*_h$ Lemma \ref{lem:aleksandrov-derivatives} implies $\Delta u_h \leq \Delta
u(x_0)+n\epsilon < n+1$ on the set $\{u_h < u\}$.  This contradicts inequality
\eqref{eq:perturb-1} to establish \eqref{I:interior}, where $\partial \Omega \subset \{u_h = u\}$
for $h$ small enough has been used.

(2) Now suppose, in addition, $x_0$ is a point of strict convexity for $u$ and, for a contradiction, that $\Delta u(x_0) > n+1$.  Set $\bar{u}(x) = (1-\epsilon)x^TD^2u(0)x/2$ with $\epsilon>0$ chosen so small that $\Delta \bar{u} = (1-\epsilon)\Delta u(x_0) > n+1$. Note that $\bar{u} < u$ in a punctured neighbourhood of $0$; (this relies on the strict convexity of $u$ at $0$ in the case $D^2u(0)$ has a zero eigenvalue). Thus for sufficiently small $h>0$ the connected component of $\{x; u(x) < \bar{u}(x)+h\}$ containing $x_0=0$, which we call $ \Omega^{(h)}$, converges to $\{0\}$ in the Hausdorff distance. Set
\[ \bar{u}_{h} =
  \begin{cases}
    \bar{u}(x)+h &x \in \Omega^{(h)},\\
    u(x) &x \not\in \Omega^{(h)}.
  \end{cases}
\]
Then $\bar{u}_{h}$ is an admissible interior
perturbation of $u$ with $\Delta \bar{u}_{h}>n+1$ on $\{\bar{u}_h >u\}$. Once again we contradict inequality \eqref{eq:perturb-1}. 
\end{proof}

At any interior point of strict convexity, $x \in \Omega_n \cap \Omega$, we have $Du(x) \in \intr Du(\Omega)$  since $\p u$ is closed. It's now immediate that $\Delta u = n+1$ at each point of Alexandrov second differentiability in $\Omega_n \cap \Omega$. It remains to show $\Omega_n \cap \Omega$ is open.  We prove in the next subsection that $D^{ -}u(\bigcup^{n-1}_{i=0}\Omega_i) \subset \partial Du(\Omega)$, that is if $x \in \bigcup^{n-1}_{i= 0}\Omega_i$ then $D^{ -}u(x)$ is in the boundary of the set of gradients. Combined with the $C^{1,1}_{\text{loc}}$ regularity of $u$ we obtain if $x \in \Omega_n \cap \Omega$ the same is true for all sufficiently close $ \bar{x}$ (this is because $Du(\bar{x})$ is also in $\intr Du(\Omega)$) and this is a sufficient condition for $\bar{x} \in \Omega_n \cap \Omega$. We conclude $\Omega_n \cap \Omega$ is open. 

Since we now know $u$ is a $W^{2,\infty}_{\text{loc}}$ (equivalently, $C^{1,1}_{\text{loc}}$) solution of $\Delta u = n+1$ almost everywhere on the open set $\Omega_n \cap \Omega$ the elliptic regularity \cite[Theorem 9.19]{GilbargTrudinger83} implies $u \in C^\infty(\Omega_n)$.

\subsection{Point 2 of Theorem \ref{thm:structure-convexity}}
\label{sec:point-3-theorem}
To conclude the proof of Theorem \ref{thm:structure-convexity} we show if $x \in \Omega_{i}$ for $i = 0,\dots,{n-1}$ then $\tilde{x}$ extends to the boundary.

For a contradiction assume otherwise. Because $u$ is convex, then
there exists $x_0$ with $\{x_0\} \ne \overline{\tilde{x_0}}  \subset\subset \Omega$  and $y_0 := Du(x_0) \in \intr Du(\Omega)$ since $\p u$ is closed. Because $u$ is $ C^{1,1}$ 
and the sections
\[ \Omega^{(h)} := \{x  \in \overline\Omega; u(x) \leq  u_h(x) := u(x_0)+Du(x_0) \cdot (x-x_0)+h\}\]
converge to $\overline{\tilde{x_0}}$ in the Hausdorff distance as $h \rightarrow 0$, we obtain  $Du(\Omega^{(h)}) \subset \subset Du(\Omega)$ for $h$ sufficiently small. In particular by Lemma \ref{lem:leq/geq} we have $\Delta u \geq n+1$ in $\Omega^{(h)}$. Using $\hat u= \max\{u,u_h\}$ as a perturbation function in inequality  \eqref{eq:perturb-0} we see $\Delta u = n+1$ almost everywhere in $\Omega^{(h)}$ (if $\Delta u > n+1$ on a subset of $\{u_{h} > u\}$ with positive measure, inequality \eqref{eq:perturb-0} is violated). 
As in the previous subsection the elliptic regularity implies $u \in C^\infty(\Omega^{(h)})$ is a classical solution of $\Delta u =n+1$ in $\Omega^{(h)}$. Differentiating this PDE twice implies the second derivatives of $u$ are harmonic (and nonnegative by convexity). The strong maximum principle for harmonic functions says in fact $\p^2_{jj}u > 0$
in $\Omega^{(h)}$  for all $j = 1,2, ..., n$, so $u$ cannot be affine on $\tilde{x}$. This contradiction completes the proof. Note our use of the strong maximum principle requires $ \p^2_{jj}u > 0$ at some point in $\Omega^{(h)}$. This, however, follows by considering $v = u - u_h$. 
If $\p^2_{jj} u=0$ throughout $\Omega^{(h)}$ then $Dv(x_0) = 0$, $v(x_0) = -h$ and $v$ is independent
of $x_j$, hence $\Omega^{(h)} \cap \p \Omega \ne \emptyset$,  which would contradict $\Omega^{(h)} \to \overline{\tilde x_0} \subset\subset \Omega$
as $h \downarrow 0$.

In the course of the above proof we've proved the following lemma which we record here since we require it again and again. 

\begin{lemma}[Interior regularity and strong maximum principle]
\label{lem:mp-argument}
  Assume $u : \Omega \rightarrow \mathbf{R}$ is a $C^{1,1}_{\text{loc}}$ convex function. Let $C \in \mathbf{R}$. Assume, in the sense of Alexandrov second derivatives, $\Delta u = C$ almost everywhere in $B_\epsilon(x_0) \subset \Omega$. Then $u \in C^\infty(B_\epsilon(x_ 0))$ and satisfies that for each unit vector $\xi$ either $u_{\xi\xi} \equiv 0$ throughout $B_\epsilon(x_0)$ or $u_{\xi\xi} > 0$ throughout $B_\epsilon(x_0)$.
\end{lemma}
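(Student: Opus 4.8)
The plan is to reduce the statement to the classical interior regularity theory for Poisson's equation, followed by the strong maximum principle for harmonic functions. Since $u \in C^{1,1}_{\mathrm{loc}}(\Omega) = W^{2,\infty}_{\mathrm{loc}}(\Omega)$, the gradient $Du$ is locally Lipschitz, and its pointwise (Alexandrov) derivative $D^2u$ agrees $\mathcal{H}^n$-almost everywhere with its distributional derivative \cite[Ch.\ 6]{EvansGariepy92}. Consequently the hypothesis $\Delta u = C$ a.e.\ upgrades to $\Delta u = C$ in $\mathcal{D}'(B_\epsilon(x_0))$, so that $u$ is a weak solution of Poisson's equation there with constant right-hand side.

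First I would invoke interior elliptic regularity. Since $u \in W^{2,p}(B_\epsilon(x_0))$ for every finite $p$ and solves $\Delta u = C$, the estimate \cite[Theorem 9.19]{GilbargTrudinger83}, bootstrapped in the usual way, yields $u \in C^\infty(B_\epsilon(x_0))$; indeed $u(x) - \tfrac{C}{2n}|x|^2$ is harmonic, hence real-analytic, on $B_\epsilon(x_0)$.

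Next, fix a unit vector $\xi$ and set $w := u_{\xi\xi} \in C^\infty(B_\epsilon(x_0))$. Differentiating the identity $\Delta u = C$ twice in the direction $\xi$ gives $\Delta w = 0$, while convexity of $u$ gives $w \ge 0$ on the connected set $B_\epsilon(x_0)$. If $w$ vanishes at some point of $B_\epsilon(x_0)$, then the nonnegative harmonic function $w$ attains an interior minimum, so the strong minimum principle forces $w \equiv 0$; otherwise $w > 0$ throughout $B_\epsilon(x_0)$. Applying this to each $\xi$ gives exactly the asserted dichotomy (with, in general, different $\xi$ falling into different cases).

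The argument is essentially bookkeeping. The only step that is not a direct citation is the passage from the a.e.\ Alexandrov Laplacian to the distributional Laplacian of a $C^{1,1}$ function, and it is this identification — rather than any estimate — that licenses the use of elliptic regularity; I expect no genuine obstacle.
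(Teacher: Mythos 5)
Your proposal is correct and follows essentially the same route as the paper: identify the a.e.\ Alexandrov Laplacian with the distributional one for a $W^{2,\infty}_{\text{loc}}$ function, apply interior elliptic regularity (the paper cites the same \cite[Theorem 9.19]{GilbargTrudinger83}) to get $u \in C^\infty$, then note that each $u_{\xi\xi}$ is a nonnegative harmonic function and conclude by the strong maximum principle. Your explicit remark on passing from the pointwise to the distributional Laplacian is a detail the paper leaves implicit, but it is not a different argument.
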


\section{Product selection remains Lipschitz up to the fixed boundary}
\label{sec:boundary-c1-1}

One of our key techniques for studying the planar Monopolist's problem is the introduction of a new coordinate system defined in terms of the rays which foliate $\Omega_{n-1}$. These new coordinates are a powerful tool for studying the behaviour of the minimizer $u$ but their justification requires two significant technical {preparations}. {Theorem \ref{thm:boundary-regularity} gives the first} boundary regularity result in arbitrary dimensions, namely that in convex polyhedral domains $u$ is $C^{1,1}$ up to the boundary  (away from the nondifferentiabilities of the boundary); furthermore, in {all} convex domains,  $u$ is $C^{1,1}$ on the set of rays having only one end on the boundary.
The second required {preparation}, proved in Section~\ref{sec:neum-estim}, is an equivalence between the Neumann condition and strict convexity stated more precisely in Propositions~\ref{P:no distortion implies strict convexity} and \ref{P:strictly-convex-implies-neumann}. Readers who are interested primarily in the consequences of these {preparations} rather than their proofs may proceed directly to Section \ref{sec:coord-argum-two}.

 As explained in Remark \ref{R:ChenFigalliZhang}, boundary regularity  for the Monopolist's problem is new:  the first version of this manuscript stimulated Chen, Figalli and Zhang to notice this and for $n=2$ to substantially improve on the following result. Previously only an interior $C^{1,1}$ result was known \cite{CaffarelliLions06+,mccann2023c}; $C^{1,1}$ regularity was also known to be sharp in the interior.

\begin{theorem}[{$C^{1,1}$ regularity at the tame fixed boundary or} on convex polyhedral domains]
\label{thm:boundary-regularity} Let $u$ minimize
\eqref{eq:monopolist} where $\Omega \subset\subset \mathbf{R}^n$ is open, bounded, and convex.
  \begin{enumerate}
  \item There is $C$ depending only on $\Omega$ such that if $x_0 \in \Omega_{n-1} \cup \Omega_n$ is
a point of Alexandrov second differentiability and $\overline{ \tilde{x_0}} \cap \partial \Omega$ a singleton or empty then
    \[ |D^2u(x_0)| \leq C.\]
  \item Assume, in addition, $\Omega$ is a convex polyhedron (i.e. an intersection of finitely many half spaces). {For $0<\epsilon<1$,} let $\Omega_{\epsilon}$ be $\Omega$
excluding an $\epsilon$-ball about each point where $\partial \Omega$ is not smooth. Then there is $C$
depending only on $\epsilon$ and $\Omega$ such that
    \[ \Vert u \Vert_{C^{1,1}(\Omega_\epsilon)} \leq C. \]
  \end{enumerate}
\end{theorem}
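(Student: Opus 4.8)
The plan is to prove (1) by a one-sided perturbation argument refining those of Lemmas~\ref{lem:leq/geq} and~\ref{lem:aleksandrov-derivatives}, and then to derive (2) from (1) using that a polyhedron has only finitely many flat faces. First I would dispose of the case $\tilde x_0\cap\p\Omega=\emptyset$: then $\tilde x_0\subset\subset\Omega$, and by Theorem~\ref{thm:structure-convexity}(2) such a leaf cannot lie in $\Omega_{n-1}$, so $x_0\in\Omega_n\cap\Omega$ is a point of strict convexity; as noted after Lemma~\ref{lem:leq/geq} this forces $Du(x_0)\in\intr Du(\Omega)$, and Lemma~\ref{lem:leq/geq}(2) gives $\Delta u(x_0)\le n+1$. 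The substance of (1) is therefore the case $\tilde x_0\cap\p\Omega=\{z_0\}$, where $\tilde x_0$ is a (possibly degenerate) segment with $z_0$ as one endpoint and either an interior endpoint $w_0\in\Omega$ or no other point, and $x_0\in\tilde x_0$.

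In that case, suppose for contradiction that $\Delta u(x_0)>C$ for a constant $C$ to be fixed. Then $D^2u(x_0)$, which annihilates the direction of $\tilde x_0$ (along which $u$ is affine), has an eigenvalue at least $C/n$ in a transverse direction. I would build an admissible competitor $\bar u$ that agrees with $u$ off a small neighbourhood $\mathcal N$ of a compact subsegment of $\tilde x_0$ containing $x_0$, kept in the interior away from $z_0$ as much as the geometry allows, and that is strictly flatter than $u$ transverse to $\tilde x_0$ inside $\mathcal N$ --- concretely by passing to the Legendre transform $v$ and lifting it near $y_0:=Du(x_0)$ exactly as in the proof of Lemma~\ref{lem:leq/geq}(1), or equivalently by partially convexifying $u$ over a thin box around that subsegment. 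The construction is arranged so that on the positive-measure set $\{\bar u\neq u\}$ one has $\bar u\le u$ and $\Delta\bar u$ bounded strictly below $n+1$.

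The contradiction then comes from \eqref{eq:perturb-1} applied with $w=\bar u-u\le 0$: the interior integral $\int_\Omega(n+1-\Delta\bar u)\,w\,dx$ is strictly negative of a definite size, whereas the boundary integral only involves $\{\bar u\neq u\}\cap\p\Omega$, a set shrinking to the single point $z_0$ as $\mathcal N$ shrinks. On that set $D\bar u$ is within a controlled amount of $Du$, while $Du\equiv Du(z_0)$ along $\tilde x_0$ with $(Du(z_0)-z_0)\cdot\mathbf{n}\ge 0$ by Proposition~\ref{prop:neumann-sign}; hence the boundary integrand exceeds $-o(1)$ and, paired with $w\le 0$, cannot overcome the interior term once $\mathcal N$ is small enough. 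Unwinding the construction shows $C$ depends only on $\Omega$. The hard part is precisely this last comparison: making rigorous that the competitor contacts $\p\Omega$ only in a set of negligible surface measure relative to the interior gain, uniformly over all admissible $x_0$ --- including those close to $\p\Omega$, which necessarily lie near $z_0$ on their leaf --- so that the sign information from Proposition~\ref{prop:neumann-sign} suffices.

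For (2), since $\Omega$ has finitely many faces it is enough to bound $\Delta u$ a.e.\ on the set of points of $\Omega$ within $\epsilon$ of a given relatively open flat face $F$ and $\epsilon$-away from the edges; combined with the interior Caffarelli--Lions bound \cite{CaffarelliLions06+,mccann2023c} this gives $\Delta u\le C$ a.e.\ on $\Omega_\epsilon$, and since $u\in C^{1,1}_{\text{loc}}(\Omega)$ the measure $\Delta u$ has no singular part inside $\Omega$, so convexity ($\p^2_{ii}u\ge 0$) yields $0\le D^2u\le CI$ a.e.; then $Du$ is Lipschitz on $\Omega_\epsilon$ and, extending continuously to the closure, $\Vert u\Vert_{C^{1,1}(\Omega_\epsilon)}\le C$. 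For the a.e.\ bound near $F$: on $\Omega_0=\{u=0\}$ (Theorem~\ref{thm:structure-convexity}(1)) $\Delta u=0$; a.e.\ point of $(\Omega_{n-1}\cup\Omega_n)$ near $F$ is a point of Alexandrov second differentiability whose leaf meets $\p\Omega$ in at most one point, to which (1) applies; and on the relatively open portion of $F$ consisting of strict-convexity points one may alternatively reflect $u$ evenly across the hyperplane of $F$, using the vanishing Neumann condition on $\Omega_n\cap\p\Omega$, to obtain a convex solution of $\Delta=n+1$ and hence $C^\infty$ regularity up to $F$ there. The remaining obstacle in (2) is the bookkeeping for leaves meeting $\p\Omega$ in more than a point (and for the lower strata $\Omega_i$, $i\le n-2$) near a face, which should be dispatched by approximating from one-point-trace leaves using $u\in C^1(\overline\Omega)$.
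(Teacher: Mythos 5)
Your reduction of the problem to a first--variation argument via \eqref{eq:perturb-1} does not work, and the failure is structural rather than technical. You propose a competitor with $w=\bar u-u\le 0$ and $\Delta\bar u<n+1$ on $\{\bar u\ne u\}$, obtained by lifting the Legendre transform as in Lemma~\ref{lem:leq/geq}(1). But that lift produces, by Lemma~\ref{lem:aleksandrov-derivatives}, a competitor with $\Delta \bar u\le \Delta u(x_0)+n\epsilon$ --- an upper bound that is \emph{large} under your contradiction hypothesis $\Delta u(x_0)>C$, not below $n+1$; and more fundamentally, no admissible $\bar u\le u$ agreeing with $u$ outside a compact subset of $\Omega$ can have $\Delta\bar u< n+1$ on $\{\bar u<u\}$, since its existence would already violate \eqref{eq:perturb-1} irrespective of $\Delta u(x_0)$. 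The mechanism of Lemma~\ref{lem:leq/geq}(1) proves a \emph{lower} bound on $\Delta u$; an upper bound via first variation requires the opposite signs, as in Lemma~\ref{lem:leq/geq}(2): a competitor \emph{above} $u$ with Laplacian $(1-\epsilon)\Delta u(x_0)>n+1$. If you repair the signs this way, two new obstacles appear on a nontrivial leaf $\tilde x_0$: the Alexandrov expansion controls $u$ only near $x_0$, so the section $\{u<(1-\epsilon)q+h\}$ need not localize around $\tilde x_0$ while remaining convex; and since $\{\bar u\ne u\}$ necessarily contains a one-sided neighbourhood of $z_0=\tilde x_0\cap\p\Omega$, the boundary term is now $+\nu\int_{\p\Omega}w\,dS$ with $\nu=(Du(z_0)-z_0)\cdot\mathbf{n}\ge0$ and $w\ge 0$, a \emph{positive} contribution of the same order ($\sim h\rho^{n-1}$) as the negative interior gain ($\sim h\,|\tilde x_0|\,\rho^{n-1}$). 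The resulting threshold degenerates like $|\tilde x_0|^{-1}$ for short rays, so it cannot yield a constant depending only on $\Omega$. The paper's proof is not a linearization at all: it is a second-order energy comparison $L[\max\{u,\bar p\}]\ge L[u]$ with an \emph{affine} cap $\bar p$, closed by the coercivity estimate $\int_{\bar S}|D\bar u-Du|^2\ge Ch^2|\bar S|/r^2$, and the boundary is handled by \emph{tilting} $\bar p$ in the ray direction so the section $\bar S$ detaches from $\p\Omega$ entirely (case (1)) or from the nearest face (case (2)).

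Part (2) has a separate gap: it cannot be deduced from part (1), because part (1) says nothing about Alexandrov points on leaves meeting $\p\Omega$ in more than one point, nor about the strata $\Omega_i$ with $i\le n-2$. The first omission is not a null set one can wave away: the region $\Omega_1^0$ of Theorem~\ref{thm:description-on-square}, foliated by rays with both endpoints on $\p\Omega$, has positive measure for $a\ge \frac72-\sqrt2$, and near such points there need be no ``one-point-trace leaves'' to approximate from. In the paper, (2) is proved by a parallel, self-contained construction valid at every $x_0\in\Omega_\epsilon$ regardless of the leaf structure (tilting the cap by $2\frac hd\,\mathbf{n}\cdot x$ to avoid the nearest face $P_{-d}$, then estimating $\mathcal{H}^{n-1}(\p\bar S\cap\p\Omega)\le C\mathcal{H}^n(\bar S)$ for the distant boundary contact). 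Your final step --- passing from an a.e.\ Alexandrov bound $\Delta u\le C$ plus $u\in C^{1,1}_{\text{loc}}(\Omega)$ and convexity to $\|D^2u\|_{L^\infty(\Omega_\epsilon)}\le C$ and a Lipschitz gradient up to the boundary --- is fine once the a.e.\ bound is actually established on all of $\Omega_\epsilon$; the even-reflection remark, by contrast, is inessential and does not preserve convexity.
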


{
\begin{remark}[Reflection methods]\label{R:reflection methods}
At points where $\p \Omega$ is smooth enough,  or at corners of the (hyper)cube, the 
results claimed for $x \not\in \overline{\Omega_0 \cup \cdots \cup \Omega_{n-1}}$ also follow
from the Neumann condition proved in Proposition~\ref{P:no distortion implies strict convexity} and $\Delta u = n+1$ in $\Omega_{n}$ by standard reflection methods.  (In the open set $\Omega \cap \Omega_n$ they follow even more directly from $\Delta u = n+1$ via convexity of $u$.)
\end{remark}
}

\begin{proof}[Proof 
{of Theorem \ref{thm:boundary-regularity}}]
  The key energy comparison ideas are 
  inspired by Caffarelli and Lions's proof of interior regularity \cite{CaffarelliLions06+}
  and its generalization~\cite{mccann2023c}.
  However new ideas are required for perturbation near the boundary.  We prove there exists $C$ depending only on $\epsilon$ and $\Omega$ such that for all $x_0 \in  \Omega_{\epsilon}$ there holds
  \begin{equation}
    \label{eq:c11-lim}
    \limsup_{r \rightarrow 0}\frac{\sup_{B_r(x_0)} |u-p_{x_0}|}{r^2} \leq C.
  \end{equation}
  Equivalently, there is some $r_0$ such that for all $r < r_0$ there holds $\sup_{B_r} (u-p_{x_0}) \leq Cr^2$. We emphasize that $r_0$ will be chosen small depending on quantities which the constant $C$ in \eqref{eq:c11-lim} is not permitted to depend on, however this does not affect the $C^{1,1}$ estimate\footnote{This is analogous to an estimate $\left\vert\frac{f(x+h)-f(x)}{h}\right\vert \leq C$ for all sufficiently small $h$ implying $|f'(x)| \leq C$ regardless of what dictates our small choice of $h$. It is interesting to note such an approach would not work for boundary H\"older or $C^{1,\alpha}$ estimates with $0 < \alpha < 1$.  }.

  We begin by explaining the proof for part (2), that is, when $\Omega$ is a polyhedron and then explain the changes required for part (1) namely, points on rays having an end in the interior of $\Omega$.

\textit{Step 1. (Construction of section and comparison function on polyhedrons)}
 We fix $x_0 \in  \Omega_{\epsilon}$ and translate and subtract a support plane after which we may assume $x_0,u(x_0)$ and $Du(x_0)$ all vanish and thus, $u \geq 0$\footnote{It is worth noting that $L$ is not translation invariant, so after this transformation we should work with $\bar{L}[u] =  \int_{\Omega}[\frac{1}{2}|Du|^2 + u - (x+x_0) \cdot Du]\, dx$. Inspection of the proof reveals such a change is inconsequential.}. Now, after a rotation we may assume the face closest to $x_0$ is
\[ P_{-d} = \partial \Omega \cap \{ x = (x^1,\dots,x^n) ; x^1 = -d\}.\]
We assume $d < \epsilon$. (If $d > \epsilon$ then we already have a $C^{1,1}$ estimate; Caffarelli and Lions's estimate is $|D^2u(x)| \leq C(n)\text{dist}(x,\partial\Omega)^{-1}\sup|Du|$.)  Note that $x_0$ may be close to a single face of the polyhedron but, because we work in $\Omega_{\epsilon}$, satisfies
\begin{equation}
  \label{eq:D-def}
  \text{dist}(x_0,\partial \Omega \setminus P_{-d}) =: \tempdnotation\geq C(\epsilon,\Omega)
\end{equation}
for a positive constant $C(\epsilon,\Omega)$ depending only on $\epsilon$ and $\Omega$.  

 For $r>0$ to be chosen sufficiently small, but initially $r < d$, set
  \[ h = \sup_{B_r(0)}u = u(r\xi),\]
  where the latter equality defines the unit vector $\xi$ as the direction in which the supremum is obtained. The section
  \begin{align*}
    S&:= \left\{ x \in \Omega ; u(x) < p(x)\right\},\\
\text{where}\quad     p(x)&:= \frac{h}{2r}(x \cdot \xi + r),
  \end{align*}
  satisfies the slab containment condition
  \begin{equation}
    \label{eq:orig-slab-containment}
    S \subset \left\{x \in \Omega ;  -r < x \cdot \xi < r \right\} =: S_{\xi,r}.
  \end{equation}
  The lower estimate is because $p(x) < 0$ when $x \cdot\xi < -r $ and $u \geq 0$. For the upper estimate note 
  \[ Du(r\xi) - Dp(r\xi),\]
  is the outer unit normal to $S$ at $r\xi$. However, because $u$ attains its maximum over the boundary of the ball at $r\xi$, $Du$ has zero tangential component and so, by convexity, $Du(r\xi) = \kappa \xi$ for some $\kappa \geq h/r$ meaning the outer normal is
\[  Du(r\xi) - Dp_{h}(r\xi) = \kappa \xi - \frac{h}{2r}\xi . \]

\textit{Step 2. (Tilting and shifting at the boundary on polyhedrons)} The possibility that $S$ intersects $\partial \Omega$ complicates the boundary estimate. The existing interior estimates use a bound $\mathcal{H}^{n-1}(\partial S \cap \partial \Omega) \leq \frac{C}{\text{dist}(x_0,\partial S \cap \partial \Omega)} \mathcal{H}^n(S)$ which does not suffice near the boundary. Thus we must tilt the affine support to ensure points where $\partial S$ intersects $\partial \Omega$ lie sufficiently far (distance greater than $C(\epsilon,\Omega)$) from $x_0$. 

We consider the modified plane and section (see Figure \ref{fig:slab-containment})
  \begin{align}
   \nonumber \bar{S} &=  \left\{ x \in \Omega ; u(x) < \bar{p}(x)\right\},\\
   \label{eq:pt-def} \text{where}\quad     \bar{p}(x)&= \frac{h}{2r}(x \cdot \xi + r) -2\frac{h}{d}(\mathbf{n} \cdot x) + s,
  \end{align}
  where  $\mathbf{n} = -e_1$ is the outer unit normal to $\Omega$ along $P_{-d}$ and $s$ is a small positive or negative shift to be specified. The key idea is that provided $r\ll d$, $h/2r \gg  2h/d$ so $D\bar{p}$ is a small perturbation of $Dp$ (in both direction and magnitude). Observe that slab containment, \eqref{eq:orig-slab-containment}, implies $ p-u < h$ and on $P_{-d} \subset \{x; x^1 = -d\}$, we have $\bar{p} = p - 2h + s$, so provided $s < h$ (which we enforce below), $\partial\bar{S}$ is disjoint from $P_{-d}$.
  We claim if $r$ is initially chosen sufficiently small, then 
  \begin{equation}
    \label{eq:new-containment}
    \bar{S} \subset \left\{x \in \Omega ;  -2r < x \cdot \xi < 2r \right\} = S_{\xi,2r}
  \end{equation}
 where the fact that $\Omega$ is bounded has been used.

First we prove the lower bound $\bar{S} \subset \left\{x \in \Omega ;  -2r < x \cdot \xi  \right\}$.
This follows because a choice of $r$ sufficiently small ensures the plane $\{\bar{p} = 0\}$ makes an arbitrarily small angle with the original plane $\{p =0\} = \{x ; x \cdot \xi = -r\}$. Moreover we can ensure $\bar{p}(t\xi) = 0$ for some $0 > t > -3r/2$.  Indeed, the plane $\{p = 0\}$, which we used as our original lower bound for the slab $S$, is necessarily orthogonal to $Dp(x) = \frac{h}{2r}\xi$. Similarly, the plane $\{\bar{p} = 0\}$, which we  use as our lower bound for $\bar{S}$, is orthogonal to 
\[ D\bar{p}(x) = \frac{h}{2r}\xi - \frac{2 h}{d}\mathbf{n}, \]
provided $r  \ll d$ the vectors $D\bar{p}$ and $Dp$ make arbitrarily small angle. Thus the planes $\{\bar{p} = 0\}$ and $\{p = 0\}$ make arbitrarily small angle. Since $\bar{p}(0) = \frac{h}{2}+s$ and $\bar{p}(-3r\xi/2) = -\frac{h}{4}+s+ 3\frac{hr}{d} {\mathbf n} \cdot \xi$, provided $|s|<h/8$ and $r<d/24$ there is a point on $\{t\xi ; 0 > t > -3r/2 \}$ where $\bar{p} = 0$. 

Next we prove the upper bound $S \subset \left\{x \in \Omega ;   x \cdot \xi < 2 r  \right\}$. This is where we choose our vertical shift. Recall
\[ u(r\xi) = h\quad \text{and}\quad Du(r\xi) = \kappa \xi,\]
where $\kappa \geq \frac{h}{r}$. Note that
\[ \bar{p}(\xi r) = h - 2\frac{h}{d} \mathbf{n} \cdot (r\xi)+s . \]
Because $r < d /24$ the choice $s =2hr \mathbf{n}\cdot \xi/d$ gives $|s|  <h/12$ and we have equality of $\bar{p}$ and $u$ at $r\xi$, i.e. $\bar{p}(r\xi) =u(r\xi) = h$. Then, as before, $Du(r\xi)-D\bar{p}(r\xi)$ is a normal to a support of the convex set $\bar{S}$. However
\[  Du(r\xi) - D\bar{p}(r\xi) = \kappa\xi - \frac{h}{2r}\xi + \frac{2h}{d}\mathbf{n}.\]
Recalling $\kappa \geq \frac{h}{r}$ we see again for $r$ sufficiently small this vector makes arbitrarily small angle with $\xi$. This yields the upper containment in \eqref{eq:new-containment}. 

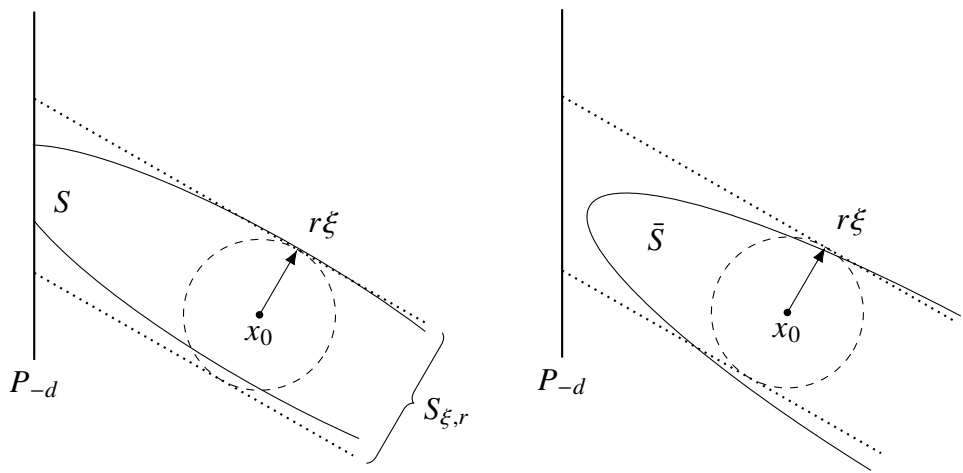
\begin{figure}[h]
  \centering
 \begin{tikzpicture}
    \usetikzlibrary {arrows.meta}
    \usetikzlibrary{decorations.pathreplacing}
\begin{scope}[rotate around={-30:(2,2)}, yshift=0cm]
    \draw [thick] (0.42265,4) -- (2.73205,0); 
    \node at (2.73205,0) [below] {$P_{-d}$}; 

    \node at (5,2) [below] {$x_0$}; 
    \fill (5,2) circle (0.05cm); 
    \draw[dashed] (5,2) circle (1cm); 
    \draw[-{Latex[length=2mm]}] (5,2) -- (5,3); 
    \node[above right] at (5,3) {$r\xi$}; 
\node at (2,2) {$S$};
    \draw[dotted,thick] (1,3) -- (7,3); 
    \draw[dotted,thick] (2.1547,1) -- (7,1); 
    \draw [decorate,decoration={brace,amplitude=4pt,mirror},xshift=0.5cm,yshift=0pt]
    (6.7,1) -- (6.7,3) node [midway,right,xshift=.1cm,yshift=-0.2cm] {$S_{\xi,r}$};

    \draw (7,2.9) arc [start angle=60, end angle=158, x radius = 4cm, y radius =0.9cm];
       \draw (1.8,1.6) arc [start angle=210, end angle=295, x radius = 4cm, y radius =0.9cm];
 \end{scope}

      \begin{scope}[rotate around={-30:(2,2)}, yshift=3.5cm, xshift=-2cm]
        \draw [thick] (8.42265,4) -- (10.73205,0); 
        \node at (10.73205,0) [below] {$P_{-d}$}; 

        \node at (13,2) [below] {$x_0$}; 
        \fill (13,2) circle (0.05cm); 
        \draw[dashed] (13,2) circle (1cm); 
        \draw[-{Latex[length=2mm]}] (13,2) -- (13,3); 
        \node[above right] at (13,3) {$r\xi$}; 

        \draw[dotted,thick] (9,3) -- (15,3); 
        \draw[dotted,thick] (10.1547,1) -- (15,1); 
\node at (11,2) {$\bar{S}$};

               \draw (15,3.1) arc [start angle=100, end angle=260, x radius = 6cm, y radius =1.2cm];
      \end{scope}
  \end{tikzpicture}

    \caption{An example of the original section and the tilted section. The tilted section is now disjoint from boundary portion $P_{-d}$. The trade off is it may leave the slab $S_{\xi,r}$. Nevertheless $\bar{S}$ is contained in the slightly larger slab $S_{\xi,2r}$. }
  \label{fig:slab-containment}
\end{figure}

Our choice of tilted support $\bar{p}$ implies $\bar{S}$ is disjoint from $P_{-d}$. Moreover we have
\begin{equation}
  \label{eq:height-ests}
  \bar{p}(0)-u(0) \geq h/4 \quad \text{and}\quad  \sup_{\bar{S}} \bar{p}-u \leq 4h,
\end{equation}
where the second inequality is because each point in the containment slab
$S_{\xi,4r}$ is of distance less than $4r$ from the plane $\{\bar{p}=0\}$ and
$|D\bar{p}| \leq h/r$. These properties are enough for us to employ the dilation
argument used by the authors in \cite{mccann2023c}. For completeness we include
full details, but first explain how to obtain the section $\bar{S}$ in the
other setting of the theorem: in arbitrary convex domains at rays with one
endpoint on the boundary.

\textit{Step 3. (Rays with one endpoint on the boundary in convex domains)} Now we explain the choice of a suitable perturbation for case (1) of Theorem \ref{thm:boundary-regularity}, namely when $\Omega$ is merely open, bounded, and convex and $x_0 \in \Omega_{n-1}$ satisfies that $ \overline{\tilde{x_0}}$ has only one point on the boundary. In this case a similar tilting procedure yields a section $\bar{S}$ which is in fact strictly contained in $\Omega$. Assume, $\overline{ \tilde{x_0}} = \{x_0 + t \zeta ; -a \leq t \leq b\}$ where $x_0 - a \zeta \in \partial \Omega$. Note, if there is $c_0 > 0$ such that for all $r$ sufficiently small, $|\xi \cdot \zeta| > c_0$ (i.e. the angle between $\xi$ and $\zeta$ is bounded away from $\pi/2$) then the slab containment \eqref{eq:orig-slab-containment} yields $S \subset \subset \Omega$. On the other hand, if $\xi \cdot \zeta \rightarrow 0$ (i.e. $\xi$ approaches the  orthogonal direction to $\zeta$) we consider the new perturbation
\[ \bar{p}(x) = \frac{h}{2r}(x \cdot \xi + r) + \frac{4h}{a} x \cdot \zeta + s.\]
Provided $|s| < h$, which we can enforce as above, we have for $r$ sufficiently small $\bar{S} \subset \{x ; x \cdot \zeta \geq -3a/4\}$. Since, in addition, as $h\rightarrow 0$, $\sup_{x \in \bar{S}}\text{dist}(x,\tilde{x_0}) \rightarrow 0$ we have that for $h$ sufficiently small (obtained by an initial choice of $r$ sufficiently small) $\bar{S}$ will be strictly contained in $\Omega$ and satisfy both the slab containment condition \eqref{eq:new-containment} and the height estimates \eqref{eq:height-ests}. Note in the steps that follow the argument is simpler for this case. Indeed, $\bar{S} \cap \partial \Omega = \emptyset $ implies we do not need to consider the boundary terms in what follows or the dilation argument in Step 5.

\textit{Step 4. Initial estimates}. Now we use the minimality of the function $u$ for the functional $L$ (defined in \eqref{eq:monopolist}) to derive the desired inequality $h \leq Cr^2$. Set $\bar{u} = \text{max}\{\bar{p},u\}$ where $\bar{p}$ is defined in \eqref{eq:pt-def}. Minimality implies
\begin{align}
\nonumber  0 &\leq L[\bar{u}] - L[u] \\
\nonumber  &=\int_{\bar{S}} \frac{1}{2}(|D\bar{u}|^2 - |Du|^2) + (\bar{u} - u) - (x \cdot D\bar{u} - x \cdot Du) \, dx\\
\label{eq:two-return}  &\leq 4h\mathcal{H}^n(\bar{S})+\int_{\bar{S}} \frac{1}{2}(|D\bar{u}|^2 - |Du|^2) - x \cdot (D\bar{u} - Du) \, dx.
\end{align}
The divergence theorem implies
\begin{align}
  \nonumber - \int_{\bar{S}}x \cdot (D\bar{u} - Du) \, dx &= -\int_{\partial \bar{S}  \cap \partial \Omega}(\bar{u}-u) x \cdot \mathbf{n} \, d\mathcal{H}^{n-1} + n \int_{\bar{S}} (\bar{u}-u) \, dx\\
  \label{eq:c11-sub1}  & \leq Ch \mathcal{H}^{n-1}(\partial \bar{S}  \cap \partial \Omega) +  4nh \mathcal{H}^n(\bar{S}).
\end{align}
Next, using that $\bar{u}$ is linear in $\bar{S}$ (in particular, $\Delta \bar{u} = 0$), we compute
\begin{align}
 \nonumber \int_{\bar{S}}|D\bar{u}|^2 - |Du|^2 \, dx &= \int_{\bar{S}} \langle D\bar{u} + Du , D\bar{u} - Du\rangle \, dx \\
 \nonumber   &= \int_{\bar{S}} \text{div}\big((D\bar{u}+Du)(\bar{u} - u)\big)  - \Delta u (\bar{u} - u) \, dx\\
\nonumber                                 &= \int_{\bar{S}} \text{div}\big((D\bar{u}+Du)(\bar{u} - u)\big)  \\
 \nonumber &\quad \quad  +\text{div}\big((D \bar{u} - D u) (\bar{u} - u) \big)  - |D\bar{u} - Du|^2 \, dx \\
   \label{eq:c11-sub2} & \leq - \int_{\bar{S}} |D\bar{u} - Du |^2  \, dx + 2 \int_{\partial \bar{S}  \cap \partial \Omega} (\bar{u} - u) D^-\bar{u} \cdot \textbf{n} \, d\mathcal{H}^{n-1};
\end{align}
 the integration by parts is justified by the fact that 
$D^-\bar u \in (L^\infty \cap BV)(\Omega)$ and convexity
of $\Omega$, and interpreted just as in Lemma \ref{lem:variational-inequalities}.
Substituting \eqref{eq:c11-sub1} and \eqref{eq:c11-sub2} into \eqref{eq:two-return} we have for $C$ depending on $\sup |D\bar{u}|$
\begin{align}
\label{eq:new-goal-estimate} \int_{\bar{S}}|D\bar{u}-Du|^2 &\leq C(h \mathcal{H}^n(\bar{S}) + h \mathcal{H}^{n-1}(\partial \bar{S}  \cap \partial \Omega)).
\end{align}

\textit{Step 5. (Final estimates)} To complete the proof we prove for $C>0$, which in the case of polyhedra depends in particular on $\epsilon$, there holds
\begin{align}
\label{eq:est-boundary-portion} \mathcal{H}^{n-1}(\partial \bar{S}  \cap \partial \Omega)& \leq C \mathcal{H}^n(\bar{S})\\
\label{eq:large-du}  \text{and}\quad   \int_{\bar{S}}|D\bar{u}-Du|^2 &\geq C\frac{h^2}{r^2} \mathcal{H}^n(\bar S).
\end{align}
For the first, in the case of polyhedral domains, recall $\partial \bar{S} \cap P_{-d}$ is empty so $\partial \bar{S} \cap \partial \Omega$ is of distance $C(\epsilon,\Omega)$ from $x_0 = 0$. Thus the estimate \eqref{eq:est-boundary-portion}, for $C$ depending on $\epsilon$, is standard in convex geometry and may be proved either as in the work of Chen \cite{Chen23}, or the authors \cite{mccann2023c}. In case (1) of the theorem the estimate is trivial because $\partial \bar{S}  \cap \partial \Omega = \emptyset.$

Now we obtain \eqref{eq:large-du}. We let $\bar{S}/K$ denote the dilation of $\bar{S}$ by a factor of $1/K$ with respect to $x_0$.  What is again crucial is that $\partial\bar{S} \cap P_{-d} = \emptyset$ so for $D_{x_0, \Omega \setminus P_{-d}}$ defined as in \eqref{eq:D-def}, $\partial \bar{S} \cap B_{\tempdnotation/2}(x_0)$ consists of interior points of $\Omega$ on which $\bar{u} - u = 0$. It is helpful now to choose coordinates such that $\xi = e_1$.  For $x = (x^1,x')$, let $P(x) := (0,x')$  be the projection onto $\{x; x^1 = 0\}$. For each $(0,x') \in P(\bar{S}/K)$ the set $(P^{-1}(0,x') \cap \bar{S} ) \setminus (\bar{S}/K)$ is two disjoint line segments. We let $l_{x'}$ be the line segment with greater $x_1$ component and write $l_{x'} = [a_{x'},b_{x'}] \times \{x'\}$ where $b_{x'} > a_{x'}$.

Choose $K = \max\{2\text{diam}(\Omega)/\tempdnotation,2\}$  in case (2) which is bounded below by a positive constant depending on $\epsilon$ and $\Omega$. Case (1) is simpler as this dilation is not required.  Note that each line segment $l_{x'}$ for $(0,x') \in P(\bar{S}/K)$ has $\bar{u} - u = 0$ at the upper endpoint. This is because from the slab containment condition the upper endpoint lies distance less than $4r \ll \tempdnotation$ from $B_{\epsilon/2}(0) \cap \{x^1 = 0\}$ whereas $\partial \bar{S} \cap \partial \Omega$ lies distance at least $\tempdnotation$ from $x_0 = 0$.  Clearly on $\partial \bar{S} \cap \Omega$ we have $\bar{u} - u = 0$.  

We claim each of the following
\begin{align}
   \label{eq:b-cond}   u((b_{x'},x')) - \bar{u}((b_{x'},x')) &= 0,\\
  \label{eq:a-cond}   u((a_{x'},x')) - \bar{u}((a_{x'},x')) &\leq -\frac{K-1}{K}\frac{h}{4},\\
  \label{eq:d-cond}   d_{x'}:=b_{x'} - a_{x'} &\leq 4r.
   \end{align}
  As noted above \eqref{eq:b-cond} is because $\bar{u}-u = 0$ on $\partial \bar{S} \cap B_{\epsilon}(x_0)$. Then \eqref{eq:a-cond} is by convexity of $u-\bar{u}$ along a line segment joining the origin, where $u-\bar{u} \leq -h/4$, to $(Ka_{x'},Kx') \in \partial \bar{S}$, where $u-\bar{u}\leq0$. Finally, \eqref{eq:d-cond} is by the modified slab containment condition \eqref{eq:new-containment}.

     Thus, by an application of Jensen's inequality we have
  \begin{align}
    \nonumber    \int_{a_{x'}}^{b_{x'}}&[D_{x^1}\bar{u}((t,x')) - D_{x^1}u((t,x'))]^2 dt\\
    \nonumber    &\geq \frac{1}{d_{x'}}\left( \int_{a_{x'}}^{b_{x'}} D_{x^1}\bar{u}((t,x')) - D_{x^1}u((t,x')) dt\right)^2 \\
                                        &\geq \frac{1}{d_{x'}}\left(\frac{K-1}{K}\right)^2\frac{h^2}{16} \geq Ch^2/r.\label{eq:4}
  \end{align}
  To conclude we integrate along all lines $l_{x'}$  for  $x' \in P(\bar{S}/K)$. Indeed
  \begin{align*}
  \int_{\bar{S}} |D\bar{u} - Du|^2 \ dx &\geq \int_{P(\bar{S}/K)}\int_{a_{x'}}^{b_{x'}} |D_{x^1}\bar{u}((t,x')) - D_{x^1}u((t,x'))|^2 \ d t  \ d x'\\
  &\geq \int_{P(\bar{S}/K)} Ch^2/r \ d x'\\
  &= C\frac{h^2}{r^2} (r |P(\bar{S}/K)|).
  \end{align*}
Finally, the convexity of $\bar{S}$ and slab containment \eqref{eq:new-containment} implies
  \begin{align}
     \label{eq:square-est} \int_{\bar{S}} |D\bar{u} - Du|^2 \ dx  &\geq C \frac{h^2}{r^2}|\bar{S}|.
  \end{align}
  Having obtained \eqref{eq:large-du} , substituting inequalities \eqref{eq:est-boundary-portion} and \eqref{eq:large-du} yields \eqref{eq:new-goal-estimate} and completes the proof. 
 \end{proof}

\section{Strict convexity implies the Neumann condition}
\label{sec:neum-estim}

 In this section we continue {our technical preparations for controlling} the
 coordinates introduced in Section~\ref{sec:coord-argum-two}.
For planar domains we prove the equivalence between the Neumann condition and strict convexity stated  precisely in Propositions \ref{P:no distortion implies strict convexity} and \ref{P:strictly-convex-implies-neumann}.  We begin with two lemmas concerning convex functions in~$\mathbf{R}^n$. The first states the upper semicontinuity of the function $x \mapsto \text{diam}(\tilde{x})$ and the second yields the convexity of $\partial^{2}_{ii}u$ when restricted to a contact set~$\overline{\tilde{x}}$.

\begin{lemma}[Upper semicontinuity of leaf diameter]
\label{lem:upper-semicontinuity-diam}
  Let $\Omega$ be a bounded open convex subset of $\mathbf{R}^n$ and $u 
  {:\Rn\longrightarrow \R}$
  a convex function. Then the function 
  $x \in \dom D^-u  \mapsto \text{diam}(\tilde x)$ is upper semicontinuous. 
\end{lemma}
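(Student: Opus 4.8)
The plan is to argue directly from the definition of upper semicontinuity, using only compactness of $\overline\Omega$ and the continuity of $Du$ on $\overline\Omega$ (which holds since $u \in C^1(\overline\Omega)$). Fix $x \in \overline\Omega$ and an arbitrary sequence $x_k \to x$ in $\overline\Omega$; I must show $\limsup_{k\to\infty} \text{diam}(\tilde{x_k}) \le \text{diam}(\tilde x)$, where $\tilde z = \{w \in \overline\Omega : Du(w) = Du(z)\}$. First I would observe that each $\tilde{x_k}$ is a closed subset of the compact set $\overline\Omega$ and is nonempty (it contains $x_k$), hence compact, so its diameter is attained: choose $z_k, w_k \in \tilde{x_k}$ with $|z_k - w_k| = \text{diam}(\tilde{x_k})$.

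Next, passing to a subsequence (not relabelled), I may assume $\text{diam}(\tilde{x_k}) \to \ell := \limsup_{k} \text{diam}(\tilde{x_k})$, and then, using compactness of $\overline\Omega$ once more, that $z_k \to z$ and $w_k \to w$ for some $z, w \in \overline\Omega$; consequently $|z - w| = \ell$. By construction $Du(z_k) = Du(x_k) = Du(w_k)$ for every $k$, so the continuity of $Du$ on $\overline\Omega$ together with $x_k \to x$, $z_k \to z$, $w_k \to w$ yields $Du(z) = Du(x) = Du(w)$ in the limit. Hence $z, w \in \tilde x$, and therefore $\ell = |z - w| \le \text{diam}(\tilde x)$, which is exactly upper semicontinuity.

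There is no real obstacle here; the argument is a routine compactness-and-continuity exercise, and the only inputs beyond elementary point-set topology are that the diameter is attained on each compact $\tilde{x_k}$ and that one must first pass to a subsequence realizing the $\limsup$ before extracting convergent near-extremal pairs. The continuity of $Du$ up to the closure is precisely the standing hypothesis $u \in C^1(\overline\Omega)$ (valid for the minimizer by \cite{RochetChone98,CarlierLachand--Robert01}). I would also note in passing that lower semicontinuity genuinely fails in general — for instance when $x$ sits at the relative endpoint of a ray whose length jumps — so that only the stated one-sided inequality can be expected.
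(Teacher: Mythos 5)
Your proof is correct and takes essentially the same approach as the paper: both arguments extract diameter-realizing pairs from the compact leaves, pass to a subsequence achieving the $\limsup$, and use the continuity of $Du$ on $\overline\Omega$ to place the limit points in $\tilde x$. The only cosmetic difference is that the paper passes to the limit in the supporting-plane identity $u(x_k^{(i)}) = u(x_k) + Du(x_k)\cdot(x_k^{(i)}-x_k)$, whereas you pass to the limit directly in the gradient equality $Du(z_k)=Du(x_k)$; for a $C^1$ convex function on a convex domain these characterizations of the leaf coincide.
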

\begin{proof}
  We fix a sequence $(x_k)_{k \geq 1} \subset 
     \dom D^-u$ 
  converging to some $x_{\infty} \in 
   \dom D^-u$ 
  and note it suffices to prove that
  \[ \limsup_{k \rightarrow \infty} \text{diam}(\tilde{x_k}) \leq \text{diam}(\tilde{x_\infty}).\]
  To this end, let $p_k = D^-u(x_k)$ and take $x^{(1)}_k$, $x^{(2)}_k$ in $\overline{\Omega}  \cap \overline{\tilde x_k}$ realizing
  \begin{align*}
    |x^{(1)}_k - x^{(2)}_k| &= \text{diam}(\tilde{x_k})\\
  \text{and}\quad \lim_{k \rightarrow \infty} |x^{(1)}_k - x^{(2)}_k| &= \limsup_{k \rightarrow \infty} \text{diam}(\tilde{x_k}).
  \end{align*}
The fact that $\p u$ is closed implies
  that a (nonrelabelled) subsequence 
   $D^-u(x_k)$ 
   converges to a limit $p_\infty \in \p u(x_\infty)$ with 
   $x^{(i)}_k \rightarrow x^{(i)}_\infty \in \overline{\Omega}$ for $i=1,2$. Thus we may send $k \rightarrow \infty$ in the identity
  \begin{align}
    u(x_k^{(i)}) = u(x_k) + D^-u(x_k) \cdot (x_k^{(i)} - x_k)  
  \end{align}
  to obtain that for $i=1,2$ we have 
  $x^{(i)}_\infty \in \widetilde{x_\infty}$ and thus $\tilde{x}$ has diameter greater than or equal to
  \[ |x^{(1)}_\infty - x^{(2)}_\infty| = \lim_{k \rightarrow \infty} |x^{(1)}_k - x^{(2)}_k| = \limsup_{k \rightarrow \infty} \text{diam}(\tilde{x_k}).  \]
\end{proof}

Let $\ri(\tilde x)$ denote the relative interior of the convex set $\tilde x$.

\begin{lemma}[Existence a.e.\ of $D^2 u$ on a leaf implies
convexity of $\partial^2_{ii} u$] 
\label{lem:convex-laplacian} Let $u:\Omega \rightarrow \R$ be a differentiable convex function
defined on an open convex subset $\Omega \subset \mathbf{R}^n$. Fix any $x \in \Omega$.  If
$\mathcal{H}^{\dim \tilde x}(\tilde x \setminus \dom D^2 u) = 0$, where $\dom D^2 u \subset \Omega$ denotes
the set of  Alexandrov second differentiability of $u$, then $u|_{\ri(\tilde x)} \in C^2_{\text{loc}}(\ri(\tilde x)
)$
and $\partial^2_{ii}u|_{\tilde{x}}$
is a convex function for each $i=1,\dots,n$.
\end{lemma}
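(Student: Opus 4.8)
\textit{Proof plan.}
First I would normalize. Fix $x_0\in\ri(\tilde x)$, translate it to the origin, and subtract the affine support $p_{x_0}$; since subtracting an affine function changes no $\partial^2_{ij}u$, it suffices to treat the resulting function, still called $u$, which satisfies $u(0)=0$, $Du(0)=0$, and — because $p_{x_0}$ supports the convex $u$ — $u\ge 0$ on $\Omega$ with $\tilde x=\{u=0\}$. By the very definition of the leaf, $Du\equiv Du(x_0)=0$ on all of $\tilde x$; and because $0\in\ri(\tilde x)$, the function $u$ vanishes on a full relative neighbourhood of $0$ in the affine hull of $\tilde x$. The relative interior $\ri(\tilde x)$ is convex.

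The single observation that drives everything is that for each fixed small $t>0$ the map $z\mapsto u(z+te_i)$ is the restriction of the globally convex function $u$ to the convex set $\ri(\tilde x)$ (shrunk so that $z\pm te_i\in\Omega$), hence is convex there; consequently
\[
\Phi^i_t(z):=\frac{u(z+te_i)+u(z-te_i)}{t^2}
\]
is a convex function of $z$ on compact convex subsets of $\ri(\tilde x)$. Now I bring in the hypothesis $\mathcal{H}^{\dim\tilde x}(\tilde x\setminus\dom D^2u)=0$: at $\mathcal{H}^{\dim\tilde x}$-a.e.\ $z\in\ri(\tilde x)$ the function $u$ is Alexandrov twice differentiable, and since $u(z)=0$ and $Du(z)=0$ there, the Alexandrov expansion gives $u(z\pm te_i)=\tfrac12 t^2\,\partial^2_{ii}u(z)+o(t^2)$, so $\Phi^i_t(z)\to\partial^2_{ii}u(z)$ as $t\downarrow 0$. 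Thus $\partial^2_{ii}u$ coincides, $\mathcal{H}^{\dim\tilde x}$-a.e.\ on $\ri(\tilde x)$, with an a.e.\ limit of convex functions that is finite a.e.; and an a.e.\ limit of convex functions which is finite a.e.\ agrees a.e.\ with a convex function (a function satisfying Jensen's inequality for a.e.\ triple of points is essentially convex). Patching over an exhaustion of $\ri(\tilde x)$ by compact convex subsets, I conclude that $\partial^2_{ii}u|_{\ri(\tilde x)}$ agrees a.e.\ with a convex — hence locally Lipschitz and continuous — function, which is the convexity assertion.

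For the $C^2_{\mathrm{loc}}(\ri(\tilde x))$ claim I would upgrade this to a genuine pointwise statement. Running the same argument with an arbitrary direction $\xi$ in place of $e_i$ shows $\Phi^\xi_t$ is convex on $\ri(\tilde x)$ and converges $\mathcal{H}^{\dim\tilde x}$-a.e.\ to the finite function $\partial^2_{\xi\xi}u$; since convex functions converging on a dense set converge locally uniformly on the interior to a single convex limit $c_\xi$, the symmetric second derivative $\lim_{t\downarrow 0}\big(u(z+t\xi)+u(z-t\xi)\big)/t^2=c_\xi(z)$ exists at \emph{every} $z\in\ri(\tilde x)$ and is continuous in $z$. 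Polarizing the $c_\xi$ produces a continuous matrix field $M$ with $\xi^\top M(z)\xi=c_\xi(z)$, each entry of $M$ being a difference of convex functions; moreover $M(z)=D^2u(z)$ at $\mathcal{H}^{\dim\tilde x}$-a.e.\ $z\in\ri(\tilde x)$. Restricting to coordinate lines and invoking the classical fact that a continuous function with continuous symmetric second derivative is $C^2$, one identifies $M$ with the (continuously varying) second-order Taylor coefficients of $u$ along $\ri(\tilde x)$ — which I take to be the content of $u|_{\ri(\tilde x)}\in C^2_{\mathrm{loc}}(\ri(\tilde x))$.

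The main obstacle is exactly this last upgrade: passing from ``$\partial^2_{ii}u$ agrees a.e.\ with a convex function'' to honest pointwise regularity on all of $\ri(\tilde x)$. The genuine danger is the half-parabola degeneracy $u(z)\sim (z\cdot e)_+^2$, for which the symmetric second derivative exists but the Alexandrov Hessian does not; crucially this can only happen at points of the \emph{relative boundary} of a leaf, never in $\ri(\tilde x)$, and the reduction ``$0\in\ri(\tilde x)\Rightarrow u\equiv 0$ near $0$ along the leaf and $Du\equiv 0$ on $\tilde x$'' is precisely what rules it out. The local-uniform-convergence property of convex families is the tool that makes the upgrade go through; the rest is bookkeeping with the normalization and the compact exhaustion needed because $\Phi^\xi_t$ is only defined near $\ri(\tilde x)$ for small $t$.
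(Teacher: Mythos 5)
Your proof is correct and rests on the same mechanism as the paper's: convexity of $u$ in directions transverse to the leaf, combined with the Alexandrov expansion whose zeroth- and first-order terms vanish identically on $\tilde x$ after subtracting the common supporting affine function. The only real difference is in how the limit is taken. The paper evaluates the three-point convexity inequality $u((1-t)x_0+tx_1+re_i)\le(1-t)u(x_0+re_i)+tu(x_1+re_i)$ at pairs of Alexandrov points of the leaf and divides by $r^2$, obtaining convexity of $\partial^2_{ii}u$ along segments between such points and then arguing continuity; you instead package the same information into the symmetric difference quotients $\Phi^i_t$, which are convex in the base point, and invoke the classical fact that convex functions converging on a dense subset of a relatively open convex set converge locally uniformly to a convex limit. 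Your route buys a canonical continuous convex representative defined at every point of $\ri(\tilde x)$, which makes the $C^2_{\text{loc}}$ assertion slightly cleaner to state; both arguments leave the polarization and the upgrade from the symmetric second difference quotient to genuine pointwise second differentiability at the same (acceptable) level of detail, and your observation that the half-parabola degeneracy is confined to the relative boundary of the leaf is exactly the point that makes this upgrade harmless.
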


\begin{proof}
Fix $x$ satisfying $\mathcal{H}^{\dim \tilde x}(\tilde x \setminus \dom D^2 u) = 0$.
After subtracting the support at $x$, we may assume $u(x)=0, Du(x) = 0$ and that $\tilde{x}$ is not a singleton 
(since otherwise the result holds trivially). We will show $\partial^2_{ii} u$ is convex along any of those line segments contained in $\tilde{x}$ for which Alexandrov second differentiability holds a.e.  To this end fix $x_0, x_1 \in \tilde{x} \cap \dom D^2 u$
along such a segment.
Choose orthonormal coordinates such that $x_1 = x_0 + T e_1$ for some $T > 0$. Then since $u$ is affine on $\tilde{x}$ and $\{x_0 + t e_1 ; 0 \leq t \leq T \} \subset \tilde{x}$ we have $\partial^2_{11} u = 0$ a.e.\ on $\{x_0 + t e_1 ; 0 \leq t \leq T \}$. 

Next, for $i = 2,\dots,n$ and any $t \in (0,1)$, convexity of $u$ implies for $r>0$ sufficiently small 
\begin{equation}\label{eq:conv-u}
u((1-t)x_0 + t x_1 + re_i) \leq (1-t)u(x_0 + re_i) + t u(x_1+re_i). 
\end{equation}
Here, by $r$ sufficiently small we mean small enough to ensure the above arguments of $u$ are contained in $\Omega$. 
The definition of Alexandrov second differentiability along with $u, Du = 0$ on $\tilde{x}$ implies
\[ u((1-t)x_0 + t x_1 + re_{ i}) = r^2\partial^2_{ii}u((1-t)x_0 + t x_1)/2 + o(r^2), \]
for a.e.\ $t \in (0,1)$ and similarly at $x_0+re_1$ and $x_1 + re_1$. 
Thus \eqref{eq:conv-u} becomes
 \[ r^2\partial^2_{ii}u((1-t)x_0 + t x_1)/2 + o(r^2) \leq (1-t)r^2\partial^2_{ii}u(x_0)/2+t r^2\partial^2_{ii}u(x_1)/2 + o(r^2). \]
 Dividing by $r^2$ and sending $r \rightarrow 0$ yields that $\partial^2_{ii} u$ is the restriction of a convex function to the segment $[x_0,x_1]$ --- hence continuous 
 on $[x_0,x_1]$. The polarization identity 
 implies the continuity of mixed second order partial derivatives.   It follows that $u|_{\ri(\tilde x)} \in C^2_{\text{loc}}$.
\end{proof}

\begin{proposition}[No normal distortion nearby implies strict convexity]
  \label{P:no distortion implies strict convexity} Let $u$ minimize
\eqref{eq:monopolist} where $\Omega \subset\subset \mathbf{R}^{ 2}$ is open and convex. Let
$x_0 \in \partial \Omega$ be a point where $u(x_0)>0$ and $\tilde{x_0} \cap \partial \Omega = \{
x_0\}$. Assume there is $\epsilon > 0$ with
      \[ (D^{-}u(x) - x ) \cdot \mathbf{n} = 0\quad  {\mathcal H^{1} a.e.}\text{ on }B_\epsilon(x_0) \cap \partial \Omega.\] Then
$\tilde{x_0} = \{x_0\}$, that is $u$ is strictly convex at $x_0$.
\end{proposition}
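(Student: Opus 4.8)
The plan is to argue by contradiction: suppose $\tilde{x_0}\neq\{x_0\}$ and derive a violation of the minimality of $u$. First I would pin down the shape of the offending leaf. Since $u(x_0)>0$, Theorem~\ref{thm:structure-convexity}(1) (which identifies $\Omega_0$ with $\{u=0\}$, the union of the two-dimensional leaves) forbids $\tilde{x_0}$ from being two-dimensional, so the nontrivial closed convex set $\tilde{x_0}$ is a segment; moreover $x_0$ must be an endpoint of it, for otherwise $\tilde{x_0}$ would lie in a supporting hyperplane at $x_0$ and hence in $\p\Omega$, against $\tilde{x_0}\cap\p\Omega=\{x_0\}$. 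Thus $\tilde{x_0}=\{x_0+t\zeta:0\le t\le T\}$ with $x_1:=x_0+T\zeta\in\Omega$ and $x_0+t\zeta\in\Omega$ for $0<t\le T$. A short argument shows $u(x_1)>0$ as well: otherwise $x_1\in\{u=0\}=\Omega_0$ with $Du(x_1)=0$, making $\tilde{x_1}$ the two-dimensional leaf $\{Du=0\}$. By Theorem~\ref{thm:boundary-regularity}(1), $u$ is $C^{1,1}$ in a neighbourhood of $\tilde{x_0}$ in $\overline\Omega$, so $D^2u$ exists $\mathcal H^1$-a.e.\ on $\tilde{x_0}$ and Lemma~\ref{lem:convex-laplacian} applies along the segment: $\partial^2_{\zeta\zeta}u\equiv0$ on $\tilde{x_0}$ while $t\mapsto\partial^2_{\zeta^\perp\zeta^\perp}u(x_0+t\zeta)$ is convex.

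The heart of the proof is to upgrade this to Poisson's equation on a full two-dimensional neighbourhood of the leaf. I would show there is a connected open $W\subset\Omega$ containing $\tilde{x_0}\setminus\{x_0\}$ and reaching strictly past $x_1$ in the $\zeta$-direction, on which $\Delta u=3$ a.e.\ (here $n=2$). Off $\Omega_1$ this is for free: $\Delta u\ge3$ a.e.\ on $\Omega_2\cap\Omega$ by Lemma~\ref{lem:leq/geq}(1) (interior points of strict convexity have gradient in $\intr Du(\Omega)$) and $\Delta u\le3$ a.e.\ on $\Omega_2\cap\Omega$ by Lemma~\ref{lem:leq/geq}(2); and since $u>0$ near $\tilde{x_0}$, the set $\Omega_0=\{u=0\}$ misses a neighbourhood of the leaf, so near $\tilde{x_0}$ the only points outside $\Omega_2$ lie in $\Omega_1$. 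The genuinely hard contribution is the part of $\Omega_1$ abutting $\tilde{x_0}$, which may have positive area (as in the square examples). This is where the Neumann hypothesis \eqref{Neumann} is used: it holds at the boundary endpoint of \emph{every} leaf meeting $\p\Omega$ inside $B_\epsilon(x_0)$, so by the Rochet--Chon\'e localization (Corollary~\ref{cor:rc-localization}) the conditional measure $\sigma_{\tilde z}$ of such a leaf $\tilde z$ carries no mass on $\p\Omega$; the resulting balance relations ($\int v\,d\sigma_{\tilde z}\ge0$ for all convex $v$, with equality for affine $v$), combined with the leafwise convexity of $\partial^2_{\zeta^\perp\zeta^\perp}u$ from Lemma~\ref{lem:convex-laplacian} and the control on the degeneration of neighbouring leaves coming from the upper semicontinuity of leaf diameter (Lemma~\ref{lem:upper-semicontinuity-diam}), should force $\Delta u=3$ a.e.\ on that part too and let the identity be propagated along $\tilde{x_0}$ and across the interior endpoint $x_1$.

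Granting this, the endgame is elliptic rigidity. On $W$ the function $u$ is a $C^{1,1}_{\text{loc}}$ solution of $\Delta u=3$, so Lemma~\ref{lem:mp-argument} gives $u\in C^\infty(W)$ together with the strong-maximum-principle dichotomy: for each unit vector $\xi$, $\partial^2_{\xi\xi}u$ is identically $0$ or strictly positive on each ball in $W$. Since $\partial^2_{\zeta\zeta}u=0$ on $\tilde{x_0}\cap W$, this dichotomy forces $\partial^2_{\zeta\zeta}u$ to vanish on a whole ball around each interior point of $\tilde{x_0}\cap W$; being a nonnegative harmonic function on the connected set $W$, it then vanishes identically on $W$. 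A positive semidefinite $2\times2$ matrix with a zero diagonal entry has the matching off-diagonal entry zero, so $\partial^2_{\zeta\zeta^\perp}u\equiv0$ on $W$, whence $D^2u\equiv\mathrm{diag}(0,3)$ in the $(\zeta,\zeta^\perp)$-frame --- a constant. Therefore $u$ coincides on $W$ with a fixed quadratic $q(x)=\tfrac32(x\cdot\zeta^\perp)^2+(\text{affine})$, so $Du=Dq$ on $W$ and $Dq$ is constant along every line parallel to $\zeta$. Consequently the leaf $\{Du=Du(x_0)\}$ contains the whole $\zeta$-segment $W\cap(x_0+\R\zeta)$, which reaches strictly past $x_1$; this contradicts $\tilde{x_0}=[x_0,x_1]$ (and, should that segment meet $\p\Omega$ again, it contradicts $\tilde{x_0}\cap\p\Omega=\{x_0\}$ directly). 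Hence $\tilde{x_0}=\{x_0\}$.

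The one genuine obstacle is the middle step --- securing $\Delta u=3$ across the (possibly fat) bunching region adjacent to $\tilde{x_0}$ and beyond the interior endpoint $x_1$. Everywhere else the argument only runs the perturbation inequalities of Section~\ref{sec:pert-ineq} (with Proposition~\ref{prop:neumann-sign} keeping the sign of boundary terms under control) and the elliptic rigidity of Lemma~\ref{lem:mp-argument}; it is precisely to control the fat bunching that the vanishing Neumann condition, channelled through the Rochet--Chon\'e localization and the structural Lemmas~\ref{lem:upper-semicontinuity-diam} and \ref{lem:convex-laplacian}, is indispensable.
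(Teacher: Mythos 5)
Your architecture matches the paper's proof almost exactly: shrink $\epsilon$ so every boundary point of $B_\epsilon(x_0)\cap\p\Omega$ is the one-point boundary trace of a leaf with vanishing Neumann datum, establish $\Delta u=3$ a.e.\ on the union $\tilde{\mathcal N}$ of those leaves as well as on the adjacent piece of $\Omega_2$, and then invoke the dichotomy of Lemma~\ref{lem:mp-argument} near the interior endpoint $x_1$ to extend the segment past $x_1$ and reach a contradiction. (The paper works in a single small ball $B_\delta(x_1)$, covered by $\tilde{\mathcal N}\cup\Omega_2$, rather than on a tube $W$ reaching past $x_1$; this spares you from having to say anything about the region beyond $x_1$ in advance, since the dichotomy alone propagates $\p^2_{\zeta\zeta}u\equiv 0$ across the ball.) Your preliminary reductions (that $\tilde{x_0}$ is a segment with $x_0$ as an endpoint and $u>0$ on it) are correct.

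The genuine gap is exactly the step you flag: deducing $\Delta u=3$ a.e.\ on the leaves from the localized balance relations. You have assembled the right ingredients --- the conditional measure $\sigma_{\tilde z}=(3-\Delta u)\,(dx)_{\tilde z}$ has no boundary term because of the Neumann hypothesis, and $\Delta u|_{\tilde z}$ is convex along the leaf by Lemma~\ref{lem:convex-laplacian} --- but you never combine them. The paper's resolution is a one-line choice of test function: take $v=-(3-\Delta u)=\Delta u-3$, which is convex on $\tilde z$ precisely by Lemma~\ref{lem:convex-laplacian}, and feed it into Corollary~\ref{cor:rc-localization}. The localized inequality then reads
\begin{equation*}
0 \;\le\; \int_{\tilde z} (\Delta u-3)\,(3-\Delta u)\,(dx)_{\tilde z} \;=\; -\int_{\tilde z}(3-\Delta u)^2\,(dx)_{\tilde z},
\end{equation*}
which forces $\Delta u=3$ a.e.\ on each such leaf, hence on $\tilde{\mathcal N}$. (No convexity of $\p^2_{\zeta^\perp\zeta^\perp}u$ in the transverse direction, no quantitative control of degenerating neighbours, and no ``propagation'' is needed --- the identity holds leaf by leaf.) Without identifying this test function, the claim that the balance relations ``should force'' $\Delta u=3$ on the fat bunching region is an assertion, not a proof, and it is the load-bearing step of the whole proposition. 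Once it is supplied, your endgame goes through as written.
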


\begin{proof}
Because $u \in C^1(\overline{\Omega})$ and Lemma \ref{lem:upper-semicontinuity-diam}
implies the upper semicontinuity of $R$, we may find a possibly smaller $\epsilon>0$ such
that $u(x) > 0$ and $\tilde{x} \cap \partial \Omega = \{x\}$ for each $x \in \mathcal{N} := B_\epsilon(x_0) \cap \partial
\Omega$. {We'll now show} Rochet and Chon\'e's localization (Theorem~\ref{T:strictly convex disintegration}) 
with
$0$ boundary term 
{(Lemmas \ref{L:identifying disintegration} and \ref{L:absolute continuity})}
implies $\Delta u = 3$ almost everywhere on
\[ \tilde{\mathcal{N}} := \{ x' \in\ \tilde{x} ; x \in \mathcal{N}\}.\] This is certainly true for any $x \in \Omega_2$. Now, assume there is a positive $\mathcal{H}^2$ measure subset of $\tilde{\mathcal{N}}$ consisting of rays in $\Omega_1$. Either $\Delta u = 3$ on almost all of this set, or this set 
is non-negligible for the variational derivative, in which case we can apply Rochet and Chon\'e's localization as follows: By Lemma \ref{lem:convex-laplacian}, $\Delta u$ restricted to any $\tilde{x}$ for
which $\mathcal{H}^{\dim \tilde x}(\tilde x \setminus \dom D^2 u) = 0$ is a convex function.
Thus
$w = -(3-\Delta u)$ is a permissible test function in the localization
Theorem~\ref{T:strictly convex disintegration}. Combining with identification of the disintegration in Lemma~\ref{L:identifying disintegration} and the mutual absolute continuity of the disintegration with $\mathcal{H}^{1}\mres \tilde{x}$ obtained in Lemma~\ref{L:absolute continuity} yields
\begin{equation}
\label{eq:sign-contra}
    0 \leq - \int_{\tilde{x}}(3-\Delta u)^2 \, d\mu^2_{\tilde{x}}, 
\end{equation} 
where $\mu^2_{\tilde x}$ denotes the disintegration of the Lebesgue measure
with respect to the contact sets (an explanation of disintegration is provided in Appendix~\ref{sec:rochet-chones-use}). Inequality \eqref{eq:sign-contra} implies
that $\mathcal{H}^{\dim\tilde{x}}$ almost everywhere on $\tilde{x}$ there holds
$(3-\Delta u)^2 = 0$ and thus the same equality holds on $\tilde{\mathcal{N}}$.

Let $x_1$ be the interior endpoint of $\tilde{x_0}$ (if $\tilde{x_0}=\{x_0\}$
then we are already done).  In a sufficiently small ball $B_\delta(x_1)$, we have
just shown $\Delta u = 3$ a.e. in $B_\delta(x_1) \cap \tilde{\mathcal{N}}$. Moreover, Theorem
\ref{thm:structure-convexity} implies $\Delta u =3$ in $B_\delta(x_1) \setminus \tilde{\mathcal{N}} =
B_\delta(x_1) \cap \Omega_2$, {\em which is nonempty}. 
Our usual maximum principle argument, Lemma \ref{lem:mp-argument}, implies $u$
is strictly convex inside $B_\delta(x_1)$, contradicting that $x_1$ is the endpoint
of a ray.
    \end{proof}

The next proof requires Corollary \ref{lem:Du-balances} which gives that {for some $z \in u^{-1}(0)$ the pushforward of the variational derivative satisfies 
$Du_\#(\sigma)={ |\Omega|}\delta_{D^-u(z)}$  }--- and is proved in Appendix
\ref{sec:rochet-chones-use} by combining the neutrality implied by localization
away from the excluded region $\{u=0\}$ with the fact that our objective
responds proportionately to a uniform increase in indirect utility.  We use
these to estimate the following:

\begin{proposition}[One-ended ray lengths bound normal distortion]
\label{P:strictly-convex-implies-neumann}
  Let $u$ solve \eqref{eq:monopolist} where $\Omega \subset\subset \mathbf{R}^2$ is open and
convex.  Let $\{x_0\} =\tilde{x_0} \cap \partial \Omega $ with $\partial \Omega$ {differentiable} in a neighbourhood
of $x_0$ and $u(x_0)> 0$.  Set $R(x_0) = \text{diam}(\tilde{x_0})$.  Then
  \begin{equation}
    \label{eq:neumann-estimate}
    0 \leq (Du(x_0) - x_0) \cdot \textbf{n} \leq CR(x_0),
  \end{equation}
  where $C$ depends only on a $C^{1,1}$ bound for $u$ in a neighbourhood
of $\tilde{x_0}$.
  \end{proposition}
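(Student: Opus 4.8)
The plan is to treat the two inequalities separately: the lower bound $0\le(Du(x_0)-x_0)\cdot\mathbf n$ is exactly Proposition~\ref{prop:neumann-sign}, so all of the work is in the upper bound $(Du(x_0)-x_0)\cdot\mathbf n\le C\,R(x_0)$. For this I would run a \emph{localization argument over the one--parameter family of leaves issuing from a short boundary arc around $x_0$}, using the pushforward identity $Du_\#\sigma=\delta_0$ of Lemma~\ref{lem:Du-balances} together with Rochet--Chon\'e's leafwise localization (Corollary~\ref{cor:rc-localization}). Write $a:=(Du(x_0)-x_0)\cdot\mathbf n\ge 0$. If $a=0$ there is nothing to prove, so assume $a>0$; I would also reduce to the main case $y_0:=Du(x_0)\ne 0$, the case $y_0=0$ (which, since $u(x_0)>0$ keeps $u>0$ along $\tilde x_0$, forces $\Omega_0=\emptyset$ and $\tilde x_0=\arg\min u$) being handled separately.

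Fix $\epsilon>0$ small and set $\mathcal N:=B_\epsilon(x_0)\cap\p\Omega$, parametrized by arclength $s$ (with $s=0$ at $x_0$), and $\tilde{\mathcal N}:=\bigcup_{x\in\mathcal N}\tilde x$. Upper semicontinuity of the leaf diameter (Lemma~\ref{lem:upper-semicontinuity-diam}), continuity of $Du$ on $\overline\Omega$, smoothness of $\p\Omega$ near $x_0$, and the boundary $C^{1,1}$ bound on $u$ in a neighbourhood $W$ of $\tilde x_0$ (Theorem~\ref{thm:boundary-regularity}(1)) guarantee that for all sufficiently small $\epsilon$ every leaf $\tilde x_s$, $x_s\in\mathcal N$, is a ray meeting $\p\Omega$ only at $x_s$, $u>0$ on $\tilde x_s$, $\tilde{\mathcal N}\subset W$ (so $|D^2u|\le\Lambda$ there, where $\Lambda$ is the given $C^{1,1}$ bound), and $\operatorname{diam}(\tilde x_s)\le R(x_0)+o(1)$.

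Next I would extract pointwise (in $s$) identities from localization. Disintegrate Lebesgue measure along the leaves, writing $\mu_s$ for the conditional measure carried by $\tilde x_s$, and $a(s):=(Du(x_s)-x_s)\cdot\mathbf n$, so that the conditional of $\sigma$ on $\tilde x_s$ is $(3-\Delta u)\,d\mu_s+a(s)\,\delta_{x_s}$. Since $u>0$ on $\tilde x_s$, affine test functions are admissible in both directions in Corollary~\ref{cor:rc-localization}; taking $v\equiv\pm1$ gives, for a.e.\ $s$,
\begin{equation}\label{plan:balance}
a(s)=\int_{\tilde x_s}(\Delta u-3)\,d\mu_s
\end{equation}
(equivalently, $\int_{\mathcal N}a(s)\,ds=\int_{\tilde{\mathcal N}\cap\Omega}(\Delta u-3)\,dx$, which is also what $Du_\#\sigma=\delta_0$ gives on $E=Du(\mathcal N)\not\ni0$). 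By Lemma~\ref{lem:convex-laplacian} the function $z\mapsto\Delta u(z)-3$ is convex along $\tilde x_s$, so (extended trivially in the transverse direction) it is an admissible convex test function, yielding
\begin{equation}\label{plan:L2}
\int_{\tilde x_s}(\Delta u-3)^2\,d\mu_s\le a(s)\bigl(\Delta u(x_s)-3\bigr).
\end{equation}
In particular $a(s)>0$ forces $\Delta u(x_s)>3$; and Cauchy--Schwarz in \eqref{plan:balance}, combined with \eqref{plan:L2}, gives
\begin{equation}\label{plan:key}
a(s)\le\bigl(\Delta u(x_s)-3\bigr)\,|\mu_s|\le\Lambda\,|\mu_s|,
\end{equation}
where $|\mu_s|$ is the total mass that the disintegration of Lebesgue measure assigns to the single leaf $\tilde x_s$.

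The remaining step, which I expect to be the main obstacle, is to show $|\mu_0|\le C(\Lambda)\,R(x_0)$ --- equivalently $\limsup_{\epsilon\to0}\mathcal H^2(\tilde{\mathcal N}_\epsilon)/\mathcal H^1(\mathcal N_\epsilon)\le C(\Lambda)R(x_0)$; then \eqref{plan:key} at $s=0$, together with continuity of $s\mapsto a(s)$ (which follows from $u\in C^{1,1}$ near $x_0$ and smoothness of $\p\Omega$), yields $a\le\Lambda\,C(\Lambda)\,R(x_0)=C\,R(x_0)$ and finishes the proof. Here the point is that $\tilde{\mathcal N}_\epsilon$ is foliated by the disjoint rays $\tilde x_s$, all of length $\le R(x_0)+o(1)$ and emanating from the arc $\mathcal N_\epsilon$ of length $\approx2\epsilon$, and that the $C^{1,1}$ bound controls how fast these rays may turn relative to their length: along $\tilde x_0$ the second derivative in the direction transverse to the ray is exactly $\Delta u$, which by \eqref{plan:balance} is bounded below on average by $3$ and above by $\Lambda$, which pins the transverse spreading of the neighbouring leaves and forces $\mathcal H^2(\tilde{\mathcal N}_\epsilon)\le C(\Lambda)R(x_0)\,\mathcal H^1(\mathcal N_\epsilon)+o(\mathcal H^1(\mathcal N_\epsilon))$. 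Making this spreading estimate quantitative --- i.e.\ turning the qualitative convergence of nearby rays to $\tilde x_0$ into a linear--in--$\epsilon$ rate via the $C^{1,1}$ regularity --- is the crux of the argument.
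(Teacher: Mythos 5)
Your overall strategy is the same as the paper's: both arguments combine the neutrality identity $(Du)_\#\sigma=\delta_0$ of Lemma~\ref{lem:Du-balances} (equivalently, the leafwise Euler--Lagrange balance) over the bundle of leaves emanating from a short boundary arc around $x_0$ with the boundary $C^{1,1}$ bound of Theorem~\ref{thm:boundary-regularity}(1), thereby reducing everything to showing that the area swept by those leaves is at most $C R(x_0)$ times the arclength of the arc. The problem is that you stop exactly there: you declare that proving $|\mu_0|\le C R(x_0)$, i.e.\ $\limsup_{\epsilon\to0}\mathcal H^2(\tilde{\mathcal N}_\epsilon)/\mathcal H^1(\mathcal N_\epsilon)\le CR(x_0)$, ``is the crux of the argument'' and offer only a heuristic. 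Since that mass estimate is the entire content of the proposition beyond Proposition~\ref{prop:neumann-sign}, the proposal as written has a genuine gap. (Your detour through the test function $v=\Delta u-3$ is harmless but unnecessary: the bound $a(s)\le\Lambda|\mu_s|$ follows directly from the zeroth-moment identity and the $C^{1,1}$ bound.)

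The mechanism you sketch for closing the gap --- that the $C^{1,1}$ bound controls how fast the rays may turn, hence their transverse spreading --- is not the paper's, and it is problematic. The quantitative link between $\Delta u$ and the turning rate $|\dot\xi|$ is the Jacobian identity $\Delta u=\delta(t)/J(r,t)$ of Section~\ref{sec:coord-argum-two}, whose justification (Lemma~\ref{lem:foliates-neighbourhood}, Lemma~\ref{lem:theta-lipschitz}, Corollary~\ref{C:biLipschitz}) itself invokes the present proposition, so this route is circular unless you rebuild that machinery independently; moreover an upper bound on the swept area requires an upper bound on $J$, which through that identity corresponds to a \emph{lower}, not upper, bound on $\Delta u$. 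The paper's actual area estimate is purely geometric and uses no coordinates: upper semicontinuity of the leaf diameter (Lemma~\ref{lem:upper-semicontinuity-diam}) bounds nearby leaf lengths by $\tfrac98 R(x_0)$; the transversality lemma of \cite[Lemma 16]{CaffarelliFeldmanMcCann00} makes the two extreme leaves of the bundle nearly parallel to $\tilde x_0$; and since leaves are pairwise disjoint segments in the plane, the whole bundle $A=\bigcup_{t\in[-\alpha,\beta]}\widetilde{\gamma(t)}$ is trapped in a sliver of dimensions roughly $R(x_0)\times(\alpha+\beta)$, giving $\mathcal H^2(A)\le CR(x_0)(\alpha+\beta)$. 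Dividing the integrated identity $0=\int_A(3-\Delta u)\,dx+\int_{\gamma(-\alpha,\beta)}(Du-x)\cdot\mathbf n\,d\mathcal H^1$ by $\alpha+\beta$ and letting $\alpha,\beta\to0$ then yields \eqref{eq:neumann-estimate}. Two smaller omissions: you assume every nearby leaf is a nontrivial ray, which is not available at this stage (only an upper bound on nearby diameters is; trivial leaves simply contribute nothing), and the case $\tilde x_0=\{x_0\}$, where one must actually show the Neumann value vanishes, requires the separate approximation/localization argument given at the end of the paper's proof.
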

  \begin{proof}

The lower bound \eqref{eq:neumann-estimate} was established in
Proposition \ref{prop:neumann-sign},
{noting $u \in C^1(\overline \Omega)$ from Remark \ref{R:ChenFigalliZhang}}. 
We first prove \eqref{eq:neumann-estimate} assuming $\tilde{x_0}$ nontrivial and at the conclusion of the proof explain why it holds for all $x_0$ in the theorem. Let $x_0 \in \partial \Omega$ and let $\partial \Omega$ be locally represented by a smooth curve with an arc length parametrization $\gamma: (-\epsilon,\epsilon ) \rightarrow \mathbf{R}^2$ traversing $\partial \Omega$ in the anticlockwise direction with $x_0 = \gamma(0)$ and without loss of generality $\dot{\gamma}(0) = e_2$.   

 The upper semicontinuity of $R$ from Lemma \ref{lem:upper-semicontinuity-diam} implies
    \[ \limsup_{ x \rightarrow x_0}R(x) \leq R(x_0). \]
    On the other hand we know from Lemma \ref{lem:mp-argument} that no subinterval of $\tilde{x_0}$ can be exposed to $\Omega_2$ by which we mean there is no $x \in \tilde{x_0}$ and $\delta>0$  with $B_{\delta}(x) \setminus \tilde{x_0} = \Omega_2$. Note in two-dimensions, if $(x_n)_{n \geq 1} \subset \partial \Omega$ satisfies $x_n \rightarrow x_0$ and $R(x_n) \rightarrow R(x_0)$ then $\tilde{x_n} \rightarrow \tilde{x_0}$ in the Hausdorff distance (and such sequences can be found).

    As a result there exists sufficiently small $\alpha,\beta > 0$ such that
    \begin{enumerate}
     \item The leaves $\widetilde{\gamma(-\alpha)}$ and $\widetilde{\gamma(\beta)}$  have length at least $3R(x_0)/4$.
    \item The leaves $\widetilde{\gamma(-\alpha)}$  and $\widetilde{\gamma(\beta)}$ can be chosen to make fixed but arbitrarily small angle with $\tilde{x_0}$, by 
    e.g.~\cite[Lemma 16]{CaffarelliFeldmanMcCann00}.
      \item All leaves intersecting the boundary in $\gamma\big([-2\alpha,2\beta]\big)$  have length less than $9R(x_0)/8$ (this holds by the upper semicontinuity of $R$).
      \end{enumerate}
      Moreover $\alpha,\beta$ can be taken as close to $0$ as desired. With the smoothness of $\gamma$, our two-dimensional setting, and the fact that leaves cannot intersect, this significantly constrains the geometry of
\[ A := (Du)^{-1}\big(Du(\gamma\big([-\alpha,\beta]\big)\big) = \bigcup_{t \in [-\alpha,\beta]}\widetilde{\gamma(t)}. \]
      The set $A$ is strictly contained in a set  with left edge $\gamma\big([-2\alpha,2\beta]\big)$ and a {\em vertical right edge of lengths bounded by $2(\beta + \alpha)$,} and top and bottom side lengths bounded by $5R(x_0)/4$ (see Figure \ref{fig:set-geometry}).       Finally we note we can choose sequences $\alpha_k,\beta_k$ satisfying the above requirements and $\alpha_k,\beta_k \rightarrow 0$.
      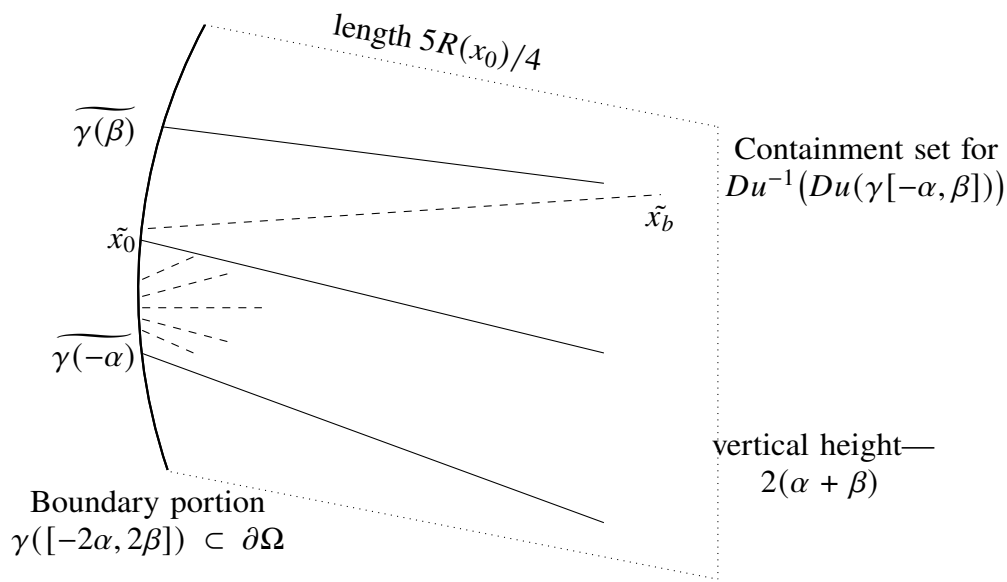
\begin{figure}[h]
        \centering
        \begin{tikzpicture}[scale=1.5]
  \draw [thick] (0.5,1.9) arc [start angle=152, end angle=198.5, radius=5];
            \node[text width=5cm, align=center] at (0,-2.5) {Boundary portion\\$\gamma([-2\alpha,2\beta]) \subset \partial \Omega$};

          \draw (-0.07,0) -- (4,-1);
          \node[left] at (0,0) {$\tilde{x_0}$};
          
          \draw (0.12,1) -- (4,0.5);
          \node[left] at (0,1) {$\widetilde{\gamma(\beta)}$};
          
          \draw (-0.06,-1) -- (4,-2.5);
          \node[left] at (0,-1) {$\widetilde{\gamma(-\alpha)}$};

          \draw [dashed] (0,0.1) -- (4.5,0.4);
          \node[below] at (4.5,0.4) {$\tilde{x_b}$};

          \draw [dashed] (-0.05,-0.6) -- (1,-0.6);
          \draw [dashed] (-0.05,-0.5) -- (0.7,-0.3);
          \draw [dashed] (-0.05,-0.7) -- (0.7,-0.9);
           \draw [dashed] (-0.05,-0.35) -- (0.4,-0.15);
           \draw [dashed] (-0.05,-0.8) -- (0.4,-1);

           \draw [dotted] (0.5,1.9) -- (5,1);
           \draw [dotted] (0.2,-2.05) -- (5,-3);
           \draw [dotted] (5,1) -- (5,-3);
           \node[right,text width=3cm, align=center] at (4.8,-2) {vertical height--- \\ $2(\alpha+\beta)$};
           \node[above,rotate=-10] at (2.5,1.5) {length $5R(x_0)/4$};
           \node[text width=5cm, align=center] at (6.3,0.6) {Containment set for\\ $Du^{-1}\big(Du(\gamma[-\alpha,\beta])\big)$};
\draw [thick] (0.5,1.9) arc [start angle=152, end angle=198.5, radius=5];
        \end{tikzpicture}

                \caption{Geometry of the constructed set $A$. Note a priori (though not expected) there may be errant leaves such as $\tilde{x_b}$ or those between $\widetilde{\gamma(-\alpha)}$ and $\tilde{x_0}$.  However we have constrained the length of such leaves as less than $9R(x_0)/8$ and, when long, their angles are constrained by the outer leaves $\widetilde{\gamma(-\alpha)}$ and $\widetilde{\gamma(\beta)}$ (which other leaves may not intersect). Thus we obtain the (crude) containment estimate indicated by dotted lines.}
        \label{fig:set-geometry}
      \end{figure}
      
Now, by Corollary \ref{lem:Du-balances}, $\sigma(A) = ((Du)_{\#}\sigma)\big(Du(\gamma([-\alpha,\beta]))\big) = 0$. That is,
\begin{equation}
  \label{eq:becoming}
  0 = \int_{A}(n+1-\Delta u) \, dx + \int_{\gamma(-\alpha,\beta)}(Du-x) \cdot \textbf{n} \, d \mathcal{H}^1.
\end{equation}
Using the boundary $C^{1,1}$ estimate from Theorem~\ref{thm:boundary-regularity} (proved in Section \ref{sec:boundary-c1-1}) near $\tilde{x_0}$\footnote{Because $\tilde{x_0}$ doesn't intersect $\partial \Omega$ at both endpoints and $R$ is upper semicontinuous the same is true for all sufficiently close leaves.}, the constrained geometry of $A$, and nonnegativity of $(Du-x)\cdot \mathbf{n}$ already established,
we see \eqref{eq:becoming} implies
      \begin{align*}
      0 &\leq \int_{\gamma(-\alpha,\beta)} (Du-x) \cdot \textbf{n} \, d \mathcal{H}^1  
      \\&\leq \sup_{A}|n+1-\Delta u| \mathcal{H}^2(A) 
      \\&\leq  C R(x_0)(\alpha+\beta)        
      \end{align*} 
      which is precisely the desired estimate. Indeed, employing this estimate with $\alpha_k,\beta_k$ in place of $\alpha,\beta$, dividing by  $\alpha_k+\beta_k$, and sending $\alpha,\beta \rightarrow 0$ we obtain \eqref{eq:neumann-estimate} (after dividing we have an average and $(Du(x_0)-x_0) \cdot \mathbf{n}$ is continuous).

      Now, we explain how to obtain the estimate when $\tilde{x}$ is trivial. If $x \in \partial \Omega$ is such that $\tilde{x}$ is trivial and there is a sequence of nontrivial leaves $\partial \Omega \ni x_k \rightarrow x$ then the estimate follows by the upper semicontinuity of $R$. If there is no such sequence then $x \in \partial \Omega$ lies in a relatively open subset of the boundary $\mathcal{N} = B_{\epsilon_0}(x) \cap \partial \Omega$ on which $u$ is strictly convex.
      Up to a possibly smaller choice of $\epsilon_0$ we may assume $B_{\epsilon_0}(x) \cap \Omega \subset \Omega_2$. Indeed, the alternative is there a sequence of rays, with one end outside $ B_{\epsilon_0}(x) \cap \partial \Omega$ approaching $x$. Since such rays have length bounded below $x$ would also lie in a ray --- a contradiction. This implies $\Delta u = 3$ on $B_{\epsilon_0}(x) \cap \Omega$. Thus the variational derivative satisfies
        \begin{equation}
          \label{eq:var-simplifies}
          \sigma(B_{\epsilon}(x_0)) = \int_{B_\epsilon(x_0) \cap \partial \Omega} (Du-x) \cdot \mathbf{n}\,d \mathcal{H}^{n-1}.
        \end{equation}
        On the other hand, strict convexity implies $Du$ is locally injective so that
        \begin{equation}
          \label{eq:balances-app}
          \sigma(B_{\epsilon}(x_0)) =     (Du_{\#}\sigma)(Du(B_{\epsilon_0}(x)\cap\Omega)) = 0
        \end{equation}
where the latter equality follows from Corollary \ref{lem:Du-balances}.  Thus, since $(Du-x) \cdot \mathbf{n}$ is nonnegative, combining \eqref{eq:var-simplifies} and \eqref{eq:balances-app} implies $(Du-x) \cdot \mathbf{n}$  must be zero on $B_{\epsilon_0}(x) \cap \partial \Omega$.
    \end{proof}

\section{Leafwise coordinates parameterizing bunches in the plane}
\label{sec:coord-argum-two}

In this section and the next we study the behavior of the minimizer on $\Omega_1$
and the free boundary $\Gamma = \partial \Omega_1 \cap \partial \Omega_2 \cap \Omega$ in two-dimensions. We introduce
one of our main tools: a coordinate system to study the problem on $\Omega_1$ which
is flexible enough to include the coordinates proposed earlier by the first and
third authors \cite{McCannZhang23+}, and for which we are finally able to
provide a rigorous foundation by proving biLipschitz equivalence to Cartesian
coordinates.  Moreover, by combining these coordinates with 
Rochet and
Chon\'e's localization technique (Theorem \ref{T:strictly convex disintegration}) we are able
to provide a radically simpler derivation of the Euler-Lagrange equations \eqref{slope E-L}--\eqref{D:h}  first
expressed in \cite{McCannZhang23+};
c.f.~\eqref{eq:zeroth moment}--\eqref{eq:first moment} and \eqref{eq:spec-zeroth}--\eqref{eq:spec-first} below

Let $\gamma:[-a,b] \rightarrow \mathbf{R}^2$ be a curve parameterizing $\partial \Omega$ in the clockwise
direction with $\dot{\gamma}(t) \neq 0$ 
and write $\gamma(t) = (\gamma^1(t),\gamma^2(t))$.

First we give conditions to ensure a neighbourhood of a ray is foliated by rays.
\begin{lemma}[Local foliation around each tame ray]
\label{lem:foliates-neighbourhood}
  Let $\partial \Omega$ be smooth in a neighbourhood of $x_0 \in \R^2$ satisfying
$(Du(x_0)-x_0) \cdot \mathbf{n} \neq 0$, $u(x_0) \ne 0$ and $\{x_0\} = \tilde{x_0} \cap \partial
\Omega$. Then there exist $\epsilon,r_0 > 0$ such that $\text{diam}(\tilde{x}) \geq r_0$ and
$\tilde{x} \cap \partial \Omega = \{x\}$ for all $x \in \partial\Omega \cap B_\epsilon(x_0)$.
\end{lemma}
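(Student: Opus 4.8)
The plan is as follows. Since $(Du(x_0)-x_0)\cdot\mathbf{n}\neq 0$ while Proposition~\ref{prop:neumann-sign} gives $(Du(x_0)-x_0)\cdot\mathbf{n}\geq 0$, in fact $(Du(x_0)-x_0)\cdot\mathbf{n}>0$. Using that $u\in C^1(\overline\Omega)$ (by \cite{RochetChone98}\cite{CarlierLachand--Robert01}) together with the smoothness of $\partial\Omega$ near $x_0$, which makes $x\mapsto\mathbf{n}(x)$ continuous there, I would first produce $\epsilon_1>0$ and $c_0>0$ with
\[(Du(x)-x)\cdot\mathbf{n}(x)\geq c_0\quad\text{and}\quad u(x)>0\qquad\text{for all }x\in\mathcal{N}:=\partial\Omega\cap B_{\epsilon_1}(x_0).\]
Let $C$ be the constant of Proposition~\ref{P:strictly-convex-implies-neumann} and set $r_0:=c_0/C>0$.

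Next I would note that the conclusion already holds at any $x\in\mathcal{N}$ for which one knows $\tilde x\cap\partial\Omega=\{x\}$: for such $x$ Proposition~\ref{P:strictly-convex-implies-neumann} applies and yields $c_0\leq(Du(x)-x)\cdot\mathbf{n}\leq C\,\text{diam}(\tilde x)$, hence $\text{diam}(\tilde x)\geq r_0$; in particular $\tilde x$ is nontrivial, and it is a segment rather than a two‑dimensional leaf since $u(x)>0$ rules out $x\in\Omega_0$ by Theorem~\ref{thm:structure-convexity}(1). Applied at $x_0$ itself this gives $R_0:=\text{diam}(\tilde{x_0})\geq r_0>0$ and that $\tilde{x_0}$ is a genuine ray whose interior endpoint lies in $\Omega$. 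So the whole lemma reduces to finding $\epsilon\in(0,\epsilon_1]$ such that $\tilde x\cap\partial\Omega=\{x\}$ for every $x\in\partial\Omega\cap B_\epsilon(x_0)$, after which we take that $\epsilon$ and the $r_0$ above.

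The substance is therefore to rule out two‑ended leaves accumulating at $x_0$, and this is the step I expect to be delicate. Suppose $x_k\to x_0$ in $\partial\Omega$ with $\tilde{x_k}$ containing a second boundary point $y_k\neq x_k$; for large $k$, $u(x_k)>0$, so $\tilde{x_k}$ is a nontrivial segment and the chord $[x_k,y_k]\subseteq\tilde{x_k}$ is part of a leaf. Lemma~\ref{lem:upper-semicontinuity-diam} bounds $\limsup_k|x_k-y_k|\leq R_0$; passing to a subsequence $y_k\to y_\infty\in\partial\Omega$ and letting $k\to\infty$ in $u(y_k)=u(x_k)+Du(x_k)\cdot(y_k-x_k)$ (using $u\in C^1(\overline\Omega)$) gives $y_\infty\in\tilde{x_0}$, whence $y_\infty=x_0$ by the hypothesis $\tilde{x_0}\cap\partial\Omega=\{x_0\}$. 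Thus $y_k\to x_0$ and $|x_k-y_k|\to 0$, so the chord $[x_k,y_k]$ cuts off a cap $C_k$ — the smaller of the two components into which it separates $\Omega$, whose closure is a lens bounded by the chord and a short sub‑arc of $\partial\Omega$ — with $\text{diam}(\overline{C_k})\to 0$. Fix $k$ so large that $\overline{C_k}\subseteq B_{\epsilon_1}(x_0)$ and $\text{diam}(\overline{C_k})<\min\{r_0,R_0\}$.

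The key observation is that every nontrivial leaf meeting $\overline{C_k}\setminus[x_k,y_k]$ is trapped in $\overline{C_k}$, being a connected set disjoint from the leaf $[x_k,y_k]$ and hence unable to cross the separating chord. If $x_0$ happens to lie on the sub‑arc bounding $C_k$, this already gives $R_0=\text{diam}(\tilde{x_0})\leq\text{diam}(\overline{C_k})<R_0$, absurd. Otherwise let $p_*\in\overline{C_k}$ be an extreme point maximizing the distance to the line through $x_k,y_k$; then $p_*$ lies on the sub‑arc, $u(p_*)>0$, and $\tilde{p_*}$ is a nontrivial segment in $\overline{C_k}$ which, by extremality of $p_*$, must have $p_*$ as an endpoint. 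Its other endpoint is either interior to $C_k$ — so $\tilde{p_*}\cap\partial\Omega=\{p_*\}$ and Proposition~\ref{P:strictly-convex-implies-neumann} forces $\text{diam}(\tilde{p_*})\geq r_0>\text{diam}(\overline{C_k})\geq\text{diam}(\tilde{p_*})$, absurd — or is a strictly shorter chord of $C_k$, cutting off a smaller cap on which one repeats the argument. Closing off this last possibility is the hard part: the plan is to argue that an infinite regress of such nested chord‑caps, whose chords are pairwise non‑crossing leaves, forces the cap apices to converge and produces a degenerate (hence trivial) limiting leaf based arbitrarily close to $x_0$, contradicting the nontriviality established in the second paragraph; when $\partial\Omega$ is strictly convex near $x_0$ this can be made crisp by noting that the apex of a cap foliated entirely by such chords cannot itself lie on a nondegenerate one. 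When $\partial\Omega$ is flat near $x_0$ a two‑ended leaf near $x_0$ is forced to be a boundary segment, which is excluded directly: localization (Corollary~\ref{cor:rc-localization}) makes its $\sigma$‑mass vanish, while $(Du-x)\cdot\mathbf{n}\geq c_0>0$ along it makes that mass positive. Once two‑ended leaves are thereby excluded from some $B_\epsilon(x_0)$, the second paragraph applies at every $x\in\partial\Omega\cap B_\epsilon(x_0)$ and the lemma follows.
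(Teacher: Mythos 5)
Your first two paragraphs are sound, and your reduction is the right one: everything hinges on excluding leaves with a second boundary point accumulating at $x_0$, and your derivation (via Lemma~\ref{lem:upper-semicontinuity-diam} and the closedness of the leaf correspondence) that any such second endpoints $y_k$ must converge to $x_0$ is correct. The genuine gap is in the final step, and you flag it yourself: the infinite regress of nested chord-caps is never shown to terminate or to produce a contradiction. Your proposed fix for strictly convex boundaries --- ``the apex of a cap foliated entirely by such chords cannot itself lie on a nondegenerate one'' --- is unjustified and, as far as I can see, false as stated: if the apex $p_*$ were an \emph{interior} point of its leaf the supporting-line argument forces the leaf into $\p\Omega$ (your flat-case exclusion then applies), but nothing prevents $p_*$ from being an \emph{endpoint} of a nondegenerate chord running down to another point of the sub-arc, which is exactly the recursion you cannot close. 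Moreover the dichotomy ``strictly convex near $x_0$'' versus ``flat near $x_0$'' does not exhaust the possibilities, and even a purely combinatorial analysis of a non-crossing, fixed-point-free pairing of the sub-arc by chords only yields chords of arbitrarily small length --- it does not by itself contradict anything, because your diameter lower bound $r_0$ was established only \emph{conditionally} on the singleton property and so cannot be invoked against a short two-ended leaf. What is missing is an analytic input: a quantitative reason why a nontrivial leaf of vanishing diameter cannot sit at a boundary point where $(Du-x)\cdot\mathbf{n}\ge c_0>0$.

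The paper's proof is much shorter and orders the steps differently: it first asserts the uniform lower bound $\text{diam}(\tilde x)\ge r_0$ at \emph{every} boundary point near $x_0$ directly from the estimate \eqref{eq:neumann-estimate} of Proposition~\ref{P:strictly-convex-implies-neumann}, and only then uses upper semicontinuity to force $\tilde x\cap\p\Omega$ into a ball $B_\eta(x_0)$ with $\eta\ll r_0$; a leaf of length at least $r_0$ whose entire boundary trace lies in $B_\eta(x_0)$ can meet $\p\Omega$ in more than one point only along a nondegenerate boundary segment, which is killed by exactly the localization argument ($\sigma$-mass of a single leaf vanishes by Lemma~\ref{lem:Du-balances}, yet the Neumann term contributes at least $c_0$ times the segment's length) that you deploy in your flat case. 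You correctly sensed the subtlety that applying Proposition~\ref{P:strictly-convex-implies-neumann} at nearby points formally presupposes the singleton property there; but the resolution is to extend the \emph{estimate} to the two-ended case (e.g.\ by the mass-balance computation on the cap $C_k$, using that $Du$ is constant along the chord so the flux of $Du-x$ through $\p C_k$ is $o(|x_k-y_k|)$ while the Neumann contribution is at least $c_0|x_k-y_k|$), not to replace it with the geometric regress. As it stands your argument does not prove the lemma.
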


\begin{proof}
   We assume without loss of generality that $\gamma(0) = x_0$ and thus there is $\epsilon
> 0$ such that $\gamma : (-\epsilon,\epsilon) \rightarrow \partial \Omega$ is a smooth curve. Lemma
\ref{P:strictly-convex-implies-neumann} implies $\tilde{x_0}$ is nontrivial.  For a possibly
smaller $\epsilon > 0$ and all $t \in (-\epsilon,\epsilon)$, the $C^1(\overline{\Omega})$ regularity of $u$
implies $\widetilde{\gamma(t)}$ is nontrivial, with length bounded below by some
$r_0$ determined by \eqref{eq:neumann-estimate}. Furthermore, the upper semicontinuity of $x
\mapsto\text{diam}(\tilde{x})$ (Lemma \ref{lem:upper-semicontinuity-diam}) implies
for a possibly smaller $\epsilon$, $\widetilde{\gamma(t)} \cap \partial \Omega = \{\gamma(t)\}$ for each $t \in
(-\epsilon,\epsilon)$.
\end{proof}

Recall $x_1 \in \Gamma$ is a tame point of the free boundary
$\Gamma:=\p \Omega_1 \cap \p \Omega_2 \cap \Omega$ provided $\tilde{x_1} \cap \partial \Omega = \{x_0\}$ for an $x_0$ satisfying the hypothesis of Lemma \ref{lem:foliates-neighbourhood}. 

We define $\xi(t) = (\xi^1(t),\xi^2(t))$ as the unit direction vector of the leaf
$\widetilde{\gamma(t)}$ pointing into $\Omega$. This means, with $R(t):=
\text{diam}(\widetilde{\gamma(t)})$,
\[ \widetilde{\gamma(t)} = \{\gamma(t) + r\xi(t) ; 0 \leq r \leq R(t)\},\] and subsequently we
can write a subset of the connected component of $\Omega_1$ containing $x_0$ as
\begin{align} \label{eq:n-def} \mathcal{N} = \mathcal{N} \cap \Omega_1 &= \bigcup_{t \in (-\epsilon,\epsilon)}\widetilde{\gamma(t)}\\
 \label{leafwise coordinates} &= \{x(r,t) = \gamma(t)+r\xi(t); -\epsilon < t < \epsilon, 0 \leq r \leq
R(t)\},
\end{align} and we take $(r,t)$ as new coordinates for $\mathcal{N}$. Because each ray is
a contact set along which $u$ is affine there exists functions $b,m:(-\epsilon,\epsilon) \rightarrow
\mathbf{R}$ such that
\begin{equation}
  \label{eq:general-form} u(x(r,t)) = b(t)+rm(t),
\end{equation} and $Du(x(r,t))$ is independent of $r$.

Our goal is to derive the Euler--Lagrange equations of Lemma \ref{lem:new-euler-lagrange} below which describe the equations the minimizer satisfies in terms of $R$, $\xi$, $m$ and $b$ . First, we record the key structural equalities for the new coordinates in the following lemma, which holds under the biLipschitz hypothesis we eventually establish in Corollary \ref{C:biLipschitz}.   The quantities \eqref{J}--\eqref{eq:hessian} from this lemma also yield a formula for the Laplacian of $u \in C^{1,1}(\mathcal{N})$: 
 \begin{equation}
\label{eq:laplacian} \Delta u = \frac{\xi \times w'}{J(r,t)} =: \frac{\delta(t)}{J(r,t)} .
 \end{equation}

\begin{lemma}[Gradient and Hessian of $u$ in coordinates along tame rays]\label{L:coordinate derivatives}
Suppose $u$ solves \eqref{eq:monopolist} where $\Omega \subset \R^2$ is bounded, open and
convex. Let 
$\gamma,\xi,R,m,b$ on $\mathcal{N}$ be as above \eqref{leafwise coordinates}.   
If the transformation $x(r,t)=\gamma(t)+r\xi(t)$ is biLipschitz on $\mathcal{N}$, then its Jacobian determinant is positive and given by
\begin{equation}\label{J}
0< J(r,t) = \det\left( \frac{\partial(x^1,x^2)}{\partial(r,t)}\right) = \xi\times \dot{\gamma} + r \xi \times \dot{\xi}=: j(t) + rf_{\xi}(t) 
\end{equation}
where $\xi \times \dot{\gamma} = \xi^1\dot{\gamma}^2 - \xi^2 \dot{\gamma}^1$ and similarly $\xi \times \dot{\xi}$
are evaluated at $t$. In addition the following formulas for the gradient and
entries of the Hessian of \eqref{eq:general-form} hold $\mathcal{H}^2$-a.e.:
 \begin{align}
   \label{eq:u1u2}
   Du(x(r,t)) &=
  \begin{pmatrix}
    D_1u\\D_2u
  \end{pmatrix} = \frac{1}{\xi \times \dot{\gamma}}
  \begin{pmatrix}
    \dot{\gamma}^2&-\xi^2\\
    -\dot{\gamma}^1 &\xi^1
  \end{pmatrix}
  \begin{pmatrix}
    m(t)\\b'(t)
  \end{pmatrix}
  =:
  \begin{pmatrix}
    w_1(t)\\w_2(t)
  \end{pmatrix},\\[1ex]
 \label{eq:hessian}  D^2 u (x(r,t)) &= \begin{pmatrix}
     \p^2_{11} u&\p^2_{12}u\\[1ex]
     \p^2_{21}u&\p^2_{22}u
   \end{pmatrix} = \frac{1}{J(r,t)} 
   \begin{pmatrix}
     -\xi^2(t)w_{1}'(t)&\xi^1(t)w_{1}'(t)\\
     -\xi^2(t)w_{2}'(t)&\xi^1(t)w_{2}'(t) 
   \end{pmatrix}.
 \end{align}
\end{lemma}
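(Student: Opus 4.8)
The plan is to read off all three formulas by differentiating the two defining identities $x(r,t)=\gamma(t)+r\xi(t)$ and $u(x(r,t))=b(t)+rm(t)$, exploiting throughout that $Du$ is \emph{constant} along each leaf $\widetilde{\gamma(t)}$, and invoking the biLipschitz hypothesis only to secure orientation and the regularity needed to differentiate. First I would compute the Jacobian directly: since $\partial_r x=\xi(t)$ and $\partial_t x=\dot\gamma(t)+r\dot\xi(t)$, expanding the $2\times2$ determinant gives
\[
J(r,t)=\xi^1(\dot\gamma^2+r\dot\xi^2)-\xi^2(\dot\gamma^1+r\dot\xi^1)=\xi\times\dot\gamma+r\,\xi\times\dot\xi=j(t)+rf_\xi(t).
\]
The biLipschitz hypothesis forces $\xi$ to be Lipschitz in $t$: comparing $x(r_\ast,t)$ with $x(r_\ast,t')$ for a fixed $r_\ast$ below the uniform leaf-length bound from Lemma~\ref{lem:foliates-neighbourhood} and subtracting the (smooth) contribution of $\gamma$ bounds $|\xi(t)-\xi(t')|$ by $C|t-t'|$; hence $\dot\xi$, and so $J$, exist $\mathcal{H}^2$-a.e. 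At each point of differentiability the lower biLipschitz bound makes the differential invertible, so $|J|\ge c^2>0$ there. For fixed $t$ the map $r\mapsto J(r,t)$ is affine and bounded away from zero for a.e.\ $r\in(0,R(t))$, so its zero (if any) lies outside that interval; thus $J$ keeps a constant sign on $(0,R(t))$, and letting $r\downarrow 0$ identifies this sign with that of $\xi\times\dot\gamma$, which is positive because $\gamma$ traverses $\partial\Omega$ clockwise while $\xi$ points into $\Omega$, i.e.\ $\xi$ lies strictly on the interior side of $\dot\gamma$. Hence $J>0$ on $\mathcal{N}$, and in particular $j(t)=\xi\times\dot\gamma\ne0$.

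For the gradient, set $w(t):=Du(x(r,t))$; this depends on $t$ alone because $u$ is affine with constant gradient on the contact set $\widetilde{\gamma(t)}$. Differentiating $u(\gamma(t)+r\xi(t))=b(t)+rm(t)$ in $r$ gives $w(t)\cdot\xi(t)=m(t)$, and differentiating in $t$ and matching powers of $r$ gives $w(t)\cdot\dot\gamma(t)=b'(t)$ together with the auxiliary identity $w(t)\cdot\dot\xi(t)=m'(t)$. The first two of these form a linear system in $w$ whose coefficient matrix has determinant $j(t)\ne0$ by the previous step; solving it produces exactly \eqref{eq:u1u2}.

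For the Hessian, I would first record that $u\in C^{1,1}(\mathcal{N})$ by Theorem~\ref{thm:boundary-regularity} — each ray foliating $\mathcal{N}$ meets $\partial\Omega$ in a single point — so $w=Du\circ x$ is Lipschitz and $u$ is twice Alexandrov differentiable $\mathcal{H}^2$-a.e. At such a point, differentiating $Du(x(r,t))=w(t)$ in $r$ gives $D^2 u(x(r,t))\,\xi(t)=0$ (the vanishing reflects that $u$ is affine along leaves), while differentiating in $t$ gives $D^2 u(x(r,t))\,(\dot\gamma(t)+r\dot\xi(t))=w'(t)$. Since $\xi$ and $\partial_t x=\dot\gamma+r\dot\xi$ span $\mathbf{R}^2$ with $\xi\times\partial_t x=J$, every vector $v$ decomposes with $\partial_t x$-coefficient $(\xi\times v)/J$, whence $D^2 u\,v=\tfrac{\xi\times v}{J}\,w'$; evaluating at $v=e_1$ and $v=e_2$, where $\xi\times e_1=-\xi^2$ and $\xi\times e_2=\xi^1$, reads off \eqref{eq:hessian}. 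Taking the trace gives $\Delta u=\tfrac1J(\xi^1w_2'-\xi^2w_1')=\tfrac{\xi\times w'}{J}$, i.e.\ \eqref{eq:laplacian} with $\delta(t)=\xi\times w'$. Symmetry of the matrix in \eqref{eq:hessian} amounts to $\xi\cdot w'=0$, which follows by differentiating $m=w\cdot\xi$ and subtracting the auxiliary identity $w\cdot\dot\xi=m'$.

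I expect the main obstacle to be not the algebra but the regularity bookkeeping: extracting Lipschitz continuity of $\xi$ and the a.e.\ nonvanishing and correct sign of $J$ from the bare biLipschitz hypothesis, and justifying the second-order chain rule $\mathcal{H}^2$-a.e. The latter genuinely relies on the boundary $C^{1,1}$ bound of Theorem~\ref{thm:boundary-regularity}, without which $Du$ need not be differentiable a.e.\ on $\mathcal{N}$. Once these are secured, everything rests on the single structural observation that the leaf direction $\xi$ spans the kernel of the symmetric matrix $D^2 u$, so that $D^2 u$ is completely determined by its one transverse value $w'$.
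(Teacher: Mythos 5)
Your proposal is correct and follows essentially the same route as the paper: compute the Jacobian directly, differentiate $u(x(r,t))=b(t)+rm(t)$ in $r$ and $t$ to obtain the linear system $\langle \xi,Du\rangle=m$, $\langle\dot\gamma,Du\rangle=b'$ for the gradient, and then differentiate once more using constancy of $Du$ along leaves together with the $C^{1,1}$ bound of Theorem~\ref{thm:boundary-regularity} and the orientation argument $j(t)=\xi\times\dot\gamma>0$ to fix the sign of $J$. The only differences are cosmetic: you obtain the Hessian by decomposing in the basis $\{\xi,\partial_t x\}$ (using that $\xi$ spans the kernel of $D^2u$) rather than by applying the inverse Jacobian \eqref{eq:jac-2}, and you add a symmetry check $\xi\cdot w'=0$ that the paper omits.
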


\begin{proof}
Where the transformation $x(r,t)=\gamma(t) + r\xi(t)$ and its inverse are Lipschitz, their Jacobian derivatives are easily compute to be:
 \begin{align}
\label{eq:jac-1}
  \frac{\partial(x^1,x^2)}{\partial(r,t)} &=
  \begin{pmatrix}
    \xi^1(t)&\dot{\gamma}^1(t)+r\dot{\xi}^1(t)\\
    \xi^2(t) &\dot{\gamma}^2(t)+r\dot{\xi}^2(t)
  \end{pmatrix}\\
   \label{eq:jac-2}
  \frac{\partial (r,t)}{\partial(x^1,x^2)} &= \frac{1}{J(r,t)}  \begin{pmatrix}
    \dot{\gamma}^2(t)+r\dot{\xi}^2(t)& -\dot{\gamma}^1(t)-r\dot{\xi}^1(t) \\
    -\xi^2(t) &  \xi^1(t)
  \end{pmatrix},
 \end{align}
with $J(r,t)$ from \eqref{J}.
Next, to obtain the gradient expressions we differentiate equation \eqref{eq:general-form} with respect to $r$ to obtain
\begin{align}
\label{eq:m-comp}  m(t) &= \xi^1(t)u_{1}(x(r,t)) + \xi^2(t)u_{2}(x(r,t)) =\langle \xi,Du\rangle.
\end{align}
Similarly, differentiating \eqref{eq:general-form} with respect to $t$ and equating coefficients of $r$ yields
\begin{align}
\label{eq:m-dash-comp}  m'(t) &= \dot{\xi}^1(t)D_1u(x(r,t))  +\dot{\xi}^2(t) u_{2}(x(r,t)) =\langle\dot \xi, Du\rangle,\\
\label{eq:b-dash-comp}  b'(t) &= \dot{\gamma}^1(t)D_1u(x(r,t)) + \dot{\gamma}^2(t)D_2u(x(r,t)) = \langle\dot \gamma, Du\rangle,
\end{align}
where $D_iu = \frac{\partial u}{\partial x^i}$ and that $Du(x(r,t))$ is independent from $r$ has been used. We solve \eqref{eq:m-comp} and \eqref{eq:b-dash-comp} for $D_1u$, $D_2u$ and obtain \eqref{eq:u1u2}. 
Note the functions $w_1$ and $w_2$ in \eqref{eq:u1u2}  are Lipschitz because $u \in C^{1,1}(\mathcal{N})$. Thus, differentiating the expressions for $D_1u$ and $D_2u$ given by \eqref{eq:u1u2} with respect to, respectively, $x_1$ and $x_2$ and using the Jacobian \eqref{eq:jac-2} gives the formula \eqref{eq:hessian}. 
 
We note two facts about the functions $j$ and $f_{\xi}$ which determine the Jacobian determinant.  First
\begin{equation}
\label{eq:j-nonzero}
  j(t):= J(0,t) = \xi \times \dot{\gamma}>0,
\end{equation}
where the nonnegativity is by our chosen orientation: $\gamma$ traverses $\partial \Omega$ in a clockwise direction and $\xi$ points into the convex domain $\Omega$.  Inequality \eqref{eq:j-nonzero} is strict because $\widetilde{\gamma(t)}$ is nontrivial and  has only one endpoint on $\partial \Omega$ for each $t \in (-\epsilon,\epsilon)$. Next, because $\xi$ is a unit vector, whence $\dot{\xi}$ is orthogonal to $\xi$, we have
\begin{align}
  \label{eq:f-xi-def}
  f_\xi(t) = \xi(t) \times \dot {\xi}(t) = \pm |\dot{\xi}|,
\end{align}
where the value of $\pm$ is determined by the sign of $f_\xi(t)$. From our biLipschitz hypothesis (or nonnegativity of the Laplacian \eqref{eq:laplacian}) the sign of $J(r,t)$ is independent of $r$. Combined with \eqref{eq:j-nonzero} we obtain $J(r,t) > 0$ for $t \in (-\epsilon,\epsilon)$ and $0 \leq r \leq R(t)$.
\end{proof}

Now we combine our raywise coordinates $(r,t)$ with Rochet--Chon\'e's localization (Theorem~\ref{T:strictly convex disintegration}). We obtain the following Euler--Lagrange equations which are central to the remainder of our work.

\begin{lemma}[Poisson data along tame rays]
\label{lem:new-euler-lagrange}
 Let $u$ solve \eqref{eq:monopolist} where $\Omega \subset \R^2$ is bounded, open and
 convex. Let the coordinates $r,t$ and functions $\gamma,\xi,R,m,b$ be as in Lemma \ref{L:coordinate derivatives}. Then the minimality of $u$, more precisely Rochet--Chone's localization, implies the Euler--Lagrange equation relating the fixed and free boundaries
  \begin{align}
  \label{eq:r-neumm} R^2(t)|\dot{\xi}(t)| = 2|\dot{\gamma}(t)| (Du-x) \cdot \mathbf{n} > 0,
  \end{align}
  and the Euler--Lagrange equation for $u$
  \begin{align}
    \label{eq:discont-est}
    3 - \Delta u = \frac{3j(t)+ 3r|\dot{\xi}| - \delta(t) }{J(r,t)} = 
    \frac{3r - 2R(t)}{r + j(t)/|\dot\xi(t)|}.
  \end{align}
\end{lemma}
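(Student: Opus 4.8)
The plan is to disintegrate the variational derivative $\sigma$ over the leaves foliating $\mathcal{N}$, invoke Rochet--Chon\'e's localization (Corollary~\ref{cor:rc-localization}) with \emph{affine} test functions to extract a zeroth- and a first-moment identity along each tame ray, and then substitute the coordinate formula $\Delta u = \delta(t)/J(r,t)$ of Lemma~\ref{L:coordinate derivatives} together with $J=j+rf_\xi$. Everything after the disintegration is elementary integration in $r$.

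First I would record the disintegration. Since $x(r,t)=\gamma(t)+r\xi(t)$ has Jacobian determinant $J(r,t)>0$, the change of variables gives $d\mathcal{H}^2\mres\mathcal{N} = \int_{-\epsilon}^{\epsilon} \big( J(\cdot,t)\,dr \big)\,dt$, while $d\mathcal{H}^1\mres(\partial\Omega\cap\mathcal{N}) = \int_{-\epsilon}^{\epsilon} |\dot\gamma(t)|\,\delta_{\gamma(t)}\,dt$; because $\mathcal{N}$ is a union of full leaves \eqref{eq:n-def}, these exhibit $\sigma\mres\overline{\mathcal{N}}$ disintegrated over $t$ with conditional measure on the ray $\widetilde{\gamma(t)}$, parametrised by $r\in[0,R(t)]$, proportional to $(3-\Delta u)\,J(r,t)\,dr + |\dot\gamma(t)|(Du-x)\cdot\mathbf{n}\,\delta_{r=0}$ (the bulk density and the boundary Dirac being disintegrated over the common base measure $dt$, so their relative weight is canonical). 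Applying Corollary~\ref{cor:rc-localization} to the globally affine functions $v\equiv 1$ and $v(z)=(z-\gamma(t))\cdot\xi(t)$ --- each of which, together with its negative, is convex, and the latter restricting to $v(x(r,t))=r$ on the leaf --- then yields, for a.e.\ $t$ (compatible via Fubini with the $\mathcal{H}^2$-a.e.\ validity in Corollary~\ref{cor:rc-localization}, since $R(t)\ge r_0>0$),
\begin{align}
\int_0^{R(t)}(3-\Delta u)\,J(r,t)\,dr &= -|\dot\gamma(t)|(Du-x)\cdot\mathbf{n},\\
\int_0^{R(t)} r\,(3-\Delta u)\,J(r,t)\,dr &= 0,
\end{align}
the boundary term dropping out of the second because $r=0$ at $\gamma(t)$.

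Next I would substitute $\Delta u(x(r,t))=\delta(t)/J(r,t)$ (valid $\mathcal{H}^2$-a.e.\ by Lemma~\ref{L:coordinate derivatives}), so that $(3-\Delta u)J(r,t) = 3J(r,t)-\delta(t) = 3j(t)+3rf_\xi(t)-\delta(t)$ is affine in $r$ with $t$-dependent coefficients. The first-moment identity becomes $\tfrac{R^2}{2}(3j-\delta)+f_\xi R^3=0$, i.e.\ $\delta(t)=3j(t)+2R(t)f_\xi(t)$ after dividing by $R^2/2>0$. Feeding this back into the zeroth-moment identity collapses its left side to $-\tfrac12 R^2 f_\xi$, giving $R^2(t)f_\xi(t)=2|\dot\gamma(t)|(Du-x)\cdot\mathbf{n}$. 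Here the right side is strictly positive: it is $\ge 0$ by Proposition~\ref{prop:neumann-sign}, and nonzero because $x_0$ is a tame endpoint (the hypothesis of Lemma~\ref{lem:foliates-neighbourhood} requires $(Du(x_0)-x_0)\cdot\mathbf{n}\ne 0$). Hence $f_\xi(t)>0$, so $f_\xi(t)=|\dot\xi(t)|$ by \eqref{eq:f-xi-def} --- in particular $\dot\xi(t)\ne 0$, resolving the degenerate case --- which is precisely \eqref{eq:r-neumm}. Finally, with $\delta=3j+2R|\dot\xi|$ and $J=j+r|\dot\xi|$ at hand, $3-\Delta u = (3J-\delta)/J = (3j+3r|\dot\xi|-\delta)/J = \tfrac{|\dot\xi|(3r-2R)}{j+r|\dot\xi|} = \tfrac{3r-2R}{r+j/|\dot\xi|}$, which is \eqref{eq:discont-est}.

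The only genuinely delicate point --- the step I would be most careful about --- is the disintegration in the second paragraph: one must verify that the bulk density $J(r,t)\,dr$ and the boundary Dirac $|\dot\gamma(t)|\,\delta_{\gamma(t)}$ are read off the \emph{same} slicing over $dt$, so that the relative weight of the boundary atom inside each conditional measure $\sigma_{\widetilde{\gamma(t)}}$ is the correct one, and that "for $\mathcal{H}^2$-a.e.\ $x$" in the localization statement descends to "for a.e.\ $t$" given that each tame leaf is one-dimensional but the foliation sweeps out a set of positive area. Once this bookkeeping is set up, the remaining manipulations are the two polynomial integrations in $r$ above, and the sign bootstrap via Proposition~\ref{prop:neumann-sign} does the rest.
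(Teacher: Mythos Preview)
Your proposal is correct and follows essentially the same approach as the paper: disintegrate $\sigma$ over the $(r,t)$ coordinates, apply Rochet--Chon\'e localization with the affine test functions $\pm 1$ and $\pm r$ to obtain zeroth- and first-moment identities, solve these for $\delta$ and $f_\xi$, and use the positivity of the Neumann datum at a tame endpoint to fix the sign $f_\xi=|\dot\xi|$. You are slightly more explicit than the paper in naming the global affine extension $v(z)=(z-\gamma(t))\cdot\xi(t)$ of the test function $r$ and in flagging the disintegration bookkeeping, but the argument is the same.
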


\begin{proof}
The assumption $(Du-x)\cdot\mathbf{n}>0$ on a boundary segment consisting of ray endpoints implies any 
subset of these endpoints having $\mathcal{H}^1 \mres \partial \Omega$  positive mass is also nonnegligible for the variational derivative so we may  invoke the results of Theorem \ref{T:strictly convex disintegration}.
Applying Lemma~\ref{L:identifying disintegration} to compute the density of the disintegration we obtain that for $\mathcal{H}^1$-a.e. $t \in (-\epsilon,\epsilon)$ that
\[ 0 \leq v (Du-x)\cdot \mathbf{n} |\dot{\gamma}(t)|+ \int_{0}^{R(t)} \left(3-\frac{\delta(t)}{J(r,t)}\right) J(r,t) \, v dr .  \]
The terms outside the integral are evaluated at $x = x(0,t)$. We consider four choices of test functions, $v|_{\widetilde{\gamma(t)}} = \pm 1$ and $v|_{\widetilde{\gamma(t)}} = \pm r$. 
Using these in the localization formula we obtain the equalities
\begin{align}
\label{eq:zeroth moment}  \left((3j(t)-\delta(t))R(t) + \frac{3}{2}R^2(t)f_{\xi}(t) \right) + |\dot{\gamma}(t)|(Du-x)\cdot \mathbf{n} = 0\\
\label{eq:first moment}    (3j(t)-\delta(t))\frac{R(t)^2}{2} + R^3(t)f_{\xi}(t) = 0.
\end{align}
These combine to imply
\begin{align}
  \label{eq:r-neumm-0} R^2(t)f_{\xi}(t) = 2|\dot{\gamma}(t)| (Du-x) \cdot \mathbf{n} > 0.
  \end{align}
  We recall \eqref{eq:f-xi-def} and note \eqref{eq:r-neumm-0} determines the sign $f_{\xi}(t) = |\dot{\xi}|$. Thus
  the Euler--Lagrange equation \eqref{eq:r-neumm} for the free boundary holds. Equation
\eqref{eq:r-neumm} implies $|\dot{\xi}|$ is bounded away from zero and from above
depending only on estimates for the continuous function $(Du-x)\cdot \mathbf{n}$
and our previously given estimate $R(t) > r_0$ (Lemma
\ref{lem:foliates-neighbourhood}).

Combining \eqref{eq:laplacian} with \eqref{eq:first moment} we obtain \eqref{eq:discont-est}.
\end{proof}

\begin{remark}[Tame rays must spread as they leave the boundary] 
\label{R:rays spread}
Comparing \eqref{J} to \eqref{eq:discont-est} we recover 
$f_{\xi}(t)=\xi \times \dot \xi > 0$, which asserts that the Jacobian $J(r,t)$ is an increasing function of $r$: that is, tame rays spread out as they move away from the boundary. This implies all the rays may be extended into  $\Omega_2$ without intersecting each other.
\end{remark}

 Recalling that $\Delta u =3 $ in $\Omega_2$, we see \eqref{eq:discont-est} quantifies how Poisson's equation fails to be satisfied along leaves (with the equation only satisfied at the point $r = 2R(t)/3$). It also implies that when we move from $\Omega_1$ into $\Omega_2$ the Laplacian jumps discontinuously across their common boundary. 

 In Section~\ref{sec:lipschitz-fb} we show this discontinuity yields quadratic separation of $u$ from its contact sets and exploit this to obtain estimates on the Hausdorff dimension of $\Gamma \cap \mathcal{N}$.

Let us conclude by justifying the aforementioned Lipschitz continuity of $\xi$. Note  both \eqref{eq:discont-est} and \eqref{eq:r-neumm} yield Lipschitz estimates. However, since their derivation assumed $\xi$ was Lipschitz we need to redo these calculations with a perturbed, Lipschitz, $\xi_\delta$ and obtain uniform estimates as the perturbation parameter $\delta$ approaches $0$. Notice the Lipschitz constant from the following lemma does not depend on the Neumann values $\|\log ((Du-x)\cdot \mathbf{n})\|_{L^\infty(\mathcal{N}\cap \p\Omega)}$.

 \begin{lemma}[Tame ray directions are Lipschitz on the fixed boundary]
   \label{lem:theta-lipschitz} With $\xi$ defined as above, the function $t \mapsto \xi(t)$ is Lipschitz on the interval $(-\epsilon,\epsilon)$ provided by Lemma \ref{lem:foliates-neighbourhood}, with Lipschitz constant depending only on an upper bound for $\sup_{\mathcal{N}} \Delta u$ and a lower bound on $R(t)$. 
\end{lemma}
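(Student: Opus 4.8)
The plan is to read the Lipschitz bound off the Euler--Lagrange relations of Lemma~\ref{lem:new-euler-lagrange}, after first repairing the circularity that those relations were derived assuming $x(r,t)=\gamma(t)+r\xi(t)$ biLipschitz. Write $M$ for an upper bound on $\sup_{\mathcal N}\Delta u$ (finite by Theorem~\ref{thm:boundary-regularity}, since every point of $\mathcal N$ lies on a ray meeting $\p\Omega$ in a single point) and $r_0>0$ for a lower bound on $R$ (Lemma~\ref{lem:foliates-neighbourhood}); I will show $|\dot\xi|\le M/(2r_0)$ in the distributional sense on $(-\epsilon,\epsilon)$, which --- $\xi$ being continuous --- is the asserted Lipschitz estimate, parametrization--independent once $\gamma$ has $|\dot\gamma|\equiv1$. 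Granting \eqref{eq:first moment} for the moment, combine it with \eqref{J} and \eqref{eq:laplacian} to get $\delta(t)=3j(t)+2R(t)f_\xi(t)$ and hence $\Delta u(x(0,t))=3+2R(t)f_\xi(t)/j(t)$; then, using $f_\xi=\xi\times\dot\xi$ (positive, cf.\ \eqref{eq:r-neumm-0}) so $f_\xi=|\dot\xi|$, $j=\xi\times\dot\gamma\le 1$, and that $r\mapsto\Delta u(x(r,t))=(3j+2Rf_\xi)/(j+rf_\xi)$ is nonincreasing and therefore at most $M$ already at $r=0$, one reads off
\[
|\dot\xi(t)|=f_\xi(t)=\frac{j(t)\bigl(\Delta u(x(0,t))-3\bigr)}{2R(t)}\le\frac{M-3}{2r_0}\le\frac{M}{2r_0}.
\]

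To legitimize \eqref{eq:first moment} without presupposing $\xi\in\mathrm{Lip}$, I would regularize. First note $\xi$ is continuous on $(-\epsilon,\epsilon)$: if $\gamma(t_k)\to\gamma(t)$ but $\xi(t_k)\to\zeta\ne\xi(t)$, continuity of $Du$ would force both segments $\gamma(t)+[0,r_0]\zeta$ and $\gamma(t)+[0,r_0]\xi(t)$ into $\widetilde{\gamma(t)}$, making that leaf two--dimensional and contradicting $\widetilde{\gamma(t)}\subset\Omega_1$. Then, with a standard mollifier $\rho_\delta$, set $\xi_\delta:=(\xi*\rho_\delta)/\lvert\xi*\rho_\delta\rvert$ (smooth, unit, $\xi_\delta\to\xi$ uniformly) and $R_\delta(t):=\inf_{\lvert s-t\rvert\le\delta}R(s)\ge r_0$ (repairing the mere upper semicontinuity of $R$ established in Lemma~\ref{lem:upper-semicontinuity-diam}). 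Since the true rays are pairwise disjoint segments of length $\ge r_0$ leaving $\p\Omega$ transversally (so $j\ge c>0$ on the interval, by continuity of $\xi$ and convexity of $\Omega$), for $\delta$ small the map $F_\delta(r,t):=\gamma(t)+r\xi_\delta(t)$, $0\le r\le R_\delta(t)$, is a diffeomorphism onto a subregion $\mathcal N_\delta\subset\mathcal N$ exhausting $\mathcal N$ as $\delta\to0$, with explicit Jacobian $J_\delta=j_\delta+rf_{\xi_\delta}>0$. I would then rerun the derivation of Lemma~\ref{lem:new-euler-lagrange}: the Rochet--Chon\'e localization (Corollary~\ref{cor:rc-localization}) with the affine test functions $v=\pm\langle\,\cdot-\gamma(t),\xi(t)\rangle$ and $v=\pm1$ needs no biLipschitz hypothesis (an affine $v$ integrates leafwise and reassembles in $t$), while the conditional Lebesgue densities on the true leaves are recovered as the $\delta\to0$ limits of the explicit densities $J_\delta$, using that the $\xi_\delta$--rays converge to the true leaves in Hausdorff distance ($\le R\lVert\xi_\delta-\xi\rVert_\infty$). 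The only further discrepancy is that $Du\circ F_\delta$ is not exactly $r$--independent; but $Du$ has Hessian bound $\le M$ near $\widetilde{\gamma(t_0)}$ --- because $D^2u=\Delta u\,\xi^\perp\otimes\xi^\perp$ a.e., of operator norm $\le M$ --- and $Du$ \emph{is} constant on the true leaves, so $Du(F_\delta(r,t))=w(t)+O\bigl(r\lvert\xi_\delta(t)-\xi(t)\rvert\bigr)$ with the error $\to0$ uniformly, where $w(t)=Du(x(\cdot,t))$. Passing $\delta\to0$, the coefficient functions $j_\delta,f_{\xi_\delta},\delta_\delta$ converge in $L^1_{\mathrm{loc}}(dt)$ to $j,f_\xi,\delta$ and \eqref{eq:first moment} survives the limit as an identity of distributions, after which the display above yields $\lvert\dot\xi\rvert\le M/(2r_0)$ distributionally, hence $\xi\in\mathrm{Lip}$ with the stated constant.

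The hard part is this middle step --- showing that replacing the contact--set foliation $\{\widetilde{\gamma(t)}\}$ by the mollified foliation $\{F_\delta(\cdot,t)\}$ perturbs the Rochet--Chon\'e localization identities by amounts vanishing as $\delta\to0$. It has two pieces: (i) that $F_\delta$ is, for small $\delta$, a diffeomorphism onto a region exhausting $\mathcal N$, which rests on $R\ge r_0$, $j\ge c>0$, and disjointness of the true rays; and (ii) control of the drift $Du(F_\delta(r,t))-w(t)$ together with the transfer of the conditional densities under Hausdorff convergence of the partition, for which the boundary $C^{1,1}$ bound of Theorem~\ref{thm:boundary-regularity} is exactly what is needed --- and is what makes the final constant depend on $M$ and $r_0$ alone, with no dependence on $\lVert\log((Du-x)\cdot\mathbf n)\rVert_{L^\infty(\mathcal N\cap\p\Omega)}$, which by \eqref{eq:r-neumm-0} scales like $\log(R^2|\dot\xi|)$ and can be arbitrarily large near the free boundary. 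Everything else --- convergence of the fixed--boundary term, consistency of the sign $f_\xi=|\dot\xi|>0$ with Remark~\ref{R:rays spread}, and the elementary algebra extracting $f_\xi\le M/(2r_0)$ from \eqref{eq:first moment} --- is routine.
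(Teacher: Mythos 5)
Your final extraction of the bound is essentially the paper's: both arguments evaluate the leafwise Euler--Lagrange identity at $r=0$, use that $\Delta u$ is largest there along the ray, and conclude $|\dot\xi|\le (\sup_{\mathcal N}\Delta u-3)\,j/(2R)\le C(M,r_0)$ independently of the regularization parameter. The difference --- and the gap --- lies in how you legitimize the Euler--Lagrange identity without presupposing $\xi\in\mathrm{Lip}$.

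Your regularization replaces the contact-set foliation by the mollified segments $F_\delta(r,t)=\gamma(t)+r\bigl(\xi*\rho_\delta/|\xi*\rho_\delta|\bigr)(t)$. These segments are \emph{not} leaves of $Du$, so Corollary~\ref{cor:rc-localization} says nothing about them: the inequality $0\le\int_{\tilde x}v\,d\sigma_{\tilde x}$ lives on the true equivalence classes, and converting it into \eqref{eq:zeroth moment}--\eqref{eq:first moment} requires knowing the disintegration of Lebesgue measure along the \emph{true} foliation, i.e.\ the density $J(r,t)=j(t)+rf_\xi(t)$ --- which presupposes exactly the differentiability of $\xi$ you are trying to establish. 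Your proposed fix, ``the conditional densities on the true leaves are recovered as the $\delta\to0$ limits of the explicit densities $J_\delta$,'' is the whole difficulty and is not carried out; if $\dot\xi$ were a priori only a measure, the $J_\delta$ would converge merely weak-$*$ to a density involving that measure, and identifying the limit with $j+rf_\xi$ for an $L^\infty$ function $f_\xi$ is circular. The paper sidesteps this entirely: it reparametrizes the \emph{same} true foliation from the inward-translated transversal $\eta(t)=\gamma(t)-\delta\mathbf n(0)$, so every coordinate line $t=\mathrm{const}$ is still a genuine leaf and the localization applies verbatim; the a priori (but $\delta$-dependent) Lipschitz regularity of the new direction field $\xi_\delta$ is supplied by Caffarelli--Feldman--McCann's Lemma~16 on disjoint ray foliations, which applies because $\eta(t)$ sits at distance $\ge\delta/2$ from both endpoints of its ray. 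That lemma is the missing ingredient in your argument; convolution mollification of $\xi$ does not substitute for it.
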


\begin{proof}
Assume a collection of non-intersecting rays foliate an open set in $\mathbf{R}^2$ and pierce a smooth curve $\gamma(t)$. Provided the intersection of each ray with the curve occurs some fixed distance $d$ from either endpoint of the ray, the assertion of Caffarelli, Feldman, and McCann \cite[Lemma 16]{CaffarelliFeldmanMcCann00} says the directions $\xi(t)$ (of the ray passing through $\gamma(t)$) is a locally Lipschitz function with Lipschitz constant depending on $\gamma$ and $d$. 

  Thus, in our setting, if we could extend each ray by length $\delta$ outside the domain, $\xi(t)$ would be locally Lipschitz with a constant depending on $\delta$. Then, once we've obtained \eqref{eq:discont-est}  the Lipschitz constant of $\xi$ is independent of $\delta$. A priori such an extension outside the domain may not be possible. Thus our strategy below will be to translate the boundary  distance $\delta$ and use the translated boundary to redefine the $r=0$ axis of the $(r,t)$ coordinates. In this setting the corresponding direction vector $\xi_{\delta}$ is locally Lipschitz, the above calculations are justified, and sending $\delta \rightarrow 0$ gives the Lipschitz estimate on~$\xi$.

  Thus, with $\gamma$ as above let $\mathbf{n}(0)$ be the outer unit normal at $\gamma(0)$ and set
  \[ \eta(t) = \gamma(t) - \delta \mathbf{n}(0).\]
  Because the length of rays intersecting $\gamma(-\epsilon,\epsilon)$ is bounded below by $r_0$, up to a smaller choice of $\epsilon,\delta$ we may assume for each $t \in (-\epsilon,\epsilon)$ that $\eta(t) \in \Omega_1$ and lies distance at least $\delta/2$ from each endpoint of $\widetilde{\eta(t)}$. Let $\xi_\delta(t)$ denote the unit vector parallel to $\widetilde{\eta(t)}$, where by \cite[Lemma 16]{CaffarelliFeldmanMcCann00} $\xi_\delta$ is a locally Lipschitz function of $t$. We may redefine the $(r,t)$ coordinates and write a connected component of $\Omega_1$ containing $\eta(0)$ as
\[\mathcal{N} =  \{x(r,t) = \eta(t) + r \xi_\delta(t) ; t \in (-\epsilon,\epsilon) \text{ and } -R_0(t) \leq r \leq R_1(t) \},\]
where $R_0(t)$ is defined so that $\eta(t) + R_0(t) \xi_\delta(t)$ is the point where the ray $\widetilde{\eta(t)}$ intersects $\partial \Omega$. The function $R_0(t)$ is locally Lipschitz because the curves $\gamma,\eta$ are smooth and $\xi_\delta$ is Lipschitz (though we don't assert that the Lipschitz constant of $R_0$ is independent of $\delta$). Thus all our earlier  computations may be repeated in these new coordinates. The computations leading to \eqref{eq:discont-est} now yield the equation
\begin{equation}
\label{eq:discont-est-new}
    3-\Delta u = |\dot{\xi}_\delta(t)|\frac{3r-2R_1+R_0}{ \xi_\delta \times \dot{\eta}+r|\dot{\xi}_\delta|}, 
\end{equation} 
which we note satisfies $3 = \Delta u$ when $r = (2R_1-R_0)/3$ and thus agrees with our earlier coordinate system (in our modified coordinates $r=(2R_1-R_0)/3$ is the point of distance  $2(R_1+R_0)/3$ from the endpoint of the ray). 

Evaluating \eqref{eq:discont-est-new} at $r=0$ we obtain a Lipschitz estimate on $\xi_\delta$ which is independent of $\delta$ (using, crucially, the Laplacian estimates of Theorem \ref{thm:boundary-regularity}). 
The pointwise convergence of $\xi_\delta$ to $\xi$ ensures $\xi$ is Lipschitz. 
\end{proof}

\begin{corollary}[Raywise coordinates \eqref{leafwise coordinates} are biLipschitz]
\label{C:biLipschitz}
Let $\mathcal{N}$ denote the subset of $\Omega_1$ defined in \eqref{eq:n-def}, $(x^1,x^2)$ Euclidean coordinates, and $(r,t)$ the coordinates defined in \eqref{leafwise coordinates}. Then the change of variables from $(x^1,x^2)$ to $(r,t)$ is biLipschitz with Lipschitz constant depending only on $\sup|Du|$, $\sup_{t \in (-\epsilon,\epsilon)}|\dot{\gamma}|$, $\inf_{t \in (-\epsilon,\epsilon)} R(t)$, and $\inf_{t \in (-\epsilon,\epsilon)} \xi \times \dot{\gamma}$ (i.e. the transversality of the intersections of rays with the fixed boundary
$\{\gamma(t)\}_{t \in (-\epsilon,\epsilon)}$).
\end{corollary}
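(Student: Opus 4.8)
The plan is to check the biLipschitz equivalence $(x^1,x^2)\leftrightarrow(r,t)$ directly for the explicit map $x(r,t)=\gamma(t)+r\xi(t)$ by verifying three things: that $x$ is Lipschitz, that it is a bijection of the parameter region $\{-\epsilon<t<\epsilon,\ 0\le r\le R(t)\}$ onto $\mathcal N$, and that its inverse is Lipschitz --- the last being equivalent to a uniform positive lower bound on the Jacobian determinant. The forward Lipschitz bound is essentially immediate once Lemma \ref{lem:theta-lipschitz} is invoked: it makes $t\mapsto\xi(t)$ Lipschitz on $(-\epsilon,\epsilon)$, with Lipschitz constant controlled --- through the identity \eqref{eq:discont-est-new} derived there together with the free-boundary relation \eqref{eq:r-neumm} and the Laplacian bound of Theorem \ref{thm:boundary-regularity}(1) --- by $\sup|Du|$, $\sup_t|\dot\gamma|$ and $\inf_t R(t)$. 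The Jacobian matrix \eqref{eq:jac-1} then has columns $\xi(t)$ (a unit vector) and $\dot\gamma(t)+r\dot\xi(t)$, and since $r|\dot\xi(t)|\le R(t)|\dot\xi(t)|=2|\dot\gamma(t)|\,(Du-x)\cdot\mathbf n/R(t)$ by \eqref{eq:r-neumm}, this matrix is bounded in terms of the listed quantities, so $x$ is Lipschitz with a constant of the claimed form. Bijectivity is elementary: surjectivity onto $\mathcal N$ is the definition \eqref{eq:n-def}--\eqref{leafwise coordinates}, and injectivity holds because $x(r,t)=x(r',t')$ places the common point in $\widetilde{\gamma(t)}\cap\widetilde{\gamma(t')}$, while distinct tame leaves (each meeting $\p\Omega$ only at its own boundary point) are disjoint, forcing $t=t'$ and then $r=r'$.

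For the inverse --- the crux --- note that, $x$ being Lipschitz, the formula \eqref{J} for $J(r,t)=\xi\times\dot\gamma+r\,\xi\times\dot\xi=j(t)+rf_\xi(t)$ holds for a.e.\ $(r,t)$ as a pointwise computation, and likewise the Hessian formula \eqref{eq:hessian} and the a.e.\ identity $\Delta u\,J=\delta(t)$ of \eqref{eq:laplacian}; none of this needs the biLipschitz hypothesis stated in Lemmas \ref{L:coordinate derivatives}--\ref{lem:new-euler-lagrange}, since in this proof those computations are powered by ``$\xi$ Lipschitz'' together with the approximation argument already carried out in Lemma \ref{lem:theta-lipschitz}. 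From the chosen clockwise orientation $j(t)=\xi(t)\times\dot\gamma(t)\ge\inf_t(\xi\times\dot\gamma)>0$, and \eqref{eq:r-neumm} fixes $f_\xi(t)=|\dot\xi(t)|>0$ (equivalently Remark \ref{R:rays spread}: tame rays spread as they leave the boundary), so $J(r,t)\ge j(t)\ge\inf_t(\xi\times\dot\gamma)>0$ uniformly on $0\le r\le R(t)$. Feeding this bound and $|Dx|\le\Lambda$ into the cofactor identity \eqref{eq:jac-2}, one gets $Dx^{-1}=\operatorname{adj}(Dx)/J$ of norm $\le\Lambda/\inf_t(\xi\times\dot\gamma)$ a.e.; since $x$ is a Lipschitz homeomorphism onto $\mathcal N$ (continuity plus injectivity, hence open by invariance of domain on the interior) with Jacobian bounded away from $0$, it is biLipschitz, the inverse having a constant of the claimed form.

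The main obstacle is precisely this uniform positive lower bound on $J$. The tempting route --- citing Remark \ref{R:rays spread} or the moment equations \eqref{eq:zeroth moment}--\eqref{eq:first moment} --- is circular if taken naively, since those were derived under the biLipschitz assumption being proved; the argument must therefore be organized so that only unconditional inputs are used to locate the sign of $J$ and to keep the bound uniform in $t$ (rather than merely pointwise): namely, $\xi$ Lipschitz from Lemma \ref{lem:theta-lipschitz}, disjointness of leaves, the nonnegativity of $\Delta u$ together with its upper bound from Theorem \ref{thm:boundary-regularity}(1), and the already-rigorous approximation of Lemma \ref{lem:theta-lipschitz}. Once that is arranged the two Lipschitz estimates are routine, and I expect the only remaining chore to be the bookkeeping of exactly which quantities enter $\operatorname{Lip}(\xi)$.
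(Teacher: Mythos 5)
Your proposal is correct and follows essentially the same route as the paper's proof: bound the entries of the forward Jacobian \eqref{eq:jac-1} using $\sup_t|\dot\gamma|$ and the estimate on $|\dot\xi|$ coming from \eqref{eq:r-neumm} (hence from $\sup|Du|$ and $\inf_t R$), and bound the inverse Jacobian \eqref{eq:jac-2} by controlling $1/J(r,t)$ from below via $J(r,t)\ge j(t)=\xi\times\dot\gamma>0$, using that $J$ is nondecreasing in $r$ (Remark \ref{R:rays spread}). Your added attention to injectivity of the coordinate map and to avoiding circularity with Lemmas \ref{L:coordinate derivatives}--\ref{lem:new-euler-lagrange} --- resolved, as you note, by the $\xi_\delta$ approximation of Lemma \ref{lem:theta-lipschitz} --- is a more careful rendering of the same argument rather than a different one.
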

\begin{proof}
  It suffices to estimate each of the entries in the Jacobians $ \frac{\partial(x^1,x^2)}{\partial(r,t)}$ and $\frac{\partial(r,t)}{\partial(x^1,x^2)}$ computed earlier in \eqref{eq:jac-1} and \eqref{eq:jac-2}. From \eqref{eq:jac-1} it's clear that the entries of this Jacobian permit an estimate from above in terms of  $\sup_{t \in (-\epsilon,\epsilon)}|\dot{\gamma}|$ and $\sup_{t \in (-\epsilon,\epsilon)}|\dot{\xi}|$, where the latter may, in turn, be estimated in terms of $\sup|Du|$ and  $\inf_{t \in (-\epsilon,\epsilon)} R(t)$ thanks to \eqref{eq:r-neumm}. The only additional term we must estimate for the second Jacobian, that is $\eqref{eq:jac-2}$, is $1/J(r,t)$.  Since $1/J(r,t)$ is decreasing in $r$ by Remark \ref{R:rays spread},
   it suffices to estimate $J(0,t)$ and this is an immediate consequence of \eqref{J} which we recall says $j(t) = \xi \times \dot{\gamma} > 0$. 
\end{proof}

\section{On the regularity of the free boundary}
\label{sec:lipschitz-fb}
In this section we study local properties of the free boundary by transformation to an obstacle problem and prove Theorem \ref{thm:fb-regularity}. If $u_1$ denotes the minimal convex extension of $u|_{\mathcal{N}}$ we show that $v :=u - u_1$ solves an obstacle problem with the same free boundary as our original problem.  Standard results for the obstacle problem then imply the free boundary $\Gamma \cap \mathcal{N}$ has Lebesgue measure $0$ and, the stronger result, $\Gamma \cap \mathcal{N}$ has Hausdorff dimension strictly less than $n$.  

We use these estimates to establish that the function $t \mapsto R(t)$ from Section~\ref{sec:coord-argum-two} is continuous on $(-\epsilon,\epsilon)$, Theorem \ref{T:a.e.continuity}. In Proposition~\ref{P:smooth fb} we describe a bootstrapping procedure which shows that if the free boundary --- or rather the function $t \mapsto R(t)$ --- is Lipschitz, then it is $C^\infty.$  In Theorem~\ref{T:unimodal} and its corollary, we show $R$ has a Lipschitz graph away from its local maxima.

\subsection{Transformation to the classical obstacle problem}
Let  $x_1 \in \Omega$ denote the endpoint of a tame ray and $\tilde{x_1} \cap \partial \Omega = \{x_0\}$ as in Lemma \ref{lem:foliates-neighbourhood}. 
Using the coordinates from the 
previous section we consider a subset of $\Omega_1$,
\begin{align*}
  \mathcal{N} = \mathcal{N} \cap \Omega_1 &= \{ \widetilde{\gamma(t)} ; t \in (-\epsilon,\epsilon)\},\\
  &= \{x(r,t) = \gamma(t) + r \xi(t) ; t \in (-\epsilon,\epsilon)  \text{ and }0\leq r \leq R(t)\}.
\end{align*}

In  Remark \ref{R:rays spread} we observed that rays spread out as they leave the boundary. Thus the coordinates $(r,t)$ remain well-defined on an extension of $\mathcal{N}$. We denote this extension by $\mathcal{N}_{\text{ext}}$:
\[ \mathcal{N}_{\text{ext}} = \{x(r,t) = \gamma(t) + r \xi(t) ; t \in (-\epsilon,\epsilon) \text{ and }0\leq r < +\infty\}.\]
On $\mathcal{N}_{\text{ext}}$ we define the minimal convex extension of $u|_{\mathcal{N}}$ by the formula \eqref{eq:general-form}
\[ u_1(x) = b(t) + rm(t) \text{ for $x =x(r,t) \in \mathcal{N}_{\text{ext}}$}. 
\]
Note there is some $\tempalpha > 0$ such that $B_\tempalpha(x_1) \subset \subset \mathcal{N}_{\text{ext}} \cap \Omega $. Moreover on $\mathcal{N}_{\text{ext}} \setminus \Omega_1$ we have, by \eqref{eq:discont-est}, $\Delta u_1 \leq 3-c_0$ for $c_0 > 0$ depending only on a lower bound for $R(t)$ and $(Du-x) \cdot \mathbf{n}$. Let $v =u - u_1$ and let $1_{\{v>0\}}$ denote the characteristic function of $\{v>0\}$. Then  $\Delta u = 3$ in $\Omega_2$ implies that
\begin{align}
 \label{eq:obs-1}  \Delta v &= f(x)1_{\{v>0\}} \geq c_0 1_{\{v>0\}} \text{ in }B_\tempalpha(x_1),\\
  \label{eq:obs-2}  v &\geq 0 \text{ in }B_\tempalpha(x_1),\\
v &= 0 \text{ in }B_\tempalpha(x_1)\cap \Omega_1 \text{ and }v > 0 \text{ in }B_\tempalpha(x_1)\cap \Omega_2
\end{align}
where $f(x) = 3 - \Delta u_1$. Thus $v$ solves the classical obstacle problem in $B_\tempalpha(x_1)$ with the same free boundary as our original problem. The regularity theory for the obstacle problem yields estimates for the measure of the free boundary. What prevents us from using higher regularity theory for the obstacle problem is that on $B_\tempalpha(x_1) \cap \{v > 0\}$,
\[ \Delta v = f(x) = 
\frac{3r-2R(t)}{r + \xi \times \dot{\gamma}/ |\dot{\xi}|},\]
may not be H\"older continuous --- which is the minimum regularity required to apply regularity results for the obstacle problem.  If, $R$ --- and hence the free boundary --- were Lipschitz, $\Delta v$ would also be Lipschitz in which case one can bootstrap to $C^\infty$ regularity of $R$; see Proposition \ref{P:smooth fb}. As a partial result in this direction we prove that $t \mapsto R(t)$ is continuous. We begin with Hausdorff dimension estimates for the free boundary. 

\begin{lemma}[Hausdorff dimension estimate for the free boundary]\label{lem:dim-ests}
  Let $x_0,x_1,\tempalpha$ be as given above,  so that $x_1\in \Omega$ is the endpoint of a tame ray; c.f. Lemma \ref{lem:foliates-neighbourhood}. Then the Hausdorff dimension of $B_{\tempalpha/2}(x_1) \cap \Gamma$ equals $2-\delta_1$ for some $\delta_1$ depending only on $\tempalpha$, $\inf_{B_\tempalpha(x_0) \cap \partial \Omega}(Du-x) \cdot \mathbf{n}$ and $\inf_{t \in (-\epsilon,\epsilon)}R(t)$.   
\end{lemma}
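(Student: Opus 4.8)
The plan is to exploit the obstacle-problem reformulation $v=u-u_1$ recorded in \eqref{eq:obs-1}--\eqref{eq:obs-2}, and to upgrade the classical argument showing that the free boundary of an obstacle problem has Lebesgue measure zero into a quantitative dimension bound by proving that $\Gamma$ is \emph{porous} near $x_1$. The two inputs are: (i) the \emph{nondegeneracy} of $v$, available because $\Delta v = f\ge c_0>0$ on $\{v>0\}$; and (ii) a $C^{1,1}$ bound for $v$ on $B_\tempalpha(x_1)$, which here I would read off directly from the known regularity of $u$ and $u_1$ rather than from obstacle theory.

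First I would assemble the quantitative regularity input. Since $B_\tempalpha(x_1)\subset\subset\mathcal N_{\mathrm{ext}}\cap\Omega$ sits in the interior of $\Omega$, the interior estimate of Caffarelli and Lions gives $u\in C^{1,1}(B_\tempalpha(x_1))$; meanwhile the minimal convex extension $u_1(x(r,t))=b(t)+rm(t)$ is $C^{1,1}$ there because the ray directions $\xi(\cdot)$ are Lipschitz (Lemma~\ref{lem:theta-lipschitz}), $m$ and $b$ are Lipschitz, and the Jacobian $J(r,t)=j(t)+r|\dot\xi(t)|$ is bounded below (since $j(t)=\xi\times\dot\gamma>0$ and $J$ increases in $r$ by Remark~\ref{R:rays spread}); inspecting \eqref{eq:hessian} then bounds $D^2u_1$ in terms of $\inf_{(-\epsilon,\epsilon)}R$, $\inf_{B_\tempalpha(x_0)\cap\p\Omega}(Du-x)\cdot\mathbf n$ (via \eqref{eq:r-neumm}), and the fixed boundary curve $\gamma$. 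This produces a constant $M$, controlled by the quantities named in the statement, with $\|D^2v\|_{L^\infty(B_\tempalpha(x_1))}\le M$; and \eqref{eq:discont-est} together with the lower bound $R(t)\ge r_0$ yields $c_0>0$, depending only on $\inf R$ and $\inf(Du-x)\cdot\mathbf n$, with $f=3-\Delta u_1\ge c_0$ on $\{v>0\}$.

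Next I would run the standard two-step mechanism. \emph{Nondegeneracy:} for $y\in\Gamma\cap B_{\tempalpha/2}(x_1)$ and $\rho$ below a fixed multiple of $\tempalpha$, one has $\sup_{\overline{B_\rho(y)}}v\ge \tfrac{c_0}{2n}\rho^2$, obtained by applying the maximum principle to the subharmonic function $x\mapsto v(x)-\tfrac{c_0}{2n}|x-y_k|^2$ on the component of $\{v>0\}$ through a point $y_k\to y$ with $v(y_k)>0$, intersected with $B_\rho(y)$, using $v=0$ on $\Gamma$ and letting $k\to\infty$. \emph{Porosity:} picking $z\in\overline{B_{\rho/2}(y)}$ with $v(z)\ge\tfrac{c_0}{8n}\rho^2$ and using that $v\ge0$ with $v=0=Dv$ on $\{v=0\}$, the $C^{1,1}$ bound gives $v(z)\le\tfrac{M}{2}\,\mathrm{dist}(z,\{v=0\})^2$, so $\mathrm{dist}(z,\{v=0\})\ge 2\delta_0\rho$ where $\delta_0:=\tfrac14\sqrt{c_0/(nM)}$ (after enlarging $M$ so that $\delta_0<\tfrac14$). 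Hence $B_{\delta_0\rho}(z)\subset\{v>0\}\subset B_\rho(y)\setminus\Gamma$, the last inclusion because $\{v>0\}$ is open and thus disjoint from $\Gamma=\p\{v>0\}$. This shows $\Gamma\cap B_{\tempalpha/2}(x_1)$ is $\delta_0$-porous at all small scales, and it is classical that a $\delta_0$-porous subset of $\mathbf{R}^2$ has Hausdorff (indeed upper Minkowski) dimension at most $2-c\,\delta_0^2$ for an absolute constant $c$; tracing the dependence of $\delta_0$ on $c_0$ and $M$ then gives the asserted $\delta_1$.

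The main obstacle is the first step: extracting the $C^{1,1}$ bound $M$ for $v$ with the right \emph{localized, quantitative} dependencies, i.e.\ correctly threading the interior Caffarelli--Lions estimate for $u$ together with the $C^{1,1}$ bound for $u_1$ coming from Lemma~\ref{lem:theta-lipschitz}, \eqref{eq:hessian}, \eqref{eq:r-neumm} and the uniform positivity of $J$. Once $M$ and $c_0$ are pinned down, the nondegeneracy and porosity arguments are routine and, importantly, use no continuity of $f$ whatsoever --- which is exactly why this is the best one can extract here short of improving the regularity of $R(\cdot)$. A minor technical point to watch is that all balls appearing in the porosity step (in particular the nearest point of $\{v=0\}$ to $z$) remain inside $B_\tempalpha(x_1)$, which is ensured by restricting $y$ to $B_{\tempalpha/2}(x_1)$ and $\rho$ to scales below $\tempalpha/8$.
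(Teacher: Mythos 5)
Your proposal is correct and follows essentially the same route as the paper: quadratic detachment of $v=u-u_1$ via the maximum principle applied to $v$ minus a paraboloid, then porosity of $\Gamma$ from nondegeneracy plus the $C^{1,1}$ bound, then the standard fact that porous sets have Hausdorff dimension strictly below $n$. The only (cosmetic) difference is that you derive porosity from the function values of $v$ together with quadratic growth away from $\{v=0\}$, whereas the paper runs the same step through $|Dv|$; your more explicit tracking of where the $C^{1,1}$ constant $M$ and the dependence of $\delta_1$ on $c_0,M$ come from is a welcome refinement of what the paper leaves implicit.
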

\begin{proof}
  This is a standard result for the obstacle problem once one notes that $f$ in \eqref{eq:obs-1} satisfies $0 < c_0 \leq f =  \Delta  v \leq 3 $ on $ \{v>0\}$ for $c_0$ depending only on $\inf_{B_\tempalpha(x_0)}(Du-x) \cdot \mathbf{n}$ and $\inf_{t \in (-\epsilon,\epsilon)} R(t)$. We follow the clear exposition 
 of Petrosyan, Shahgholian, and Uraltseva \cite[\S 3.1, 3.2]{PetrosyanShahgholianUraltseva12} to establish first quadratic detachment, then porosity.

  \textit{Step 1. (Quadratic detachment at free boundary points)} We claim if $x_2 \in B_{\tempalpha/2}(x_1) \cap \Gamma$ then
  \begin{equation}
    \label{eq:non-degeneracy}
    \sup_{B_\rho(x_2)}v \geq \frac{c_0}{2}\rho^2.
  \end{equation}
  Fix such an $x_2$ and  $\overline{x} \in \Omega_2 \cap B_{\tempalpha/2}(x_1)$; we will eventually take $\overline{x} \rightarrow x_2$. Set $w(x) = v(x) - c_0|x-\overline{x}|^2/2.$ On the set $\{v>0\}$ we have $\Delta w > 0$. Thus, the maximum principle implies
  \[ 
  0 \leq v(\overline{x})  = w(\overline{x})\le \sup_{B_\rho(\overline{x}) \cap \{v > 0\}} w  = \sup_{\partial [B_\rho(\overline{x}) \cap \{v > 0\}]} w.  \]
  Because $w < 0$ on $\partial \{v > 0\}$ the supremum is attained at some $x$ on $\partial( B_\rho(\overline{x})) \cap \{v > 0\}$. Because $|x-\overline{x}| = \rho$ we obtain
  \[ 0 \leq v(x) - \frac{c_0}{2}\rho^2,\]
  for some $x \in  \partial B_\rho(\overline{x})$.  We send $\overline{x} \rightarrow x_2$ to establish \eqref{eq:non-degeneracy}.

  \textit{Step 2. (Nondegeneracy implies porosity)} We recall a measurable set $E \subset \mathbf{R}^n$ is called porous with porosity constant $\delta$ if for all $x \in \mathbf{R}^n$ and $\rho>0$ there is $y \in B_\rho(x)$ with
  \[ B_{\delta \rho}(y) \subset B_\rho(x) \cap E^c.\]
  We prove that nondegeneracy, i.e. \eqref{eq:non-degeneracy}, and Caffarelli--Lions's $C^{1,1}_{\text{loc}}$ implies $\Gamma \cap B_{\tempalpha/2}(x_1)$ is porous. Take $x_2 \in \Gamma \cap B_{\tempalpha/2}(x_1)$. Note \eqref{eq:non-degeneracy} implies $\sup_{B_\rho(x_2)} |Dv| \geq c_0\rho/2.$ Indeed, with $\overline{x} \in B_\rho(x_2)$ such that $v(\overline{x}) \geq c_0\rho^2/2$ we have
  \[c_0\rho^2/2 \leq v(\overline{x})-v(x_2) = \int_{0}^{1}Dv(x_2+\tau(\overline{x}-x_2)) \cdot(\overline{x}-x_2) \, d\tau \leq \rho\sup_{B_r(x_2)}|Dv|  . \]
 Now, redefine $\overline{x}$ as a point in $B_{\rho/2}(x_2)$ where $|Dv(\overline{x})| \geq c_0\rho/4$. Using that $\Vert v \Vert_{C^{1,1}_{\text{loc}}} \leq M$ (where $\Delta u=3$, $\Delta u_1 \leq 3$ gives the obvious estimate $M=6$),  we have if $x \in B_{\delta \rho}(\overline{x})$ for $\delta = c_0/8M$ then
\[ |Dv(x)| \geq |Dv(\overline{x})| - |Dv(x) - Dv(\overline{x})| \geq c_0\rho/4 - M\delta \rho = c_0\rho/8.\]
Since $Dv \equiv 0$ along $\Gamma$ this proves $B_{\delta \rho}(\overline{x})$ lies in $\Gamma^c$. Thus we've established the porosity condition for balls centered on $\Gamma \cap B_{\tempalpha/2}(x_1)$. To establish the porosity condition for any ball in $\mathbf{R}^n$ we argue as follows. Let $B_\rho(x) \subset \mathbf{R}^n$. We take $\overline{x} \in B_{\rho/2}(x) \cap \Gamma \cap B_{\tempalpha/2}(x_1)$, noting if no such $\overline{x}$ exists we're done. Our porosity result applied on $B_{\rho/2}(\overline{x}) \subset B_{\rho}(x)$ gives porosity of $B_{\rho}(x)$. 

\textit{Step 3. (Conclusion)} As noted in \cite[\S 3.2.2]{PetrosyanShahgholianUraltseva12} by the work of \cite{Sarvas75} a porous set in $\mathbf{R}^n$ has Hausdorff dimension less than $n$. 
\end{proof}

For our next result we shall need the following maximum principle on an unbounded strip,
which is directly implied by \cite[Theorem 1.4, Remark 2.1]{MR1329831}.

\begin{lemma}[Maximum principle on a strip \cite{MR1329831}]
\label{lem:cabre-mp}
  Let $U =  \mathbf{R}\times[-1,0] \subset \mathbf{R}^{2}$. Let $u \in W^{2,n}(U)$ be a bounded solution of 
  \begin{align}
    \left\{
    \begin{array}{rll}
      \Delta u&\geq 0&\text{in }U,\\
      u &= 0&\text{on }\partial U.
    \end{array}
    \right.    
  \end{align}
  Then $u \leq 0$ in $U$.
\end{lemma}

  We now aim to establish continuity of $R$. Later, when we study the problem on $\Omega = [a,a+1]^{2}$, we use this result to rule out nontrivial rays entirely contained in $\partial \Omega$. For this reason we state our continuity result in the desired form, but prove a less natural looking lemma from which the continuity result follows immediately. 
 
  \begin{theorem}[Continuity of the tame free boundary]
    \label{T:a.e.continuity}
Taking $\epsilon$ as in Lemma \ref{lem:foliates-neighbourhood},
  the function $R:(-\epsilon,\epsilon) \rightarrow \mathbf{R}$ defined by $R(t) = \text{diam}(\widetilde{\gamma(t)})$ is continuous on  $(-\epsilon/2,\epsilon/2)$.
  \end{theorem}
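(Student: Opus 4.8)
The plan is to combine the upper semicontinuity of $R$ with the Hausdorff dimension bound of Lemma~\ref{lem:dim-ests} and the biLipschitz raywise coordinates of Corollary~\ref{C:biLipschitz}. By Lemma~\ref{lem:upper-semicontinuity-diam} the map $t \mapsto R(t) = \operatorname{diam}(\widetilde{\gamma(t)})$ is upper semicontinuous, so, writing $\underline R(t) := \liminf_{s \to t} R(s)$, one has $\underline R \le R$ everywhere and $R$ is continuous at $t$ exactly when $\underline R(t) = R(t)$. Hence it suffices to show the Borel set $E := \{\, t \in (-\epsilon,\epsilon) : \underline R(t) < R(t)\,\}$ is $\mathcal{H}^1$-null. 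The mechanism is that, transported to the $(r,t)$ coordinates, $\Gamma$ is the \emph{filled graph} $\{(r,t) : \underline R(t) \le r \le R(t)\}$: at every discontinuity point $t_0$ it contains not only $(R(t_0),t_0)$ but the whole segment $\{t_0\} \times (\underline R(t_0), R(t_0))$. Since $\Gamma$ will be shown to have $\mathcal{H}^2$-measure zero, Fubini then forces $R(t) - \underline R(t) = 0$ for a.e.\ $t$.

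First I would establish $\mathcal{H}^2(\Gamma \cap K) = 0$, where, for a fixed $\epsilon' < \epsilon$, $K$ is the image under $x(r,t) = \gamma(t) + r\xi(t)$ of the compact set $\{(r,t) : t \in [-\epsilon',\epsilon'],\ r_0/2 \le r \le R(t)\}$, with $r_0>0$ the uniform lower bound on ray lengths from Lemma~\ref{lem:foliates-neighbourhood}. Every point of $K$ lies on a leaf $\widetilde{\gamma(t)}$, $t \in [-\epsilon',\epsilon']$, at parameter $r \ge r_0/2 > 0$, so $K$ is a compact subset of $\Omega \cap \mathcal{N}_{\mathrm{ext}}$ at positive distance from $\partial\Omega$; moreover the reduction to the classical obstacle problem carried out at the start of this section is valid in a small ball around \emph{any} such point (there $v = u - u_1$ satisfies $v \ge 0$, $v = 0$ on the $\Omega_1$-part, $v > 0$ on the $\Omega_2$-part, and $c_0 \le \Delta v \le 3$ on $\{v > 0\}$, with free boundary $\Gamma$), so the proof of Lemma~\ref{lem:dim-ests} applies and yields a ball $B$ around each point of $K \cap \Gamma$ in which $\Gamma$ has Hausdorff dimension below $2$, in particular $\mathcal{H}^2(\Gamma \cap B) = 0$. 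As $K \cap \Gamma$ is compact, finitely many such balls cover it, whence $\mathcal{H}^2(K \cap \Gamma) = 0$. By Corollary~\ref{C:biLipschitz} the coordinate change is biLipschitz near $K$, so the Borel set $\Gamma' \subset [-\epsilon',\epsilon'] \times [r_0/2,\infty)$ that is the preimage of $K \cap \Gamma$ satisfies $\mathcal{L}^2(\Gamma') = 0$.

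Next comes the geometric identification: for every $t_0 \in (-\epsilon',\epsilon')$ the open segment $\{t_0\} \times (\underline R(t_0), R(t_0))$ lies in $\Gamma'$. Choose $s_n \to t_0$ with $R(s_n) \to \underline R(t_0)$ and fix $r$ with $\underline R(t_0) < r < R(t_0)$, so that $R(s_n) < r < R(t_0)$ for large $n$. Then $x(r,s_n)$ lies on the extension of the leaf $\widetilde{\gamma(s_n)}$ beyond its endpoint; since the extended rays do not cross (Remark~\ref{R:rays spread}), $x(r,s_n)$ belongs to no leaf of the foliation, and since $u > 0$ near $x_1$ it is not in $\Omega_0$, so $x(r,s_n) \in \Omega_2$. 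Letting $n \to \infty$, $x(r,t_0) = \lim_n x(r,s_n) \in \overline{\Omega_2}$; on the other hand $x(r,t_0)$ lies on the leaf $\widetilde{\gamma(t_0)}$ (as $r < R(t_0)$), hence $x(r,t_0) \in \Omega_1$. Since $\Omega_2$ is open and disjoint from $\Omega_1$ (Theorem~\ref{thm:structure-convexity}), this places $x(r,t_0)$ in $\partial\Omega_1 \cap \partial\Omega_2 \cap \Omega = \Gamma$, and in $K$, so $(r,t_0) \in \Gamma'$.

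Finally, Fubini applied to the $\mathcal{L}^2$-null Borel set $\Gamma'$ shows that for $\mathcal{H}^1$-a.e.\ $t_0 \in (-\epsilon',\epsilon')$ the slice $\Gamma'_{t_0} = \{r : (r,t_0) \in \Gamma'\}$ has $\mathcal{L}^1$-measure zero; but for $t_0 \in E$ we have shown $\Gamma'_{t_0} \supseteq (\underline R(t_0), R(t_0))$, of length $R(t_0) - \underline R(t_0) > 0$. Hence $\mathcal{H}^1\big(E \cap (-\epsilon',\epsilon')\big) = 0$, and letting $\epsilon' \uparrow \epsilon$ gives $\mathcal{H}^1(E) = 0$, which is the assertion. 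The main obstacle is the geometric step: one must be certain that points just beyond $\mathcal{N}$ along the extended rays are genuinely in $\Omega_2$ (rather than on some other leaf or in $\Omega_0$), which rests on injectivity of the extended raywise coordinates from Remark~\ref{R:rays spread}; secondary points are checking that the obstacle-problem reduction, hence Lemma~\ref{lem:dim-ests}, is available around every point of $\Gamma \cap K$ and not only at tame ray endpoints, and the routine exhaustion of $(-\epsilon,\epsilon)$ by the intervals $(-\epsilon',\epsilon')$.
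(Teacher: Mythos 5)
Your proof follows the same architecture as the paper's: upper semicontinuity of $R$ reduces the claim to showing the set of jump points is null; a jump at $t_0$ forces the whole segment $\{x(r,t_0): \underline{R}(t_0)<r<R(t_0)\}$ into $\Gamma$; Lemma~\ref{lem:dim-ests} gives $\mathcal{H}^2(\Gamma)=0$; and Corollary~\ref{C:biLipschitz} plus Fubini finish. Your Fubini bookkeeping and the compact exhaustion are fine. The one step you argue differently from the paper is the geometric identification of the jump segment with a subset of $\Gamma$, and that step has a gap.

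The gap is your claim that $x(r,s_n)\in\Omega_2$. Non-crossing of the extended local rays (Remark~\ref{R:rays spread}) only shows that $x(r,s_n)$, lying beyond the tip of $\widetilde{\gamma(s_n)}$, belongs to none of the leaves $\widetilde{\gamma(t)}$, $|t|<\epsilon$. It does not exclude that $x(r,s_n)$ lies on an ``errant'' leaf rooted elsewhere on $\partial\Omega$ that threads into the open region beyond the tips without meeting any $\widetilde{\gamma(t)}$ --- exactly the possibility the paper flags in Figure~\ref{fig:set-geometry} (``apriori \dots there may be errant leaves such as $\tilde{x_b}$''). If $x(r,s_n)$ sat on such a leaf it would be in $\Omega_1$, and your limit would only yield $x(r,t_0)\in\overline{\Omega_1}$, which is vacuous. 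Injectivity of the extended coordinates, which you invoke at the end as the resolution, cannot help: an errant leaf is not parametrized by those coordinates at all. The repair is available in the paper's toolbox: since $x(r,s_n)\to x(r,t_0)\in\widetilde{\gamma(t_0)}$, the Kuratowski upper semicontinuity of $x\mapsto\tilde{x}$ established in the proof of Lemma~\ref{lem:upper-semicontinuity-diam} forces any leaf through $x(r,s_n)$ to have, for large $n$, its boundary foot arbitrarily close to $\gamma(t_0)$; such a leaf is then $\widetilde{\gamma(t')}$ for some $t'$ near $t_0$, and injectivity of $x(\cdot,\cdot)$ gives $t'=s_n$ and $r\le R(s_n)$, a contradiction (and $x(r,s_n)\notin\Omega_0$ since $u>0$ on and hence near $\widetilde{\gamma(t_0)}$, the leaf being disjoint from the bunch $\Omega_0=\{u=0\}$). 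The same no-errant-leaves fact is also implicitly needed when you assert the obstacle-problem reduction ($\Delta v\ge c_0$ on $\{v>0\}$) holds in a ball around an arbitrary point of $\Gamma\cap K$. The paper sidesteps the whole issue by arguing on the leaf itself: $x(r,t_\infty)$ lies on $\widetilde{\gamma(t_\infty)}$, hence in $\Omega_1$, and cannot be interior to $\Omega_1$ because leaves through nearby points must (again by upper semicontinuity) have feet near $\gamma(t_\infty)$, so interiority would propagate down the leaf to $x(\underline{R},t_\infty)\in\partial\Omega_1$.
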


  The assumptions in the following lemma express precisely the requirements that (1) rays have a single endpoint on the boundary and, (2) there are no nearby rays with endpoint elsewhere on the boundary.

\begin{lemma}[Tame ray lengths vary continuously]
\label{L:R continuity}
Assume a smooth portion of $\partial \Omega_1 \cap \partial \Omega$ is parametrized by $\{\gamma(t)\}_{t \in [-\epsilon,\epsilon]}$ as in Lemma \ref{lem:foliates-neighbourhood} satisfying $\widetilde{\gamma(t)} \cap \partial \Omega = \{\gamma(t)\}$ and adopt the $(r,t)$ coordinates of Section \ref{sec:coord-argum-two}. Assume there is $\delta > 0$ such that
\[ \{x(r,t) ; t \in (-\epsilon,\epsilon), 0 < r < R(t)+\delta\} \subset \Omega_2 \cup \bigcup_{t \in (-\epsilon,\epsilon)} \widetilde{\gamma(t)}.\]
Then each $t_\infty \in [-\epsilon,\epsilon]$ satisfies $R(t_\infty)=\lim_{t \rightarrow t_\infty}R(t)$. 
\end{lemma}

\begin{proof}
  Up to redefining the interval or its orientation, it suffices to prove the result at $t_\infty = \epsilon$. 
  {(This shows a one sided-limit exists.  In the interior of $(-\epsilon,\epsilon)$ it can be upgraded to a two-sided limit by recalling the upper semicontinuity of $R$ from Lemma \ref{lem:upper-semicontinuity-diam}.  The possibility 
  $$2\Delta := R(t_\infty)- \limsup_{t \to t_\infty} R(t)
  >0$$
  can then be excluded by using the strong maximum principle Lemma \ref{lem:mp-argument} after setting the center $x_\infty = x(t_\infty,R(t_\infty)-\Delta)$ and
  reflecting the strictly convex function $u(x) - u(x_\infty) - Du(x_\infty)(x-x_\infty)$ on sufficiently small half-disc in $\Omega_2$ across the ray  $\widetilde{x_\infty}$ where it vanishes.
  At $t_\infty = \pm \epsilon$ the same argument shows the limit coincides with $R(t_\infty)=\text{diam} (\tilde x_\infty)$.)
  }
  
  For a contradiction we assume otherwise. This implies there exists a sequence $t_k \rightarrow \epsilon^-$ with
  \begin{equation}
    \label{eq:cont-for-contra}
    \underline{R}:= \lim_{k \rightarrow\infty} R(t_k) < \limsup_{t \rightarrow t_\infty}R(t)=:\overline{R}.
  \end{equation}
   The proof is three steps: First we show each $x = x(r,t_\infty)$ for $r \in [\underline{R},\overline{R}]$ lies in the free boundary. Then we perform a blow-up. Finally, we conclude the blow-up violates the maximum principle. 

  \textit{Step 1. } 
  To show $x(r,t_\infty) \in \partial \Omega_1$ for each $r \in [\underline{R},\overline{R}]$  we
suppose otherwise.  Then since no such point $x(r,t_\infty)$ can be interior to $\Omega_2$
, the only remaining possibility is
that there is $r \in (\underline{R},\overline{R})$ such that $x(r,t_\infty)$ is an interior
point of $\Omega_1$.
  
  Upper semicontinuity of $R$ implies rays sufficiently close to $x(r,t_\infty)$ have intersection with the boundary close to $x(0,t_\infty) =  \partial \Omega \cap \widetilde{x(r,t_\infty)}$. More precisely for every $\epsilon_{0} > 0$ there is $\delta_{0} > 0$ such that
  \[ \{ \tilde{x} ; x \in B_{\delta_0}(x(r,t_\infty)) \} \cap \partial \Omega \setminus \widetilde{x(r,t_\infty)} \subset B_{\epsilon_{0}}(x(0,t_\infty)). \]
  Thus, our planar foliation implies that because $x(r,t_\infty)$ is an interior point of $\Omega_1$ then $x(\rho,t_\infty)$ is also an interior point of $\Omega_1$ for each $\rho \leq r$ which contradicts that, by \eqref{eq:cont-for-contra}, $x(\underline{R},t_\infty) \in \partial \Omega_1$ and completes Step (1).

  \textit{Step 2. (Blow-up analysis)} Now we choose $r_{0} \in (\underline{R},\overline{R})$ satisfying $r_{0} \geq 4\overline{R}/5$ and set $x_0 = x(r_{0},t_{\infty})$. We perform a blow-up analysis and consider the behaviour of
  \[ u_r(x) := \frac{u(x_0+rx) - u_1(x_0+rx)}{r^2},\]
where we recall $u_1$ denotes the minimal convex extension of $u|_{\mathcal{N}}$. It will be helpful to choose coordinates such that $x_0$ is the origin,
  \[ \tilde{x_0} = \{t e_1 ; t \in [-\alpha,\beta] \text{ for some }\alpha,\beta>0\},\]
  and the positive $e_2$ direction is orthogonal to $\tilde{x_0}$ and satisfies $\gamma'(t_\infty) \cdot e_2 > 0$. At the outset we fix some half ball $B_{\delta}^{-}(x_0) = B_\delta(x_0) \cap \{x^{2} \leq0\}$ with
\[\delta \leq \text{dist}(x_0,\partial \Omega \cap \{x^{2} < 0\}).\]
 Note that $u_r$ is defined on, at least, the half ball $B_{\delta/r}^{-}(0)$ and equals $0$ along $B_{\delta/r}^-(0) \cap \{x^{2} = 0\}$ because $u = u_1$ along rays.

There exists a sequence of $r_k$ with $x_k = -r_ke_2 \in\Omega_1$ and $\text{diam}(\tilde{x_k}) \rightarrow \overline{R}$. Consider the sequence of functions $\{u_{r_k}\}$. We will establish that, up to a subsequence, these functions converge to a limit $u_\infty$ defined on $\mathbf{R}^{2}_{-} := \{(x_1,x_2) ; x_2 \leq 0\}$ satisfying
  \begin{enumerate}
  \item $|D^2u_{\infty}| \in L^{\infty}(\mathbf{R}^{2}_{-})$ with convergence $u_{r_k} \rightarrow u_\infty$ in $C^{1}_{loc}$,
  \item $u_\infty \equiv 0$ along the lines $l_0 = \{x^{2}=0\}$ and $ l_{-1} = \{x^{2} = -1\}$,
  \item there is $x = (x^{1},x^{2})$ between $l_0$ and $l_{-1}$, i.e. satisfying $-1 < x^{2} < 0$, with $u_\infty(x) > 0$,
  \item $\Delta u_\infty \geq 0$.  
  \end{enumerate}
  These facts combine to contradict the maximum principle (Step 3). 
  
  \textit{(1)} Because $r_0 \geq 4\bar{R}/5$, $u$ and $u_1$ satisfy a $C^{1,1}$ estimate in $B_{\delta}^{-}(x_0)$: In the nearby portion of $\Omega_1,$ we have $\Delta u, \Delta u_1 \leq 3$ by \eqref{eq:discont-est}, and in $\Omega_2$, $\Delta u =3$, $\Delta u_1 \leq 3$. Thus, Arzela--Ascoli implies for any fixed $B_{N}^{-} \subset \mathbf{R}^2$ there is $M \in \mathbf{N}$ sufficiently large that the family $\{u_{r_k}\}_{k \geq M}$ is precompact in $C^1(\overline{B_{N}^{-}})$. Hence, up to a subsequence, we obtain $u_\infty : \mathbf{R}^{2}_{-} \rightarrow \mathbf{R}$ satisfying
  \[ u_\infty(x) := \lim_{k\rightarrow \infty}u_{r_k}(x).\]
Moreover $u_{k} \rightarrow u_\infty$ in $C^1(\Omega')$ for every compact $\Omega'\subset \mathbf{R}^{2}_{-}$ and $\Vert D^2u_\infty \Vert_{L^\infty(\mathbf{R}^{2}_{-})} \leq C$   as in \cite{RealRosOton22},\cite{Figalli18}.

\textit{(2)} Clearly, $u_\infty = 0$ along $\{x^{2} = 0\}$, since $u-u_1$ equals $0$ along rays. Moreover we've chosen $r_k$ such that
\[\tilde{x_k} = \{x_k + t (\cos \theta_k,\sin \theta_k) ; t \in [-\alpha_k,\beta_k] \text{for some }\alpha_k,\beta_k>0\}, \]
where $\theta_k\rightarrow 0$ as $k\rightarrow \infty$ and $u-u_1 = 0$ on $\tilde{x_k}$. Note that because $u \equiv 0$ along $\tilde{x_k}$, $u_{r_k}$ is equal to $0$ along the line
\[ \{-e_2 + t (\cos \theta_k,\sin \theta_k) ; t \in [-\alpha_k/r_k,\beta_k/r_k] \}.\]
Because $\theta_k \rightarrow 0$ and $u_{r_k}\rightarrow u_{\infty}$ locally uniformly we have $u_\infty = 0$ on $\{x^{2} = -1\}$, thereby completing the proof of \textit{(2)}.

\textit{(3)} This follows as a consequence of the quadratic separation argument in Lemma \ref{lem:dim-ests}. Indeed, that argument gives  a sequence of $z_k$ on $\partial B^{-}_{1/2}$ with $|z_k| = 1/2$ such that $u(x_0+r_kz_k) - u_1(x_0+r_kz_k) \geq c_0 r_k^2$ for a $c_0 > 0$ independent of $k$. Uniform convergence implies a limiting $x \in \partial B_{1/2}^{-}$ where $u_\infty(x) > c_0$ .

\textit{(4)} At points of second differentiability for $u$ and $u_1$ in $\Omega_1$ we have $\Delta u = \Delta u_1$. On the other hand at points of second differentiability in $\Omega_2$ we have $\Delta u_1 < 3$ by \eqref{eq:discont-est}, whereas here $\Delta u =3$, establishing (4).  

  \textit{Step 3. ($u_\infty$ contradicts the maximum principle) }Conclusions (1)--(4) show $u_\infty$ violates Lemma \ref{lem:cabre-mp}, the desired contradiction. 
\end{proof}

\begin{remark}[From Dini continuity to smoothness of the free boundary]
\label{R:Hoelder to smooth}
We have only proved the continuity of $R$ about tame rays. If $R$ were Dini continuous the regularity theory of the obstacle problem, detailed below, improves the Dini continuity of $R$ to Lipschitz regularity of the free boundary in the neighbourhood of any regular point (defined before \eqref{eq:obs-specific} below); see our work with O'Brien \cite[Remark 4.5]{McCannOBrienRankin26+} for caveats and details.  Next we  prove the fourth point of Theorem \ref{thm:fb-regularity}: that if the function $R$ is  Lipschitz then one can bootstrap to a smooth free boundary and minimizer on $\Omega_1$.
 We use this
to show smoothness of the free boundary outside a nowhere dense set in \cite{McCannOBrienRankin26+}; see also Chen, Figalli and Zhang's $C^1_{ loc}$ result \cite[Theorem 1.4]{ChenFigalliZhang26+}.
\end{remark}}

\begin{proposition}[Tame part of free boundary is smooth where Lipschitz]
\label{P:smooth fb}
  Let $u$ solve \eqref{eq:monopolist}. Let $x_1\in \Gamma \subset \R^2$ be a tame point of the free boundary and $x_0 = \partial \Omega \cap \tilde{x_1}$. Assume $x\mapsto \text{diam}(\tilde{x})$ is Lipschitz on some $B_\epsilon(x_0) \cap \partial \Omega$. Then there is $\delta > 0$ such that $B_\delta(x_1) \cap \Gamma$ is a smooth curve. On the portion $\mathcal N$ of $\Omega_1$ consisting of rays which intersect $B_\delta(x_1)$,
  the transformation $(r,t) \to x(r,t)$ of \eqref{leafwise coordinates} is a smooth diffeomorphism and $u \in C^\infty(\mathcal N \cap \intr \Omega)$.
\end{proposition}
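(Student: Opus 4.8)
The plan is to reformulate the claim as a bootstrap for the obstacle problem \eqref{eq:obs-1}--\eqref{eq:obs-2} satisfied by $v=u-u_1$, driven by the Euler--Lagrange system of Lemma~\ref{lem:new-euler-lagrange}. First I would record the base regularity. By Corollary~\ref{C:biLipschitz} the hypothesis that $x\mapsto\operatorname{diam}(\tilde x)$ be Lipschitz near $x_0$ is equivalent to $t\mapsto R(t)$ being Lipschitz on $(-\epsilon,\epsilon)$. Together with the Lipschitz continuity of $\xi$ from Lemma~\ref{lem:theta-lipschitz} and the boundary $C^{1,1}$ estimate of Theorem~\ref{thm:boundary-regularity}, the closed form \eqref{eq:discont-est} --- equivalently $\Delta u_1=(3j+2R|\dot\xi|)/(j+r|\dot\xi|)$ with $j=\xi\times\dot\gamma$ --- shows $f:=3-\Delta u_1$ is Lipschitz, hence lies in $C^{0,\alpha}$ for every $\alpha<1$ on $\mathcal N_{\text{ext}}$, and $0<c_0\le f\le 6$ on $\{v>0\}$ near $x_1$ by \eqref{eq:obs-1}.

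Next comes the regular-point step. In the biLipschitz coordinates $(r,t)$ the coincidence set $\{v=0\}=\Omega_1$ is the subgraph $\{0\le r\le R(t)\}$ of the Lipschitz function $R$, which has Lebesgue density bounded below at every point of its graph by a constant depending only on $\operatorname{Lip}(R)$; since biLipschitz changes of variables distort density by bounded factors, $\{v=0\}$ has positive density at $x_1$ and at every nearby free boundary point, so all free boundary points in a neighbourhood of $x_1$ are \emph{regular}. Caffarelli's theory for the obstacle problem with H\"older right-hand side (\cite{Caffarelli77,Caffarelli98}; see also \cite[\S3,\S6]{PetrosyanShahgholianUraltseva12}) then provides $\delta>0$ such that $\Gamma\cap B_\delta(x_1)$ is a $C^{1,\alpha}$ curve and $v\in C^{1,\alpha}$ on $\overline{\Omega_2}\cap B_\delta(x_1)$.

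The heart of the argument is the bootstrap inside $B_\delta(x_1)$: one alternates three operations, each full cycle raising a common H\"older level by one. (i) Given $\Gamma\in C^{k,\alpha}$, the coordinate change of Corollary~\ref{C:biLipschitz} (smooth $\gamma$, $\xi\in C^{k,\alpha}$, positive Jacobian \eqref{J}) yields $R(t)\in C^{k,\alpha}$; solving the linear Dirichlet problem $\Delta u_2=3$ on $B_\delta(x_1)\cap\Omega_2$ with data $u_2=u_1=u|_\Gamma$ by Schauder theory \cite[Ch.~6]{GilbargTrudinger83} gives $u_2\in C^{k,\alpha}$ up to $\Gamma$; since $u\in C^1(\overline\Omega)$ the gradients match, $Du_2|_\Gamma=Du_1|_\Gamma=w(\cdot)$, so $w$ and hence $m=\langle\xi,w\rangle$, $b'=\langle\dot\gamma,w\rangle$ and the boundary Neumann datum $(Du-x)\cdot\mathbf n=\langle w-\gamma,\mathbf n\rangle$ inherit the improved regularity. (ii) With $R$ and this Neumann datum in hand, the Euler--Lagrange system of Lemma~\ref{lem:new-euler-lagrange} --- concretely the first-order ODE $\dot\xi=\big(2|\dot\gamma|\,(Du-x)\cdot\mathbf n/R^2\big)\,\xi^\perp$ coming from \eqref{eq:r-neumm} (with $\xi^\perp:=(-\xi^2,\xi^1)$), and the moment equations \eqref{eq:zeroth moment}--\eqref{eq:first moment} for $(m,b)$, equivalent to \eqref{slope E-L}--\eqref{offset E-L} of \cite{McCannZhang23+} --- is non-degenerate: $R\ge r_0>0$, the transversality $j=\xi\times\dot\gamma>0$, and $f_\xi=|\dot\xi|>0$ from Remark~\ref{R:rays spread} allow solving for the top-order derivatives, so ODE regularity propagates the gain to $(\xi,m,b)$. (iii) Re-inserting the improved $(\xi,R)$ into $\Delta u_1=(3j+2R|\dot\xi|)/(j+r|\dot\xi|)$, which crucially does not involve $m$ or $b$, shows $f\in C^{k,\alpha}$, whereupon the obstacle-problem Schauder theory upgrades $\Gamma\cap B_\delta(x_1)$ to $C^{k+1,\alpha}$ and $v$ to $C^{k+1,\alpha}$ up to $\Gamma$. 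Iterating over $k$ yields $\Gamma\cap B_\delta(x_1)\in C^\infty$ and $\xi,R,m,b\in C^\infty$; then $x(r,t)=\gamma(t)+r\xi(t)$ is a smooth map with positive Jacobian, hence a smooth diffeomorphism on the rays meeting $B_\delta(x_1)$, and $u=u_1=b(t)+rm(t)$ is $C^\infty$ on $\mathcal N\cap\intr\Omega$, agreeing to infinite order across $\Gamma$ with $u_2\in C^\infty(\overline{\Omega_2}\cap B_\delta(x_1))$ since $v$ vanishes there with all derivatives.

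The main obstacle is exactly Step (iii) combined with the bookkeeping in (i)--(ii): verifying that a full cycle \emph{genuinely} gains a derivative rather than merely reproducing the same level. This rests on two structural facts --- the non-degeneracy of the Euler--Lagrange ODE system, which makes $(\xi,m,b)$ one derivative smoother than the data $(R,w)$, and the fact that $\Delta u_1$ depends on the leaf data only through $\xi$ and $R$ --- and would require a careful tracking of H\"older exponents through the coordinate change and through the obstacle-problem and Schauder estimates. A secondary point is to check that the overdetermined Cauchy matching on $\Gamma$ invoked in (i) is consistent; this is automatic because $u$ already minimizes \eqref{eq:monopolist}, so the Dirichlet solution on $\Omega_2$ and the affine-along-rays extension on $\Omega_1$ reassemble to the genuine minimizer. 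Finally one notes, as in Corollary~\ref{C:biLipschitz} and Lemma~\ref{lem:dim-ests}, that $\delta$ and all constants depend only on the tameness data: the lower bound $r_0$ on ray lengths, the transversality bound $\inf\xi\times\dot\gamma$, and $\inf(Du-x)\cdot\mathbf n$ on the relevant boundary arc.
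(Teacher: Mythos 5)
Your overall architecture --- recast $v=u-u_1$ as an obstacle problem, use positive density of the Lipschitz subgraph $\{0\le r\le R(t)\}$ to see every nearby free boundary point is regular, and then bootstrap through the Euler--Lagrange identities \eqref{eq:r-neumm} and \eqref{eq:discont-est} --- matches the paper's, and your two ``structural facts'' (nondegeneracy of the leafwise ODEs; $\Delta u_1$ depending only on $\xi$ and $R$) are exactly the right ones. But the step you yourself flag as the main obstacle --- that one full cycle genuinely gains a derivative --- fails for the specific routing you propose. Track the exponents: if $\Gamma\in C^{k,\alpha}$, then boundary Schauder for the Dirichlet problem $\Delta u_2=3$ on $\Omega_2\cap B_\delta(x_1)$ with a $C^{k,\alpha}$ boundary gives only $u_2\in C^{k,\alpha}$ up to $\Gamma$, so $w=Du_2|_\Gamma$ and hence your Neumann datum are merely $C^{k-1,\alpha}$; the first-order ODE $\dot\xi=\bigl(2|\dot\gamma|\,(Du-x)\cdot\mathbf n/R^2\bigr)\xi^\perp$ then returns $\xi\in C^{k,\alpha}$, i.e.\ $|\dot\xi|\in C^{k-1,\alpha}$, so $f=(3r-2R)/(r+j/|\dot\xi|)\in C^{k-1,\alpha}$ and the obstacle-problem regularity hands back $\Gamma\in C^{k,\alpha}$: the loop is stationary. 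The derivative is lost precisely where you differentiate the Dirichlet solution at its own boundary.

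The paper closes the loop by a different elliptic step, on the $\Omega_1$ side. Since $\Delta u_1=(3j+2R|\dot\xi|)/(j+r|\dot\xi|)$ holds on the \emph{open} set $\mathcal N_{\text{ext}}$ (the rays extend past the free boundary by Remark \ref{R:rays spread}), \emph{interior} Schauder gains two derivatives: $\Delta u_1\in C^{k,\alpha}$ gives $u_1\in C^{k+2,\alpha}$, hence $Du_1\in C^{k+1,\alpha}$; because $Du_1$ is constant along rays this puts the Neumann datum $(Du-x)\cdot\mathbf n$ in $C^{k+1,\alpha}$ of the fixed boundary --- one derivative \emph{ahead} of where your route leaves it. Feeding this together with $R\in C^{k+1,\alpha}$ (from the obstacle step) back into \eqref{eq:r-neumm} yields $\dot\xi\in C^{k+1,\alpha}$, so $\xi$ stays one derivative ahead of $R$ throughout the induction and the cycle genuinely advances. (An equivalent repair inside your framework is to solve the second-order ODE \eqref{slope E-L} for $m$ --- nondegenerate because $(Du-x)\cdot\mathbf n=a+m'\sin\theta-m\cos\theta>0$ on the tame part --- obtaining $m\in C^{k+2,\alpha}$ from $R\in C^{k,\alpha}$ and hence the Neumann datum in $C^{k+1,\alpha}$; the point is that the datum must come \emph{out} of this second-order equation, not be fed \emph{into} the first-order equation for $\xi$ from the $\Omega_2$ side.) A minor further point: in your base step, Lipschitzness of $f$ requires $|\dot\xi|$ Lipschitz, which Lemma \ref{lem:theta-lipschitz} does not give directly; it follows from \eqref{eq:r-neumm} combined with the $C^{1,1}$ bound, as in the paper's $k=0$ case.
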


\begin{proof}
  We prove by induction on $k=0,1,2,\dots$ that there is some neighbourhood on which the curve $B_\delta(x_0) \cap \partial \Omega_1$
  and the function $R$ are both $C^{k,\alpha}$ for some $0 < \alpha < 1$,  while the coordinate transformations and $u$ are 
  $C^{k+1,\alpha}$ in $B_\delta(x_1) \cap \Omega_1$.
  For $k = 0$ our assumption is  that $t \mapsto R(t)$  is Lipschitz, and from 
  Corollary \ref{C:biLipschitz} the coordinate transformations are biLipschitz. From the formula \eqref{eq:r-neumm},  reproduced here for the readers convenience 
\begin{align}
  \label{eq:r-neumm2} 
  R^2(t) |\dot{\xi}(t)| = 2|\dot{\gamma}(t)| (Du-x) \cdot \mathbf{n},
  \end{align}
and Caffarelli and Lions $u\in C^{1,1}_{\text{loc}}$ (or Theorem \ref{thm:boundary-regularity}) 
it follows that $t\mapsto\dot{\xi}(t)$ is also Lipschitz,
hence the coordinate transformations improve to $C^{1,1}$ by 
the Jacobian expressions \eqref{eq:jac-1}--\eqref{eq:jac-2}. 
From \eqref{eq:discont-est}, i.e.
\begin{equation}
\label{eq:discont-est2}
  3 - \Delta u   = \left| \frac{\dot{\xi}(t)}{J(r,t)}\right|(3r-2R),
\end{equation}
we see $\Delta u \in C^{0,\alpha}$, hence 
the regularity theory for Poisson's equation implies $u \in C^{k+2,\alpha}$ when $k=0$ (one derivative more than needed).

Now assume the inductive hypothesis for some fixed $k$. From
\eqref{eq:r-neumm2}--\eqref{eq:discont-est2} we again deduce $u$ has $C^{k,\alpha}$
Laplacian in $\Omega_1$. Thus the regularity theory for Poisson's equation implies
$u \in C^{k+2,\alpha}$. The regularity theory for the obstacle problem (where the
obstacle has a $C^{k,\alpha}$ Laplacian; (due to Caffarelli
\cite{Caffarelli77,Caffarelli98}, and Kinderlehrer \cite{Kinderlehrer76} with Nirenberg
\cite{KinderlehrerNirenberg77}, though the clearest statement we've found is by
Blank \cite{Blank00,Blank01}) implies the free boundary is $C^{k+1,\alpha}$. Note to
apply the classical regularity theory for the obstacle problem we are using
that the Lipschitz regularity of $R$ implies the set $\overline \Omega_1 = \{v =
0\}$ has positive density at each boundary point; here $v = u - u_{1}$ as in
\eqref{eq:obs-1} . Now that $R$ is $C^{k+1,\alpha}$ the same is again true for
$\dot{\xi}$ by equation \eqref{eq:r-neumm2} since the smoothness $Du\in C^{k+1,\alpha}$
established in $\Omega_1\cap B_\delta(x_1)$ propagates down the rays from the free to the
fixed boundary using the $C^{k+1,\alpha}$ coordinate transformations; these
transformations then improve to $C^{k+2,\alpha} $ by equations
\eqref{eq:jac-1}--\eqref{eq:jac-2} so the induction is established and the proof
is complete.
\end{proof}

Theorem \ref{thm:fb-regularity}(1) and (4) are obtained by combining Lemmas \ref{lem:dim-ests} and \ref{P:smooth fb}; parts (2) and (3) will be established in Theorem \ref{T:unimodal} of the next section.

\subsection{Criteria for the tame part of the free boundary to be Lipschitz}
Having deduced regularity of the free boundary when it is Lipschitz we now turn our attention to the question of characterising the set on which the free boundary is Lipschitz. We will rely on the well known Caffarelli dichotomy for the blow-up of solutions to the obstacle problem. We recall that blowing-up at the edge of the contact region in the classical obstacle problem (without convexity constraints) led Caffarelli to formulate his celebrated alternative \cite{Caffarelli77,Caffarelli98}: 
at each point in the free boundary,  the density of the contact region is either $0$ (called {\em singular}) or $\frac1 2$ (called {\em regular});
it cannot equal $1$ because of quadratic detachment
(as in e.g.\ the proof of Lemma~\ref{lem:dim-ests}). Furthermore, the following dichotomy holds for blowups of solution to equations of the form  $\Delta u(x) = f(x)1_{\{u>0\}}(x)$ in a domain $\Omega$ where $f$ is continuous in the following sense: Take $x_0 \in \Omega \cap \partial \{u = 0\}$ and a sequence $r_k \rightarrow 0$. Note that up to taking a sequence the limit 
\begin{equation}
    \label{eq:obs-specific}
u_0(x) := \lim_{k \rightarrow \infty} \frac{u\big(r_k (x-x_0) \big)}{r_k^2},
\end{equation}
exists and is a globally defined solution of $\Delta u(x) = f(x_0) 1 _{\{u > 0\}}(x)$. Moreover $u_0$ is {convex}, 
and either a {quadratic polynomial} or a rotation
of the {\em half-parabola solution}
$$
{w(x_1,\ldots,x_n) = 
\begin{cases}
\frac12 x_1^2 & {\rm if}\ x_1 >0 
\\ 0 & {\rm else}.
\end{cases}
}$$
Unfortunately, in our setting we only know $f \in L^\infty_{\text{loc}}$ and not the H\"older continuity required for higher regularity \cite{Kinderlehrer76};
see the sequelae \cite{ChenFigalliZhang26+}\cite{McCannOBrienRankin26+} for  improvements.

A real-valued function $S$ on an interval $J \subset \R$ is called {\em unimodal} if it is monotone,  or else if it attains its maximum on a 
(possibly degenerate interval) $I \subset J$, with $S$ being non-decreasing throughout the connected component of $J\setminus I$ to the left of $I$,
and non-increasing through the connected component 
 to the right of $I$. The following lemma shows lower semicontinuous functions are unimodal away from their local minima.

\begin{lemma}[Lower semicontinuous unimodality away from local minima]
\label{L:unimodal}
Let $S:E \longrightarrow [-\infty,\infty)$ be lower semicontinuous on an interval $E\subset \R$.  Let $T$
denote the subset of $E$ consisting of local minima for $S$, and $\overline T$
its closure.  Then $S$ is unimodal on each connected component $J$ of $E \setminus
\overline T$.
\end{lemma}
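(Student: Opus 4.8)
The plan is to reduce the statement to the assertion that $S$ restricted to $J$ is \emph{quasiconcave}, i.e.\ that every super-level set $\{x\in J:S(x)\ge\lambda\}$ is an interval, and then to read off the unimodal structure from quasiconcavity together with lower semicontinuity. Fix a connected component $J$ of $E\setminus\overline T$; the only properties of $J$ we shall use are that $J$ is an interval and that $S$ has no local minimum at any point of $J$ (since $J\cap T=\emptyset$).

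The first step is quasiconcavity of $S|_J$. Suppose it fails: there are $a<c<b$ in $J$ with $S(c)<\min\{S(a),S(b)\}$. Since $a,b\in J$ and $J$ is an interval, $[a,b]\subseteq J\subseteq E$, and because $S$ is lower semicontinuous it attains its infimum over the compact set $[a,b]$ at some $x^\ast$. From $S(x^\ast)\le S(c)<\min\{S(a),S(b)\}$ we get $x^\ast\in(a,b)$, so $(a,b)$ is a neighbourhood of $x^\ast$ in $E$ on which $S\ge S(x^\ast)$; hence $x^\ast$ is a local minimum of $S$ lying in $J$, contradicting $J\cap T=\emptyset$. Therefore each $A_\lambda:=\{x\in J:S(x)\ge\lambda\}$ is an interval, and it is relatively closed in $J$ by lower semicontinuity.

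The second step extracts the modal interval. Let $M:=\sup_J S\in[-\infty,+\infty]$; the case $M=-\infty$ is trivial, so assume $M>-\infty$. For each real $\lambda<M$ the set $A_\lambda$ is a nonempty relatively closed interval; write its closure in $\R$ as $[\ell_\lambda,r_\lambda]$, with $\ell_\lambda,r_\lambda\in[\inf J,\sup J]$. Since the $A_\lambda$ shrink as $\lambda$ grows, $p:=\sup_{\lambda<M}\ell_\lambda\le q:=\inf_{\lambda<M}r_\lambda$. I claim $S$ is non-decreasing on $J\cap(-\infty,p)$: if $x<y<p$ lay in $J$ with $S(x)>S(y)$, choose $\lambda\in(S(y),S(x))$; then $x\in A_\lambda$ and $y\notin A_\lambda$, so the interval $A_\lambda$ lies entirely to the left of $y$, giving $r_\lambda\le y<p\le q\le r_\lambda$, which is absurd. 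Symmetrically $S$ is non-increasing on $J\cap(q,+\infty)$. Moreover, if $p<q$ then for every $\lambda<M$ and every $x\in(p,q)$ (which lies in $J$, since $\inf J\le p$ and $q\le\sup J$) one has $\ell_\lambda<x<r_\lambda$, so $x\in A_\lambda$ and $S(x)\ge\lambda$; letting $\lambda\uparrow M$ shows $S\equiv M$ on $J\cap(p,q)\neq\emptyset$, so $M<\infty$ and $M$ is attained on $I:=J\cap[p,q]$. With $I$ defined this way in all cases, the connected components of $J\setminus I$ are $J\cap(-\infty,p)$ to the left and $J\cap(q,+\infty)$ to the right; $S$ is non-decreasing on the first and non-increasing on the second. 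When $I=\emptyset$ one necessarily has $p=q\notin J$, so $p$ is an endpoint of $J$ and one of the two components exhausts $J$, whence $S$ is monotone there. In every case $S$ is unimodal on $J$.

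I expect the main obstacle to be precisely this second step: converting the purely qualitative fact that super-level sets are intervals into the quantitative ``non-decreasing then non-increasing'' picture, while correctly locating the modal interval $I$ and handling its endpoints. This is where lower semicontinuity is genuinely used (to make the $A_\lambda$ relatively closed, so that $\ell_\lambda,r_\lambda$ behave well and the argument in Claim~A goes through), and where the degenerate possibilities $p=q$ or $p,q\in\partial J$, together with the extreme cases $M=\pm\infty$, require separate but routine bookkeeping.
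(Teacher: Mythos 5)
Your argument is correct in substance and follows essentially the same route as the paper's: both proofs use lower semicontinuity together with the absence of local minima on $J$ to show that super-level sets of $S|_J$ are intervals (the paper works with the strict sets $\{S>c\}$, which are open by lower semicontinuity, and traps a local minimum between two of their components; you work with $\{S\ge\lambda\}$ and trap a local minimum of $S$ on $[a,b]$), and then both read off the non-decreasing/non-increasing structure on either side of a modal interval. Your second step is in fact carried out in more detail than the paper's one-line version.

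Two remarks. First, the claim that $A_\lambda=\{x\in J: S(x)\ge\lambda\}$ is relatively closed ``by lower semicontinuity'' is backwards: lower semicontinuity makes the sub-level sets $\{S\le\lambda\}$ closed (equivalently $\{S>\lambda\}$ open), and $\{S\ge\lambda\}$ need not be closed. This is harmless, since your argument only ever uses the closure $[\ell_\lambda,r_\lambda]$ of $A_\lambda$ in $\R$ and the inclusion $(\ell_\lambda,r_\lambda)\subset A_\lambda$, which hold for any interval; the genuine use of lower semicontinuity is the attainment of the infimum on $[a,b]$ in your first step, so your closing paragraph misattributes it. Second, in the degenerate case $p=q\in J$ you do not verify that $S(p)=M$, which the definition of unimodality given before the lemma requires when $S$ is not monotone; an upward jump of a lower semicontinuous $S$ at a single interior point (e.g.\ $S(x)=2+x$ for $x<0$, $S(0)=1$, $S(x)=1-x$ for $x>0$ on $J=E=(-1,1)$, which is lower semicontinuous and has no local minima) shows that the maximum can genuinely fail to be attained. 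The paper's own proof is no more careful on this point --- its $I=\cap_{c<c_0}J(c)$ is empty in that example while $S$ is not monotone --- so this is a defect of the lemma's statement (or of the definition of unimodal) shared with the paper rather than a flaw specific to your argument; in the paper's application such jump discontinuities are excluded separately via the strong maximum principle.
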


\begin{proof}
Fix any open interval $J\subset E \setminus \overline{T}$.  We claim $S$ is unimodal on~$J$.
Since $S$ is lower semicontinuous but has no local minima on $J$, for each $c \in
\R$ it follows that $J(c) := \{t \in J \mid S(t)>c\} = \cup_i (a_i,b_i)$ is a countable
union of open intervals on which $S>c$ with $S \le c$ on $J\setminus J(c)$.  If there
were more than one open interval in this union, say $(a_1,b_1)$ and $(a_2,b_2)$
with $b_1 \le a_2$, then $S$ would attain a local minimum on the compact set
$[b_1,a_2] \subset J$, contradicting the fact $J\subset E\setminus \overline T$.  Thus the set
$J(c)$ consists of at most one open interval, which is monotone nonincreasing
with $c \in \R$.  Let $c_0$ denote the infimum of $c \in \R$ for which $J(c)$ is
empty, and set $I = \cap_{c<c_0} J(c)$. Then $S$ is non-decreasing to the left
of~$I$, non-increasing to the right of~$I$, and --- if $I$ is nonempty --- attains
its maximum value on~$I$.
\end{proof}

We apply this lemma to the diameter $R=-S$ of the rays along the tame part of the free boundary to deduce the free boundary
is Lipschitz away from its local maxima.

\begin{theorem}[Tame free boundary is Lipschitz away from local maxima]
\label{T:unimodal}
Let $\gamma:E \longrightarrow \p \Omega$ with $\dot{\gamma}(t)\ne 0$ for $t \in E:= (-\epsilon,\epsilon)$ smoothly parameterize a fixed boundary interval 
throughout which the Neumann condition \eqref{Neumann} is violated.  Let $T$ denote the subset of $E$ consisting of local maxima for 
$R(t):=\text{\rm diam}(\widetilde{\gamma(t)})$,  and $J$ any connected component of $E \setminus \overline T$, where $\overline T$ is the closure of $T$.
Then $R$ extends continuously to $\overline J$ and its graph is a Lipschitz submanifold
of $\overline J \times \R$.  Similarly,  the graph of $F(t):=\gamma(t) + R(t) \xi(t)$ over $\overline J$ is a Lipschitz submanifold of $\overline \Omega$ 
(except perhaps at $t = \pm \epsilon$), and $F$ is continuous on $\overline J$.
\end{theorem}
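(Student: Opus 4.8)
The plan is to run the argument in the leafwise coordinates $(r,t)$ of Section~\ref{sec:coord-argum-two}, which are available throughout $\mathcal{N}=\bigcup_{t\in E}\widetilde{\gamma(t)}$ because the assumption that \eqref{Neumann} fails on all of $E$, together with Lemma~\ref{lem:foliates-neighbourhood}, makes each $\widetilde{\gamma(t)}$ a nontrivial tame ray meeting $\p\Omega$ only at $\gamma(t)$; the three tools I will combine are the unimodality Lemma~\ref{L:unimodal}, the obstacle problem $v=u-u_1$ of \eqref{eq:obs-1}--\eqref{eq:obs-2}, and the Caffarelli dichotomy. \textbf{Step 1 (piecewise monotonicity).} Since $R(t)=\text{diam}(\widetilde{\gamma(t)})$ is upper semicontinuous (Lemma~\ref{lem:upper-semicontinuity-diam}), $S:=-R$ is lower semicontinuous and its local minima are exactly the local maxima $T$ of $R$. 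Applying Lemma~\ref{L:unimodal} to $S$, on every component $J$ of $E\setminus\overline T$ there is a (possibly empty or degenerate) subinterval $I\subseteq J$ on which $R$ attains its minimum over $J$, with $R$ non-increasing on the component of $J\setminus I$ left of $I$ and non-decreasing on the one to its right. Hence $R|_J$ is piecewise monotone, so it has one-sided limits everywhere, is continuous off a countable set, and is differentiable a.e.\ with finite derivative.

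\textbf{Step 2 (every free boundary point over $J$ is regular; $R$ is continuous on $J$).} In the coordinates $(r,t)$ the coincidence set of \eqref{eq:obs-1}--\eqref{eq:obs-2} is $\overline{\Omega_1}=\{(r,t):r\le R(t)\}$. For $t_0\in J$ and $x_2=x(R(t_0),t_0)=F(t_0)\in\Gamma$, monotonicity (say on the non-increasing arc, $R(t)\ge R(t_0)$ for $t\le t_0$) makes $\overline{\Omega_1}$ contain the quarter-neighbourhood $\{t\le t_0,\ r\le R(t_0)\}$ of $x_2$, so after transport through the a.e.-differentiable biLipschitz change of variables (Lemma~\ref{L:coordinate derivatives}, Corollary~\ref{C:biLipschitz}) the Lebesgue density of $\overline{\Omega_1}$ at $x_2$ is bounded below by a positive constant. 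The Caffarelli dichotomy for blow-ups of \eqref{eq:obs-1} (recalled just before Lemma~\ref{L:unimodal}, using $0<c_0\le f\le3$), combined with the quadratic detachment of Lemma~\ref{lem:dim-ests} which rules out density $1$, then forces that density to be $\tfrac12$: every blow-up of $v$ at $x_2$ is a half-parabola, so $x_2$ is a regular free boundary point. Next, since $R$ is upper semicontinuous and monotone on each arc, any interior discontinuity at $t_0$ is a one-sided jump $R(t_0^-)=R(t_0)=:\overline R>R(t_0^+)=:\underline R$; as in the proof of Theorem~\ref{T:a.e.continuity} the prong $\{x(r,t_0):\underline R\le r\le\overline R\}$ then lies in $\Gamma$, and by monotonicity the coincidence set near the two ends $x(\overline R,t_0)$, $x(\underline R,t_0)$ of this prong is squeezed into quadrant-, half-plane-, or cusp-type neighbourhoods determined by the incoming and outgoing arcs of $R$; matching these against the only values $0,\tfrac12$ allowed by the dichotomy yields a contradiction unless $\overline R=\underline R$. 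Thus $R$ is continuous on $J$, and by the monotonicity of its arcs extends continuously to $\overline J$, and so does $F=\gamma+R\,\xi$ since $\gamma$ is smooth and $\xi$ is Lipschitz (Lemma~\ref{lem:theta-lipschitz}).

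\textbf{Step 3 (the two graphs are Lipschitz submanifolds).} Continuity of $R$ on $J$ makes the coefficient $f=3-\Delta u_1=\big(3j(t)/|\dot\xi(t)|+2R(t)\big)/\big(r+j(t)/|\dot\xi(t)|\big)$ of \eqref{eq:obs-1} continuous: \eqref{eq:r-neumm} gives $|\dot\xi(t)|=2|\dot\gamma(t)|\,(Du-x)\!\cdot\!\mathbf n/R(t)^2$, which is continuous and bounded below on compact subintervals of $J$, so $\dot\xi=|\dot\xi|\,\xi^\perp$ (sign fixed by Remark~\ref{R:rays spread}) is continuous. By the regularity theory for the obstacle problem with continuous coefficient (directional monotonicity at regular points), $\Gamma$ is locally a Lipschitz graph near each of its regular points --- hence near every point of $\Gamma$ over $J$, by Step~2. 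With $R$ continuous, $\Gamma\cap\mathcal{N}$ is exactly the graph of $F$; applying the biLipschitz coordinate change of Corollary~\ref{C:biLipschitz} (which carries $\Gamma\cap\mathcal{N}$ to the graph of $R$ over $\overline J$) and the fact that biLipschitz images of Lipschitz graphs are Lipschitz graphs, both the graph of $R$ over $\overline J$ and the graph of $F$ over $\overline J$ are Lipschitz submanifolds --- of $\overline J\times\R$ and of $\overline\Omega$ respectively --- away from the endpoints $t=\pm\epsilon$, where $\p\Omega$ or the Neumann datum may degenerate.

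\textbf{Main obstacle.} I expect the real work to be Step~2: upgrading the a.e.\ continuity of $R$ (Theorem~\ref{T:a.e.continuity}) to genuine continuity on $J$, and certifying that the two-ended density comparison along a hypothetical prong genuinely rules out every monotone profile of $R$, using \emph{only} the boundedness (not H\"older continuity) of $f$ through the Caffarelli dichotomy and quadratic detachment. Monotonicity from Lemma~\ref{L:unimodal} is what makes this possible --- it is what converts the dichotomy into one-sided control of $R$ and excludes the slope oscillation or Cantor-type behaviour an unconstrained regular free boundary with merely bounded data might otherwise show --- and once $R$ is continuous so that the obstacle's coefficient is too, the classical regular-free-boundary theory closes the argument in Step~3.
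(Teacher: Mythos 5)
Your overall architecture matches the paper's: Lemma \ref{L:unimodal} applied to $-R$ gives piecewise monotonicity on each component $J$, monotonicity makes the graph Lipschitz away from the interior minimum, and the Caffarelli dichotomy handles the point where the two monotone branches meet (density cannot be $0$ at a local minimum of $R$, so it is $1/2$ and the two tangents match). The genuine gap is in your Step~2, the upgrade from piecewise monotonicity to continuity of $R$ on $J$. You invoke the Caffarelli dichotomy at an arbitrary $t_0\in J$, including at a hypothetical jump point, to classify the density of $\Omega_1$ at $F(t_0)$ and at the endpoints of the ``prong''. But the dichotomy, as recalled in the paper, requires the right-hand side $f=3-\Delta u_1$ of \eqref{eq:obs-1} to be \emph{continuous} at the blow-up point so that the blow-up limit solves the constant-coefficient problem; by \eqref{eq:discont-est}, $f(x(r,t))$ depends on $R(t)$, so $f$ is discontinuous at exactly the points where $R$ jumps --- which is precisely where you need the dichotomy. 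This is circular. Moreover, even granting the dichotomy formally, your ``density matching'' does not close: at interior points of the prong the coincidence set occupies (up to biLipschitz distortion) the half-space $\{t<t_0\}$, so no forbidden density value appears there, and the quadrant/cusp densities at the prong's endpoints are only contradictory if the dichotomy is available, which it is not. The paper rules out the jump by an entirely different mechanism: a jump produces a segment of positive length in $\Gamma$ along which $u$ is affine and which borders $\Omega_2$; reflecting $u_2$ across this segment and applying the strong maximum principle (Lemma \ref{lem:mp-argument}) gives the contradiction. Note the paper only applies the dichotomy at the single minimum point $t'$, \emph{after} continuity there has been secured for free (upper semicontinuity plus local minimality), so that $f$ is continuous at $F(t')$.

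A secondary problem is your Step~3: the claim that the free boundary is a Lipschitz graph near every regular point ``by the regularity theory for the obstacle problem with continuous coefficient (directional monotonicity at regular points)'' is not valid for $f$ that is merely continuous. Directional monotonicity and $C^1$ (or even Lipschitz-graph) structure at regular points require Dini or H\"older continuity of $f$; Blank's examples show that for non-Dini continuous $f$ the free boundary near a regular point can spiral and fail to be a graph. Fortunately this detour is unnecessary: once continuity of $R$ is established, the Lipschitz-submanifold conclusion follows directly from the monotonicity of $R$ on each branch (a monotone graph has Lipschitz constant $1$ after a rotation) together with the tangent-matching at $t'$ from the dichotomy, exactly as in the paper, and then transports to the graph of $F$ via Corollary \ref{C:biLipschitz}.
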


\begin{proof}
Corollary \ref{C:biLipschitz} shows the coordinates $x(t,r) = \gamma(t) + r\xi(t)$ are locally biLipschitz on $E \times (0,\infty)$,  
so the final sentence follows from showing $R$ has a continuous extension to $\overline J$ whose graph is a Lipschitz submanifold.

Proposition \ref{P:strictly-convex-implies-neumann} asserts $R$ is upper semicontinuous on $\overline J$.  Unless $R$ is monotone on~$J$,  Lemma \ref{L:unimodal} shows $J$ 
decomposes into two subintervals on which $-R$ 
is monotone and they overlap at least at one point $t'$.  
Although a monotone function need not be Lipschitz --- or even continuous --- its graph has Lipschitz constant at most~$1$.
A discontinuity in $R$ on the closure of either of these 
subintervals can be ruled out as in the proof of Lemma \ref{L:R continuity} (or by Theorem \ref{T:a.e.continuity} in the interior).
Thus $R$ is continuous on $\overline J$.   The graph of $R$ on $\bar J$ is obviously Lipschitz,  except perhaps when the minimum value of $R$ is uniquely 
attained at some $t' \in J$.  Since $t'$ is a local minimum,  $R$ is continuous at $t'$ hence Caffarelli's alternative holds for the blow-up at $F(t')$:
the Lebesgue density of $\Omega_1$ at $F(t')$ cannot be zero since $R(t')$ is a local minimum,  so it must be exactly $1/2$ \cite{Caffarelli98,Caffarelli77}.
The blow-up limits of $u_2-u_1$ at $F(t')$ all coincide with the same half-parabola,  and $R$ is differentiable at $t'$.  
The Lipschitz graph of $R$ to the left of $t'$ shares the same tangent at $t'$ as the Lipschitz graph of $R$ to the right of $t'$, which completes the proof.
\end{proof}

Our next corollary shows that the tame part of the free boundary can only fail to be locally Lipschitz when oscillations with unbounded frequency cause local maxima of $R$ to accumulate, or when an isolated local maximum forms a cusp.  In the latter case,  the tame free boundary is locally piecewise Lipschitz and the perimeter of
$\Omega_1$ is locally finite in this region.

\begin{corollary}[Is the tame free boundary piecewise Lipschitz?]
\label{C:unimodal}
Assume $T$ has only finitely many connected components 
in Theorem \ref{T:unimodal} 
and $R$ is constant on each of them
--- as when $R$ has only finitely many local maxima on $E$.
Then the graph of $F(t):=\gamma(t) + R(t)\xi(t)$ is a piecewise Lipschitz submanifold of $(-\epsilon+\delta,\epsilon-\delta) \times (0,\infty)$ 
for each $\delta>0$.  Moreover,  if the graph of $F$ fails to be Lipschitz at $F(t')$ for some $t' \in E$,
then $R$ has an isolated local maximum at $t'$ and $\Omega_1$ has Lebesgue density zero at $F(t')$.
\end{corollary}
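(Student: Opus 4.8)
The plan is to cut the fixed-boundary interval $E=(-\epsilon,\epsilon)$ into finitely many closed subintervals over each of which the graph of $F$ is \emph{already} Lipschitz, and then study the finitely many junctions between consecutive pieces. Since $T$ has finitely many connected components with $R$ constant on each, and since the closures of two such components cannot meet at a point carrying two different ray lengths --- a jump of $R$ would produce, by upper semicontinuity of $R$ (Proposition~\ref{P:strictly-convex-implies-neumann}) together with the strong maximum principle (Lemma~\ref{lem:mp-argument}), a positive-length sub-segment of $\Gamma$ along which $u$ is affine and which borders $\Omega_2$ --- the closure $\overline T$ is a finite disjoint union of closed intervals $C_1,\dots,C_p$ with $R$ constant on each $C_l$, and $E\setminus\overline T$ is a finite disjoint union of open intervals $J_1,\dots,J_q$. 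Over each $\overline{J_k}$ the graph of $F$ is a Lipschitz submanifold by Theorem~\ref{T:unimodal} (away from $t=\pm\epsilon$); over each $C_l$ we have $R\equiv M_l$ constant, so $F=\gamma+M_l\xi$ is Lipschitz there because $\gamma$ is smooth and $\xi$ is Lipschitz (Lemma~\ref{lem:theta-lipschitz}), whence its graph is a Lipschitz submanifold too. Restricted to $(-\epsilon+\delta,\epsilon-\delta)$, the graph of $F$ is therefore a union of finitely many Lipschitz submanifolds glued along their endpoints, i.e.\ a piecewise Lipschitz submanifold; this is the first assertion.

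Now suppose the graph of $F$ fails to be Lipschitz at $F(t')$. Since it is Lipschitz over each piece above, $t'$ is a common endpoint of two adjacent pieces, hence an endpoint of some component $C_l$ of $\overline T$ (a degenerate $C_l=\{t'\}$ being allowed), and in particular $F(t')\in\Gamma$. As in the discussion preceding Lemma~\ref{lem:dim-ests}, $v:=u-u_1$ solves the classical obstacle problem in a ball about $F(t')$, so Caffarelli's dichotomy applies and the Lebesgue density of $\Omega_1=\{v=0\}$ at $F(t')$ is either $0$ or $\tfrac12$. It cannot be $\tfrac12$: $R$ is continuous at $t'$ (again, a jump would yield an affine sub-segment of $\Gamma$ bordering $\Omega_2$, against Lemma~\ref{lem:mp-argument}), hence continuous near $t'$ (being constant on the $C_l$-side and continuous on the $J_k$-side), so $|\dot\xi|$ is continuous near $t'$ by \eqref{eq:r-neumm}, so $\xi\in C^1$ there (its rotation sense being fixed, Remark~\ref{R:rays spread}), and consequently the obstacle's Laplacian $f=3-\Delta u_1$ is continuous near $F(t')$ by \eqref{eq:discont-est}; then a density of $\tfrac12$ would make $F(t')$ a regular free boundary point, so $\Gamma$ --- which coincides with the graph of $F$ near $F(t')$ --- would be a Lipschitz (in fact $C^1$) curve there, as in the last paragraph of the proof of Theorem~\ref{T:unimodal}, contradicting the assumed failure. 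Hence the density of $\Omega_1$ at $F(t')$ is $0$.

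It remains to show $C_l=\{t'\}$, i.e.\ that $t'$ is an isolated local maximum of $R$. If $C_l$ were nondegenerate, then by the constancy argument of the first paragraph $C_l=\overline{I_l}$ for a single component $I_l$ of $T$ with $R\equiv M_l=R(t')$ on $I_l$, and $I_l$ contains an interval with endpoint $t'$; hence $\Omega_1$ contains the fan $\{\gamma(t)+r\xi(t):t\in I_l\text{ near }t',\ 0\le r\le M_l\}$, which in the biLipschitz coordinates of Corollary~\ref{C:biLipschitz} contains a rectangle having $F(t')=x(M_l,t')$ as a corner. Since biLipschitz maps preserve positive Lebesgue density, $\Omega_1$ would then have positive density at $F(t')$, contradicting the previous paragraph. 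So $C_l=\{t'\}$; as it is one of finitely many pairwise-separated components of the closed set $\overline T$, it is isolated in $\overline T\supseteq T$, which forces $t'\in T$ with $t'$ isolated in $T$. Thus $R$ has an isolated local maximum at $t'$; together with the density-zero conclusion this proves the corollary.

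The main obstacle is the continuity of $R$ at the junction points: one must adapt the reflection plus strong-maximum-principle argument of Theorem~\ref{T:unimodal} to a junction at which one neighbouring region carries a constant ray length while the other is only known to be unimodal, checking that a jump of $R$ still forces a positive-length affine segment of $\Gamma$ bordering $\Omega_2$. A secondary point is to confirm that this continuity of $R$ at $t'$ genuinely upgrades the a priori merely bounded obstacle datum $f$ to a continuous one near $F(t')$, which is what makes the standard $C^1$ regularity of $\Gamma$ at regular free boundary points available and so lets us exclude density $\tfrac12$.
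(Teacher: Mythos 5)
Your decomposition of $E$ into the closures $\overline{J_k}$ of the components of $E\setminus\overline T$ and the finitely many closed intervals $C_l$ with $R$ constant, together with the jump-increase/affine-segment/maximum-principle argument for continuity of $R$ at the junctions, is essentially the paper's proof of the first assertion, and your use of Caffarelli's alternative at a failure point matches the paper's strategy for the second. Your extra step excluding a nondegenerate $C_l$ as a failure point --- the fan $\{\gamma(t)+r\xi(t): t\in I_l,\ 0\le r\le M_l\}$ giving positive Lebesgue density of $\Omega_1$ at $F(t')$ via Corollary~\ref{C:biLipschitz} --- is a valid alternative to the paper's remark that gluing the monotone Lipschitz graph over $\overline{J_k}$ to the constant graph over a nondegenerate $C_l$ is ``obvious''; it buys nothing essential but is clean.

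The one step I cannot accept as justified is your exclusion of density $\tfrac12$. You argue that continuity of $R$ near $t'$ makes $f=3-\Delta u_1$ continuous near $F(t')$, and that ``the standard $C^1$ regularity of $\Gamma$ at regular free boundary points'' is then available, so density $\tfrac12$ would make the graph Lipschitz and contradict the assumed failure. Mere continuity of the right-hand side is \emph{not} sufficient for regular-point regularity of the free boundary: the sharp condition is Dini continuity, and Blank's counterexamples (cited in the paper precisely in this context) exhibit continuous, non-Dini $f$ for which the free boundary spirals at a regular point and is not a Lipschitz graph. Since $f$ here involves $R(t)$ itself, whose modulus of continuity near $t'$ is exactly what is in question, you cannot invoke that black box. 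The paper instead excludes the Lipschitz failure at a density-$\tfrac12$ point by a hands-on argument special to this setting: $t'$ is an isolated local maximum, so by Lemma~\ref{L:unimodal} the graph of $R$ is Lipschitz (being monotone) on each side of $t'$, and the uniqueness of the half-parabola blow-up at a density-$\tfrac12$ point forces the two one-sided tangents to agree, so the glued graph is Lipschitz through $t'$. You gesture at ``the last paragraph of the proof of Theorem~\ref{T:unimodal},'' which is the right argument, but your stated justification --- and your closing ``secondary point'' asserting that continuity of $f$ suffices --- rests on the invalid implication and should be replaced by the tangent-matching argument.
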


\begin{proof}
Under the hypotheses of Theorem \ref{T:unimodal},  
assume $T$ has only finitely many connected components and $R$ is constant on each of them.
Then these components must be intervals which are relatively closed in $E$: 
otherwise the upper semicontinuous function $R$ has a jump increase at the end of one of them,
which leads to a segment in the graph of $u_2$ --- producing the same contradiction to Lemma \ref{lem:mp-argument} as in the proof of Lemma \ref{L:R continuity}.
Thus $\overline T=T$. For each of the open intervals $J$ comprising $E \setminus \overline T$,
 Theorem \ref{T:unimodal} already asserts that $R$ is continuous and has Lipschitz graph on $\overline J$;
 the only question is whether the graph extends past each endpoint of $\overline J$ in $E$ in a Lipschitz fashion.
If the endpoint of $\overline J$ belongs to a nondegenerate interval in $T$ this is obvious.
When the endpoint of $\overline J$ is an isolated point $t'$ in $T$,  then Lemma~\ref{L:unimodal} shows 
$R$ nearby is monotone on either side hence must be continuous at $t'$ to
avoid an affine segment in the graph of $u_2$ as before.  Now Caffarelli's alternative applies, so the density of $\Omega_1$ 
at $F(t')$ must be either $0$ or $1/2$.  In the latter case $R$ has a Lipschitz graph in a neighbourhood of $t'$,
as in the proof of Theorem~\ref{T:unimodal}, hence the corollary is established.
\end{proof}

\begin{remark}[A partial converse]
\label{R:unimodal}
If $\Omega_1$ fails to have Lebesgue density $1/2$ at some tame point $x'=F(t') \in \Omega \cap \p \Omega_1$, 
then there is no neigbourhood of $x'$ whose intersection with $\Omega_1$ is a Lipschitz domain.
This follows from the Caffarelli alternative,  which requires $\Omega_1$ to have Lebesgue density $0$ at 
$x'$ \cite{Caffarelli98,Caffarelli77}.
\end{remark}

 \section{Bifurcations to bunching in 
 the family of square examples}
 \label{sec:rc-example}

In this section we apply our results and techniques to a concrete example and
completely describe the solution on the domain $\Omega = (a,a+1)^2$. We prove
Theorem \ref{thm:description-on-square}.  For $a\ge \frac72 -\sqrt{2}$ the solution is as
hypothesized in the earlier work of McCann and Zhang
\cite{McCannZhang23+}. A key strategy involves showing first unimodality and then monotonicity of the normal distortion $(x-Du(x))\cdot \mathbf n$ along each edge of the square. From this we deduce the leaves $\Omega_{1}^{0}$ with more than one endpoint on the boundary can only have one
endpoint on $\Omega_{W}$ and the other on $\Omega_{S}$; on these leaves the solution is given
explicitly in~\cite{RochetChone98,McCannZhang23+}. We let $\Omega_{1}^-$ denote the
set of
leaves with one endpoint in $\Omega$ and the other on $\Omega_{W}$, and, finally,
let $\Omega_{1}^+$ denote the set of leaves with one endpoint in $\Omega$ and the other on
$\Omega_{S}$. Because, in the course of our proof, we prove $\Omega_1$ does not intersect
$\Omega_N$ or $\Omega_E$ we have $\Omega_1 = \Omega_{1}^{0} \cup \Omega_1^-\cup \Omega_1^+$.

Our main tool to study the minimizer is the coordinates introduced in Section
\ref{sec:coord-argum-two}.  Let us consider a component of $\Omega_1^-$ consisting
of leaves with one endpoint on the boundary $\Omega_{W} = \{a\} \times [a,a+1]$ and the
other interior to $\Omega$. The argument is similar on each side. We may take the
angle $\theta$ made by leaves with the horizontal (that is, with the vector $(1,0)$), as
the parametrization coordinate of our boundary (i.e. $t=\theta$ and $\xi(t) =
(\cos\theta,\sin\theta)$). Then $\gamma(\theta) = (a,h(\theta))$ and $(r,\theta)$ satisfy
 \[ x(r,\theta) = (a + r \cos \theta, h(\theta) + r \sin\theta), \] where $h(\theta)$ is the height at
which the leaf that makes angle $\theta$ with the horizontal intersects $\Omega_{W}$. We
work in a connected subset of $\Omega_1$
 \[ \mathcal{N} = \mathcal{N} \cap \Omega_1^{-} = \{(r,\theta) ;\underline{\theta} \leq \theta \leq \overline{\theta} \text{ and } 0 \leq r \leq R(\theta)\}. \]
 In this setting \eqref{eq:zeroth moment} and \eqref{eq:first moment} become
\begin{align}
\label{eq:spec-zeroth}
 0&= (3h'\cos \theta - m''-m)R(\theta)+\frac{3}{2}R^2(\theta) + h'(\theta)(Du({x}) - {x}) \cdot \mathbf{n} \\
0 &=   (3h'\cos \theta - m''-m)\frac{R^2(\theta)}{2} + R^3(\theta) 
\label{eq:spec-first}
\end{align}
where we use the prime notation for derivatives of roman characters as opposed
to the dot notation for derivatives of greek characters, and equation
\eqref{eq:r-neumm} becomes
\begin{equation}
  \label{eq:spec-r-neumm}
  R^2(\theta) = 2h'(\theta)(Du-x) \cdot \mathbf{n}.
\end{equation}
Note when we parameterize with respect to $\theta$, $|\dot{\xi}| = 1$ so $m'$ is
Lipschitz by \eqref{eq:m-dash-comp}. We've used that $h'(\theta) > 0$ as is easily seen by first working with the parametrization $\gamma(t) = (a,t)$ and
the angles $\xi(t) = (\cos \theta(t), \sin\theta(t) )$, for which the identity $\xi \times \dot{\xi}
>0$ derived in Section \ref{sec:coord-argum-two} implies $\theta'(t) >0$.  Equations
\eqref{eq:spec-zeroth} -- \eqref{eq:spec-r-neumm} yield a new, expedited, proof
of the Euler--Lagrange equations in $\Omega_{1}^\pm$ originally derived by the first
and third author \cite{McCannZhang23+} via a complicated perturbation
argument. Solving \eqref{eq:m-comp} and \eqref{eq:m-dash-comp} gives
\begin{align}
  \label{eq:u1theta}  u_1(x) &=m(\theta) \cos \theta - m'(\theta) \sin \theta\\
  \label{eq:u2theta} \text{and}\quad u_2(x) &= m(\theta) \sin \theta + m'(\theta) \cos \theta.
\end{align}
Thus, along $\Omega_W$
\[ (Du(x_0) - x_0) \cdot \mathbf{n} = a - u_1(x_0) = a + m'(\theta)\sin(\theta) - m(\theta)\cos\theta\]
Substituting into \eqref{eq:spec-r-neumm} we obtain
\[ R^2(\theta) = 2h'(\theta) (a + m'\sin(\theta) - m(\theta)\cos\theta).\]
After multiplying by $\cos \theta$ and solving \eqref{eq:spec-first} for $h'\cos \theta$
we obtain \eqref{slope E-L}, which coincides precisely with equation (4.22) of
\cite{McCannZhang23+}.

We obtain Theorem \ref{thm:description-on-square} as a combination of Lemmas.  As required by the theorem, we henceforth make the tacit assumption $a\ge 0$.

\begin{lemma}[Exclusion includes right-angled triangle in lower left corner]
\label{L:lower left}
  Let $u$ minimize \eqref{eq:monopolist} with $\Omega = (a,a+1)^2$. Then
$\mathcal{H}^2(\Omega_0)>0$ and  $\Omega_0 \subset [a,c]^2$ for some $c>a$ 
satisfying $\Omega_0\cap \partial\Omega = [a,c]^2 \cap \partial
\Omega$.
\end{lemma}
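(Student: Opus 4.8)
The plan is to exploit the known fact (Armstrong \cite{Armstrong96}) that $\Omega_0 \neq \emptyset$ on strictly convex domains together with Theorem~\ref{thm:structure-convexity}(1), which says $\Omega_0 = \{u=0\}$ is closed and convex whenever it is nonempty. The square is not strictly convex, so first I would obtain $\mathcal{H}^2(\Omega_0) > 0$ by an approximation/perturbation argument: either approximate $\Omega = (a,a+1)^2$ from inside by smooth strictly convex domains $\Omega^{(k)}$ and pass to the limit using the gradient convergence cited in the proof of Proposition~\ref{prop:neumann-sign} (via \cite[Corollary 4.7]{FigalliKimMcCann11}), noting that $\{u^{(k)}=0\}$ has a uniform lower bound on its inradius near the corner $(a,a)$; or, more directly, construct an explicit competitor. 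For the latter, one compares $u \equiv 0$ on a small square $[a,a+\rho]^2$ against the energy cost: since the integrand $\frac12|Du-x|^2 + u$ at a point near $(a,a)$ with $u$ and $Du$ small is dominated by $\frac12|x|^2 \approx \frac12\cdot 2a^2$ (or just bounded) while forcing $u$ to be positive and convex there costs strictly more when $|x|$ is small enough — this is the standard reasoning showing the obstacle is active near the corner where $x$ is smallest. Either route gives $\mathcal{H}^2(\Omega_0) > 0$.

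Next, having $\Omega_0$ nonempty, Theorem~\ref{thm:structure-convexity}(1) gives that $\Omega_0 = \{u = 0\}$ is closed and convex and, by the symmetry $x_1 \leftrightarrow x_2$ of the functional $L$ on the square (the square and the integrand are both invariant under reflection across the diagonal, and the minimizer is unique), $\Omega_0$ is symmetric across the diagonal. Moreover $\Omega_0$ contains a neighbourhood of $(a,a)$ in $\overline\Omega$ by the previous paragraph. The key remaining claim is the \emph{slab/staircase containment} $\Omega_0 \subset [a,c]^2$ with $\Omega_0 \cap \partial\Omega = [a,c]^2 \cap \partial\Omega$. Here the main point is to control where $\Omega_0$ meets $\Omega_W$ and $\Omega_S$: let $c := \sup\{t : (a,t) \in \Omega_0\}$, which by reflection symmetry also equals $\sup\{t : (t,a) \in \Omega_0\}$. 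I claim $\Omega_0 \subset \{x_1 \le c\} \cap \{x_2 \le c\}$. Suppose not, so some point $(p_1,p_2) \in \Omega_0$ has $p_1 > c$. Since $\Omega_0$ is convex and contains a neighbourhood of $(a,a)$, the segment from this point to $(a,a)$, together with convexity, forces a point of $\Omega_0$ on $\Omega_W$ strictly above height $c$ unless $p_2$ is large — but then by symmetry a point of $\Omega_0$ lies on $\Omega_S$ to the right of $c$; in either case, using convexity of $\Omega_0$ and that $\Omega_0$ is a level set $\{u=0\}$ of a convex function with $u\in C^1(\overline\Omega)$, one derives a contradiction with the definition of $c$. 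The cleanest way: $\Omega_0$ convex and symmetric across the diagonal, containing $(a,a)$ and with $(a,c)$ its topmost point on $\Omega_W$, must lie in the square $[a,c]^2$ — any point with a coordinate exceeding $c$ would, by convexity together with the reflected point, force the segment between them (which passes near the diagonal at height $>c$ by convexity, hence by symmetry also a boundary point of $\Omega_0$ on $\Omega_W$ or $\Omega_S$ above $c$) out of consistency with maximality of $c$. Finally $\Omega_0 \cap \partial\Omega = [a,c]^2 \cap \partial\Omega$ because $[a,c]^2 \cap \partial\Omega$ is exactly the two segments $\{a\}\times[a,c]$ and $[a,c]\times\{a\}$, which lie in $\Omega_0$ by convexity (they join $(a,a)$ to $(a,c)$ resp. $(c,a)$, all in the convex set $\Omega_0$), while $\Omega_0 \subset [a,c]^2$ gives the reverse inclusion.

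The main obstacle I anticipate is the first step — rigorously establishing $\mathcal{H}^2(\Omega_0) > 0$ for the (non-strictly-convex) square, since Armstrong's result is stated for strictly convex domains. I would handle this by the approximation argument, being careful that the approximating solutions $u^{(k)}$ have exclusion regions with inradius bounded below independently of $k$ near the corner $(a,a)$ (this can be seen from a uniform energy-comparison estimate using a fixed small test paraboloid supported near $(a,a)$, or by noting $(Du-x)\cdot\mathbf n = a$-type boundary behaviour forces detachment near the corner), so that the Hausdorff-distance limit of $\{u^{(k)}=0\}$ has positive area. The rest is soft: convexity and closedness of $\{u=0\}$ from Theorem~\ref{thm:structure-convexity}(1), reflection symmetry from uniqueness, and elementary convex geometry to pin down the containment.
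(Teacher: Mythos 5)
Your proposal has a genuine gap in each half. The more serious one is in the containment step: convexity of $\Omega_0$, symmetry across the diagonal, and containing a neighbourhood of $(a,a)$ do \emph{not} imply $\Omega_0 \subset [a,c]^2$ with $c=\sup\{t:(a,t)\in\Omega_0\}$. For example, the convex hull of $(a,a)$, $(a,a+\delta)$, $(a+\delta,a)$ and $(a+\tfrac12,a+\tfrac12)$ is a closed convex set, symmetric across the diagonal, containing a relative neighbourhood of $(a,a)$ in $\overline\Omega$, whose topmost point on $\Omega_W$ is $(a,a+\delta)$, yet it is not contained in $[a,a+\delta]^2$; and a nonnegative convex function can have any closed convex set as its zero set, so nothing in your ``cleanest way'' paragraph excludes such a shape for $\{u=0\}$. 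The ingredient you are missing --- and the one the paper uses --- is that the minimizer on a square in the first quadrant satisfies $D_iu\ge 0$. Combined with $u\ge 0$ this makes $\{u=0\}$ downward closed for the coordinatewise order (if $u(x)=0$ and $x'\le x$ componentwise then $u(x')\le u(x)=0$), and the containment $\Omega_0\subset[a,c]^2$ together with $\Omega_0\cap\p\Omega=[a,c]^2\cap\p\Omega$ then drops out immediately. Without the monotonicity of $Du$, the ``elementary convex geometry'' does not close.

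The positivity of $\mathcal{H}^2(\Omega_0)$ is also not established by either of your sketched routes. The approximation route requires a lower bound on the size of the exclusion regions $\{u^{(k)}=0\}$ that is uniform in $k$; this is exactly the quantitative content one would need to extract from Armstrong's qualitative theorem, so as written it begs the question. Your direct competitor sketch (``forcing $u$ to be positive \dots costs strictly more when $|x|$ is small enough'') has no force for large $a$, where $|x|\approx a\sqrt2$ on $\Omega_0$ and yet exclusion is, if anything, more pronounced. The paper's route is different and shorter: in the proof of \cite[Theorem 4.8]{FigalliKimMcCann11}, strict convexity of $\Omega$ is used only to upgrade $\mathcal{H}^{1}(\p\Omega_0\cap\p\Omega)>0$ to $\mathcal{H}^{2}(\Omega_0)>0$, and on the square that upgrade follows instead from convexity of $\Omega_0$ and its reflection symmetry (a convex set meeting both $\Omega_W$ and $\Omega_S$ in sets of positive length has positive area). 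So both halves need repair, and in both cases the repair is precisely the structural input ($D_iu\ge0$, and the symmetry-for-strict-convexity substitution in the Armstrong/Figalli--Kim--McCann argument) that your write-up omits.
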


\begin{proof}
Whenever $\Omega$ is a subset of the first quadrant, symmetry shows the minimizer
satisfies $D_i u \ge 0$. Since the inclusion $\Omega_0 \subset \{u=0\}$ of Theorem~\ref{thm:structure-convexity} becomes an equality if $\Omega_0$ is nonempty,
monotonicity of convex gradients implies $\Omega_0 = \{u=0\} \subset [a,c]^2$ for some
$c>a$ such that $\Omega_0 \cap \partial \Omega$ = $[a,c]^2\cap \partial \Omega$ in this case; here symmetry across
the diagonal and the fact that $\{u=0\}$ is closed have been used. 
Armstrong has proved that $\Omega_0$ has positive measure whenever $\Omega$ is strictly
convex \cite{Armstrong96} and this result has been extended to general benefit
functions by Figalli, Kim, and McCann \cite{FigalliKimMcCann11}. It is
straightforward to adapt their proof to our setting.  Indeed, convexity of
$\Omega_0$ and symmetry across the diagonal means $\mathcal{H}^{1}(\partial \Omega_0 \cap \partial \Omega) > 0$ implies
$\mathcal{H}^2(\Omega_0) > 0$ and this implication is the only place strict convexity is used
in \cite[Theorem 4.8]{FigalliKimMcCann11}.
\end{proof}

Now we establish that $\Omega_N \cup \Omega_E \subset \Omega_2$.

\begin{proposition}[No normal distortion along top right boundaries]
\label{prop:no-outward}
  Let $u$ solve \eqref{eq:monopolist} with $\Omega = (a,a+1)^2$. Then $(Du-x)\cdot
\mathbf{n} = 0$ throughout $\Omega_N$ and $\Omega_E$. Consequently, $\Omega_E \cup \Omega_N \subset \Omega_2$.
\end{proposition}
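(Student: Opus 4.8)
The plan is to establish the stronger inclusion $\Omega_N,\Omega_E\subset\Omega_2$ \emph{first} and then read off the vanishing Neumann condition, rather than the reverse. Since both the functional $L$ and the square $(a,a+1)^2$ are invariant under the interchange $(x_1,x_2)\mapsto(x_2,x_1)$, uniqueness of the minimizer forces $u$ to be symmetric across the diagonal, so it suffices to treat $\Omega_N$; the statements for $\Omega_E$ follow by reflection. First note $\Omega_N\cap\Omega_0=\emptyset$: Lemma~\ref{L:lower left} gives $\Omega_0\subset[a,c]^2$, and necessarily $c<a+1$, since $c=a+1$ would force the convex set $\Omega_0=\{u=0\}$ (Theorem~\ref{thm:structure-convexity}(1)) to equal $\overline\Omega$, i.e.\ $u\equiv 0$, which is not the minimizer. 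Hence every point of $\Omega_N$ lies in $\Omega_1\cup\Omega_2$, and the task reduces to showing no nontrivial leaf meets $\Omega_N$.

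The engine for the leaf analysis is Lemma~\ref{lem:Du-balances}: since $Du_\#\sigma=\delta_0$, one has $\sigma\big((Du)^{-1}(B)\big)=0$ for every Borel $B$ with $0\notin B$. Taking $B=Du(\Omega_N)$ --- which avoids $0$ because $D_2u\ge a+1>0$ on $\Omega_N$ by Proposition~\ref{prop:neumann-sign} --- the set $A:=(Du)^{-1}(B)=\bigcup_{x\in\Omega_N}\tilde x$ is the union of all leaves meeting $\Omega_N$, and unwinding \eqref{eq:variational-derivative},
\[
0=\int_{A\cap\Omega}(3-\Delta u)\,dx+\int_{A\cap\partial\Omega}(Du-x)\cdot\mathbf n\,dS.
\]
The boundary term is bounded below by $\int_{\Omega_N}\big(D_2u-(a+1)\big)\,dS\ge 0$: indeed $A\cap\partial\Omega$ contains $\Omega_N$ (each of whose points is its own leaf), together with the boundary endpoints of the nontrivial leaves meeting $\Omega_N$; such an endpoint cannot lie on $\Omega_S$, where $D_2u\le a<a+1$ would contradict $D_2u$ being constant $\ge a+1$ along the leaf, so it lies on $\Omega_N,\Omega_W$ or $\Omega_E$, where $(Du-x)\cdot\mathbf n\ge 0$ by Proposition~\ref{prop:neumann-sign}. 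For the interior term, $A\cap\Omega_0=\emptyset$, points of $A\cap\Omega_2$ contribute nothing since $\Delta u=3$ there (Theorem~\ref{thm:structure-convexity}(3)), and $A\cap\Omega_1$ is a disjoint union of entire leaves (as $Du$ is constant along each leaf), along each of which \eqref{eq:discont-est} integrates to $-\tfrac12|\dot\xi(t)|R(t)^2\le 0$. Thus if $\Omega_N$ meets no nontrivial leaf the interior term vanishes, forcing $\int_{\Omega_N}(D_2u-(a+1))=0$, hence $D_2u\equiv a+1$ on $\Omega_N$ by continuity of $Du$, and $\Omega_N\subset\Omega_2$; then $u$ is strictly convex on the relatively open set $\Omega_N\subset\partial\Omega$ and Corollary~\ref{cor:rc-localization} with the test functions $v=\pm1$, exactly as in the final paragraph of the proof of Proposition~\ref{P:strictly-convex-implies-neumann}, together with $(Du-x)\cdot\mathbf n\ge 0$, recovers $(Du-x)\cdot\mathbf n=0$ throughout $\Omega_N$. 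By symmetry the same holds on $\Omega_E$, and $\Omega_E\cup\Omega_N\subset\Omega_2$ is precisely the strict convexity just obtained (or one invokes Proposition~\ref{P:no distortion implies strict convexity}).

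It therefore remains to rule out a nontrivial leaf $\ell$ through a point $x_0\in\Omega_N$ --- the main obstacle, and the step which makes essential use of the square geometry. By Theorem~\ref{thm:structure-convexity}(2), $\ell$ is a segment meeting $\partial\Omega$; constancy of $Du$ on $\ell$ combined with Proposition~\ref{prop:neumann-sign} applied on $\Omega_S$ (where $D_2u\le a$) and, by the mirror statement, on $\Omega_W$, immediately excludes $\ell$ from meeting $\Omega_S$ or from running between $\Omega_E$ and $\Omega_W$; a leaf from $\Omega_N$ to $\Omega_W$ crossing the diagonal would meet its own (distinct) reflection there, violating disjointness of leaves; and a horizontal leaf inside $\Omega_N$ contradicts the strong maximum principle (Lemma~\ref{lem:mp-argument}) after reflecting $u|_{\Omega_2}$ across it. The genuinely delicate cases --- a leaf from $\Omega_N$ to $\Omega_W$ confined to $\{x_2>x_1\}$, and a one-ended ray issuing from $\Omega_N$ --- I would exclude by exploiting the coordinatewise monotonicity $D_iu\ge 0$ together with the location of $\Omega_0$ in the lower-left corner (Lemma~\ref{L:lower left}) to show that $\Omega_0\cup\Omega_1$ is a lower set for the componentwise order, equivalently that $\Omega_2$ occupies the upper-right region and no bunch can reach $\Omega_N$ or $\Omega_E$; carrying this out rigorously --- presumably via a comparison argument against the support plane $p_\ell$, which must be nonpositive on $[a,c]^2$ yet positive at $x_0$ --- is the hard part, and everything else above is routine once it is in place.
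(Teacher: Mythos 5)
Your overall architecture --- reduce to $\Omega_N$ by diagonal symmetry, rule out every nontrivial leaf meeting $\Omega_N$, then recover the vanishing Neumann condition from the mass balance $Du_\#\sigma=\delta_0$ --- is sound, and the easy exclusions are correct: a leaf joining $\Omega_N$ to $\Omega_S$ contradicts $D_2u\ge a+1$ on $\Omega_N$ versus $D_2u\le a$ on $\Omega_S$ (both from Proposition~\ref{prop:neumann-sign}), and a horizontal leaf inside $\Omega_N$ dies by reflection and Lemma~\ref{lem:mp-argument}. But the cases you defer are precisely the substance of the proposition. Three configurations remain: (i) a one-ended ray issuing from $\Omega_N$ into the interior; (ii) a leaf joining $\Omega_N$ to $\Omega_W$ (which, both edges lying in $\{x_2\ge x_1\}$, never crosses the diagonal, so your reflection argument does no work there); (iii) a leaf joining $\Omega_N$ to $\Omega_E$. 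For (iii) your reflection argument only kills leaves distinct from their mirror image; a leaf from $(t,a+1)$ to $(a+1,t)$, orthogonal to the diagonal, is fixed by the reflection and survives --- and this is exactly the configuration requiring real work. The paper handles (ii) and (iii) by an energy comparison: replace $u$ on the right triangle $T$ cut off by the leaf in the corner by the affine support along the leaf, and use componentwise monotonicity of $Du$ together with the signs of the Neumann data on the two adjacent edges to verify $|D\bar u-x|\le|Du-x|$ and $\bar u<u$ on $T$, so $L$ strictly decreases. It handles (i) by propagation: monotonicity of the Neumann defect along the edge forces every boundary point between $x_0$ and a corner to be the foot of a nontrivial leaf, and if no crossing leaf exists the lengths $R\to 0$ at the corner while $(Du-x)\cdot\mathbf n$ stays bounded below, contradicting $R^2=2h'\,(Du-x)\cdot\mathbf n$ combined with the bound $R/h'\le C$ from the boundary $C^{1,1}$ estimate of Theorem~\ref{thm:boundary-regularity}. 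None of this appears in your proposal.

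Your proposed substitute for the hard cases does not close. The support plane $p_\ell\le u$ only forces $p_\ell\le 0$ on $\Omega_0$ itself, which is merely \emph{contained} in $[a,c]^2$, so "$p_\ell\le 0$ on $[a,c]^2$" is unjustified; and even granting it, no contradiction with $p_\ell(x_0)>0$ is produced, since an affine function may well be negative on a lower-left square and positive on the top edge. Likewise, "$\Omega_0\cup\Omega_1$ is a lower set for the componentwise order" is essentially equivalent to the statement being proved, not an available tool. So the proof is incomplete at its central step.
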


 \begin{proof}
  For a contradiction we assume (without loss of generality, by Proposition~\ref{prop:neumann-sign}) there is $x_0 = (a+1,t_0)\in \Omega_{E}$, 
at which $(Du(x_0) - x_0) \cdot \mathbf{n} > 0$;
 when $x_0 \in \Omega_E$ is a vertex of $\Omega$ we interpret $\mathbf{n}=(1,0)$. With this interpretation the continuity of $Du$ implies we may in fact assume, without loss of generality, that $x_0$ lies in the relative interior of $\Omega_{E}$. Thus $\tilde{x_0}$ has
positive diameter and the same is true in a relatively open portion of the
boundary, by Proposition~\ref{P:strictly-convex-implies-neumann}. Working on $\Omega_{E}$, it is
convenient to let $\theta$ denote the clockwise angle a ray makes with the inward normal $(-1,0)$. Thus $\theta > 0$ corresponds to a ray with nonpositive
slope. Parametrizing the boundary as $\gamma(\theta) = (a+1,h(\theta))$ using this $\theta$, the
derivation of the equation \eqref{eq:spec-r-neumm} is unchanged along $\Omega_E$. In
particular $h'(\theta) > 0$, which is most easily seen by beginning with the
clockwise oriented parametrization $\gamma(t) = (a+1,a+1-t)$ and $\xi(t) = (-\cos
\theta(t),\sin \theta(t))$ in the coordinate arguments of Section
\ref{sec:coord-argum-two} and recalling $\xi \times \dot{\xi} > 0$.

Clearly $\theta|_{\tilde{x_0}} \ge 0$ or $\theta|_{\tilde{x_0}} < 0$; we will derive a contradiction in either case. 

 \textit{Case 1. (Nonpositively sloped leaf).} First let's assume the leaf has
nonpositive slope (i.e. $\theta \ge 0$). The inequality $h'(\theta) > 0$ implies leaves above, but in the same connected
component of $\Omega_1$, as $x_0$ with one endpoint on $\{a+1\}\times[t_0,a+1]$ are also
nonpositively sloped.

At the endpoint of each leaf the Neumann condition is not satisfied, that is
$(Du(x)-x)\cdot \mathbf{n} > 0$, equivalently $D_1u(x) > a+1$ (by the sign
condition on the Neumann value). On the boundary portion where leaves have
nonnegative slope, $t \mapsto u_1(a+1,t)$ is a nondecreasing function (see Figure \ref{fig:angle-determines-u1}(A)). 
Thus
$D_1u(a+1,t) > a+1$ for all $t \in
[t_0,a+1]$ and, by Theorem \ref{thm:structure-convexity} and
Proposition \ref{P:strictly-convex-implies-neumann},
 each $x \in \{a+1\} \times [t_0,a+1]$ is the endpoint of
a nontrivial leaf of nonpositive slope; 
(such leaves cannot accumulate onto the convex set $\Omega_0=\{u=0\}$ in view of Lemma 8.1).
We consider the following dichotomy and
derive a contradiction in either case: there is a sequence of leaves
approaching $(a+1,a+1)$ with one endpoint on $\Omega_E$ and the other in $\Omega$ or else
there is not, in which case all sequences of leaves approaching $(a+1,a+1)$
have one end on $\Omega_E$ and the other on $\Omega_N$.

  \begin{figure}
     \centering
     \begin{subfigure}[b]{0.4\textwidth}
           \begin{tikzpicture}
      \draw [thick] (0,0) -- (0,4);
      \node [below] at (0,0) {$\Omega_E$};
      \draw (0,1) -- (-2,3);
      \node [above, left,text width=3cm] at (-0.5,3.2) {Leaf containing\\ $x_0,x_1$};
      \draw [dashed] (-1,2) -- (0,2);
      \node [right] at (0,1) {$x_0$};
      \node [left] at (-1,2) {$x_1$};
      \node [right] at (0,2) {$x_2$};
    \end{tikzpicture}
         \centering
         \caption{}
     \end{subfigure}
     \hfill
     \begin{subfigure}[b]{0.4\textwidth}
       \begin{tikzpicture}
         \draw [thick] (0,4) -- (4,4);
         \draw [thick] (4,0) -- (4,4);
         \draw (4,1) -- (1,4);
         \node at (3,3) {Triangle $T$};
         \node [below,left] at (3,2) {$\tilde{x_1}$};
         \node [right] at (4,1) {$x_1$};
       \end{tikzpicture}
         \caption{}
       \end{subfigure}
           \caption{(A) Explanation of why $t \mapsto u_1(a+1,t)$ is monotone nondecreasing when leaves make positive angle with the horizontal (i.e. have nonpositive slope). Because $x_1 \in \tilde{x_0}$, $Du(x_0) = Du(x_1)$. Then monotonicity of the gradient of a convex function implies $D_1u(x_2) \geq D_1u(x_1) = D_1u(x_0)$. Thus $t \mapsto D_1u(a+1,t)$ is nondecreasing. \\
    (B) Since $Du$ is constant along $\tilde{x_1}$, monotonicity of the gradient implies $D_1u(x) \ge D_1u(x_1)$ and $D_2u(x)  \ge D_2u(x_1)$ for all $x \in T$.}
    \label{fig:angle-determines-u1}
\end{figure}
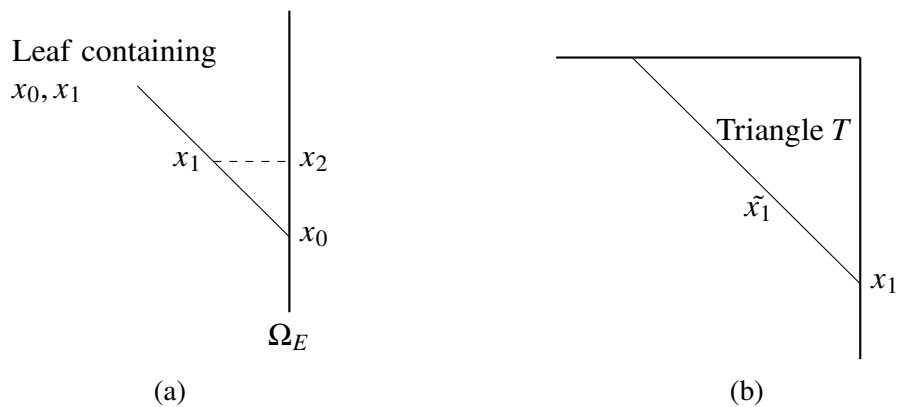

\textit{Case 1a. (All leaves approaching the vertex have one end in the interior).}  In the first case
take a sequence $(x_k)_{k \geq 1} \subset \Omega_E$ with $x_k = (x_k^1,x_k^2) = (a+1,x^2_k)$
increasing along $\Omega_E$ to a limit
$x_\infty$ with $(a+1,a+1) \in \tilde x_\infty$.
Provided no ray is contained entirely in the boundary, which we prove subsequently in Lemma~\ref{L:no boundary rays}, we obtain $\tilde x_\infty=\{(a+1,a+1)\}$ in view of Lemma~\ref{L:lower left}. Moreover,
we can take $\tilde x_k$ to contain points of Alexandrov second differentiability of $u$ 
since the leaves occupy positive area by Corollary~\ref{C:biLipschitz} and Fubini's theorem.
Let the corresponding angles be $\theta_k$. Because the leaves don't intersect other sides
of the square, symmetry and the sign of the angle yield $R(\theta_k) \rightarrow0$.  Theorem
\ref{thm:boundary-regularity}(1) provides a $C^{1,1}$ estimate along leaves
with one endpoint on the boundary. Thus from \eqref{eq:discont-est},
  \[ \Delta u - 3 = \frac{2R-3r}{h'\cos \theta +r},\] evaluated at $r=0$ we obtain an
estimate
  \[ \frac{R(\theta_k)}{h'(\theta_k)} \leq C.\] Combined with \eqref{eq:spec-r-neumm}, i.e.
$R^2(\theta) = 2h'(\theta)(Du-x)\cdot \mathbf{n}$, we contradict that $R(\theta) \rightarrow 0$ but
$(Du(x)-x) \cdot \mathbf{n}$ is positive and increasing.

\textit{Case 1b. (There exists a leaf crossing the domain).}  In the second case we pick
any leaf with one endpoint (call it $x_1$) on $\Omega_E$ and the other on $\Omega_N$.
Then $|\theta|=\pi/4$ by symmetry.  Note $D_1u(x_1),D_2 u(x_1) > a+1$ (by the Neumann
inequality on $\Omega_E$ and $\Omega_N$).  Also this leaf bounds a right triangle $T$
with sides $\tilde{x_1}$, and segments of $\Omega_N,\Omega_E$ (Figure
\ref{fig:angle-determines-u1}(B)). Define
\begin{equation}
  \label{eq:triangle-extension}
   \bar{u}(x) :=
  \begin{cases}
    u(x_1) + Du(x_1) \cdot (x-x_1) &\text{for }x \in T\\
    u(x) &x \in \Omega \setminus T
  \end{cases}
  .
\end{equation}
  Because $\bar{u}$ is defined by extension of an affine support for $u$, for
all $x \in \text{int}\,T$, $\bar{u}(x) < u(x)$. Moreover for $x \in T$ we have
  \[ |D\bar{u}(x) - x| \leq |Du(x) - x|,\] this is because monotonicity of the
gradient and the Neumann condition implies for $x \in T$, $D_iu(x) \ge D_iu(x_1) >
a+1$, whereas $x^i \leq a+1$.  Thus $\bar{u}$ is admissible for
\eqref{eq:monopolist} and strictly decreases $L[u] = \int_{\Omega} |Du-x|^2/2+u \, dx$,
a contradiction, given that $u$ minimizes $L$.

    \textit{Case 2. (Positively sloped leaf).} If our originally chosen leaf
has positive slope (i.e. $\theta<0$) the proof is similar, with slight modifications
in the lower right corner.  Indeed, $h'(\theta) > 0$ implies all leaves below our
chosen leaf also have positive slope
and on such leaves $t \mapsto D_1 u(a+1,t)$ is a decreasing function (by monotonicity
of $Du$, as in Case 1). Thus the Neumann value $(Du-x) \cdot \mathbf{n} =  D_1 u(a+1,t)-a-1$ increases as we
move towards the lower right corner. Proposition \ref{P:strictly-convex-implies-neumann} then
implies each $x \in \{a+1\}\times[a,t_0]$ is the endpoint of a nontrivial leaf with
positive slope (since Lemma \ref{L:lower left} again prevents such rays from accumulating onto the convex set $\Omega_0$, and rays in the boundary are ruled out by Lemma \ref{L:no boundary rays} below).  
Consider the same two alternatives as above: there is a
sequence of leaves whose endpoints on $\Omega_E$ converge to $(a+1,a)$ and whose other endpoint is interior to $\Omega$, or there is not.

  In the first case the contradiction is the same as in Case 1a above. In the second case
choose a leaf with endpoint $x_1$ on $\Omega_E$ and other endpoint on $\Omega_S$.  By the
Neumann inequality, Proposition \ref{prop:neumann-sign}, $D_1 u(x_1) > a+1$ while $D_2
u(x_1) \le a$.  For $x$ in the  interior of the right triangle $T$ formed by $\tilde{x_1}$ and
segments of $\Omega_{E}$, $\Omega_{S}$ monotonicity of the gradient implies
  \begin{align}
    \label{eq:2}
    D_{1}u(x) > D_{1}u(x_1) > a+1,\\
    D_{2}u(x)  \le D_2u(x_1)  \le a.
  \end{align}
  Thus the affine extension as in \eqref{eq:triangle-extension} once again
satisfies $|D\bar{u} - x| < |Du - x|$ (because $x \in [a,a+1]^2$) and
$\bar{u} < u$ in $T$. Thus $L[\bar{u}] < L[u]$ --- the same contradiction as in Case 1b
above.

\textit{Conclusion: $\Omega_{E} \cup \Omega_{N} \subset \Omega_2$.} It remains to be shown that the corners $(a,a+1), (a+1,a),$ and $(a+1,a+1)$ are not the endpoints of rays. This follows by the maximum principle, Lemma \ref{lem:mp-argument}, combined with a reflection argument. For example, suppose a ray $x_0 = (a,a+1) \in \Omega_{N}$ is the endpoint of a nontrivial ray $\tilde{x_0}$ which has negative slope and thus enters $\Omega$; That this negative slope cannot be infinite holds because no ray can be entirely contained in the boundary, see Lemma \ref{L:no boundary rays} below.
Fix any point $x_1$ in the relative interior of $\tilde{x_0}$ and let $\nu$ be the normal to $\tilde{x_0}$ that has positive components. Then for $\epsilon$ sufficiently small $B_{\epsilon}(x) \subset \Omega$ and the half ball
\[ B_{\epsilon}^+(x) = B_{\epsilon}(x) \cap \{ x ; (x-x_1) \cdot \nu > 0\},\]
is contained in $\Omega_2$ (because no rays intersect the relative interior of $\Omega_{ N}$). After subtracting from $u$ its support at $x$ and extending the resulting function to 
\[  B_{\epsilon}^-(x) = B_{\epsilon}(x) \cap \{ x ; (x-x_1) \cdot \nu < 0\},\]
via reflection from $B_{\epsilon}^{+}(x)$ we obtain a function which violates Lemma \ref{lem:mp-argument}. We conclude no rays intersect $(a,a+1)$ and, via an identical argument, no rays intersect $(a+1,a)$ and $(a+1,a+1)$. 
\end{proof}

\begin{remark}[No ray has positive slope]
\label{rem:ow-angle}
A similar argument to the above implies no leaf intersecting $\Omega_{W}$ or $\Omega_S$
has positive slope. Indeed, if a leaf on $\Omega_{W}$ has positive slope its other
endpoint is interior to $\Omega$ (since Proposition~\ref{prop:no-outward} shows the leaf cannot intersect $\Omega_N$ or $\Omega_E$). The
same argument as Case 1 above implies as one moves vertically up $\Omega_{W}$ each boundary
point remains the endpoint of a nontrivial leaf of positive slope. 
Lemma \ref{L:no boundary rays} ensures
leaves must have length shrinking to $0$ as they approach
$(a,a+1)$ and thus we obtain the same contradiction as in Case 1a above.
As a result along $\Omega_{W}\cap \Omega_1$ the function $\theta \mapsto u_1(x(0,\theta))$ is nondecreasing (equivalently $\theta \mapsto (Du-x)\cdot\mathbf{n}\big\vert_{x=x(0,\theta)}$ is nonincreasing; see again Figure \ref{fig:angle-determines-u1}(A)). 
\end{remark}

\begin{remark}[Neumann data: counting stray rays on convex polygons]
\label{rem:Neumann unimodality}
The monotonicity argument of the preceding proposition 
combines with Propositions \ref{prop:neumann-sign}, \ref{P:no distortion implies strict convexity}--\ref{P:strictly-convex-implies-neumann} and Remark \ref{R:rays spread} to yield the following more general result. Let $u$ minimize \eqref{eq:monopolist} for a convex polygon $\Omega \subset [0,\infty)^2$ 
with vertex set $V$.
On each subinterval $I \subset \p \Omega$ that is disjoint from $V\cup \Omega_0 
\cup \Omega_1^0 \cup \Omega _2$, the function $(x-Du(x))\cdot 
\mathbf{n}(x)$ is unimodal (hence non-vanishing on $I$ except perhaps at a single point denoted $x_I$): its value at $x$ dominates its values 
throughout the orthogonal projection of $\tilde x$ onto the boundary 
segment of $\Omega$ containing $x$.  
Here $\Omega_1^0 = \{x \in \Omega_1 
: \#(\tilde x \cap \p \Omega) \ge 2 \}$.
The arguments and conclusions of Lemmas \ref{lem:perturb-exists} and \ref{L:no boundary rays} below also adapt equally well to this more general geometry. At least one endpoint of the interval $I$ therefore lies
in $\Omega_0 \cup \Omega_1^0$.  This shows that outside of $\Omega_1^0$, stray rays cannot occur where such intervals $I$ accumulate, hence can only occur on (an at most countable subset of) $\{x_I\}_I$.
\end{remark}

To deal with the remaining case deferred from the proof of Proposition \ref{prop:no-outward} --- that no ray can be entirely contained in the boundary --- we first introduce a suitable perturbation. 

\begin{lemma}[Perturbation of a boundary ray]
  \label{lem:perturb-exists}
  Let $u$ minimize the Monopolist's problem \eqref{eq:monopolist} on $\Omega = (a,a+1)^{2}$. Assume there is $x_0 \in \partial \Omega$ with nontrivial $\tilde{x_0} \subset \partial \Omega$ and that $M$ satisfies $\Delta u(x_0) < M$. Then there exists a family of perturbations $u_h$ such that:
  \begin{enumerate}
  \item $- h \leq u_h - u \leq 0$ and $\mathcal{N}_h := \{ u_h < u\} \rightarrow \tilde{x}$ in the Hausdorff distance as $h \rightarrow 0$,
  \item $u _h - u \leq - h/2$ on a segment of $\tilde{x_0}$ with $\mathcal{H}^{1}$ measure greater than or equal to $\mathcal{H}^1(\tilde{x_0})/4$.
  \item $\Delta u_h < M$
  \item $(Du_h - x) \cdot \mathbf{n} \geq (Du-x) \cdot \mathbf{n}$ on $\mathcal{N}_h \cap \partial \Omega$. 
  \end{enumerate}
\end{lemma}
\begin{remark}
  The lemma requires the inequality $\Delta u(x_0) < M$ only in the viscosity sense, namely that a paraboloid satisfying \eqref{eq:u-bounded-above} exists. 
\end{remark}
\begin{proof}
  We fix an $x_0$ as in the statement of the lemma. For concreteness we assume $x_0 \in \ri(\Omega_N)$ and $\tilde{x_0} \subset \Omega_N$; the reader will see the proof is unchanged regardless of which of the four edges of the square we take. Let $x_0 = (x_0^1,x_0^2) = (x_0^1,a+1)$ and $y_0 = Du(x_0) = (y_0^{1},y_0^{2})$. By assumption $\Delta u(x_0) < M$ implies there is $c,b > 0$ with $c+b < M$ and 
  \begin{equation}
     \label{eq:u-bounded-above}  u(x) <u(x_0) + y_0 \cdot (x-x_0) +  \frac{c}{2} (x^{1}-x_0^{1})^2+ \frac{b}{2}(x^{2}-x_0^{2})^2,
  \end{equation}
  satisfied in some punctured neighbourhood $B_{\delta_0}(x_0) \cap \bar{\Omega} \setminus \{x_0\}$.

  Now, we consider the Legendre transform $v : \mathbf{R}^{2} \rightarrow \mathbf{R}$ defined by
\[v(y) = \sup_{x \in \bar{\Omega}} x\cdot y - u(x). \]
Note, by \eqref{eq:u-bounded-above} and the proof of Lemma 2.4
\[ v(y) > q_0(y):= v_0(y_0)+x_0\cdot(y-y_0) + \frac{1}{2c}(y^{1}-y_0^{1})^{2} +\frac{1}{2b}(y^{2}-y_0^{2})^2,  \]
on a punctured neighbourhood $B_{\delta_1}(y_0) \cap D u(\bar{\Omega}) \setminus \{y_0\}.$ Thus
the connected component
$\mathcal{N}^{*}_h$ 
of
\[\{v(y) < q_0(y)+h \} \cap  D u(\bar{\Omega}),\]
containing $y_0$
has diameter going to zero as $h \rightarrow 0$.

For $h$ chosen sufficiently small define convex $v_h : D u(\bar{\Omega}) \rightarrow \mathbf{R} $ by
\begin{align*}
v_h(y) =
\left\{  \begin{array}{ll}
  q_0(y) + h& y \in \mathcal{N}_h^{*},\\
  v(y) & \text{otherwise.}
  \end{array}\right. 
\end{align*}
Our desired perturbations are
\[ u_h(x) := \sup_{y \in D u(\bar{\Omega})} x \cdot y - v_h(y). \]

We will establish (1)--(4). First,  $v_h \geq v$ implies $u_h \leq u$ and $v_h \leq v + h$ implies $u_h \geq u - h$. Moreover, since $\mathcal{N}^{*}_h \rightarrow \{y_0\}$ in Hausdorff distance, $\mathcal{N}_h \rightarrow \tilde{x_0}$ in Hausdorff distance, thereby establishing (1).

To establish (2) we note the restriction of $u_h - u $ to $\tilde{x}$ is a convex function. Then (2) is a straightforward consequence of $u_h(x_0) - u(x_0) = -h$, along with $u_h - u \leq 0$ at either endpoint of $\tilde{x_0}$, and the definition of convexity. 

To establish (3) note at each point of $\bar{\mathcal{N}_h^{*}}$, $v_h$ is supported by the paraboloid $q_0$. Thus, Lemma 2.4 implies $\Delta u_h < M$ at each point of Alexandrov second differentiability in $\mathcal{N}_h$.

Finally, we consider (4) --- the Neumann inequality. We need to compute the value of $Du_h(x)$ for $x \in \partial \Omega$, and it is equivalent to ask where a given $x \in \partial \Omega$ supports $v_h$. Because we are giving the proof assuming $x \in \Omega_N$, we need only consider boundary terms on $\Omega_N$ and---if $(a,a+1) \in \tilde{x_0}$---$\Omega_W$. First we consider  $x \in \Omega_{N}$. Note that in $\mathcal{N}_h^{*}$ our explicit formula for $v_h$ implies $D_2v_h = a+1$ precisely throughout 
the line $\{y ; y^{2} = y_0^{2}\}$. This implies any $x \in \Omega_N  \cap \mathcal{N}_h$ supports $v_h$ at a point with $y^{2} = y_0^2$ and subsequently $(Du_h-x)\cdot \mathbf{n} = (Du-x) \cdot \mathbf{n}$. 
Next, if $(a,a+1) \in \tilde{x_0}$ we consider $x \in \Omega_{W}$. Note that along the line $\{ y ; y^{1} = y_0^{1}\}$ our formula for $v_h$ implies $D_1v_h = x_0^{1} > a$. Thus, by monotonicity of the gradient of a convex function if $x \in \Omega_{W} \cap \mathcal{N}_h$, i.e. $x$ has $x^{1} = a$,  it supports $v_h$ at a point $y$ satisfying $y^{1} \leq y_0^{1} $. Hence, because $\mathbf{n} = -e_1$, along $(D\bar u-x) \cdot \mathbf{n} \geq (Du-x) \cdot \mathbf{n}$ is satisfied also along $\Omega_{W} \cap \mathcal{N}_h$.
\end{proof}

\begin{lemma}[No ray is a subset of the fixed boundary]
  \label{L:no boundary rays}
  Let $u$ minimize the Monopolist's problem \eqref{eq:monopolist} on $\Omega = (a,a+1)^{2}$. There is no nontrivial ray contained in $\partial\Omega$
\end{lemma}

\begin{proof}
  For a contradiction assume such a ray exists. Namely that there is nontrivial $\tilde{x_0} \subset \partial \Omega$, where, without loss of generality, $x_0$ is not a corner (though $\tilde{x_0}$ may contain a corner). Set $L = \mathcal{H}^{1}(\tilde{x_0})$.

  We consider two cases $(Du(x_0) - x_0) \cdot \mathbf{n}  > 0$ and $(Du(x_0) - x_0) \cdot \mathbf{n} = 0$. In each case we obtain a contradiction by employing Lemma \ref{lem:perturb-exists} to obtain a perturbation $u_h$ which contradicts \eqref{eq:perturb-1}, which in our setting is
  \begin{equation}
    \label{eq:variational-restatement}
  0 <   \int_{\Omega}(n+1-\Delta u_h) \, (u_h-u)dx + \int_{\partial\Omega}(Du_h-x) \cdot \mathbf{n} \, (u_h-u)dS.
  \end{equation}

  \textit{Case 1. $((Du(x_0) - x_0) \cdot \mathbf{n} > 0)$} This case is straightforward. One may take any fixed $M$ in Lemma \ref{lem:perturb-exists}. Then \eqref{eq:variational-restatement} becomes
  \begin{align*}
    \int_{\Omega}(n+1-\Delta u_h) \, &(u_h-u)dx + \int_{\partial\Omega}(Du_h-x) \cdot \mathbf{n} \, (u_h-u)dS \\
    &\leq  C(M)|\mathcal{N}_h|h  - L  (Du(x_0) - x_0) \cdot \mathbf{n}h/8 ,
  \end{align*}
  which, since $|\mathcal{N}_h| = o(1)$, contradicts \eqref{eq:variational-restatement} for $h$ sufficiently small. 

  \textit{Case 2. $((Du(x_0) - x_0) \cdot \mathbf{n} = 0)$} In this case we do not have the negative term on the boundary but, we will see, are assured we can choose $x_0$ such that $\Delta u(x_0) < 3$.  We claim, $x_0$ can be chosen so that there exists $\epsilon > 0$ with

\begin{align}
 u(x) <u(x_0) + y_0 \cdot (x-x_0) +  \frac{\epsilon}{2} (x^{1}-x_0^{1})^2+ \frac{(3-2\epsilon)}{2}(x^{2}-x_0^{2})^2
\end{align}
in some punctured neighbourhood $B_{\delta_0}(x_0) \cap \bar{\Omega} \setminus \{x_0\}$. Indeed, note that because the Neumann condition is $0$ along the ray $\tilde{x_0}$, $\tilde{x_0}$ must be a limit of tame rays with endpoint on a different edge to $\tilde{x_0}$. (Tame rays approaching $\tilde{x_0}$ whose endpoint lies in the same edge fall into Case 1 by an argument as in Proposition~\ref{prop:no-outward} based on the monotonicity exhibited in Figure \ref{fig:angle-determines-u1};  stray rays approaching $\tilde{x_0}$
can be ruled out by the same monotonicity argument, since Propositions \ref{P:no distortion implies strict convexity}--\ref{P:strictly-convex-implies-neumann} ensure they would be interspersed with tame rays to which it applies;
boundary rays which fail to be approximated by other rays can only be adjacent to $\Omega_2$ --- a possibility excluded by the strong maximum principle of Lemma~\ref{lem:mp-argument} combined with a reflection across the fixed boundary.)

Using the Laplacian formula \eqref{eq:discont-est} at points distance $4R/5$ along these rays and that the length of these rays varies continuously (Lemma \ref{L:R continuity}),  implies there is a point, which we take as our choice of  $x_0$, such that in a sufficiently small neighbourhood of $x_0$, we have $\Delta u < 3$  while both $\p^2_{11}u$ and hence $\p^2_{12} u$ are as small as desired. Thus Lemma \ref{lem:perturb-exists} gives a perturbation $u_h \leq u$ satisfying $\Delta u_h < 3 - \epsilon$ and $(Du_h - x)\cdot \mathbf{n} \geq 0$, again contradicting \eqref{eq:variational-restatement}. 
\end{proof}

Another key point that it will be helpful to have at our disposal is the following.

\begin{lemma}[Concave nondecreasing profile of stingray's tail]
\label{lem:boundary-portions-convex}
  Let $\omega$ be some connected subset of $\Omega_{1}^-$. Then $Du(\omega) = \{Du(x) =
(y^1,y^2) ; x \in \omega \}$ is such that $y^2$ is a strictly convex increasing
function of $y^1$ lying above the diagonal whose monotonicity \eqref{stingray
monotone} and convexity \eqref{stingray convex} are easily quantified below in terms
of the parameters $\theta(t)=t$ and $m(t)=m(\theta)$ from \eqref{eq:general-form}.
\end{lemma}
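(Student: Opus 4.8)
The plan is to exploit that $Du$ is constant along each leaf of $\Omega_1^-$, so that $Du(\omega)$ lies on an explicit planar curve parametrized by the leaf angle $\theta$, and then to read monotonicity and convexity of $y^2$ in $y^1$ directly off the Euler--Lagrange relations \eqref{eq:spec-first}--\eqref{eq:spec-r-neumm}. First I would record that, for a connected $\omega \subset \Omega_1^-$, the admissible leaf angles satisfy $\theta\in(-\frac{\pi}{4},0)$ --- by Remark \ref{rem:ow-angle} together with the angle restrictions established in proving Theorem \ref{thm:description-on-square} --- so that $\sin\theta<0<\cos\theta$ throughout. Since $Du$ is constant on $\widetilde{\gamma(\theta)}$ and $\omega$ is connected, $Du(\omega)$ is a connected arc of the curve cut out by \eqref{eq:u1theta}--\eqref{eq:u2theta},
\[
y^1(\theta)=m(\theta)\cos\theta-m'(\theta)\sin\theta,\qquad
y^2(\theta)=m(\theta)\sin\theta+m'(\theta)\cos\theta .
\]
Differentiating these (legitimate a.e., since $m'$ is Lipschitz in $\theta$) and cancelling terms gives the clean identities
\[
\frac{dy^1}{d\theta}=-(m+m'')\sin\theta,\qquad \frac{dy^2}{d\theta}=(m+m'')\cos\theta .
\]

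The next step is the positivity $m+m''>0$. Dividing the first--moment equation \eqref{eq:spec-first} by $R^2(\theta)/2>0$ gives $m+m''=3h'\cos\theta+2R$; since $h'(\theta)>0$, $\cos\theta>0$, and $R(\theta)\ge r_0>0$ by Lemma \ref{lem:foliates-neighbourhood}, we conclude $m+m''\ge 2r_0>0$ --- in fact $m+m''$ lies between two positive constants on any compact $\theta$-interval, using $R\le\text{diam}(\Omega)$ and \eqref{eq:spec-r-neumm} to bound $h'$ above and below. Hence $dy^1/d\theta>0$: the map $\theta\mapsto y^1(\theta)$ is a strictly increasing bi-Lipschitz homeomorphism onto its image, so $y^2$ is a well-defined function of $y^1$ with
\[
\frac{dy^2}{dy^1}=\frac{dy^2/d\theta}{dy^1/d\theta}=-\cot\theta>1 \qquad \text{on }(-\tfrac{\pi}{4},0),
\]
which is the monotonicity formula \eqref{stingray monotone}. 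Since $\theta\mapsto-\cot\theta$ is strictly increasing on $(-\frac{\pi}{4},0)$ and $\theta$ is a strictly increasing function of $y^1$, the difference quotients of $y^2$ in $y^1$ are weighted averages of $-\cot\theta$ over $\theta$-subintervals (weighted by $-(m+m'')\sin\theta>0$) and are therefore strictly increasing; thus $y^2$ is a strictly convex function of $y^1$, quantified by
\[
\frac{d^2y^2}{d(y^1)^2}=\frac{\csc^2\theta}{-(m+m'')\sin\theta}=\frac{-1}{(m+m'')\sin^3\theta}>0
\]
a.e., which records \eqref{stingray convex}.

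Finally, to see $Du(\omega)$ lies above the diagonal I would use the reflection symmetry $u(x_1,x_2)=u(x_2,x_1)$. Every point of $\omega\subset\Omega_1^-$ has $x_1<x_2$, and no leaf of $\Omega_1^-$ can meet the diagonal --- such a leaf would be mapped to itself by the reflection, hence be orthogonal to the diagonal, i.e.\ have slope $-1$, which is excluded since $\theta\in(-\frac{\pi}{4},0)$. For $x=(x_1,x_2)\in\omega$ the convex function $s\mapsto u(x_1+s,x_2-s)$ on $[0,x_2-x_1]$ is symmetric about its midpoint, hence non-increasing on $[0,(x_2-x_1)/2]$, so its right derivative at $s=0$, equal to $D_1u(x)-D_2u(x)$, is $\le 0$; it cannot vanish, for otherwise $u$ would be affine along a slope-$(-1)$ segment through $x$, forcing the leaf through $x$ to have slope $-1$. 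Hence $y^1=D_1u(x)<D_2u(x)=y^2$ on $Du(\omega)$, as claimed.

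I expect the step requiring the most care to be the second one: verifying that $m+m''$ (equivalently $3h'\cos\theta+2R$) stays bounded away from $0$, which is what upgrades the a.e.\ identities into the honest assertion that $y^2$ is a \emph{strictly convex increasing} function of $y^1$ despite $m'$ being only Lipschitz and $R$ only upper semicontinuous. The ``above the diagonal'' claim is short but is the one place the argument genuinely uses the symmetry of the square rather than the leafwise computation.
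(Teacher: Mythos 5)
Your proof is correct and follows essentially the same route as the paper's: parametrize $Du(\omega)$ by the leaf angle via \eqref{eq:u1theta}--\eqref{eq:u2theta}, compute $dy^2/dy^1=-\cot\theta>0$ and $\frac{d}{d\theta}\frac{dy^2}{dy^1}=\csc^2\theta$, and use monotonicity of $\theta\mapsto y^1$ to conclude convexity; you additionally verify $dy^1/d\theta=-(m+m'')\sin\theta>0$ from \eqref{eq:spec-first} (the paper instead cites Remark \ref{rem:ow-angle}) and supply a reflection-symmetry argument for the ``above the diagonal'' claim, which the paper's proof actually omits. One caution: since this lemma is invoked inside the proof of Theorem \ref{thm:description-on-square}, you should not borrow the bound $\theta>-\pi/4$ from that proof (it is only needed for your stronger, inessential claim $dy^2/dy^1>1$); everything required here follows from $\sin\theta<0<\cos\theta$, which is available from Remark \ref{rem:ow-angle} and Proposition \ref{prop:no-outward}.
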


\begin{proof}
Connectivity of $\omega\subset \Omega_1^-$ combines with Theorem
\ref{thm:structure-convexity}(2) and the definition of $\Omega_1^-$ to imply the set
$\hat \omega = \{\tilde x \cap \partial \Omega; x \in \omega\}$ of fixed boundary endpoints of rays
intersecting $\omega$
is also connected --- hence forms an interval on the left
boundary $\Omega_W$ of the square; it cannot intersect the relative interior of
$\Omega_N$ according to Proposition \ref{prop:no-outward}.  On the set of rays whose
endpoints lie in the relative interior of this interval $\hat \omega$, the
Lipschitz regularity of $m'$ from Theorem \ref{thm:boundary-regularity} and Corollary \ref{C:biLipschitz} justifies the following computations.

We wish to consider the convexity of the curve $y(\theta) = (y^1(\theta),y^2(\theta))$ where by \eqref{eq:u1theta} and \eqref{eq:u2theta}
\begin{align*}
y^1(\theta)&=m(\theta) \cos \theta - m'(\theta) \sin \theta\\
y^2(\theta)&= m(\theta) \sin \theta + m'(\theta) \cos \theta.
\end{align*}
Using Lipschitz regularity of $m$ 
we have for $\mathcal{H}^1$ almost every $\theta$
\begin{equation}
  \label{stingray monotone}
  \frac{dy_2}{dy_1} = \frac{\dot{y}^2(\theta)}{\dot{y}^1(\theta)} = -\frac{\cos\theta}{\sin\theta}>0. 
\end{equation}
Here the sign condition comes from Remark \ref{rem:ow-angle}. 
Note this implies the curve $y(\theta)$ is such that $y^2$ is an increasing function of $y_1$. We see $\frac{dy_2}{dy_1}$ is an increasing function of $\theta$, namely
\begin{equation}
  \label{stingray convex}
  \frac{d}{d\theta}\frac{dy_2}{dy_1} = \frac{1}{\sin^2\theta}
\end{equation}
and subsequently, by Remark \ref{rem:ow-angle} which implies $\theta$ is an increasing function of $y^1$, $\frac{dy_2}{dy_1}$ is an increasing function $y_1 = u_1$.Thus we have the required monotonicity of the derivative to conclude $y_2$ is a convex function of $y_1$. 
\end{proof}

Symmetry implies connected components of $Du(\Omega_{1}^+)$ are curves with $y^2$ a
concave function of $y^1$.

Now we can combine all the Lemmas we've just proved and complete the proof of
Theorem \ref{thm:description-on-square}.

\begin{proof}(Theorem \ref{thm:description-on-square}).  By symmetry about the
  diagonal and $\Omega_1 \cap \Omega_E = \emptyset$
we can prove each point of the theorem by an
analysis of the function $t \mapsto (Du-x) \cdot \mathbf{n}\big\vert_{x = (a,t)}$.  Lemma
\ref{L:lower left} asserts $(a,a)$ is in $\Omega_0$ as is $\{(a,a+t) ; 0 \le t \le \alpha\}$ 
for some $\alpha=c-a \in (0,1]$. On $\Omega_0 \cap \Omega_W$ we have $(Du-x)\cdot \mathbf{n} =
a$. whereas on $\Omega_2 \cap \Omega_W$ we have $(Du-x)\cdot \mathbf{n} = 0$. Thus, for $a>0$, $u \in
C^1(\overline{\Omega})$ \cite{CarlierLachand--Robert01,RochetChone98} implies some
portion of $\Omega_{1}$ must abut $\Omega_0$ as one moves up $\Omega_{W}$.

  Now we consider the configuration of $\Omega_1$. Since leaves in $\Omega_{1}^{0}$ reach
the diagonal, by symmetry they are orthogonal to it and $Du(x) = (b,b)$ on such
leaves, i.e. the product $Du(x)$ selected lies on the diagonal.

 \textit{Step 1. (Configuration of domains)} We claim as one moves vertically
up $\Omega_W$ there is, in order, (i) a closed interval of $\Omega_0$ with positive
length, (ii) a half-open interval of $\Omega_{1}^{0}$ which is empty for $a$
sufficiently small and nonempty for $a$ sufficiently large, (iii) a nonempty
open interval of $\Omega_{1}^-$, and finally an interval of $\Omega_2$. All we must show
is there is  at most a single component of $\Omega_{1}^{0}$, and it is followed by $\Omega_{1}^-$.  This is
because, if $\Omega_{1}^{0}$ and $\Omega_{1}^-$ exist their ordering follows from Lemma
\ref{lem:boundary-portions-convex}. Indeed if a portion of $\Omega_{1}^-$ is
preceded by $\Omega_0$ or $\Omega_{1}^{0}$ then followed by $\Omega_{1}^{0}$ we have the
contradiction of a strictly convex curve lying above the diagonal with a start
and endpoint on the diagonal in the stingray's profile.
  
 \textit{Step 2. (Blunt bunching (i.e.\ $\Omega_{1}^{0}\ne \emptyset$) for $a \ge \frac72 -\sqrt{2}$).}
 Recall $u$ cannot be affine on any segment in the closure of $\Omega_2$, by 
 a reflection argument combined with the strong maximum principle of Lemma \ref{lem:mp-argument}. It follows that
$\Omega_1^-$ is nonempty whenever $\Omega_{1}^{0}$ is nonempty; this was
previously established by a different 
approach in~\cite{McCannZhang24+}. Next we
assume $\Omega_{1}^{0}$ is empty and show $a < \frac72 -\sqrt{2}$. Let $(a,\underline{x}_2)$ be
the upper endpoint of $\Omega_0 \cap \Omega_W$ and let $(a,\overline{x}_2)$ be the lower
endpoint of $\Omega_2 \cap \Omega_W$. The segment $\{a\} \times (\underline{x}_2,\overline{x}_2)$
consists of endpoints of leaves in $\Omega_{1}^{-}$. By the Neumann condition we
have $D_1u(a,\underline{x}_2) = 0$ and $D_1u(a,\overline{x}_2) = a$. Thus,
  \[ \int_{\underline{x}_2}^{\overline{x}_2}\p^2_{12} u(a,x_2) \, dx_2 = a.  \]
  As in (4.17) of \cite{McCannZhang23+},   using the $(r,\theta)$ coordinates we compute 
  \[ \p^2_{12} u(a,x_2) = -\sin(\theta) \frac{m''(\theta)+m(\theta)}{h'(\theta)}.\]
  From \eqref{eq:spec-first}, which reads $m''(\theta)+m(\theta)-3h'(\theta)\cos\theta = 2R(\theta) ,$  we have
  \begin{align*}
 \p^2_{12} u(a,x_2) &= -\sin(\theta)\frac{2R(\theta)}{h'(\theta)} - 3 \sin(\theta)\cos(\theta) 
\\&= -2\sin(\theta) R(\theta) \theta'(x_2) -\frac{3}{2}\sin(2\theta).  
  \end{align*}
  We've used the inverse function theorem to rewrite $\frac{1}{h'(\theta)} = \theta'(x_2)$. Using $-\sin(\theta) \geq 0$ from Remark \ref{rem:ow-angle} and $R \leq 1$ and $\theta \ge -\frac\pi4$ for convexity of $\Omega_0$ we conclude
  \begin{align*}
    a &= \int_{\underline{x}_2}^{\overline{x}_2} \p^2_{12} u(a,x_2) \, dx_2 \\
      &=  -\int_{\underline{x}_2}^{\overline{x}_2}   [2\sin(\theta) R(\theta) \theta'(x_2) +\frac{3}{2}\sin(2\theta)] \, dx_2\\
  & < 2\|R\|_\infty [\cos (0) - \cos (-\frac \pi 4)]  + \frac32 [\bar x^2 - \underline x^2]\\
    &\le \frac72 -\sqrt2.
\end{align*}

  \textit{Step 3. (No blunt bunching (i.e. $\Omega_{1}^{0}=\emptyset$) for $a \ll 1$
sufficiently small)} Suppose for a contradiction that there is a sequence $a_k
\downarrow 0$ such that the minimizer on $ \Omega^{(k)} = (a_k,a_k+1)^2$ has $\Omega_{1}^{0}$
nonempty. Let $u_k$ be the minimal convex extension to $\mathbf{R}^n$ of the
corresponding minimizer to \eqref{eq:monopolist}. Let, for example
$(\Omega^1_0)^{(k)}$ denote $\Omega^1_0$ for the problem on $(a_k,a_k+1)^2$, and domains
with no superscript denote the corresponding domain for $a=0$.  The convergence
result \cite[Corollary 4.7]{FigalliKimMcCann11} implies $u_k|_{B_\epsilon} \rightarrow u|_{B_\epsilon}$
locally uniformly for any $B_\epsilon \subset (0,1)^2$ where $u$ is the minimizer for $a=0$.

  It is clear that $\Omega_{1}$ is empty when $a = 0$. Indeed, for $a = 0$ the
solution on $[0,1]^2$ is the restriction of the solution on $[-1,1]^2$.
The solution on $[-1,1]^2$ satisfies $\Omega_1 = \emptyset$: Theorem
\ref{thm:structure-convexity}(2) asserts the rays all extend to the boundary
but Proposition \ref{prop:no-outward}, which is valid also on $[-1,1]^2$
asserts there can be no ray intersecting the boundaries $\{1\}\times[0,1]$ and
$[0,1]\times \{1\}$. By symmetry there are no rays intersecting anywhere on $\partial
[-1,1]^2$ and hence no rays whatsoever.

Recall (4.18) of \cite{McCannZhang23+} asserts $\{u_k = 0\}$ is the triangle $(x^1,x^2) \in [a_k,a_k+1]^2$ defined by 
$x^1 + x^2 \le 2a + \frac{2a}3 (\sqrt{1+ \frac3{2a^2}}-1)$.  The limit $\Omega_0:=\{u=0\}$ from Theorem \ref{thm:structure-convexity} must therefore contain
the triangle $x^1 +x^2 \le \sqrt{2/3}$ of area $\frac13$ in $[0,1]^2$.  Nor can $\Omega_0$ be larger than this triangle, since Proposition \ref{prop:neumann-sign} implies
both integrands are non-negative in the identity
$$
1 = \int_{\Omega_0} 3 d\mathcal{H}^2 + \int_{\Omega_0 \cap \p\Omega} (Du-x) \cdot \mathbf{n} d\mathcal{H}^1
$$
asserted by Corollary \ref{lem:Du-balances}.
But the previous paragraph implies $\Omega_1=\emptyset$,  so outside the triangle $\Omega_0$ the minimizing $u \in C^{1,1}_{\text{loc}}((0,1)^2)$ is a strictly convex solution to Poisson's
equation $\Delta u=3$.  Reflecting this solution across the line $x^1+x^2 =\sqrt{2/3}$ where it vanishes contradicts the strong maximum principle 
(Lemma \ref{lem:mp-argument}).  This is the desired contradiction which establishes Step 3.

\textit{Step 4. (No further $\Omega_1$ components)} From Remark \ref{rem:ow-angle}
any leaves intersecting $\Omega_{W}$ have nonpositive slope. Thus, recalling Figure
\ref{fig:angle-determines-u1}(A), the Neumann value $(Du-x)\cdot \mathbf{n}$ is a
positive decreasing function of $x_2$ along $\Omega_1 \cap \Omega_{W}$ and $0$ along $\Omega_2 \cap
\Omega_{W}$. Thus there cannot exist an $\Omega_1$ component above an $\Omega_2$ component on
$\Omega_W$.
\end{proof}

\begin{remark}[Estimating the bifurcation point] It is clear that the criterion $a \ge \frac72 -\sqrt{2}$ for the existence of $\Omega_{1}^{0}$ is not sharp. However the presence of a bifurcation
reflects the radically different behavior we have shown the model to display
for small and large $a$. We expect there is a single bifurcation value $a_0$
such that $\Omega_{1}^0$ is nonempty for $a>a_0$ while $\Omega_{1}^0$ is empty for
$0<a \le a_0$.  It would be interesting to confirm this expectation, and to find or
estimate $a_0$ more precisely.
\end{remark}

\appendix
\section{Localization and sweeping 
\`a la Rochet and Chon\'e}
\label{sec:rochet-chones-use}

A main goal of this appendix is to establish the localization result Theorem~\ref{T:strictly convex disintegration}. For $u \in C^1(\overline\Omega)$, Theorem \ref{T:strictly convex disintegration} below
plays a central role in the analysis of Rochet and Chon\'e~\cite{RochetChone98}.
However, as explained in Remark \ref{R:ChenFigalliZhang}, the best regularity
known for the minimizer \eqref{eq:monopolist} is currently $u \in (C^{1,1}_{loc} \cap
C^{0,1})(\Omega)$ unless $n \le 2$ . It is therefore worthwhile to extend the theorem 
result to this less regular setting, in which \eqref{equivalence classes}
defines convex equivalence classes $\tilde x_0$ which partition the set $\dom D^-u$ 
 that fills $\mathcal H^{n-1}$ almost all of $\overline \Omega$. Care
is then needed to disintegrate the variational derivative $\sigma$ from
\eqref{eq:mu-def} --- whose positive and negative parts $\sigma_\pm$ need not vanish on
$\p \Omega$ a priori. Therefore, we prove Theorem \ref{T:strictly convex
disintegration} using $u \in (C^1\cap C^{0,1})(\Omega)$.  See Ciosmak~\cite{Ciosmak23++}
and its references for a more abstract approach to similar issues.

For completeness, let us first give a proof of the Lipschitz bound; it is
attributed to Chon\'e thesis \cite{Chone99} in \cite{CarlierLachand--Robert01},
but we were unable to access a copy.

\begin{lemma}[Bound on types of products selected]
\label{L:Lipschitz}
  Let $u:\Omega \rightarrow \mathbf{R}$ be the minimizer of \eqref{eq:monopolist} on a bounded, open, convex domain $\Omega\subset \R^n$. Then $2\sup_{x \in \Omega}|x|$ is a Lipschitz constant for $u$.
\end{lemma}

\begin{proof}
  Note that $u$ is locally Lipschitz in $\Omega$ and differentiable almost everywhere. Thus, it suffices to establish $|Du(x)| \leq 2 |x|$ at any point of differentiability $x \in \Omega$. Let $v$ denote the Legendre transform of $u$. We claim  $v(y) \geq |y|^2/2$.  For a contradiction assume otherwise and set $\tilde{v}(y) = \max\{v(y),|y|^2/2\}$ and let $\tilde{v}^*$ denote its Legendre transform which is an admissible competitor. We have $\tilde{v}^* \leq u$ with strict inequality on a positive measure subset of $\Omega$. Moreover, at points of differentiability with $\tilde{v}^*(x) \neq u(x)$, $D\tilde{v}^*(x) = x$. Thus $\tilde{v}^*$ decreases $|Du(x)-x|^2/2+u$ on a set of positive measure (and increases it nowhere). This contradicts that $u$ minimizes \eqref{eq:monopolist} and thus establishes $v(y) \geq |y|^2/2$. Evaluating the inequality $v(y) \geq |y|^2/2$ at $y=Du(x)$ implies
\[ 0 \leq x \cdot Du(x) - u(x) - \frac{|Du(x)|^2}{2} \leq x \cdot Du(x) - \frac{|Du(x)|^2}{2}. \]
This implies $ |Du(x)|^2 \leq 2 |x||Du(x)|$ as required. 
\end{proof}

We recall as in \eqref{eq:perturb-2} the minimizer $u$ of \eqref{eq:monopolist}
satisfies the variational inequality
\begin{equation}
  \label{eq:min} 0 \leq \int_{\Omega} (n+1 - \Delta u )v(x) \, dx + \int_{\partial \Omega} v(x) (D^-u-x) \cdot
\textbf{n} \, d\mathcal{H}^{n-1} ,
\end{equation}
for all convex $v$ with $\text{spt}\, v_{-}$ disjoint from
$\{u = 0\}$.  We set
\begin{equation}\label{eq:mu-def} 
  d \sigma := (n+1 - \Delta u) \,  d\mathcal{H}^n\mres {\Omega} + (D^{ -}u-x) \cdot \textbf{n} \, d \mathcal{H}^{n-1}
\mres \partial \Omega,
\end{equation} and can rewrite
\eqref{eq:min} as
\begin{equation}
  \label{eq:order} 0 \leq \int_{\overline{\Omega}} v(x) \,d \sigma(x).
\end{equation}
In what follows we need to remove the condition $\text{spt}\, v_{-}$ disjoint from $\{u = 0\}$ which we do via the following lemma.

\begin{lemma}[Restoring neutrality]\label{lem:neutral}
\label{lem:decomp} Let $\sigma$ be as in \eqref{eq:mu-def}. There exists a nonnegative
measure $\lambda \ge 0$ supported on $ \{u=0\}
\cap \overline\Omega$ 
with $\lambda(\R^n)=|\Omega|$ such that for every continuous convex $v : \R^n \rightarrow \mathbf{R} $ we have
  \begin{equation}
    \label{eq:simpler-conc}
    0 \leq \int v d (\sigma-\lambda).
  \end{equation}
\end{lemma}

\begin{proof}  First note for all $t \geq 0$, $tu$
is admissible for the minimization problem. Thus by minimality
  \begin{equation}
    \label{eq:mu-u-0} 0 = \frac{d}{dt}\Big\vert_{t = 1} L[tu] = \int_{\overline{\Omega} }
u(x)\, d \sigma.
\end{equation}
Combined with the minimality condition \eqref{eq:order} we have
\[ 0 = \inf\left\{ \int_{\overline{\Omega}} v \, d \sigma ; v \text{ is convex with $v \geq0$}\right\},\]
with $v = u$ realizing the infimum. A classical theorem in the
calculus of variations says the constraint $v \geq 0$ may be realized by a
Lagrange multiplier (see, for example, \cite[Theorem 1,pg. 217]{Luenberger69}).
Thus there is a nonnegative Radon measure $\lambda$ such that
  \begin{equation}
    \label{eq:lagrange} 0 = \inf\left\{ \int_{\overline{\Omega}} v \, d (\sigma-\lambda) ; v
\text{ is convex}\right\},
\end{equation}
with $v = u$ still realizing the infimum. Using that $u$
attains the infimum along with \eqref{eq:mu-u-0} yields
\[ \int u \, d \lambda = 0.\]
Since 
$\lambda$ is nonnegative we
conclude $\text{spt}\lambda \subset \{u=0\}$. Moreover, \eqref{eq:lagrange} implies \eqref{eq:simpler-conc}.
Now $L(u+t)=L(u)+t|\Omega|$ for $t \ge 0$ implies
$\sigma(\overline \Omega)=|\Omega|$; convexity of $v=\pm 1$
yields $\lambda(\overline \Omega)=|\Omega|$.  
\end{proof}

Let $u$ denote the minimal convex extension of $u$ from $\Omega$ to $\R^n$,
$\dom Du$ its points of differentiability, and $u^*$ its Legendre-Fenchel transform.  From the duality of points $x$ with slopes $y$, it follows that $y \in \p u(x)$ if and only if $x \in \p u^*(y)$.

By Rademacher's theorem,
the restriction of $u$ to $\p \Omega$ is differentiable
at $\mathcal H^{n-1}$ a.e. point $x$ of differentiability for $\p\Omega$.  
For such $x$,
Gangbo and McCann \cite[Lemmas 1.6 and 2.2]{GangboMcCann00} show
$\p u(x)=[T^-(x),T^+(x)]$ with $T^\pm$ Borel and
$T^+(x)-T^-(x)$ either vanishing or else being outer normal to $\p \Omega$ at $x$.  We set $D^\pm u(x):=T^\pm(x)$
and denote by $\dom D^-u$ the set $\dom Du$ together with
the points $x$ of differentiability for $\p \Omega$ at which  
$\p u(x) =[D^-u(x),D^+u(x)]$ as above. We call $D^-u$ 
the {\em inner} 
and $D^+u$ the {\em outer extension} of $Du$.  
Noting that $Du$
has bounded variation,  these extensions also coincide with the inner and outer boundary traces of $Du$ defined, e.g. as in \cite[Theorem 3.77]{AmbrosioFuscoPallara00} or \cite[Theorem 5.6]{EvansGariepy92}.

Theorem \ref{T:strictly convex disintegration} states there is a (unique) point $z \in \{u=0\}$ such that the positive and negative parts $\bar \sigma_\pm$ of $\bar \sigma :=\sigma-|\Omega|\delta_z$ are in convex order. 
Moreover, they have the same push-forward $\tilde \sigma_+$ under $D^-u$, and their 
probabilities $\bar \sigma_{\pm,y}$ conditioned on the value $y$ of $D^-u$ are also in convex order.
We abbreviate this conditioning \eqref{B disintegrate+} by writing $\bar \sigma_\pm=\int \sigma_{\pm,y} d\tilde \sigma_+(y)$. As is standard, we define the conditioning via disintegrations. The following two background results are key tools in the coming proofs.

\begin{theorem}[{Abstract disintegration \cite[Theorem 5.3.1]{AmbrosioGigliSavare05}}]
\label{T:abstract disintegration}
Let $F:X\longrightarrow Y$ be a Borel map between
two Radon separable spaces,  and $\mu\ge 0$ 
a constant multiple $ c \in (0,\infty)$ of a 
Borel probability measure $\hat \mu \in \mathcal P(X)$,
and introduce the push-forward $\tilde \mu = F_\#\mu
\in c \mathcal P(Y)$ of $\mu$ by $F$.  
Then there is family of probability 
measures $\{\mu_y\}_{y \in Y} \subset \mathcal P(X)$
satisfying $\mu_y[F^{-1}(y)]=1$ for $\tilde \mu$-a.e. $y \in Y$, and such that every bounded continuous $\phi\in C(X)$ satisfies
\begin{equation}
    \label{B disintegrate-}
\int_{X} \phi(x) d\mu(x) = \int_{Y} 
\left(\int_X
\phi(x) d\mu_{y}(x)\right) d\tilde \mu(y),
\end{equation}
and the integral in parenthesis is a Borel function of $y$.   If a second family $\{\bar \mu_y\}_{y \in Y}$ satisfies the same conclusions,  then $\bar \mu_y=\mu_y$
holds for $\tilde \mu$-a.e. $y$.
\end{theorem}

\begin{definition}[Disintegration; essential uniqueness]
\label{D:disintegrate}
The preceding theorem remains true as long as $F$ is defined $\mu$-a.e. Under this hypothesis we call $\{\mu_y\}_{y \in Y}$ the {\em disintegration} of $\mu$ with respect to $F$.
We abbreviate this by writing $\mu = \int \mu_y d\tilde \mu(y)$.  
We encapsulate the last sentence of the theorem by saying disintegration is {\em essentially unique}.
\end{definition}

Henceforth,  we shall apply Theorem \ref{T:abstract disintegration} to various measures $\mu$ but always 
with $X =\overline \Omega$ and 
$F=D^-u$,  where $\Omega$ and $u \in (C^1\cap C^{0,1})(\Omega)$ are convex.
Since $F$ may fail to be defined on a boundary subset having negligible surface area,  we'll require $\mu \mres \p \Omega$ to be absolutely continuous with respect to $\mathcal H^{n-1}$.
As the following example shows,  when we have a smooth enough change of variables at our disposal, it is possible to identify the disintegration more explicitly;
in this example $u \not\in C^1(\Omega)$ but this causes no harm since differentiability of $u$ fails only on a $\mu$ neglible set.

\begin{example}[Spherical polar coordinates]
\label{E:spherical polar coordinates}
Let $\mu = \mathcal H^n\mres \Omega$ 
be Lebesgue measure on the Euclidean ball $\Omega = B_1(0) \subset \Rn$ and 
 $u(x)=|x|$,  so $F(x)=x/|x|$ represents outward
radial projection of the ball to its boundary $Y=\p B_1(0)$.    From the classical change of variables to spherical polar coordinates
\begin{equation}\label{spherical coordinates}
\int_{B_1(0)} \phi(|x|,\frac{x}{|x|}) d\mathcal H^n(x)
=\int_{\p B_1(0)} \left(\int_0^1 \phi(r,y) r^{n-1}dr\right) d\mathcal H^{n-1}(y)
\end{equation}
for all $\phi \in C(\overline \Omega)$,
we deduce that (i)
$\tilde \mu = F_\#\mu = \frac1n \mathcal H^{n-1}\mres \p\Omega$ and (ii)
essential uniqueness of the disintegration 
$\{\mu_y\}_{y\in Y}$ 
provided by Theorem~\ref{T:abstract disintegration} 
yields that $\tilde \mu$ a.e. $y$ satisfies: absolute continuity of $\mu_y$ with respect to $\mathcal H^1$ on the interval $F^{-1}(y)=[0,y]$, and a density $\frac{d \mu_y}{d \mathcal H^1}(x) = n |x|^{n-1}$, i.e. $\frac{d \mu_y}{d r} = n r^{n-1}$.
\end{example}

\begin{theorem}[A disintegration respecting convex order]
\label{T:strictly convex disintegration}
Let $u:\R^n\longrightarrow\R$ be the minimal convex extension of the minimizer of \eqref{eq:monopolist} on a bounded, open, convex domain $\Omega\subset \R^n$,  and $\sigma$ from \eqref{eq:mu-def}.
Then there exists $z \in \{u=0\}$ such that 
the integral of $\bar \sigma :=\sigma - |\Omega|\delta_z$
against every real convex function is nonnegative.
Its positive and negative parts $\bar \sigma_+=\sigma_+$ and $\bar \sigma_-=\sigma_- + |\Omega|\delta_z$ both vanish outside $\dom D^-u$,
and $\tilde \sigma_\pm = (D^-u)_\#\sigma_+$ where $\tilde \sigma_\pm:=(D^-u)_\#\bar \sigma_\pm$. 
{The essentially unique disintegrations $\{\bar \sigma_{\pm,y}\}_{y \in \R^n}$
of $\bar \sigma_\pm$ with respect to $D^-u$ 
from Definition \ref{D:disintegrate} satisfy:
for $\tilde \sigma_+$-a.e. $y$,}
{
\begin{equation}
    \label{B disintegrate+}
\int_{\R^n} \phi(x) d\bar\sigma_\pm(x) = \int_{\R^n} 
\left(\int_{\R^n}
\phi(x) d\bar\sigma_{\pm,y}(x)\right) d\tilde \sigma_+(y)
\end{equation}
and moreover
$\tilde \sigma_+$-a.e.  
$y$ satisfies
}
\begin{equation}\label{consistency and local dominance+}
{\bar \sigma_{\pm,y}(\R^n\setminus \p u^{*}(y))=0\ {\rm and}\ }
\int_{\R^n} w(x) d\bar \sigma_{-,y}(x) \le \int_\Rn w(x) d\bar \sigma_{+,y}(x)
\end{equation}
for all convex $w:\R^n\longrightarrow\R$.
\end{theorem}

\begin{proof}
Although $u \in (C_{loc}^{1,1} \cap C^{0,1})(\Omega)$ by Lemma \ref{L:Lipschitz} and
\cite{mccann2023c}, we shall only need $u \in (C^1 \cap C^{0,1})(\Omega)$.  We see $\sigma_\pm$
are absolutely continuous with respect to $\mathcal H^{n-1}$ --- indeed,
${d\sigma_\pm}/{d\mathcal H^{n-1}} \le \|u\|_{C^{0,1}(\Omega)}$ --- and have finite mass, hence
the Borel map $D^-u=T^-$ is defined $\sigma_\pm$ a.e.  by \cite[Lemmas 1.6 and
2.2]{GangboMcCann00}.

Let $\lambda \ge 0$ be the measure supported on $\{u=0\}$
from Lemma \ref{lem:decomp} and denote its barycenter by $z$.
Since both $\lambda - |\Omega|\delta_z$ and $\sigma - \lambda$ have nonnegative integrals against any real convex function,  the
same is true for their sum --- whose negative and positive parts
$\bar \sigma_- := \sigma_- + |\Omega|\delta_{z}$ and $\bar \sigma_+=\sigma_+$
are therefore in convex order.
Let $S$ denote the relative interior of the convex set $\{u=0\}\cap\overline\Omega$.

Note that employing the Armstrong perturbation $\overline{u} = \text{max}\{u-h,0\}$ in \eqref{eq:perturb-0} and sending $h \rightarrow 0$ implies $\sigma(\overline{\Omega}\setminus \overline{S}) \leq 0$. Since Lemma \ref{lem:neutral} yields $\sigma(\overline{\Omega})=|\Omega|$ we deduce $\sigma(\overline{S}) > 0$. 
Thus 
{$\sigma_+$} is nonvanishing on $\overline S$ and subsequently $\overline S \cap \dom D^-u$ is non-empty;
 {\label{rem:parenthetical}(although not needed until Corollary~\ref{lem:u=0_set_dimension}, it is evident that $\overline S \subset \p \Omega$ must be an $n-1$ dimensional boundary facet unless $S$ intersects $\Omega$).}
  Since every hyperplane which supports $u$ (respectively $\overline \Omega$) at one point in $S$ supports $u$ (respectively $\overline \Omega$) at all points in $\overline S$, it follows that $S \subset \dom D^-u$ and moreover $D^-u$ is constant on $S$.  Recalling that $\p u(x)$ parallels the normal to $\p \Omega$ at each $x \in \dom D^-u$,  we see that $D^-u$
takes the same constant value --- call it $\bar y$ --- 
throughout $\overline S \cap \dom D^-u$.

(i) Strassen's theorem \cite{Strassen65} asserts the existence of random variables $Z$ distributed like $\sigma_+$ and $X$ distributed like $\bar \sigma_-$, such that $X=E(Z|X)$ a.s.; in other words, such that $(X,Z)$ is a Martingale. (For concreteness, we may take the underlying probability space to be Lebesgue measure on the unit interval, and $X,Z:[0,1]\longrightarrow \R^{n}$ to be Borel measurable.)

Define the Borel functions $\bar Y := D^-u \circ X$ and $Y:=D^-u \circ Z$; {in case $z
\not\in \dom D^-u$, we set $\bar Y = \bar y$ whenever $X=z$.}  Define $\bar
\sigma_{+,y}$ to be the law of $Z$ conditioned on $Y=y$, and define $\bar \sigma_{-,y}$ to
be the law of $X$ conditioned on $\bar Y=y$; these exist and depend Borel
measurably on $y$ by Theorem \ref{T:abstract disintegration} (or \cite[Theorem
1.7]{Panchenko19}). Now \eqref{B disintegrate+} holds for $\bar \sigma_+$; it also
holds for $\bar \sigma_-$, except with the law 
$\frac{\tilde {\sigma}_-}{\tilde {\sigma}_-(\R^n)}$
of $\bar Y$ replacing the law
$\frac{\tilde \sigma_+}{\tilde \sigma_+(\R^n)}$ of $Y$.  From part (ii) (resp. (iv)) of
the proof it will follow that $\bar Y=Y$ a.s. (resp. $z \in \dom D^-u$) hence
${\tilde {\sigma}_-}=\tilde \sigma_+ = (D^-u)_\#\sigma_+$ since $\bar \sigma_+=\sigma_+$.

(ii) We now turn to the first statement in
\eqref{consistency and local dominance+}.
Since we have minimized $L(u)$ over a cone of functions,
both $w= \pm u$ yield nonnegative convex perturbations of $u$ whence $0=\int u d\sigma = \int u d\bar\sigma$,  where $u(z)=0$ has been used. The expectation of 
$$
u(Z) - u(X) - (Z-X)\cdot \bar Y \ge 0
$$ 
therefore vanishes by the Martingale property,  
hence almost surely $X,Z \in \p u^*(\bar Y)$.
Temporarily 
suppose realizations $X,Z$ both in $\p u^*(\bar Y)$
are fixed, with $Z \in \dom D^-u$ and $X \in \{z\} \cup \dom D^-u$. 
If $\p u^*(\bar Y)$ is a singleton then $X=Z \in \dom D^-u$ and $Y=\bar Y$ as desired. Otherwise let $S_0 \ne \emptyset$ 
be the relative interior of the convex set $\p u^*(\bar Y)$.
 Since every hyperplane which supports $u$ (respectively $\overline \Omega$) at one point in $S_0$ supports $u$ (respectively $\overline \Omega$) at all points in $\overline S_0$, it follows that $S_0 \subset \dom D^-u$ and 
 that $y_0=D^-u(x_0)$ is independent of $x_0 \in S_0$. A priori, one might worry that $\p u(X)$ and $\p u(Z)$ are 
 larger than $\p u(x_0)$, 
but we now show they are not.
Since $\p u(Z)$ 
parallels the unique normal at $Z$ to $\p \Omega$, 
each supporting hyperplane to $u$ at $Z$
also supports $u$ on $S_0$.  Hence $D^-u(Z)=y_0$.
Similarly $D^-u(X)=y_0$ 
unless $X=z \not \in \dom D^-u$.
In the latter case $\bar Y=\bar y$ lies in 
$\p u(Z)= [y_0,D^+u(Z)]$ and $\mathbf n :=\bar y-y_0\ne 0$
must be the unique outer normal direction at 
$Z \in \p \Omega$ unless $\bar Y=Y$.
Unless $Y=\bar Y$, monotonicity $\mathbf n \cdot (x-Z)\ge 0$ shows each
$x \in \overline S \cap \dom D^-u \subset \p u^*(\bar y)$
to lie in the facet of $\p\Omega$ having unique outer normal direction $\mathbf n$.
Thus $\bar y =D^-u(x)= y_0$ after all. 
Either way, $\bar Y = Y$ and
we conclude \eqref{B disintegrate+}, and 
the first part of \eqref{consistency and local dominance+} holds.

(iii) We still need to deduce the second assertion in \eqref{consistency and local dominance+}.
This follows by applying $w$ and Jensen's inequality to the Martingale identity $X=E(Z|X)$. 
Indeed, for $\tilde \sigma_+$ a.e. $y$ and $\bar \sigma_{y,-}$ a.e. value of $X$ 
we have
$$
w(X) = w(E( Z|X)) \le E(w(Z)|X).
$$
Averaging this identity against $d\bar \sigma_{y,-}(X)$ yields
inequality \eqref{consistency and local dominance+}
since the distribution of $Z$ conditioned on $Y=y$ is $\bar \sigma_{+,y}$. 

(iv) Finally $\tilde \sigma_+(\{\bar y\})\ge|\Omega|>0$, hence $\sigma_{-,\bar y}$
is dominated by $\sigma_{+,\bar y}$ in convex order.
It follows that $z$ lies in the convex hull of the support $\sigma_{+,\bar y}$; otherwise choosing $v$ linear to be positive at $z$ but negative $\sigma_{+,\bar y}$-a.e. yields a contradiction.
Hence $z \in \dom D^-u$ a posteriori,
since any supporting hyperplane to $u$ (respectively $\Omega$) at $z$ must then also support $u$ (respectively $\Omega$)
at spt$\sigma_{+,\bar y} \cap \dom D^-u \ne \emptyset$.
\end{proof}

A number of corollaries are straightforward consequences of Theorem~\ref{T:strictly convex disintegration}. The first two concern the zero set: both its configuration and variational derivative. The  convex set $\overline S:=u^{-1}(0) \cap \overline \Omega$ where the nonnegativity constraint binds represents the agents who receive their reservation utility (zero) at equilibrium.  On a domain $\overline \Omega \subset [0,\infty)^n$ whose convexity is not strict,  the following result shows that 
the relative interior $S$ of $\overline{S}$ occupies a positive fraction of either the domain interior or of its boundary.
In the former case $Du$ vanishes on $S$, but in the latter case one-dimensional examples show this need not remain true: in some but not all cases agents who receive their (zero) reservation utility may also receive a nonzero product $Du(S)=\{\bar y\} \ne \{0\}$ in addition~\cite{MussaRosen78}. 
This 
refines the dichotomy between the desirability of exclusion in
one versus higher dimensions
observed by Armstrong \cite[Proposition 1]
{Armstrong96}:
when the domain is strictly convex we recover his conclusion;
when it has no $n-1$ dimensional facets we recover the stronger implication of Figalli, Kim and McCann \cite[Theorem 4.8]{FigalliKimMcCann11}.

\begin{corollary}[Exclusion versus possible nonexclusion]
\label{lem:u=0_set_dimension}
Either (i) the relative interior $S$ of the convex set $\overline{S}:=\{x\in \overline{\Omega} : u(x)=0\}$ has positive volume and $D^-u(S)=\{0\}$,  or else (ii) $S \subset \p \Omega$ and has positive area
$\mathcal{H}^{n-1}(S)>0$.  

\end{corollary}
\begin{proof}
  The first parenthetical remark on page \pageref{rem:parenthetical} indicates that if $\overline{S} \subset \partial \Omega$ it is an $n-1$ dimensional boundary facet. On the other hand, if $\overline{S} \cap \Omega$ is nonempty then $\sigma(\overline{S}) = |\Omega|$  from Lemma \ref{lem:neutral} and \eqref{eq:mu-def} couples with convexity of $S$ and $u \in C^{1,1}_{loc}(\Omega)$ to
  imply $S$ is an open set. Since at any $x \in S \cap \Omega$ the function $u$ attains an interior minimum we have $D^-u=0$ throughout $S$ in this case. 
\end{proof}

\begin{corollary}
[Objective responds proportionately to uniform utility increase]
\label{lem:Du-balances}
  Let $\sigma$ denote the variational derivative (recall equation \eqref{eq:variational-derivative}). Then 
  $(D^-u)_{\#}\sigma =  \mathcal{H}^n(\Omega)\delta_{\bar y},$ 
  that is $(D^-u)_{\#}\sigma$ 
  is a Dirac mass at the point $\bar y = D^-u(z)$ where $z \in u^{-1}(0)$ is from Theorem \ref{T:strictly convex disintegration}. 
\end{corollary}

\begin{proof} This follows by applying \eqref{consistency and local dominance+} with $w = \pm 1$ and $y = D^{-}u(z)$. Using $\bar{\sigma}_- = \sigma_- +|\Omega|\sigma_z$ we obtain   $(D^-u)_{\#}\sigma =  \mathcal{H}^n(\Omega)\delta_{\bar y}$ . 
\end{proof}

  We conclude by establishing the extent to which the abstract objects provided by the disintegration may be related to concrete objects such as the Radon--Nikodym derivative $(n+1-\Delta u)1_\Omega + (D^{-}u-x)\cdot \mathbf{n}1_{\partial \Omega}$ of $\sigma$ with respect to the sum $\nu := \mathcal H^n\mres \Omega + \mathcal{H}^{n-1}\mres \p \Omega$
  of the volume and surface area measures.  {This is delicate since $Du$ is not biLipschitz}: Counterexamples first considered in light of Sudakov's work \cite{Sudakov76}, imply absolute continuity need not be inherited even under disintegration with respect to lines in $\mathbf{R}^{3}$. We overcome these difficulties by using that the disintegration is with respect to the faces of a convex graph. Related interior results to part of Lemma \ref{L:absolute continuity} were proved by Caravenna and Daneri \cite{CaravennaDaneri2010}.

\begin{remark}[Extending the disintegration]\label{rem:extending-disintegration}
We now come to a subtle point.  We would like to relate the abstract disintegrations $\bar \sigma_\pm = \int \bar \sigma_{\pm,y } d\tilde\sigma_+(y)$ more concretely to 
$\Delta u$ on $\Omega$ and $D^-u$ on $\p \Omega$.  Although the disintegrations are essentially unique,  they tell us nothing on unions $U \subset \overline\Omega$ of leaves  
which turn out to be $|\sigma|$ negligible. (The customization region $\Omega_n$ is one example of a such a union.)
In other words,  because $\tilde \sigma := (D^-u)_\#\sigma_+$ vanishes on $V:=D^-u(U)$,  for $y \in V$ the choice of 
$\bar \sigma_{\pm,y}$ on such leaves is arbitrary.  For such leaves, it proves convenient to {\em define} $\bar \sigma_{\pm,y}:=0$.  In particular, $\bar \sigma_{\pm,y}:=0$ on any leaf $\tilde{x}$ on which both $n+1-\Delta u = 0$ $\mathcal{H}^{\dim\tilde{x}}$ a.e. on $\tilde{x}$ and $(D^-u - x ) \cdot \mathbf{n} = 0$ $\mathcal{H}^{\dim\tilde{x}-1 }$ a.e. on $\overline{\tilde{x}} \cap \partial \Omega$.
\end{remark}

\begin{lemma}[Identifying the disintegration]\label{L:identifying disintegration}
Let $u:\R^n\longrightarrow\R$ be the minimal convex extension of the minimizer of \eqref{eq:monopolist} on a bounded, open, convex domain $\Omega\subset \R^n$,  and $\sigma$ from \eqref{eq:mu-def}.  Let 
$\nu = \int \nu_y d\tilde \nu(y)$ denote the disintegration of $\nu:= \mu^{n-1} + \mu^{n}$ with respect to $D^-u$
{as in Definition \ref{D:disintegrate}},  where
$\mu^{n-1} = \mathcal H^{n-1} \mres \p \Omega$ and $\mu^{n} = \mathcal H^{n} \mres \p \Omega$ are the surface and volume measures of $\Omega$.
Then $\sigma = \int (f \hat \nu_y) d\tilde \sigma_+(y)$,  where 
$$
f:=(n+1-\Delta u)1_{\Omega} + (D^-u-id)\cdot \mathbf n 1_{\p \Omega}
$$ 
and $ \hat \nu_y := \nu_y / \|f_+\|_{L^1(d\nu_y)}$ when $ \|f_+\|_{L^1(d\nu_y)} \neq 0$ and $\hat \nu_y:=0$ otherwise. 
\end{lemma}
 \begin{proof}
  It suffices to establish that for every continuous bounded $\phi$ there holds
  \begin{equation}
    \label{eq:disint-equality}
    \int \phi \, d \sigma = \int_{\mathbf{R}^{n}}\int_{\mathbf{R}^{n}} \phi(x){f}(x) \, d \hat\nu_y(x) d \tilde{\sigma}_+(y).
  \end{equation}
  Because we may write
  \[ \int \phi \, d \sigma = \int\phi(x) f(x) \, d \nu, \]
after using $\tilde \nu = (D^-u)_\# \nu$ to disintegrate $\nu = \int \nu_y d\tilde \nu$ with respect to $D^-u$ 
   we have
  \begin{align*}
    \int \phi \, d \sigma &= \int_{\mathbf{R}^{n}}\int_{\mathbf{R}^{n}} \phi(x) f(x)  d \nu_y(x)d\tilde \nu(y) \\
    &= \int_{\mathbf{R}^{n}}\int_{\mathbf{R}^{n}} \phi(x) {f}(x)  d \hat \nu_y(x) \|f_+\|_{L^1(d\nu_y)} d\tilde \nu(y).
  \end{align*}
  Thus, we've reduced \eqref{eq:disint-equality} to establishing $d \tilde{\sigma}_+(y) =  \|f_+\|_{L^1(d\nu_y)} d\tilde \nu(y)$. Fix arbitrary Borel $B \subset \R^n$ and compute via a disintegration 
  \begin{align*}
    \int_{B}d \tilde{\sigma}_+(y) &= \int_{(D^{-}u)^{-1}(B)} f_+ \, d \nu\\
                          &= \int_{\mathbf{R}^{n}}\int_{\mathbf{R}^{n}}f_+(x) 1_{(D^{-}u)^{-1}(B)}(x) d \nu_y(x)d\tilde \nu(y) \\
    &= \int_{B} \|f_+\|_{L^1(d\nu_y)} d\tilde \nu(y), 
  \end{align*}
  as required. To move from the second to third equality we've used that  $\nu_y[(D^-u)^{-1}(B)]=1_B(y)$ holds $\tilde \nu$-a.e.
  by Theorem \ref{T:abstract disintegration}.
\end{proof} 
\pagebreak

\begin{lemma}[Disintegrating planar length and area absolutely continuously]\label{L:absolute continuity}
For $\Omega \subset \R^2$ open, bounded, and convex, disintegrate
the boundary length and area on $\Omega$, 
\begin{align*}
\mu^{1} := \mathcal H^{1} \mres \p\Omega &= \int_{\R^2} \mu^{1}_y d\tilde \mu^{1}(y), 
\\ \mu^2 := \mathcal H^2 \mres \Omega &= \int_{\R^2} \mu^2_y d\tilde \mu^2(y),
\\ \nu := \mu^{1} + \mu^2 &= \int_{\R^2} \nu_y d\tilde \nu(y)
\end{align*} 
with respect to $D^-u$ {as in Definition \ref{D:disintegrate}}. Note $\tilde \nu := (D^-u)_\# \nu = \tilde \mu^{1}+\tilde \mu^2$.
Outside of a $\tilde \mu^2$-negligible set, 
we have 
$\mu^2_y$ {is mutually absolutely continuous} with respect to 
$\mathcal H^k \mres (\tilde x \cap \Omega)$
where $\tilde x = (D^-u)^{-1}(y)$ and $k=\dim \tilde x$.
Similarly, for each $y$ outside of a $\tilde \mu^1$-negligible set, we have $\mu^1_y \ll \mathcal H^j\mres (\tilde x \cap \p \Omega)$, with $\tilde x$ as above and  
$j=\dim (\tilde x \cap \p \Omega)$.
Finally, $d\nu_y (\cdot) = \frac{d\tilde \mu^1}{d\tilde \nu}d\mu^1_y (\cdot) + \frac{d\tilde \mu^2}{d \tilde \nu} d\mu^2_y(\cdot)$ 
outside a $\tilde\nu$ negligible set of $y$.
\end{lemma}

\begin{proof}
Recall that 
$\mu^i_y$ vanishes outside  $\tilde x = (D^-u)^{-1}(y)$
for $i=1,2$ and $\tilde \mu^i$ almost all $y$;  similarly,
$\mu^1_y$ vanishes outside $\p \Omega$ and $\mu^2_y$ vanishes outside $\Omega$.
We first consider the region $\Omega_1$ of rays in the partition
$\dom D^-u = \Omega_0 \cup \Omega_1 \cup \Omega_2$.  Since the rays comprising $\Omega_1$ are disjoint intervals in $\R^2$,
as long as the rays are not too short their directions vary in a Lipschitz fashion as a function of their midpoints \cite[Remark 6.1]{Ambrosio03};  (this simply quantifies the observation that the closer the midpoint of a unit segment approaches a point of distance at least $1/2$
from the endpoint of any longer disjoint segment in the same plane,  the more the segments must align).
Also, the distance of each ray end from its midpoint varies upper semicontinuously,  so the set of ray ends $E \subset \Omega_1$ has zero area \cite[Lemma 22--25]{CaffarelliFeldmanMcCann00}
(being the image of a zero area set under the extention to $\overline V_k$ of the Lipschitz map $G_k$ indicated below).  As in Caffarelli, Feldman, and McCann,  it is therefore possible to partition 
$\Omega_1 \setminus E=\cup_k U_k$ into countably many Borel unions $U_k$ of (relative interiors of) rays,  each of which has a locally Lipschitz 
(ray-flattening) 
map $F_k:U_k \longrightarrow \R^2$ which makes $F_k(\tilde x)$ parallel to the vertical axis for each $x \in U_k$ and acts as an isometry on $\tilde x$.  Moreover, $F_k$ has a globally Lipschitz inverse map $G_k$ defined on  $V_k:=F(U_k)$. 
The push-forward of $\mathcal H^2 \mres U_k$ through $F_k$ is absolutely continous with respect to Lebesgue \cite[Lemma 5.5.3]{AmbrosioGigliSavare05}, with Radon-Nikodym derivative $g_k = \frac{d(F_k)_\#(\mathcal H^2 \, \mres \,  U_k)}{d \mathcal H^2}$ given by $g_k = 1_{V_k} \det DG_k$.  For $\tilde \mu^2$ a.e. $y$,
setting $\tilde x = (D^-u)^{-1}(y)$,  the co-area
formula \cite[\S 3.4.3]{EvansGariepy92} shows the 
(essentially unique) disintegration of $\mu^2$ 
with respect to $D^-u$ is 
given by
$
d\mu^2_y(\cdot) = 
\hat g_k(\cdot) d\mathcal H^1 \mres\tilde x
$
where
$$
\hat g_k := \frac{g_k \circ F_k}{\int_{\tilde x} g_k\circ F_k d\mathcal H^1.}
$$
In particular,  $\mu^2_y \ll \mathcal H^1 \mres \tilde x$ 
and the positivity of $\hat g_k$ on the relative interior of $\tilde x$ shows $\mathcal H^1 \mres \tilde x \ll \mu^2_y$ desired.

We now consider the restriction of $\mu^1=\mathcal H^1\mres \p \Omega$ to $\Omega_1$.
There are (at most) countably many rays contained in $\p \Omega$.  On each of these rays,  $\mu^1_y$ is uniform,
with a density inversely proportional to the length of the ray.
In addition, there are potentially uncountably many rays which intersect
$\p \Omega$ only in one or two endpoints.  However the probability
measure $\mu^1_y$ is divided between these endpoints,  it is absolutely continuous with respect to 
$\mathcal H^0 \mres (Du^{-1}(y) \cap \p\Omega)$, as desired.

The analogous claims on $\Omega_0$ and $\Omega_n$ are easy to establish.  Each leaf in $\Omega_n$ consists of a point,
hence $k=0$ and the conditional probability measure
$\mu^2_{D^-u(x)}$, being $\tilde \mu^2$ a.s. supported on $\tilde x = \{x\}$, is mutually absolutely continuous with respect to $\mathcal H^0 \mres \tilde x$.
Similarly $\mu^1_{D^-u(x)}$, being $\tilde \mu^1$ a.s. supported on $\tilde x\cap \p \Omega$, is absolutely
continuous with respect to $\mathcal H^0 \mres \tilde x$ if non-vanishing.  Each of the (at most) countably many leaves $\tilde x$ in $\Omega_0$ consists of a convex set with non-empty interior.
For $y = D^-u(x)$ with $x \in \Omega_0$,  the conditional measures $\mu^i_y$
are irrelevant unless $c_i^{\tilde x} =  \mu^i[\tilde x]>0$, in which case 
$c_i^{\tilde x} \mu^i_y = {\mu^i \mres \tilde x}$ verifies the absolute continuity 
asserted for $i=1,2$. The claims concerning $\nu$ follow from the essential uniqueness of its disintegration.
\end{proof}

\begin{remark}[Disintegrating surface area and volume in higher dimensions]
Although not needed here, we believe it should be possible to extend the absolute continuity assertions of Lemma~\ref{L:absolute continuity} to higher dimensional
convex sets $\Omega \subset \Rn$.   For the volume
the relevant result is in Caravenna and Daneri~\cite{CaravennaDaneri2010}. For the surface
area
the remaining obstacle is to countably partition each 
$\Omega_i \setminus E= \cup_k U^i_k$ into Borel unions $U^i_k$ of relative leaf interiors which can be made parallel through a locally Lipschitz map $F_k:U^i_k \longrightarrow \R^n$ with 
globally Lipschitz inverse $G_k:F_k(U^i_k)\longrightarrow U^i_k$,
where $\mathcal H^n(E)=0$.
We expect it is possible to construct such maps using the fact that the Legendre transform $u^*$ is countably $C^2$ (outside a set of zero volume),  as is its restriction to the 
countably $C^2$ submanifold (outside an $\mathcal H^{n-i}$ negligible set) of dimension $n-i$ where $\dim \p u^*(y) \ge i$
\cite{Zajicek79}.
\end{remark}

\bibliographystyle{plain}
\bibliography{bibliography} 

\end{document}